\theoremstyle{definition}
\newtheorem{theorem}{Theorem}
\newtheorem{definition}[theorem]{Definition}
\newtheorem{corollary}[theorem]{Corollary}
\newtheorem{lemma}[theorem]{Lemma}
\newtheorem{proposition}[theorem]{Proposition}
\newtheorem{remark}[theorem]{Remark}
\newcommand{\bbbr}{\mathbb R}
\newcommand{\bbbc}{\mathbb C}
\newcommand{\bbbz}{\mathbb Z}
\newcommand{\bbbn}{\mathbb N}
\newcommand{\Lps}{\dot{L}^p_s(\bbbr^n)}
\newcommand{\Fspq}{\dot{F}^s_{p,q}(\bbbr^n)}
\newcommand{\Lloc}{L^1_{loc}(\bbbr^n)}
\newcommand{\FT}{\mathcal{F}}
\newcommand{\iFT}{\mathcal{F}^{-1}}
\newcommand{\Lebes}{\mathcal{L}}
\newcommand{\unitsph}{\mathbb{S}^{n-1}}
\newcommand{\HLmax}{\mathcal{M}}
\newcommand{\PFSmax}{\mathcal{P}}
\newcommand{\Pint}{\mathscr{P}}
\newcommand{\Sw}{\mathcal{S}}
\newcommand{\Ccinfty}{\mathcal{C}^{\infty}_0}
\newcommand{\Remain}{\mathcal{R}}
\newcommand{\Lift}{\mathcal{I}}
\newcommand{\Diff}{\mathcal{D}}
\newcommand{\borel}{\mathcal{B}}
\newcommand{\LPG}{\mathcal{G}}
\newcommand{\LPg}{\mathfrak{g}}
\newcommand{\num}{\boldsymbol{n}}
\newcommand{\den}{\boldsymbol{d}}
\newcommand{\functrep}{\mathfrak{F}}
\newcommand{\multi}{\boldsymbol{m}}
\newcommand{\bfarIm}{b^{(f.m.)}}
\newcommand{\bnearIm}{b^{(n.m.)}}
\newcommand{\Rez}{\operatorname{Re}}
\def\esssup{\mathop{\rm ess\,sup\,}}
\def\dist{{\rm dist\,}}
\def\mvint_#1{\mathchoice
	{\mathop{\vrule width 6pt height 3 pt depth -2.5pt
			\kern -8pt \intop}\nolimits_{\kern -3pt #1}}%
	{\mathop{\vrule width 5pt height 3 pt depth -2.6pt
			\kern -6pt \intop}\nolimits_{#1}}%
	{\mathop{\vrule width 5pt height 3 pt depth -2.6pt
			\kern -6pt \intop}\nolimits_{#1}}%
	{\mathop{\vrule width 5pt height 3 pt depth -2.6pt
			\kern -6pt \intop}\nolimits_{#1}}}
\begin{document}
	
	\title[A weak inequality in Sobolev spaces]
	{A weak inequality in fractional homogeneous Sobolev spaces}
	
	\author{Lifeng Wang}
	\address[Lifeng Wang]{Department of Mathematics\\University of Pittsburgh\\Pittsburgh, PA 15260, USA}
	\email{lifeng.wang.1987@outlook.com}
	\subjclass[2020]{42B35.}
	\keywords{Fractional Sobolev spaces, generalized Littlewood-Paley functions, Whitney decomposition, homogeneous Triebel-Lizorkin spaces, Peetre-Fefferman-Stein maximal functions.}

\begin{abstract}
In this paper, we prove the following inequality
\begin{equation*}
\|\big(\int_{\mathbb{R}^n}\frac{|f(\cdot+y)-f(\cdot)|^q}{|y|^{n+sq}}dy\big)^{\frac{1}{q}}\|_{L^{p,\infty}(\mathbb{R}^n)}\lesssim\|f\|_{\dot{L}^p_s(\mathbb{R}^n)},
\end{equation*}
where $\|\cdot\|_{L^{p,\infty}(\mathbb{R}^n)}$ is the weak $L^p$ quasinorm and $\|\cdot\|_{\dot{L}^p_s(\mathbb{R}^n)}$ is the homogeneous Sobolev norm, and parameters satisfy the condition that $1<p<q$, $2\leq q<\infty$, and $0<s=n(\frac{1}{p}-\frac{1}{q})<1$. Furthermore, we prove the estimate $\|\mathfrak{g}_{s,q}(f)\|_{L^p(\mathbb{R}^n)}\lesssim\|f\|_{\dot{F}^s_{p,q}(\mathbb{R}^n)}$ when $0<p,q<\infty$, $-1<s<1$, $\|\cdot\|_{\dot{F}^s_{p,q}(\mathbb{R}^n)}$ denotes the homogeneous Triebel-Lizorkin quasinorm and the Littlewood-Paley-Poisson function $\mathfrak{g}_{s,q}(f)(\cdot)$ is a generalization of the classical Littlewood-Paley $g$-function. Moreover, we prove the weak type $(p,p)$ boundedness of the $\mathcal{G}_{\lambda,q}$-function and the $\mathcal{R}_{s,q}$-function, where the $\mathcal{G}_{\lambda,q}$-function is a generalization of the well-known classical Littlewood-Paley $g_{\lambda}^*$-function. We also prove that when $0<p,q<\infty$ and $-\infty<s\leq\max\{0,n(\frac{1}{p}-\frac{1}{q})\}$, we have
\begin{equation*}
\|\big(\int_{\mathbb{R}^n}\frac{|f(\cdot+y)-f(\cdot)|^q}{|y|^{n+sq}}dy\big)^{\frac{1}{q}}\|_{L^{p}(\mathbb{R}^n)}=\infty.
\end{equation*}
\end{abstract}

\maketitle
\tableofcontents
	
\section{Introduction}\label{Introduction}
We denote the $n$-dimensional Fourier transform of an integrable function $f\in L^1(\bbbr^n)$ by
$$\FT_n f(\xi)=\int_{\bbbr^n}f(x)e^{-2\pi ix\cdot\xi}dx,$$
and the $n$-dimensional inverse Fourier transform is denoted by
$$\iFT_n f(\xi)=\int_{\bbbr^n}f(x)e^{2\pi ix\cdot\xi}dx,$$
where for $x\in\bbbr^n$ and $\xi\in\bbbr^n$, $x\cdot\xi$ is the inner product. We denote $\bbbn$ is the set of all the positive integers, and $\bbbn_0:=\bbbn\cup\{0\}$, and $\Lloc$ is the space of all locally integrable functions defined on $\bbbr^n$, and a function $f(x)$ is in $\Lloc$ if and only if $\int_K|f(x)|dx<\infty$ for every compact set $K\subseteq\bbbr^n$. We denote $\Sw(\bbbr^n)$ is the space of Schwartz functions defined on $\bbbr^n$, and the space of continuous linear functionals on $\Sw(\bbbr^n)$, denoted by $\Sw'(\bbbr^n)$, is called the space of tempered distributions on $\bbbr^n$. Let $g\in\Sw(\bbbr^n)$ and $N\in\bbbn_0$, and we define
\begin{equation}\label{eq1.89}
\rho_N(g):=\sum_{\substack{|\alpha|\leq N\\|\beta|\leq N}}\sup_{x\in\bbbr^n}|x^{\alpha}\partial^{\beta}g(x)|.
\end{equation}
According to \cite[Proposition 2.3.4. (b)]{14classical}, a linear functional $f$ defined on $\Sw(\bbbr^n)$ is in $\Sw'(\bbbr^n)$ if and only if there exist a positive finite constant $C_1$ and a number $N\in\bbbn_0$, whose values are determined by $f$, so that the following estimate (\ref{eq1.90}) is true for all $g\in\Sw(\bbbr^n)$,
\begin{equation}\label{eq1.90}
|<f,g>|\leq C_1\cdot\rho_N(g).
\end{equation}
Let $f\in\Sw'(\bbbr^n)$. We say a function $F(x)$ defined on $\bbbr^n$ is a function representative of $f$ in the sense of $\Sw'(\bbbr^n)$ if and only if we can write
\begin{equation}\label{eq1.91}
<f,g>=\!\!\int_{\bbbr^n}F(x)\!\cdot\!g(x)dx\quad\text{for all $g\!\in\!\Sw(\bbbr^n)$}.
\end{equation}
We denote the collection of all the function representatives of $f\in\Sw'(\bbbr^n)$ in the sense of $\Sw'(\bbbr^n)$ is $\functrep(f)$. Then $F_1(x)\in\functrep(f)\cap\Lloc$ and $F_2(x)\in\functrep(f)\cap\Lloc$ imply the following equation (\ref{eq1.92}) is true for all $g\in\Sw(\bbbr^n)$,
\begin{equation}\label{eq1.92}
\int_{\bbbr^n}F_1(x)\!\cdot\!g(x)dx=\!\!\int_{\bbbr^n}F_2(x)\!\cdot\!g(x)dx,
\end{equation}
hence by Lemma \ref{lemma15}, $F_1(x)=F_2(x)$ for almost every $x\in\bbbr^n$. If $f_1$ and $f_2$ are continuous linear functionals in $\Sw'(\bbbr^n)$, then we say $f_1=f_2$ in the sense of $\Sw'(\bbbr^n)$ if the following equation (\ref{eq1.99}) is true for all $g\in\Sw(\bbbr^n)$,
\begin{equation}\label{eq1.99}
<f_1,g>=<f_2,g>.
\end{equation} 
Furthermore, if either $f_1$ or $f_2$ has at least one function representative in the sense of $\Sw'(\bbbr^n)$, then $f_1=f_2$ in the sense of $\Sw'(\bbbr^n)$ means
\begin{equation}\label{eq1.100}
\functrep(f_1)=\functrep(f_2).
\end{equation}
Moreover, if $f_1\in\Sw'(\bbbr^n)$ and $f_2\in\Sw'(\bbbr^n)$ share a common function representative in the sense of $\Sw'(\bbbr^n)$, i.e. $\functrep(f_1)\cap\functrep(f_2)\neq\emptyset$, then equation (\ref{eq1.99}) is true for all $g\in\Sw(\bbbr^n)$ and $f_1=f_2$ in the sense of $\Sw'(\bbbr^n)$, and we have $\functrep(f_1)=\functrep(f_2)$. Given $f\in\Sw'(\bbbr^n)$ and $F(x)\in\functrep(f)$, because we can modify the value of the function $F(x)$ on an arbitrary set of Lebesgue measure zero so that equation (\ref{eq1.91}) is true for the modified function and for all $g\!\in\!\Sw(\bbbr^n)$, then the modified function also belongs to $\functrep(f)$. Therefore, if $f\in\Sw'(\bbbr^n)$ has one function representative in the sense of $\Sw'(\bbbr^n)$, then $f\in\Sw'(\bbbr^n)$ has infinitely many function representatives in the sense of $\Sw'(\bbbr^n)$, and two distinct locally integrable function representatives in the sense of $\Sw'(\bbbr^n)$ are equal for almost every $x\in\bbbr^n$. In this case, we can consider $f\in\Sw'(\bbbr^n)$ is a symbol that represents an infinite collection $\functrep(f)$ of function representatives in the sense of $\Sw'(\bbbr^n)$. The space $\Sw_0(\bbbr^n)$ is a subspace of $\Sw(\bbbr^n)$ that inherits the same topology as $\Sw(\bbbr^n)$, and a Schwartz function $\phi\in\Sw(\bbbr^n)$ is in $\Sw_0(\bbbr^n)$ if and only if $\phi$ satisfies the condition
	\begin{equation}\label{Sw0.condition.1}
		\int_{\bbbr^n}x^{\gamma}\phi(x)dx=0\qquad\text{for all multi-indices $\gamma$},
	\end{equation}
	or equivalently, the condition
	\begin{equation}\label{Sw0.condition.2}
		(\partial^{\gamma}\iFT_n\phi)(0)=0\qquad\text{for all multi-indices $\gamma$}.
	\end{equation}
Furthermore, if $\omega$ and $\omega'$ are multi-indices, then we can deduce from Taylor's formula for the Schwartz function $\partial^{\omega}\iFT_n\phi(\xi)$ at $\xi=0$ with integral form of the remainder the following estimate
\begin{align}
&|\partial^{\omega}\iFT_n\phi(\xi)|\!
\leq\!\!\!\int_0^1\!\!\frac{(1-t)^{M-1}}{(M-1)!}\!\cdot\!\bigg|\sum_{|\omega'|=M}\!\!\!\xi^{\omega'}
(\partial^{\omega+\omega'}\iFT_n\phi)(t\xi)\bigg|dt\nonumber\\
&\lesssim\sum_{|\omega'|=M}\sup_{x\in\bbbr^n}|(\partial^{\omega+\omega'}\iFT_n\phi)(x)|\cdot|\xi|^M,
\label{Sw0.condition.3}
\end{align}
where $M$ is an arbitrarily large positive integer and the implicit constant depends on $n$ and $M$. By \cite[Proposition 1.1.3]{14modern}, the dual space of $\Sw_0(\bbbr^n)$ under the topology inherited from $\Sw(\bbbr^n)$ is the space of tempered distributions modulo polynomials. The space of tempered distributions modulo polynomials is the space of equivalence classes of tempered distributions, where two tempered distributions $u$ and $v$ are in the same equivalence class if and only if their difference $u-v$ is another tempered distribution whose function representative in the sense of $\Sw'(\bbbr^n)$ is a polynomial on $\bbbr^n$. Because the topology of $\Sw(\bbbr^n)$ is the only topology we will consider in this paper when it comes to the matters of $\Sw(\bbbr^n)$ and $\Sw_0(\bbbr^n)$, we denote the space of tempered distributions modulo polynomials by $\Sw_0'(\bbbr^n)$. A linear functional $f$ defined on $\Sw_0(\bbbr^n)$ is in $\Sw_0'(\bbbr^n)$ if and only if there exist a positive finite constant $C_2$ and a number $N\in\bbbn_0$, whose values are determined by $f$, so that the following estimate (\ref{eq1.93}) is true for all $\phi\in\Sw_0(\bbbr^n)$,
\begin{equation}\label{eq1.93}
|<f,\phi>|\leq C_2\cdot\rho_N(\phi).
\end{equation}
Let $f\in\Sw_0'(\bbbr^n)$. We say a function $F(x)$ defined on $\bbbr^n$ is a function representative of $f$ in the sense of $\Sw_0'(\bbbr^n)$ if and only if we can write
\begin{equation}\label{eq1.94}
<f,\phi>=\!\!\int_{\bbbr^n}F(x)\!\cdot\!\phi(x)dx\quad\text{for all $\phi\!\in\!\Sw_0(\bbbr^n)$}.
\end{equation}
We denote the collection of all the function representatives of $f\in\Sw_0'(\bbbr^n)$ in the sense of $\Sw_0'(\bbbr^n)$ is $\functrep_0(f)$. Then $F_1(x)\in\functrep_0(f)$ and $F_2(x)\in\functrep_0(f)$ imply the following equation (\ref{eq1.95}) is true for all $\phi\in\Sw_0(\bbbr^n)$,
\begin{equation}\label{eq1.95}
\int_{\bbbr^n}F_1(x)\!\cdot\!\phi(x)dx=\!\!\int_{\bbbr^n}F_2(x)\!\cdot\!\phi(x)dx.
\end{equation}
Given $f\in\Sw'(\bbbr^n)\subseteq\Sw_0'(\bbbr^n)$, we see that if $f$ has a function representative $F(x)$ in the sense of $\Sw'(\bbbr^n)$, then $F(x)$ is also a function representative of $f$ in the sense of $\Sw_0'(\bbbr^n)$, thus $\functrep(f)\subseteq\functrep_0(f)$. Because Schwartz functions in $\Sw_0(\bbbr^n)$ satisfy the condition (\ref{Sw0.condition.1}), then for the same $f\in\Sw'(\bbbr^n)$, its function representative in the sense of $\Sw_0'(\bbbr^n)$ and its function representative in the sense of $\Sw'(\bbbr^n)$ are generally different, with the possible difference of a polynomial. If $f\in\Sw_0'(\bbbr^n)$ and $F(x)$ is a function in $\functrep_0(f)$ and $P(x)$ is an arbitrary polynomial defined on $\bbbr^n$, then the function $F(x)+P(x)$ also belongs to $\functrep_0(f)$. Therefore, if $f\in\Sw_0'(\bbbr^n)$ has one function representative in the sense of $\Sw_0'(\bbbr^n)$, then $f\in\Sw_0'(\bbbr^n)$ has infinitely many function representatives in the sense of $\Sw_0'(\bbbr^n)$. In this case, we can consider $f\in\Sw_0'(\bbbr^n)$ is a symbol that represents an infinite collection $\functrep_0(f)$ of function representatives in the sense of $\Sw_0'(\bbbr^n)$. If $f_1$ and $f_2$ are continuous linear functionals in $\Sw_0'(\bbbr^n)$, then we say $f_1=f_2$ in the sense of $\Sw_0'(\bbbr^n)$ if the following equation (\ref{eq1.96}) is true for all $\phi\in\Sw_0(\bbbr^n)$,
\begin{equation}\label{eq1.96}
<f_1,\phi>=<f_2,\phi>.
\end{equation} 
Furthermore, if either $f_1$ or $f_2$ has at least one function representative in the sense of $\Sw_0'(\bbbr^n)$, then $f_1=f_2$ in the sense of $\Sw_0'(\bbbr^n)$ means that both $f_1\in\Sw_0'(\bbbr^n)$ and $f_2\in\Sw_0'(\bbbr^n)$ have infinitely many function representatives in the sense of $\Sw_0'(\bbbr^n)$, and we have
\begin{equation}\label{eq1.97}
\functrep_0(f_1)=\functrep_0(f_2).
\end{equation}
Moreover, if $f_1\in\Sw_0'(\bbbr^n)$ and $f_2\in\Sw_0'(\bbbr^n)$ share a common function representative in the sense of $\Sw_0'(\bbbr^n)$, i.e. $\functrep_0(f_1)\cap\functrep_0(f_2)\neq\emptyset$, then equation (\ref{eq1.96}) is true for all $\phi\in\Sw_0(\bbbr^n)$ and $f_1=f_2$ in the sense of $\Sw_0'(\bbbr^n)$, and we have $\functrep_0(f_1)=\functrep_0(f_2)$. By the definition of dual spaces, we have $\Sw'(\bbbr^n)\subseteq\Sw_0'(\bbbr^n)$. In addition, for every $f\in\Sw_0'(\bbbr^n)$, we can use the Hahn-Banach theorem, i.e. \cite[3.3 Theorem]{Rudin.Funct.Anal}, to obtain an extension of $f$ to a tempered distribution defined on all of $\Sw(\bbbr^n)$, and we still denote this extended tempered distribution by the same symbol $f$. In this paper, we call the extended tempered distribution $f\in\Sw'(\bbbr^n)$ obtained by the Hahn-Banach theorem is the extended version of the continuous linear functional $f\in\Sw_0'(\bbbr^n)$. Another important subspace of $\Sw(\bbbr^n)$ is the space $\Sw_{00}(\bbbr^n)$ defined below,
\begin{equation}\label{eq1.62}
\Sw_{00}(\bbbr^n)\!=\!\{g\!\in\!\Sw(\bbbr^n)\!:\!\partial^{\gamma}g(0)\!=\!0\quad\text{for all multi-indices $\gamma$}\}.
\end{equation}
	If $\omega$, $\omega'$ are multi-indices and $g\in\Sw_{00}(\bbbr^n)$, then we can deduce from Taylor's formula for the Schwartz function $\partial^{\omega}g(\xi)$ at $\xi=0$ with integral form of the remainder the following estimate
	\begin{equation}\label{eq1.63}
		|\partial^{\omega}g(\xi)|
		\lesssim\sum_{|\omega'|=M}\sup_{x\in\bbbr^n}|(\partial^{\omega+\omega'}g)(x)|\cdot|\xi|^M,
	\end{equation}
where $M$ is an arbitrarily large positive integer and the implicit constant depends on $n$ and $M$. The dual space of $\Sw_{00}(\bbbr^n)$ under the topology inherited from $\Sw(\bbbr^n)$ is the space $\Sw_{00}'(\bbbr^n)$ of continuous linear functionals defined on the subspace $\Sw_{00}(\bbbr^n)$. A linear functional $f$ defined on $\Sw_{00}(\bbbr^n)$ is in $\Sw_{00}'(\bbbr^n)$ if and only if there exist a positive finite constant $C_3$ and a number $N\in\bbbn_0$, whose values are determined by $f$, so that the following estimate (\ref{eq1.107}) is true for all $g\in\Sw_{00}(\bbbr^n)$,
\begin{equation}\label{eq1.107}
|<f,g>|\leq C_3\cdot\rho_N(g).
\end{equation}
Let $f\in\Sw_{00}'(\bbbr^n)$. We say a function $F(x)$ defined on $\bbbr^n$ is a function representative of $f$ in the sense of $\Sw_{00}'(\bbbr^n)$ if and only if we can write
\begin{equation}\label{eq1.108}
<f,g>=\!\!\int_{\bbbr^n}F(x)\!\cdot\!g(x)dx\quad\text{for all $g\!\in\!\Sw_{00}(\bbbr^n)$}.
\end{equation}
We denote the collection of all the function representatives of $f\in\Sw_{00}'(\bbbr^n)$ in the sense of $\Sw_{00}'(\bbbr^n)$ is $\functrep_{00}(f)$. Then $F_1(x)\in\functrep_{00}(f)$ and $F_2(x)\in\functrep_{00}(f)$ imply the following equation (\ref{eq1.109}) is true for all $g\in\Sw_{00}(\bbbr^n)$,
\begin{equation}\label{eq1.109}
\int_{\bbbr^n}F_1(x)\!\cdot\!g(x)dx=\!\!\int_{\bbbr^n}F_2(x)\!\cdot\!g(x)dx.
\end{equation}
Every $f\in\Sw_{00}'(\bbbr^n)$ can be extended to a tempered distribution defined on all of $\Sw(\bbbr^n)$ by the Hahn-Banach theorem and denoted by the same symbol $f$, and we call this extended tempered distribution is the extended version of $f\in\Sw_{00}'(\bbbr^n)$. By the definition of dual spaces, we have $\Sw'(\bbbr^n)\subseteq\Sw_{00}'(\bbbr^n)$, hence $\Sw'(\bbbr^n)\subseteq\Sw_0'(\bbbr^n)\cap\Sw_{00}'(\bbbr^n)$. From the definitions of $\Sw_0(\bbbr^n)$ and $\Sw_{00}(\bbbr^n)$, we see that a Schwartz function $\phi$ is in $\Sw_0(\bbbr^n)$ if and only if $\iFT_n\phi$ is in $\Sw_{00}(\bbbr^n)$. We have the following propositions related to $\Sw(\bbbr^n)$, $\Sw_0(\bbbr^n)$, and $\Sw_{00}(\bbbr^n)$.
\begin{proposition}\label{proposition2}
If $F(x)\in\Lloc$ satisfies the condition that
\begin{equation}\label{eq1.101}
\int_{\bbbr^n}F(x)\!\cdot\!\phi(x)dx=0\quad\text{for all $\phi\in\Sw_0(\bbbr^n)$,}
\end{equation}
and if there exists $f\in\Sw'(\bbbr^n)$ so that $F(x)\in\functrep(f)$, then $F(x)$ equals a polynomial function for almost every $x\in\bbbr^n$.
\end{proposition}
\begin{proof}[Proof of Proposition \ref{proposition2}]
Let $f\in\Sw'(\bbbr^n)$ and $F(x)\in\functrep(f)$, then we have
\begin{equation}\label{eq1.102}
<f,g>=\int_{\bbbr^n}F(x)\cdot g(x)dx\quad\text{for all $g\in\Sw(\bbbr^n)$.}
\end{equation}
Combining conditions (\ref{eq1.101}) and (\ref{eq1.102}), we deduce that for all $\phi\!\in\!\Sw_0(\bbbr^n)$,
\begin{equation}\label{eq1.103}
<\FT_n f,\iFT_n\phi>=<f,\phi>=\int_{\bbbr^n}F(x)\!\cdot\!\phi(x)dx=0.
\end{equation}
Because $\phi\in\Sw_0(\bbbr^n)$ if and only if $\iFT_n\phi\in\Sw_{00}(\bbbr^n)$, we have obtained $<\FT_n f,g>=0$ for all $g\in\Sw_{00}(\bbbr^n)$, thus $\FT_n f$ is supported at the origin of $\bbbr^n$, hence by \cite[Corollary 2.4.2.]{14classical}, there exists a polynomial function $P(x)$ of $x\in\bbbr^n$ so that $P(x)\in\functrep(f)\cap\Lloc$. Since $F(x)\in\functrep(f)\cap\Lloc$, Lemma \ref{lemma15} implies the functions $F(x)$ and $P(x)$ are equal for almost every $x\in\bbbr^n$.
\end{proof}
We consider the integral (\ref{eq1.104}) for $g\in\Sw(\bbbr^n)$,
\begin{equation}\label{eq1.104}
\int_{\bbbr^n}F(x)\!\cdot\!g(x)dx.
\end{equation}
If the function $F(x)$ is in $L^{p_1}(\bbbr^n)+L^{p_2}(\bbbr^n)$ for $1\leq p_1,p_2\leq\infty$, then we can write $F(x)=F_1(x)+F_2(x)$ where $F_1(x)\in L^{p_1}(\bbbr^n)$ and $F_2(x)\in L^{p_2}(\bbbr^n)$, and hence
\begin{equation*}
(\ref{eq1.104})=\int_{\bbbr^n}F_1(x)\!\cdot\!g(x)dx+\int_{\bbbr^n}F_2(x)\!\cdot\!g(x)dx.
\end{equation*}
Therefore, H\"{o}lder's inequality implies the integral (\ref{eq1.104}) defines a tempered distribution denoted by the symbol $f$ and $F(x)\in\functrep(f)$. Similarly, if $F(x)$ is a function defined on $\bbbr^n$ that satisfies the condition
\begin{equation}\label{eq1.105}
|F(x)|\leq C_4\cdot(1+|x|)^{C_5}
\end{equation}
for some real numbers $C_4\in(0,\infty)$ and $C_5\in(-\infty,\infty)$, then the integral (\ref{eq1.104}) defines a tempered distribution and $F(x)$ is a function representative of this tempered distribution in the sense of $\Sw'(\bbbr^n)$. In particular, let $f\in\Sw_0'(\bbbr^n)$, $F_1(x)\in\functrep_0(f)\cap L^{p_1}(\bbbr^n)$, and $F_2(x)\in\functrep_0(f)\cap L^{p_2}(\bbbr^n)$ for some $1\leq p_1,p_2\leq\infty$, then the function $F_1(x)-F_2(x)$ is in $L^{p_1}(\bbbr^n)+L^{p_2}(\bbbr^n)\subseteq\Lloc$ and there exists $\tilde{f}\in\Sw'(\bbbr^n)$ so that $F_1(x)-F_2(x)\in\functrep(\tilde{f})$. According to Proposition \ref{proposition2}, the difference $F_1(x)-F_2(x)$ equals a polynomial for almost every $x\in\bbbr^n$. The proof of Proposition \ref{proposition1} below can be found in section \ref{proof.of.proposition1}.
\begin{proposition}\label{proposition1}
Assume that $\multi(\xi)$ is a smooth function on $\bbbr^n\setminus\{0\}$, and for every multi-index $\alpha=(\alpha_1,\cdots,\alpha_n)$, there exist nonnegative real numbers $L_1^{\alpha}$ and $L_2^{\alpha}$ such that
\begin{equation}\label{eq1.64}
|\partial^{\alpha}\multi(\xi)|\lesssim\max\{|\xi|^{-L_1^{\alpha}},|\xi|^{L_2^{\alpha}}\}\quad
\text{for $\xi\in\bbbr^n\setminus\{0\}$},
\end{equation}
where the values of $L_1^{\alpha}$ and $L_2^{\alpha}$ depend on $\alpha$ and are independent of $\xi$, and the implicit constant in (\ref{eq1.64}) depends on fixed parameters. Then for every function $g\in\Sw_{00}(\bbbr^n)$, the product $\multi(\xi)\cdot g(\xi)$ is a function in $\Sw_{00}(\bbbr^n)$. And we have the following conclusions.\\
(i) If $f\in\Sw_{00}'(\bbbr^n)$, then the product $\multi(\xi)\cdot f$ of the function $\multi(\xi)$ and the continuous linear functional $f\in\Sw_{00}'(\bbbr^n)$ is a well-defined continuous linear functional on the subspace $\Sw_{00}(\bbbr^n)$ in the topology of $\Sw(\bbbr^n)$, and its action on functions in $\Sw_{00}(\bbbr^n)$ is given by
\begin{equation}\label{eq1.65}
<\multi(\xi)\cdot f,g>=<f,\multi\cdot g>\quad\text{for $g\in\Sw_{00}(\bbbr^n)$}.
\end{equation}
Therefore we can apply Hahn-Banach theorem to obtain an extended tempered distribution defined on all of $\Sw(\bbbr^n)$ and written in the same symbol as $\multi(\xi)\cdot f$. The restriction of this extended tempered distribution on the subspace $\Sw_{00}(\bbbr^n)$ is given by equation (\ref{eq1.65}). Furthermore, the distributional inverse Fourier transform of the extended tempered distribution $\multi(\xi)\cdot f$ is well-defined and its action on a general Schwartz function $\varphi\in\Sw(\bbbr^n)$ is given by
\begin{equation}\label{eq1.66}
<\iFT_n[\multi(\xi)\cdot f],\varphi>=<\multi(\xi)\cdot f,\iFT_n\varphi>.
\end{equation}
Moreover, if $\varphi$ belongs to the subspace $\Sw_0(\bbbr^n)$, then $\iFT_n\varphi$ belongs to the subspace $\Sw_{00}(\bbbr^n)$ and we can continue from (\ref{eq1.66}) as follows,
\begin{equation}\label{eq1.67}
<\iFT_n[\multi(\xi)\cdot f],\varphi>=<\multi(\xi)\cdot f,\iFT_n\varphi>
=<f,\multi\cdot\iFT_n\varphi>.
\end{equation}
(ii) If $f\in\Sw_0'(\bbbr^n)$ and $\multi(\xi)$ is also a function in $L^1(\bbbr^n)$, then formula (\ref{eq1.110}) is true for all $g\in L^1(\bbbr^n)$ and $x\in\bbbr^n$,
\begin{equation}\label{eq1.110}
\iFT_n[\multi(\xi)\FT_n g(\xi)](x)=(\iFT_n\multi)*g(x).
\end{equation}
When $\varphi\in\Sw_0(\bbbr^n)$, we have the function $\multi(\xi)\cdot\FT_n\varphi(\xi)$ is in the subspace $\Sw_{00}(\bbbr^n)$, thus for every $x\in\bbbr^n$,
\begin{equation}\label{eq1.111}
\iFT_n[\multi(\xi)\FT_n\varphi(\xi)](x)=(\iFT_n\multi)*\varphi(x)\in\Sw_0(\bbbr^n).
\end{equation}
The convolution $f*\FT_n\multi$ of the function $\FT_n\multi\in L^{\infty}(\bbbr^n)$ and the continuous linear functional $f\in\Sw_0'(\bbbr^n)$ is a well-defined continuous linear functional in $\Sw_0'(\bbbr^n)$, and its action on functions $\varphi\in\Sw_0(\bbbr^n)$ is given by
\begin{equation}\label{eq1.112}
<f*\FT_n\multi,\varphi>=<f,(\iFT_n\multi)*\varphi>.
\end{equation}
Therefore we can apply Hahn-Banach theorem to obtain an extended tempered distribution defined on all of $\Sw(\bbbr^n)$ and written in the same symbol as $f*\FT_n\multi$. The restriction of this extended tempered distribution on the subspace $\Sw_{0}(\bbbr^n)$ is given by equation (\ref{eq1.112}). Furthermore, the distributional inverse Fourier transform of the extended tempered distribution $f*\FT_n\multi$ is well-defined and its action on a general Schwartz function $g\in\Sw(\bbbr^n)$ is given by
\begin{equation}\label{eq1.113}
<\iFT_n[f*\FT_n\multi],g>=<f*\FT_n\multi,\iFT_n g>.
\end{equation}
Moreover, if $g$ is a function in the subspace $\Sw_{00}(\bbbr^n)$, then $\iFT_n g$ is a function in the subspace $\Sw_{0}(\bbbr^n)$ and we can combine (\ref{eq1.111}), (\ref{eq1.112}), and (\ref{eq1.113}) to obtain the following equation,
\begin{align}
&<\iFT_n[f*\FT_n\multi],g>=<f,\iFT_n\multi*\iFT_n g>\nonumber\\
&=<f,\iFT_n[\multi(\xi)\cdot g(\xi)]>.\label{eq1.114}
\end{align}
In particular, the convolution $f*g$ of $f\in\Sw_{0}'(\bbbr^n)$ and a function $g\in\Sw(\bbbr^n)$ is a continuous linear functional in $\Sw_0'(\bbbr^n)$ defined by the equation
\begin{equation}\label{eq1.124}
<f*g,\varphi>=<f,\tilde{g}*\varphi>\text{ for $\varphi\!\in\!\Sw_0(\bbbr^n)$},
\end{equation}
where $\tilde{g}(x)=g(-x)$. And $f*g\in\Sw_{0}'(\bbbr^n)$ has the following smooth function representative (\ref{eq1.123}) in the sense of $\Sw_0'(\bbbr^n)$,
\begin{equation}\label{eq1.123}
x\in\bbbr^n\longmapsto<f,g(x-\cdot)>,
\end{equation}
where $f\!\in\!\Sw'(\bbbr^n)$ appearing in (\ref{eq1.123}) is the extended version of $f\!\in\!\Sw_{0}'(\bbbr^n)$. In addition, for every multi-index $\alpha$, there exist positive finite constants $C_{\alpha}$ and $K_{\alpha}$ such that
\begin{equation}\label{eq1.125}
|\partial^{\alpha}(f*g)(x)|\leq C_{\alpha}\cdot(1+|x|)^{K_{\alpha}}.
\end{equation}
Therefore, for every $h\in\Sw(\bbbr^n)$, the following equation (\ref{eq1.115}) holds true in the sense of $\Sw_0'(\bbbr^n)$,
\begin{equation}\label{eq1.115}
(f*\FT_n\multi)*h=(f*h)*\FT_n\multi.
\end{equation}
(iii) If $f\in\Sw_{00}'(\bbbr^n)$ and $\multi(\xi)$ is also a function in $L^1(\bbbr^n)$, then formula (\ref{eq1.116}) is true in the sense of $\Sw_{0}'(\bbbr^n)$,
\begin{equation}\label{eq1.116}
\FT_n[\multi(\xi)\cdot f]=\FT_n f*\FT_n\multi,
\end{equation}
where $f$ appearing on the left side of (\ref{eq1.116}) is the given continuous linear functional in $\Sw_{00}'(\bbbr^n)$, and $\FT_n f$ appearing on the right side of (\ref{eq1.116}) is the distributional Fourier transform of the extended version of $f\in\Sw_{00}'(\bbbr^n)$.\\ 
(iv) If $f\in\Sw_{0}'(\bbbr^n)$ and $\varphi\in\Sw_0(\bbbr^n)$, then the convolution $f*\varphi$ is a well-defined tempered distribution in $\Sw'(\bbbr^n)$ and has the following smooth function representative (\ref{eq1.119}) in the sense of $\Sw'(\bbbr^n)$,
\begin{equation}\label{eq1.119}
x\in\bbbr^n\longmapsto<f,\varphi(x-\cdot)>.
\end{equation}
The action of $f*\varphi\in\Sw'(\bbbr^n)$ on a function $g\in\Sw(\bbbr^n)$ is given by
\begin{equation}\label{eq1.122}
<f*\varphi,g>=<f,\tilde{\varphi}*g>,
\end{equation}
where $\tilde{\varphi}(x)=\varphi(-x)$. Furthermore, for every multi-index $\alpha$, there exist positive finite constants $C_{\alpha}$ and $K_{\alpha}$ such that
\begin{equation}\label{eq1.120}
|\partial^{\alpha}(f*\varphi)(x)|\leq C_{\alpha}\cdot(1+|x|)^{K_{\alpha}}.
\end{equation}
Moreover, the following formula (\ref{eq1.126}) is true in the sense of $\Sw'(\bbbr^n)$,
\begin{equation}\label{eq1.126}
\FT_n[f*\varphi]=\FT_n f\cdot\FT_n\varphi,
\end{equation}
where $\FT_n[f*\varphi]$ on the left side of (\ref{eq1.126}) is the distributional Fourier transform of $f*\varphi\in\Sw'(\bbbr^n)$, and $\FT_n f$ on the right side of (\ref{eq1.126}) is the distributional Fourier transform of the extended version of $f\in\Sw_{0}'(\bbbr^n)$. The statement of Proposition \ref{proposition1} ends here.
\end{proposition}
We consider Proposition \ref{proposition1} (ii) and (iv) are partial generalizations of \cite[Theorem 2.3.20.]{14classical}. From \cite[Theorem 2.3.20.]{14classical} and Proposition \ref{proposition1} (ii), we see that given $f\in\Sw_{0}'(\bbbr^n)$ and $g\in\Sw(\bbbr^n)$, the convolution $f*g$ is a well-defined continuous linear functional in $\Sw_{0}'(\bbbr^n)$. And we can also obtain the tempered distribution $\overline{f}*g\in\Sw'(\bbbr^n)\subseteq\Sw_{0}'(\bbbr^n)$ by convoluting the extended version $\overline{f}\in\Sw'(\bbbr^n)$ of $f\in\Sw_{0}'(\bbbr^n)$ with $g\in\Sw(\bbbr^n)$. Furthermore, $f*g=\overline{f}*g$ in the sense of $\Sw_{0}'(\bbbr^n)$.\\

We review the definition of homogeneous Triebel-Lizorkin spaces. Through out the whole paper, we fix the Schwartz function $\psi\in\Sw_0(\bbbr^n)$ and the positive finite constant $C_{\psi}$ whose value is solely determined by $\psi$, that satisfy the following conditions,
	\begin{align}
		&spt.\FT_n\psi\subseteq\{\xi\in\bbbr^n:\frac{1}{2}\leq|\xi|<2\},\label{eq1-7}\\
		&\sum_{j\in\bbbz}\FT_n\psi(2^{-j}\xi)=1\quad\text{if}\quad\xi\neq0,\label{eq1-8}\\
		&|\FT_n\psi(\xi)|\geq C_{\psi}>0\quad\text{if}\quad\frac{3}{5}\leq|\xi|<\frac{5}{3}.\label{eq1.82}
	\end{align}
For every $j\in\bbbz$ and $x\in\bbbr^n$, we denote $\psi_{2^{-j}}(x)=2^{jn}\psi(2^j x)$, then $\psi_{2^{-j}}\in\Sw_{0}(\bbbr^n)$. When $f\in\Sw_{0}'(\bbbr^n)$, Proposition \ref{proposition1} (iv) implies that $f*\psi_{2^{-j}}\in\Sw'(\bbbr^n)$ has the smooth function representative (\ref{eq1.117}) in the sense of $\Sw'(\bbbr^n)$, 
\begin{equation}\label{eq1.117}
F_j:x\in\bbbr^n\longmapsto<f,\psi_{2^{-j}}(x-\cdot)>.
\end{equation}
And we have the following decomposition
\begin{equation}\label{eq1-9}
f=\sum_{j\in\bbbz}f*\psi_{2^{-j}},
\end{equation}
where the series in (\ref{eq1-9}) converges to $f\in\Sw_0'(\bbbr^n)$ in the sense of $\Sw_0'(\bbbr^n)$ (cf. \cite[section 1.1.1]{14modern}). 
\begin{definition}\label{definition1}
For $0<p<\infty$, $0<q<\infty$ and $s\in\bbbr$, the homogeneous Triebel-Lizorkin space $\Fspq$ is a subspace of $\Sw_0'(\bbbr^n)$, and an element $f\in\Sw_0'(\bbbr^n)$ is in $\Fspq$ if and only if the quasinorm defined below is finite,
\begin{align}
&\|f\|_{\Fspq}:=\big(\int_{\bbbr^n}\big(\sum_{k\in\bbbz}2^{ksq}|f*\psi_{2^{-k}}(x)|^q\big)^{\frac{p}{q}}dx\big)^{\frac{1}{p}}\nonumber\\
&=\big(\int_{\bbbr^n}\big(\sum_{k\in\bbbz}2^{ksq}|F_k(x)|^q\big)^{\frac{p}{q}}dx\big)^{\frac{1}{p}},
\label{eq1.127}
\end{align}
where for each $k\in\bbbz$, $F_k(x)$ is the function given in (\ref{eq1.117}).
\end{definition}
For an appropriate function $f$ so that the expressions in (\ref{eq1-10}) and (\ref{eq1-11}) make sense, $s\in\bbbr$, and $0<q<\infty$, we use the following notations
	\begin{equation}\label{eq1-10}
		\Diff_{s,q}f(x):=\big(\int_{\bbbr^n}\frac{|f(x+y)-f(x)|^q}{|y|^{n+sq}}dy\big)^{\frac{1}{q}},
	\end{equation}
	and
	\begin{equation}\label{eq1-11}
		\Lift_s f(x):=\iFT_n(|\xi|^s\FT_n f(\xi))(x).
	\end{equation}
	In \cite{Wang2023}, the following known inequality is proven.
	\begin{theorem}[cf. Corollary 1.7 (i) of \cite{Wang2023}]\label{theorem1}
		If $0<p,q<\infty$, $\max\{0,n(\frac{1}{p}-\frac{1}{q})\}<s<1$, and $f\in\Fspq$ has a function representative, then we have
		\begin{equation}\label{eq1-12}
			\|\Diff_{s,q}f\|_{L^p(\bbbr^n)}\lesssim\|f\|_{\Fspq}.
		\end{equation}
	\end{theorem}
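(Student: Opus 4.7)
My plan is to obtain a pointwise estimate of $\Diff_{s,q}f(x)$ by an $l^q$-combination of Peetre-Fefferman-Stein maximal functions applied to the Littlewood-Paley pieces $\{f_m\}_{m\in\bbbz}$, and then invoke the vector-valued Peetre-Fefferman-Stein inequality to pass to the $L^p$ bound. The first move is to partition the defining integral of $\Diff_{s,q}f(x)^q$ over the dyadic annuli $A_j=\{y:2^{-j-1}<|y|\leq 2^{-j}\}$, which yields
\[
\Diff_{s,q}f(x)^q\sim\sum_{j\in\bbbz}2^{jsq}\,\mvint_{A_j}|f(x+y)-f(x)|^q\,dy.
\]
Using the Littlewood-Paley decomposition $f=\sum_{m\in\bbbz}f_m$ from \eqref{eq1-9}, for each scale $2^{-j}$ I would split the difference $f(x+y)-f(x)$ into a low-frequency part $\sum_{m\leq j}[f_m(x+y)-f_m(x)]$ and a high-frequency part $\sum_{m>j}[f_m(x+y)-f_m(x)]$.

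For the low-frequency part, the Bernstein-type bound $|\nabla f_m(z)|\lesssim 2^m\PFSmax_m f(z)$ (meaningful because $\FT_n f_m$ is supported in an annulus of radius $2^m$), together with the shift property of the Peetre maximal function and the mean value theorem, yields for $|y|\sim 2^{-j}$ and $m\leq j$ the estimate $|f_m(x+y)-f_m(x)|\lesssim 2^{m-j}\PFSmax_m f(x)$. For the high-frequency part, I would write $|f_m(x+y)-f_m(x)|\leq|f_m(x+y)|+|f_m(x)|$ and bound the $L^q$-average over $A_j$ of each term using a suitable Peetre-Fefferman-Stein or local Hardy-Littlewood maximal function. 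After passing the sums inside the $q$-th powers via H\"older's inequality with geometric-series weights $2^{(m-j)\delta q}$ for small $\delta>0$, and exchanging the order of summation between $j$ and $m$, I expect a pointwise bound of the form
\[
\Diff_{s,q}f(x)^q\lesssim\sum_{m\in\bbbz}2^{msq}\bigl[\PFSmax_{m,r}f(x)\bigr]^q
\]
for a suitable Peetre index $r\in(0,\min(p,q))$. The convergence of the geometric series at the upper end of $j$ in the low-frequency sum is governed by $s<1$, while the convergence at the lower end in the high-frequency sum requires $s$ to exceed a quantity depending on $r$ and $n$.

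Finally, the vector-valued Peetre-Fefferman-Stein inequality $\|\{\PFSmax_{m,r}f\}_m\|_{L^p(l^q)}\lesssim\|\{f_m\}_m\|_{L^p(l^q)}$ (valid for $r<\min(p,q)$) gives
\[
\|\Diff_{s,q}f\|_{L^p(\bbbr^n)}\lesssim\|\{2^{ms}\PFSmax_{m,r}f\}_m\|_{L^p(l^q)}\lesssim\|\{2^{ms}f_m\}_m\|_{L^p(l^q)}=\|f\|_{\Fspq}.
\]
The main technical obstacle is the high-frequency estimate: the Peetre index $r$ must be chosen simultaneously small enough that the vector-valued Peetre-Fefferman-Stein inequality applies (forcing $r<\min(p,q)$), and large enough (so that $n/r$ is small) that the geometric series arising from the growth factor $2^{(m-j)n/r}$ paired against the weight $2^{jsq}$ remains summable. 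The hypothesis $\max(0,n(1/p-1/q))<s<1$ is the precise compatibility condition that permits such a choice of $r$; implementing this rigorously may require using an $L^q$-averaged variant of the Peetre maximal function, or a two-step bound via $\HLmax(|f_m|^q)$ followed by a Fefferman-Stein-style estimate, rather than the naive pointwise Peetre maximal function.
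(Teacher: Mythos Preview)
The paper does not supply its own proof of this statement: Theorem~\ref{theorem1} is quoted verbatim as a known result from \cite{Wang2023} (``In \cite{Wang2023}, the following known inequality is proven'') and is used only as background to motivate the critical-index weak-type inequality of Theorem~\ref{theorem2}. There is therefore nothing in the present paper to compare your argument against.

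That said, your outline is the standard route to this inequality and is essentially the argument one finds in the cited source and in the earlier literature (Seeger, Triebel, Prats). The dyadic partition of the $y$-integral, the low/high frequency split relative to the scale $2^{-j}$, the mean-value/Bernstein estimate $|f_m(x+y)-f_m(x)|\lesssim 2^{m-j}\PFSmax_m f(x)$ for $m\le j$, and the final appeal to the vector-valued Peetre--Fefferman--Stein inequality are all correct and standard. You have also correctly located the only genuine difficulty: the high-frequency piece. The naive pointwise Peetre bound $|f_m(x+y)|\le (1+2^m|y|)^{n/r}\PFSmax_m f(x)$ introduces a growth factor $2^{(m-j)n/r}$ that is too large to be summable against $2^{jsq}$ unless $s>n/r$, which is incompatible with $r<\min(p,q)$ in the full stated range. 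The remedy you gesture at---replacing the pointwise bound by the $L^q$-average $\bigl(\mvint_{A_j}|f_m(x+y)|^q\,dy\bigr)^{1/q}$ and controlling that by a Hardy--Littlewood maximal function of $|f_m|^r$ with $r<q$---is exactly what is done in \cite{Wang2023}; carrying it out produces a factor $2^{(m-j)n(1/r-1/q)}$ in place of $2^{(m-j)n/r}$, and the requirement that this be summable with $r$ slightly below $\min(p,q)$ is precisely the hypothesis $s>n(1/p-1/q)_+$. So your diagnosis of where the parameter restriction enters is right, and once the $L^q$-averaged maximal bound is written down explicitly the proof goes through as you describe.
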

Many other authors also studied the counterpart of this inequality for the inhomogeneous Triebel-Lizorkin space $F^s_{p,q}(\bbbr^n)$. In \cite[section 2.5.10]{1983functionspaces}, H. Triebel gave an equivalence characterization theorem of the inhomogeneous Triebel-Lizorkin space $F^s_{p,q}(\bbbr^n)$ in terms of iterated difference, where the conditions for the parameters are $0<p<\infty$, $0<q\leq\infty$, $\frac{n}{\min\{p,q\}}<s<M$ and $M$ is the iteration number. In \cite[Theorem 1 page 102]{Stein1961}, E. M. Stein gave the equivalence characterization
$$\|(\!\int_{\bbbr^n}\!\!\!\!|h|^{-2\alpha}|f(\cdot\!+\!h)\!-\!f(\cdot)|^2\frac{dh}{|h|^n})^{\frac{1}{2}}\|_{L^p(\bbbr^n)}\!+\!\|f\|_{L^p(\bbbr^n)}\!\sim\!\|f\|_{L^p_{\alpha}(\bbbr^n)}$$ where the restrictions $0<\alpha<1$, $1<p<\infty$ and $\frac{2n}{n+2\alpha}<p<\infty$ were considered essentially sharp. And for $1<p<\infty$, $\alpha\in\bbbr$ and $f\in\Sw'(\bbbr^n)$, the inhomogeneous Sobolev norm of $f$ (cf. \cite[section 1.3.1]{14modern}) is defined to be
	\begin{equation}\label{Wang2023-eq45}
		\|f\|_{L_{\alpha}^p(\bbbr^n)}:=\|\iFT_n((1+|\xi|^2)^{\alpha/2}\FT_n f)\|_{L^p(\bbbr^n)}.
	\end{equation}
	Since the inhomogeneous Triebel-Lizorkin space satisfies $L^p_{\alpha}(\bbbr^n)\sim F^{\alpha}_{p,2}(\bbbr^n)$ if $1<p<\infty$, we consider E. M. Stein's result is an improvement of H. Triebel's. Furthermore, in \cite[Theorem 1 page 393]{seeger1989note}, A. Seeger provided an equivalence characterization for the homogeneous anisotropic space
	$$\|f\|_{\dot{F}^{\alpha}_{p,q}(\bbbr^n)}\sim\|S^{\alpha}_{q,r,m}f\|_{L^p(\bbbr^n)}$$
	where $0<p<\infty$, $0<q\leq\infty$, $m>\alpha/a_0$, $r\geq 1$ with
	$$\alpha>\max\{0,\nu(\frac{1}{p}-\frac{1}{r}),\nu(\frac{1}{q}-\frac{1}{r})\},$$
	and
	$$S^{\alpha}_{q,r,m}f(x)=(\int_0^{\infty}[\mvint_{\varrho(h)\leq t}
	|(\varDelta^m_{h}f)(x)|^r dh]^{q/r}\frac{dt}{t^{1+\alpha q}})^{1/q}.$$
	If we consider the isotropic case of A. Seeger's result, in which $\varrho(h)$ above can be deemed as $|h|$ and $a_0$ can be deemed as $1$, then by letting $r=q$ and changing the order of integration we can obtain
	\begin{align*}
		&\|f\|_{\dot{F}^{\alpha}_{p,q}(\bbbr^n)}
		\sim\|(\int_0^{\infty}\int_{|h|\leq t}t^{-1-n-q\alpha}\cdot|(\varDelta^m_h f)(\cdot)|^q dhdt)^{\frac{1}{q}}\|_{L^p(\bbbr^n)}\\
		&\sim\|(\int_{\bbbr^n}|h|^{-q\alpha}\cdot|(\varDelta^m_h f)(\cdot)|^q\frac{dh}{|h|^n})^{\frac{1}{q}}\|_{L^p(\bbbr^n)},
	\end{align*}
	for $0<p<\infty$, $1\leq q\leq\infty$ and $\max\{0,\nu(\frac{1}{p}-\frac{1}{q})\}<\alpha<m$. Recently in \cite[Theorem 1.2 page 693]{Prats19} M. Prats proves an equivalence characterization theorem of the inhomogeneous norm $\|f\|_{F^s_{p,q}(\Omega)}$ in terms of the sum of $\|f\|_{W^{k,p}(\Omega)}$ and
	\begin{equation}\label{Wang2023-eq1}
		\sum_{|\alpha|=k}\big(\int_{\Omega}\big(\int_{\Omega}
		\frac{|D^{\alpha}f(x)-D^{\alpha}f(y)|^q}{|x-y|^{\sigma q+d}}
		dy\big)^{\frac{p}{q}}dx\big)^{\frac{1}{p}}
	\end{equation}
	when parameters satisfy $1\leq p<\infty,1\leq q\leq\infty,s=k+\sigma$, $\max\{0,d(\frac{1}{p}-\frac{1}{q})\}<\sigma<1$ and $\Omega$ is a uniform domain in $\bbbr^d$. Furthermore, M. Prats also shows under the same conditions on parameters, the equivalence relation stands if (\ref{Wang2023-eq1}) is replaced by
	\begin{equation}\label{Wang2023-eq2}
		\sum_{|\alpha|=k}\big(\int_{\Omega}\big(\int_{\mathbf{Sh}(x)}
		\frac{|D^{\alpha}f(x)-D^{\alpha}f(y)|^q}{|x-y|^{\sigma q+d}}
		dy\big)^{\frac{p}{q}}dx\big)^{\frac{1}{p}}
	\end{equation}
	where $\mathbf{Sh}(x):=\{y\in\Omega:|y-x|\leq c_{\Omega}\delta(x)\}$ is the Carleson box centered at $x$, $\delta(x)=\dist(x,\partial\Omega)$ and $c_{\Omega}>1$ is a constant. Moreover, when $1\leq q\leq p<\infty$, the set $\mathbf{Sh}(x)$ in (\ref{Wang2023-eq2}) can be improved and replaced by the Whitney ball $B(x,\rho\delta(x))$ for $0<\rho<1$. Now to relate our main result with the known inequality (\ref{eq1-12}), we review the definition of homogeneous Sobolev spaces below, written in the language of the theory of functional and function representatives.
\begin{definition}\label{definition2}
Let $s\in\bbbr$ and $1<p<\infty$. The homogeneous Sobolev space $\Lps$ is a subspace of $\Sw_0'(\bbbr^n)$, and an element $f\in\Sw_0'(\bbbr^n)$ is in $\Lps$ if and only if the tempered distribution $\Lift_s f=\iFT_n(|\xi|^s\FT_n f)$ has a function representative in the sense of $\Sw_0'(\bbbr^n)$ and this function representative is in $L^p(\bbbr^n)$. And the homogeneous Sobolev norm of $f$ is given by
\begin{equation}\label{eq1-13}
\|f\|_{\Lps}:=\big(\int_{\bbbr^n}|\Lift_s f|^p dx\big)^{\frac{1}{p}}.
\end{equation}
\end{definition}
In the theory of functional and function representatives, the integral of a continuous linear functional $f\in\Sw_0'(\bbbr^n)$ is justified when $f$ has a function representative in the sense of $\Sw_0'(\bbbr^n)$. And when integrating a continuous linear functional $f\in\Sw_0'(\bbbr^n)$, we integrate its appropriately specified function representative in the sense of $\Sw_0'(\bbbr^n)$. As illustrated by the argument given at the end of the proof of Lemma \ref{lemma11}, integrals of distinct function representatives in the same sense may be different. As demonstrated by the proof of Lemma \ref{lemma13}, when it is difficult to find a function representative in the sense of $\Sw'(\bbbr^n)$, the specification of a function representative in the sense of $\Sw_0'(\bbbr^n)$ is necessary. Moreover, the same principle also applies to the supremum and derivative of a continuous linear functional $f\in\Sw_0'(\bbbr^n)$. Integrals, supremums, and derivatives included in this paper are all considered this way. For example, when fixing a multi-index $\gamma=(\gamma_1,\gamma_2,\cdots,\gamma_n)$, we can deduce from the definition of Schwartz functions in $\Sw(\bbbr^n)$ that we can find a positive finite constant $C_6$ and a number $N\in\bbbn_0$ so that the following estimate (\ref{eq1.98}) is true for all $g\in\Sw(\bbbr^n)$,
\begin{equation}\label{eq1.98}
\bigg|\int_{\bbbr^n}x^{\gamma}\cdot g(x)dx\bigg|\leq C_6\cdot\rho_N(g),
\end{equation}
where the values of $C_6$ and $N$ are determined by $\gamma$ and $n$, and $\rho_N(g)$ is defined in (\ref{eq1.89}). Thus the integral in (\ref{eq1.98}) defines a tempered distribution denoted by the symbol $x^{\gamma}$, and the polynomial function $x^{\gamma}$ is in $\functrep(x^{\gamma})\subseteq\functrep_0(x^{\gamma})$. Furthermore, equations (\ref{Sw0.condition.1}) and (\ref{eq1.94}) indicate that both of the following equations are true,
\begin{equation}\label{eq1.83}
<x^{\gamma},\phi>=\int_{\bbbr^n}x^{\gamma}\phi(x)dx=0=\int_{\bbbr^n}0\cdot\phi(x)dx
\end{equation}
for all $\phi\in\Sw_0(\bbbr^n)$, and
\begin{equation}\label{eq1.84}
<x^{\gamma},g>=\int_{\bbbr^n}x^{\gamma}\cdot g(x)dx
\end{equation}
for all $g\in\Sw(\bbbr^n)$. Equation (\ref{eq1.83}) tells that when considering $x^{\gamma}$ as an element of $\Sw_0'(\bbbr^n)$, the function representative of the symbol $x^{\gamma}$ in the sense of $\Sw_0'(\bbbr^n)$ can be specified to be the function
\begin{equation*}
y\in\bbbr^n\longmapsto x^{\gamma}(y)=0,
\end{equation*}
thus $\partial_1 x^{\gamma}=0$. Equation (\ref{eq1.84}) tells that when considering $x^{\gamma}$ as an element of $\Sw'(\bbbr^n)$, the function representative of the symbol $x^{\gamma}$ in the sense of $\Sw'(\bbbr^n)$ can be specified to be the function
\begin{equation*}
y\in\bbbr^n\longmapsto x^{\gamma}(y)=y^{\gamma},
\end{equation*}
thus $\partial_1 x^{\gamma}=\gamma_1\cdot y_1^{\gamma_1-1}y_2^{\gamma_2}\cdots y_n^{\gamma_n}$. For every multi-index $\beta$, since $\phi\in\Sw_{0}(\bbbr^n)$ implies $\partial^{\beta}\phi\in\Sw_{0}(\bbbr^n)$, we can define the derivative of a continuous linear functional $f\in\Sw_{0}'(\bbbr^n)$ by the following equation
\begin{equation}\label{eq1.128}
<\partial^{\beta}f,\phi>=(-1)^{|\beta|}<f,\partial^{\beta}\phi>.
\end{equation}
And (\ref{eq1.93}) indicates $\partial^{\beta}f\in\Sw_{0}'(\bbbr^n)$. Under the stronger condition that $f\in\Sw_{0}'(\bbbr^n)$ has a sufficiently smooth function representative $F(x)\in\functrep_{0}(f)$ and the function $F(x)$ satisfies the condition that for every multi-index $\alpha$, there exist positive finite constants $C_{\alpha}$ and $K_{\alpha}$ so that
\begin{equation}\label{eq1.129}
|\partial^{\alpha}F(x)|\leq C_{\alpha}\cdot(1+|x|)^{K_{\alpha}},
\end{equation}
then we can use (\ref{eq1.128}) and integration by parts to deduce the following equation for all $\phi\in\Sw_{0}(\bbbr^n)$,
\begin{equation}\label{eq1.130}
<\!\partial^{\beta}f,\phi\!>=\!(-1)^{|\beta|}\!\!\!\int_{\bbbr^n}\!\!\!\!\!F(x)\!\cdot\!
\partial^{\beta}\phi(x)dx\!=\!\!\!\int_{\bbbr^n}\!\!\!\!\!\partial^{\beta}F(x)\!\cdot\!\phi(x)dx,
\end{equation}
thus we obtain the function $\partial^{\beta}F(x)\in\functrep_{0}(\partial^{\beta}f)$. In a similar way, we can define the derivative of a continuous linear functional in $\Sw_{00}'(\bbbr^n)$. For every multi-index $\beta$, since $g\in\Sw_{00}(\bbbr^n)$ implies $\partial^{\beta}g\in\Sw_{00}(\bbbr^n)$, we define the derivative of a continuous linear functional $f\in\Sw_{00}'(\bbbr^n)$ by the following equation
\begin{equation}\label{eq1.131}
<\partial^{\beta}f,g>=(-1)^{|\beta|}<f,\partial^{\beta}g>.
\end{equation}
And (\ref{eq1.107}) indicates $\partial^{\beta}f\in\Sw_{00}'(\bbbr^n)$. Under the stronger condition that $f\in\Sw_{00}'(\bbbr^n)$ has a sufficiently smooth function representative $F(x)\in\functrep_{00}(f)$ and the function $F(x)$ satisfies condition (\ref{eq1.129}), then we deduce from (\ref{eq1.131}) and integration by parts that the following equation is true for all $g\in\Sw_{00}(\bbbr^n)$,
\begin{equation}\label{eq1.132}
<\!\partial^{\beta}f,g\!>=\!(-1)^{|\beta|}\!\!\!\int_{\bbbr^n}\!\!\!\!\!F(x)\!\cdot\!
\partial^{\beta}g(x)dx\!=\!\!\!\int_{\bbbr^n}\!\!\!\!\!\partial^{\beta}F(x)\!\cdot\!g(x)dx,
\end{equation}
thus we have the function $\partial^{\beta}F(x)\in\functrep_{00}(\partial^{\beta}f)$. Therefore condition (\ref{eq1.129}) implies the derivative of a function representative is also a function representative of the derivative of a functional. According to \cite[Definition 2.3.6]{14classical}, this conclusion holds true when $F(x)\in\functrep(f)$ for some $f\in\Sw'(\bbbr^n)$ is sufficiently smooth and satisfies condition (\ref{eq1.129}).
\begin{remark}\label{remark3}
We remark that if $f\in\Sw_0'(\bbbr^n)$, then by the Hahn-Banach theorem, i.e. \cite[3.3 Theorem]{Rudin.Funct.Anal}, we have that $f$ extends to a continuous linear functional on all of $\Sw(\bbbr^n)$ and we denote this extended tempered distribution by the same symbol $f$. Thus the distributional Fourier transform $\FT_n f\in\Sw'(\bbbr^n)\subseteq\Sw_{00}'(\bbbr^n)$ is well-defined for $f\in\Sw_0'(\bbbr^n)$. Because the function $\multi(\xi)=|\xi|^s$ for $s\in\bbbr$ satisfies condition (\ref{eq1.64}), the theory in Proposition \ref{proposition1} (i) tells us that the continuous linear functional $|\xi|^s\cdot\FT_n f$, initially defined on the subspace $\Sw_{00}(\bbbr^n)$ by the following equation
\begin{equation}\label{eq1-58}
<|\xi|^s\cdot\FT_n f,g>=<\FT_n f,|\xi|^s\cdot g>
\end{equation}
for $g\in\Sw_{00}(\bbbr^n)$, can extend to a tempered distribution defined on all of $\Sw(\bbbr^n)$ by the Hahn-Banach theorem, i.e. \cite[3.3 Theorem]{Rudin.Funct.Anal}. And hence $\Lift_s f\!=\!\iFT_n(|\xi|^s\!\cdot\!\FT_n f)\!\in\!\Sw'(\bbbr^n)$ is the distributional inverse Fourier transform of the extended tempered distribution $|\xi|^s\cdot\FT_n f$. Since $\phi\in\Sw_0(\bbbr^n)$ whenever $\iFT_n\phi\in\Sw_{00}(\bbbr^n)$, the action of $\Lift_s f$ on a Schwartz function $\phi\in\Sw_0(\bbbr^n)$ is given by
\begin{align}
&<\Lift_s f,\phi>=<\iFT_n(|\xi|^s\cdot\FT_n f),\phi>=<|\xi|^s\cdot\FT_n f,\iFT_n\phi>\nonumber\\
&=<\FT_n f,|\xi|^s\cdot\iFT_n\phi>=<f,\FT_n[|\xi|^s\cdot\iFT_n\phi(\xi)]>,\label{eq1-59}
\end{align}
where in the last equation of (\ref{eq1-59}), we use \cite[Definition 2.3.7.]{14classical} for the distributional Fourier transform of the extended tempered distribution $f$. Since $\FT_n[|\xi|^s\cdot\iFT_n\phi(\xi)](x)\in\Sw_0(\bbbr^n)$ by Proposition \ref{proposition1}, the action of the extended tempered distribution $f$ on $\FT_n[|\xi|^s\cdot\iFT_n\phi(\xi)]$ coincides with the action of the original continuous linear functional $f\in\Sw_0'(\bbbr^n)$ on the function $\FT_n[|\xi|^s\cdot\iFT_n\phi(\xi)]$.
\end{remark}
We can use Definition \ref{definition1} and the formula
	\begin{equation}\label{eq1-14}
		(\sum_{k\in\bbbz}a_k)^{\theta}\leq\sum_{k\in\bbbz}a_k^{\theta}	
	\end{equation}
for a sequence $\{a_k\}_{k\in\bbbz}$ of nonnegative real numbers and $0\leq\theta\leq 1$ to deduce that $\|f\|_{\Fspq}\leq\|f\|_{\dot{F}^s_{p,2}(\bbbr^n)}$ for any $s\in\bbbr$ when $2\leq q<\infty$. And we can combine this observation with \cite[Theorem 1 (i) in section 5.2.3]{1983functionspaces}, Lemma \ref{lemma12} (i), and the condition $f\in\Lps$ to obtain
\begin{equation}\label{eq1-15}
\|f\|_{\dot{F}^s_{p,2}(\bbbr^n)}\sim\|\Lift_s f\|_{\dot{F}^0_{p,2}(\bbbr^n)}\lesssim
\|\Lift_s  f\|_{L^p(\bbbr^n)}=\|f\|_{\Lps}
\end{equation}
for $s\in\bbbr$ and $1<p<\infty$. Furthermore, we deduce from Theorem \ref{theorem1} the following inequality
	\begin{equation}\label{eq1-16}
		\|\Diff_{s,q}f\|_{L^p(\bbbr^n)}\lesssim\|f\|_{\Lps},
	\end{equation}
where $1<p<q$, $2\leq q<\infty$ and $n(\frac{1}{p}-\frac{1}{q})<s<1$ and $f\in\Lps$ has a function representative. In \cite[Theorem 1']{fefferman1970}, C. Fefferman stated that the following inequality
	\begin{equation}\label{eq1-17}
		\|\Diff_{s,2}f\|_{L^{p,\infty}(\bbbr^n)}\lesssim\|f\|_{\Lps}
	\end{equation}
is true when $1<p<2$ and $0<s=n(\frac{1}{p}-\frac{1}{2})<1$, and $\|\cdot\|_{L^{p,\infty}(\bbbr^n)}$ denotes the weak $L^p$ quasinorm. We can consider that inequality (\ref{eq1-17}) is the case of inequality (\ref{eq1-16}) under the critical condition $s=n(\frac{1}{p}-\frac{1}{2})$ when $q=2$. In this paper, we review the definition of Sobolev space $W^{1,p}(\bbbr^n)$, and then state and prove the generalization of inequality (\ref{eq1-17}) below.
\begin{definition}\label{definition3}
For $1\leq p\leq\infty$, we define the Sobolev space $W^{1,p}(\bbbr^n)$ is the space of functions $f$ defined on $\bbbr^n$, all of whose first-order derivatives exist in the classical sense, so that the following quasinorm is finite,
\begin{equation}\label{eq1.85}
\|f\|_{W^{1,p}(\bbbr^n)}:=\|f\|_{L^{p}(\bbbr^n)}+\|\nabla_n f\|_{L^{p}(\bbbr^n)}<\infty,
\end{equation}
where $\nabla_n f(x)$ is the $n$-dimensional gradient vector of the function $f(x)$ with respect to $x\in\bbbr^n$.
\end{definition}
\begin{theorem}\label{theorem2}
If $1<p<q$, $2\leq q<\infty$, and $0<s=n(\frac{1}{p}-\frac{1}{q})<1$, and if $f\in\Lps$ has a real-valued function representative $f(x)\in\functrep_0(f)\cap W^{1,p_0}(\bbbr^n)$ for some $1\leq p_0\leq\infty$, then we have
\begin{equation}\label{eq1-18}
\|\Diff_{s,q}f\|_{L^{p,\infty}(\bbbr^n)}\lesssim\|f\|_{\dot{L}^p_s(\bbbr^n)}.
\end{equation}
\end{theorem}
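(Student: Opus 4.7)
The plan is to prove the distributional inequality $|\{x\in\bbbr^n:\Diff_{s,q}f(x)>\alpha\}|\lesssim\alpha^{-p}\|f\|_{\Lps}^p$ for every $\alpha>0$ via a Calder\'on--Zygmund decomposition. The object to decompose is not $f$ itself but $h:=\Lift_s f\in L^p(\bbbr^n)$, whose $L^p$ norm equals $\|f\|_{\Lps}$ by Definition \ref{definition2}. Fix $\alpha>0$ and perform the classical Calder\'on--Zygmund decomposition of $h$ at level $\alpha$, producing pairwise disjoint dyadic cubes $\{Q_k\}$ and a decomposition $h=g+\sum_k b_k$ with $\|g\|_{L^\infty}\lesssim\alpha$, $\operatorname{spt}b_k\subseteq Q_k$, $\int b_k=0$, $\int|b_k|\lesssim\alpha|Q_k|$, and $\sum_k|Q_k|\lesssim\alpha^{-p}\|h\|_{L^p}^p$. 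Applying the Riesz potential $\Lift_{-s}$ (of order $s>0$) yields $f=G+\sum_k B_k$ with $G:=\Lift_{-s}g$ and $B_k:=\Lift_{-s}b_k$. Since $q\geq 1$, Minkowski's inequality inside the integrand defining $\Diff_{s,q}$ gives the pointwise subadditivity $\Diff_{s,q}f(x)\leq\Diff_{s,q}G(x)+\sum_k\Diff_{s,q}B_k(x)$, so it suffices to bound the level sets of $\Diff_{s,q}G$ and $\sum_k\Diff_{s,q}B_k$ at height $\alpha/2$ separately.

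For the good part, the key input is Theorem \ref{theorem1}. Since $g\in L^p\cap L^\infty\subseteq L^q$, the function $G$ lies in $\dot{L}^q_s(\bbbr^n)$ with $\|G\|_{\dot{L}^q_s(\bbbr^n)}=\|g\|_{L^q}$. Applying Theorem \ref{theorem1} with $p$ replaced by $q$ (the hypothesis on $s$ reduces to $0<s<1$), together with the embedding $\|G\|_{\dot{F}^s_{q,q}(\bbbr^n)}\leq\|G\|_{\dot{F}^s_{q,2}(\bbbr^n)}\sim\|G\|_{\dot{L}^q_s(\bbbr^n)}$ (valid for $q\geq 2$ via (\ref{eq1-14}) and (\ref{eq1-15})), gives $\|\Diff_{s,q}G\|_{L^q(\bbbr^n)}\lesssim\|g\|_{L^q}$. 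Combined with the interpolation $\|g\|_{L^q}^q\leq\|g\|_{L^\infty}^{q-p}\|g\|_{L^p}^p\lesssim\alpha^{q-p}\|h\|_{L^p}^p$ and Chebyshev's inequality, one obtains
\[
|\{\Diff_{s,q}G>\alpha/2\}|\lesssim\alpha^{-q}\cdot\alpha^{q-p}\|h\|_{L^p}^p=\alpha^{-p}\|f\|_{\Lps}^p.
\]

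For the bad part, fix an enlarged cube $Q_k^*$ (a dilate of $Q_k$ by a dimensional constant) and let $E:=\bigcup_k Q_k^*$, so $|E|\lesssim\alpha^{-p}\|h\|_{L^p}^p$. It therefore suffices to control $|\{x\notin E:\sum_k\Diff_{s,q}B_k(x)>\alpha/2\}|$, and by Chebyshev this amounts to proving an $L^1$ bound off $E$, which after using $\|b_k\|_{L^1}\lesssim\alpha|Q_k|$ and $\sum_k|Q_k|\lesssim\alpha^{-p}\|h\|_{L^p}^p$ reduces to the uniform per-cube estimate
\[
\int_{(Q_k^*)^c}\Diff_{s,q}B_k(x)\,dx\lesssim\|b_k\|_{L^1}.
\]
Verifying this per-cube estimate is the technical heart of the argument and the main obstacle. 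For $x\notin Q_k^*$ one represents $B_k$ via the Riesz-potential kernel $c_{n,s}|x-z|^{-(n-s)}$ and exploits $\int b_k=0$ to improve the decay of both $B_k$ and $\nabla B_k$ by a factor of $\ell(Q_k)/|x-z_k|$, where $z_k$ denotes the center of $Q_k$. One then splits the $y$-integral defining $\Diff_{s,q}B_k(x)$ at the threshold $|y|\sim\dist(x,Q_k)$: the small-$|y|$ regime is handled by the mean value theorem combined with the cancellation-improved gradient bound, while the large-$|y|$ regime is handled by the cancellation-improved pointwise bound on $B_k$, with an additional subdivision for those $y$ satisfying $x+y\in Q_k^*$ (where cancellation is unavailable and one reverts to the direct Hardy--Littlewood--Sobolev bound on the small set $Q_k^*$). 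Balancing the weight $|y|^{-n-sq}$ against the Riesz-kernel decay and its derivative so that the resulting $dx$-integral over $(Q_k^*)^c$ is finite and of size $\|b_k\|_{L^1}$ is the delicate step; the critical equality $s=n(\frac{1}{p}-\frac{1}{q})$ is precisely what makes this scaling balance close.
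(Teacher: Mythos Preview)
Your treatment of the good part is correct and efficient: applying Theorem~\ref{theorem1} at the level $q$ to $G=\Lift_{-s}g$ and then interpolating $\|g\|_{L^q}$ is exactly the right move. The gap is in the bad part. The per-cube estimate
\[
\int_{(Q_k^*)^c}\Diff_{s,q}B_k(x)\,dx\lesssim\|b_k\|_{L^1}
\]
is \emph{false}. The obstruction is that $B_k=\Lift_{-s}b_k$ is not localized: even with the cancellation $\int b_k=0$, the Riesz potential only gives $|B_k(w)|\lesssim\ell(Q_k)\|b_k\|_{L^1}\,|w-z_k|^{-(n-s+1)}$, and for $x$ at distance $R\gg\ell(Q_k)$ from $z_k$ the integral defining $\Diff_{s,q}B_k(x)^q$ picks up a nonnegligible contribution from those $y$ with $x+y$ landing back near $Q_k$ (a set of $y$'s of measure $\sim\ell(Q_k)^n$ at distance $|y|\sim R$). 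Concretely,
\[
\Diff_{s,q}B_k(x)^q\;\gtrsim\;\int_{x+y\in Q_k^{**}}\frac{|B_k(x+y)-B_k(x)|^q}{|y|^{n+sq}}\,dy\;\gtrsim\;R^{-n-sq}\|B_k\|_{L^q(Q_k^{**})}^q\;\sim\;R^{-n-sq}\|b_k\|_{L^p}^q,
\]
so that $\Diff_{s,q}B_k(x)\gtrsim R^{-n/q-s}\|b_k\|_{L^p}=R^{-n/p}\|b_k\|_{L^p}$. Since $n/p<n$, this is not integrable in $x$ over $\{R>\ell(Q_k)\}$, and no choice of splitting scale for $|y|$ repairs this: the large-$|y|$ regime always sees the bulk of $B_k$. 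Your final sentence (``the critical equality $s=n(\tfrac1p-\tfrac1q)$ is precisely what makes this scaling balance close'') is what makes the Hardy--Littlewood--Sobolev step $\|B_k\|_{L^q}\lesssim\|b_k\|_{L^p}$ work, but it does not close the $L^1$ balance.

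The paper avoids this by \emph{not} running Calder\'on--Zygmund directly on $\Diff_{s,q}$. Instead it first proves a pointwise inequality
\[
\Diff_{s,q}f(x)\lesssim\LPG_{\lambda,q}\big(\varGamma(\tfrac{-s}{2})\Lift_s f\big)(x)+\Remain_{s,q}\big(\varGamma(\tfrac{-s}{2})\Lift_s f\big)(x),
\]
obtained by writing $f(x+y)-f(x)$ through the Poisson integral of $\Lift_s f$, and crucially using the \emph{subharmonicity} of $|\nabla_{n+1}\Pint(\varGamma(\tfrac{-s}{2})\Lift_s f;\cdot,\cdot)|^q$ (Lemma~\ref{lemma9}--Lemma~\ref{lemma11}); this is exactly where the hypothesis that $f$ is real-valued with continuous first derivatives enters, a hypothesis your argument never uses. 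The weak $(p,p)$ bounds for $\LPG_{\lambda,q}$ and $\Remain_{s,q}$ (Theorems~\ref{theorem4} and~\ref{theorem5}) are then proved by Calder\'on--Zygmund on $\Lift_s f$, but succeed because the Poisson-kernel derivatives satisfy genuine H\"ormander conditions (Lemma~\ref{lemma4}) for the ``far'' pieces, while the ``near'' pieces are handled by an $L^q$ estimate (Theorem~\ref{theorem3}) combined with a scaling factor $\Lebes^n(I_m)^{1-q/p}$ coming from integrating over $\Omega^c$ --- a mechanism that has no analogue in the direct approach you propose.
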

The proof of Theorem \ref{theorem2} is given in section \ref{proof.of.theorem2}. In Theorem \ref{theorem2}, the condition that $f\in\Lps$ has a real-valued function representative $f(x)\in\functrep_0(f)\cap W^{1,p_0}(\bbbr^n)$ for some $1\leq p_0\leq\infty$, is necessary to guarantee the subharmonicity of $$|\nabla_{n+1}\Pint(\varGamma(\frac{-s}{2})\Lift_s f;x,t)|^q$$ when $q\neq 2$, where $\Pint(\varGamma(\frac{-s}{2})\Lift_s f;x,t)$ is the Poisson integral of the tempered distribution $\varGamma(\frac{-s}{2})\Lift_s f$ and $\varGamma(\frac{-s}{2})$ is the evaluation of the meromorphic extension of the gamma function at $\frac{-s}{2}$. The definitions of the gamma function and the Poisson integral are given in section \ref{Preliminaries}. For more details about this requirement, see Lemma \ref{lemma9}, Remark \ref{remark1}, Lemma \ref{lemma10}, Lemma \ref{lemma11}, and inequality (\ref{eq6.22}) in the proof of Theorem \ref{theorem2} where we use this subharmonicity.\\
	
In order to prove Theorem \ref{theorem2}, we investigate the boundedness property of the generalized Littlewood-Paley $\LPg_{s,q}$-function given by
	\begin{equation}\label{eq1-19}
		\LPg_{s,q}(f)(x)=\big(\int_0^{\infty}t^{q-sq-1}\big|\nabla_{n+1}\Pint(f;x,t)\big|^q dt\big)^{\frac{1}{q}},
	\end{equation}
and we use Lemma \ref{lemma16} to prove the following theorem. The proof of Theorem \ref{theorem3} is given in section \ref{proof.of.theorem3}.
\begin{theorem}\label{theorem3}
Let $0<p,q<\infty$, and $-1<s<1$. If $f\in\Fspq$ has a function representative $f(x)\in\functrep_0(f)\cap L^{p_0}(\bbbr^n)$ for some $p_0\in[1,\infty]$, then we have
\begin{equation}\label{eq1-20}
\|\LPg_{s,q}(f)\|_{L^p(\bbbr^n)}\lesssim\|f\|_{\Fspq},
\end{equation}
where $\LPg_{s,q}(f)$ is the generalized Littlewood-Paley function defined in (\ref{eq1-19}). Recall the following expression
\begin{equation}\label{eq1.106}
\big|\nabla_{n+1}\Pint(f;x,t)\big|^q=\bigg(\sum_{k=1}^{n+1}
|\partial_k\Pint(f;x,t)|^2\bigg)^\frac{q}{2}.
\end{equation}
When $1\leq p_0<\infty$, each term $\partial_k\Pint(f;x,t)=f*\partial_k[P_t(\cdot)](x)$ in (\ref{eq1.106}) is a function and $f$ appearing in it is the function representative $f(x)\in\functrep_0(f)\cap L^{p_0}(\bbbr^n)$. When $p_0=\infty$, each term $\partial_k\Pint(f;x,t)$ in (\ref{eq1.106}) is the tempered distribution defined by the integral
\begin{equation}\label{eq1.121}
\int_{\bbbr^n}\partial_k\Pint(f;x,t)\!\cdot\!g(x)dx\quad\text{for $g\in\Sw(\bbbr^n)$}.
\end{equation}
\end{theorem}
With a more general nonzero integrable kernel that satisfies some additional conditions, replacing derivatives of the Poisson kernel in (\ref{eq1-19}), D. Fan and F. Zhao have proven the inequality (\ref{eq1-20}) for $1<p,q<\infty$ and $s=0$ in their recent development \cite[Theorem 2.1]{FZ2021}. When the kernel $\Phi$ is a compactly supported function in $L^q(\bbbr^n)$ for some $q>1$ and satisfies $\int_{\bbbr^n}\Phi(x)dx=0$, H. Al-Qassem, L. Cheng and Y. Pan have proven in \cite[Theorem 1.1]{AQCP2018} the estimate
	\begin{equation}\label{weak.eq13}
		\|\mathcal{S}_{\Phi}^{(\lambda)}(f)\|_{L^p(\bbbr^n)}\lesssim\|\Phi\|_{L^q(\bbbr^n)}\cdot
		\|f\|_{\dot{F}^0_{p,\lambda}(\bbbr^n)},
	\end{equation}
	where $\mathcal{S}_{\Phi}^{(\lambda)}(f)(x)$ is their generalized Littlewood-Paley function given by
	\begin{equation}\label{weak.eq14}
		\mathcal{S}_{\Phi}^{(\lambda)}(f)(x)=\big(\int_0^{\infty}
		|\Phi_t*f(x)|^{\lambda}\frac{dt}{t}\big)^{\frac{1}{\lambda}}\quad\text{for}\quad\lambda>1,
	\end{equation}
	and $1/\big(1+\frac{1}{\lambda}-\max\{\frac{1}{\lambda},\frac{1}{q}\}\big)<p<\infty$. In the same paper \cite[Theorem 1.2 and Theorem 1.3]{AQCP2018}, the authors also have shown if additionally the kernel $\Phi$ has a radial majorant in $L^q(\bbbr^n)$ for $1<q<\lambda$, then the operator $\mathcal{S}_{\Phi}^{(\lambda)}(f)$ given in (\ref{weak.eq14}) is a bounded operator on $L^p(\bbbr^n)$ for $(\lambda nq)/(\lambda nq-\lambda+q)<p<\infty$, and in the case when $q=1$, the lower bound for the range of $p$ becomes $(\lambda n)/(\lambda n-1)$. Imposing more conditions on the kernel in \cite[Theorem 1.4]{AQCP2018}, the inequality (\ref{weak.eq13}) is proven for $1<\lambda,p<\infty$. In \cite{AQCP2021}, H. Al-Qassem, L. Cheng and Y. Pan generalize their results in \cite{AQCP2018} to the case of the product generalized Littlewood-Paley function defined on $\bbbr^n\times\bbbr^m$ and the homogeneous Triebel-Lizorkin space $\dot{F}^{\vec{s}}_{p,q}(\bbbr^n\times\bbbr^m)$. Indeed, early in the year 2018, F. Liu and Q. Xue published their results about the boundedness properties of the multiple Littlewood–Paley operator on the product space $L^2(\bbbr^m\times\bbbr^n)$ in \cite{LX2018}. The boundedness of the multiple Littlewood–Paley function with a rough kernel on $L^2(\bbbr^m\times\bbbr^n)$ was also studied in \cite{LW2014} by F. Liu and H. Wu. The $L^p(\bbbr^n\times\bbbr^m)$ $(1<p<\infty)$ boundedness property of the multiple Littlewood–Paley operator, where the kernel of the multiple Littlewood–Paley operator is the product of a function defined on $\bbbr^n$ and a function defined on $\bbbr^m$, was studied in \cite[(2.1)]{FS1982} by R. Fefferman and E. M. Stein. And the $L^p(\bbbr^n)$-boundedness property of the operator $\mathcal{S}_{\Phi}^{(2)}(f)$ defined by (\ref{weak.eq14}) has been studied in \cite{Sato2008} by S. Sato. The proof of \cite[Lemma 4]{Sato2008} has been used extensively by H. Al-Qassem, L. Cheng and Y. Pan in their paper \cite{AQCP2018} to achieve the boundedness from $\dot{F}^0_{p,\lambda}(\bbbr^n)$ into $L^p(\bbbr^n)$. In \cite{FS2002}, D. Fan and S. Sato defined their generalized Littlewood-Paley function on $\bbbr^{n+1}$ for a Schwartz function $f$ defined on $\bbbr^{n+1}$, and they have
	\begin{equation}\label{eq1-33}
		S_{\psi,\gamma}(f)(x,z)=\bigg(\int_0^{\infty}|\psi_t\sharp f(x,z)|^2\frac{dt}{t}\bigg)^{\frac{1}{2}},
	\end{equation}
	where $x\in\bbbr^n,z\in\bbbr$ and
	\begin{equation}\label{eq1-34}
		\psi_t\sharp f(x,z)=\int_{\bbbr^n}b(y)\psi_t(y)f(x-y,z-\gamma(|y|))dy,
	\end{equation}
	and $b(y)$ is a measurable function on $\bbbr^n$ and $\gamma(\cdot)$ is a continuous function defined on the interval $[0,\infty)$, and they have proven that the operator defined in (\ref{eq1-33}) is bounded on $L^p(\bbbr^{n+1})$ for certain ranges of $p$. In \cite{DFP2000}, Y. Ding, D. Fan, and Y. Pan studied the $L^p(\bbbr^n)$-boundedness property of the Littlewood-Paley operator $\mathcal{S}_{\Phi}^{(2)}(f)$ given in (\ref{weak.eq14}) when the function $\Phi$ satisfies the rough kernel condition which says there exist $\alpha>0$ and $\beta>0$ such that 
	\begin{equation}\label{eq1-35}
		|\FT_n\Phi(\xi)|\lesssim\min\{|\xi|^{\alpha},|\xi|^{-\beta}\}\qquad\text{for all $0\neq\xi\in\bbbr^n$}.
	\end{equation}
	Another well-known result about the boundedness property of operator $\mathcal{S}_{\Phi}^{(2)}(f)$ given in (\ref{weak.eq14}) is by A. Benedek, A. P. Calder\'{o}n and R. Panzone in \cite{BCP1962}, which says the operator $\mathcal{S}_{\Phi}^{(2)}(f)$ is bounded on $L^p(\bbbr^n)$ for all $p\in(1,\infty)$ if the kernel $\Phi$ satisfies the following conditions that $\FT_n\Phi(0)=0$ and
	\begin{align*}
		&|\Phi(x)|\lesssim(1+|x|)^{-n-\varepsilon}\qquad&\text{for some $\varepsilon>0$},\\
		&\int_{\bbbr^n}|\Phi(x-y)-\Phi(x)|dx\lesssim|y|^{\varepsilon}\qquad&\text{for some $\varepsilon>0$}.
	\end{align*}
	The interested reader can refer to \cite{HLYY2019,TW2014,BM2022,MY2008,JYYZ2022,Cheng2007,Lerner2019,Bourgain1989,Javier2013,HouWu2019,GM2022,GM2023} for more studies on the classical Littlewood-Paley square function and its variations.\\
	
	Comparing Theorem \ref{theorem3} with previous results about the boundedness properties of generalized Littlewood-Paley functions with different kernels, we find that Theorem \ref{theorem3} is outstanding due to the following reasons. First, the generalized Littlewood-Paley $\LPg_{s,q}$-function defined in (\ref{eq1-19}) uses derivatives of the Poisson kernel $P_t(x)$ as its kernel function and $|\nabla_{n+1}\Pint(f;x,t)|^q$ inherits the subharmonicity, as shown in Lemma \ref{lemma10}. The subharmonicity is an important property in harmonic analysis, and plays a crucial role in the proof of our main Theorem \ref{theorem2}. Second, the derivatives of the Poisson kernel satisfy the H\"{o}rmander type condition as given in Lemma \ref{lemma4}. This good property is used repeatedly in the proof of Lemma \ref{lemma7} and hence in the proofs of Theorem \ref{theorem4} and Theorem \ref{theorem5}. In many cases, the H\"{o}rmander type condition is a desirable condition. Third, the derivatives of the Poisson kernel frequently appear in the process of the pointwise convergence to a function $f(x)$ by its Poisson integral $\Pint(f;x,t)=f*P_t(x)$ as $t\rightarrow0$. The almost everywhere pointwise convergence property of the Poisson integral is a well-established result given in \cite[chapter I, Theorem 1.25]{SteinWeissFourier}. As shown in (\ref{eq1.60}) and (\ref{eq1.61}), the derivatives of the Poisson kernel $P_t(x)$ have their integrals on $\bbbr^n$ equal to zero, while at the same time the Poisson kernel has its integral on $\bbbr^n$ equal to $1$ and hence the family of functions $\{P_t(\cdot)\}_{t>0}$ constitutes an approximate identity as $t\rightarrow0$. Fourth, with all the above advantages inherited from the Poisson kernel, Theorem \ref{theorem3} achieves advantageous ranges for the involved parameters $p$, $q$, $s$. In particular, the applicability of Theorem \ref{theorem3} to negative values of $s$ has a significant place in (\ref{eq4.25}) in the proof of Theorem \ref{theorem4}. Because of these reasons, we name the generalized Littlewood-Paley $\LPg_{s,q}$-function defined in (\ref{eq1-19}) is the Littlewood-Paley-Poisson function $\LPg_{s,q}(f)(x)$.\\
	
An immediate consequence of Theorem \ref{theorem3} is the following Corollary \ref{corollary1}, whose proof can be found in section \ref{proofs.of.corollaries.1&2}.
\begin{corollary}\label{corollary1}
Let $0<p<\infty$ and $2\leq q<\infty$. If $f\in\dot{F}^0_{p,2}(\bbbr^n)$ has a function representative $f(x)\in\functrep_0(f)\cap L^{p_0}(\bbbr^n)$ for some $1\leq p_0\leq\infty$, then we have
\begin{equation}\label{eq1-21}
\|\LPg_{0,q}(f)\|_{L^p(\bbbr^n)}\lesssim\|f\|_{\dot{F}^{0}_{p,2}(\bbbr^n)},
\end{equation}
where $\LPg_{0,q}(f)$ is given by
\begin{equation}\label{eq1-22}
\LPg_{0,q}(f)(x)=\big(\int_0^{\infty}t^{q-1}\big|\nabla_{n+1}\Pint(f;x,t)\big|^q dt\big)^{\frac{1}{q}}.	
\end{equation}
\end{corollary}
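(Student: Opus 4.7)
The proof will be a short deduction from Theorem \ref{theorem3} combined with the embedding of Triebel-Lizorkin spaces in the second index. The plan is to first apply Theorem \ref{theorem3} with $s=0$ and the given $q\in[2,\infty)$, which is a legal choice of parameters since $|0|<1$ and $0<p,q<\infty$. Since $f$ lies in the union $\bigcup_{1\leq p_0\leq\infty}L^{p_0}(\bbbr^n)$ by hypothesis, Theorem \ref{theorem3} yields
\begin{equation*}
\|\LPg_{0,q}(f)\|_{L^p(\bbbr^n)}\lesssim\|f\|_{\dot{F}^0_{p,q}(\bbbr^n)}.
\end{equation*}

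The next step is to upgrade the $\dot{F}^0_{p,q}$-quasinorm on the right-hand side to the $\dot{F}^0_{p,2}$-quasinorm. This is precisely the embedding already noted in the paper immediately after (\ref{eq1-13}): applying the pointwise inequality $(\sum_k a_k)^{\theta}\leq\sum_k a_k^{\theta}$ with $\theta=2/q\in(0,1]$ to the nonnegative sequence $\{|2^{ks}f_k(x)|^2\}_{k\in\bbbz}$ gives $\|\{2^{ks}f_k\}_k\|_{l^q}\leq\|\{2^{ks}f_k\}_k\|_{l^2}$ pointwise in $x$, and then taking the $L^p$-quasinorm produces
\begin{equation*}
\|f\|_{\dot{F}^0_{p,q}(\bbbr^n)}\leq\|f\|_{\dot{F}^0_{p,2}(\bbbr^n)}.
\end{equation*}
Chaining the two displayed estimates yields (\ref{eq1-21}).

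There is no real obstacle here: the only potential subtlety is checking that the hypotheses of Theorem \ref{theorem3} are met. The parameter condition $|s|<1$ is satisfied with room to spare by $s=0$, the condition $0<p,q<\infty$ is part of the hypothesis of the corollary, and the integrability assumption $f\in L^{p_0}$ for some $1\leq p_0\leq\infty$ is exactly the membership in the union (\ref{union}) demanded by Theorem \ref{theorem3}. Finally, the formula (\ref{eq1-22}) for $\LPg_{0,q}(f)$ is simply the specialization of (\ref{eq1-19}) at $s=0$, so no additional computation is needed.
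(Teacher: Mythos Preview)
Your proof is correct and follows essentially the same approach as the paper: apply Theorem \ref{theorem3} with $s=0$ and then use the embedding $\|f\|_{\dot{F}^0_{p,q}(\bbbr^n)}\leq\|f\|_{\dot{F}^0_{p,2}(\bbbr^n)}$ for $2\leq q<\infty$ coming from (\ref{eq1-14}). The paper states these two steps in the reverse order, but the argument is identical.
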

When $q=2$ and $s=0$, we recover from (\ref{eq1-19}) the classical Littlewood-Paley $g$-function introduced in \cite[section 1 of chapter IV]{Stein1971} by E. M. Stein, and Corollary \ref{corollary1} and its consequential estimates (\ref{eq1-23}), (\ref{eq1-24}) turn out to be partial generalizations of \cite[Theorem 1 in section 1 of chapter IV]{Stein1971}. The special case of (\ref{eq1-25}) when $p=q=2$ can be obtained by using routine calculations and Plancherel's identity, and it has been used to prove the weak type $(p,p)$ boundedness of the classical Littlewood-Paley $g_{\lambda}^*$-function under the critical condition $\lambda=\frac{2}{p}$ in \cite[Theorem 1]{fefferman1970}. To prove the more challenging case of (\ref{eq1-25}) when $p=q\neq 2$ and the more general Theorem \ref{theorem3}, where the absence of Plancherel's identity poses a major difficulty, we use the illuminating method introduced in \cite[section 2.6.4]{1992functionspaces} by H. Triebel. Indeed, H. Triebel provided a harmonic characterization of the inhomogeneous Triebel-Lizorkin space $F^s_{p,q}(\bbbr^n)$ in \cite[section 2.6.4]{1992functionspaces} with a rough condition $$k>\frac{n}{\min\{p,q\}}+\max\{s,n(\frac{1}{p}-1),0\},$$ where $k$ is the order of derivatives of the Poisson integral $\Pint(f;x,t)$.\\
	
We also study the weak type $(p,p)$ boundedness of the following operators. Let $0<s,q,\lambda<\infty$ and $f$ be an appropriate function, then we define the generalized Littlewood-Paley $\LPG_{\lambda,q}$-function is given by
	\begin{equation}\label{eq1-26}
		\LPG_{\lambda,q}(f)(x)\!:=\!\big(\!\!\int_0^{\infty}\!\!\!\!\int_{\bbbr^n}\!\frac{t^{\lambda n+q-n-1}}{(t+|x-y|)^{\lambda n}}\!\cdot\!|\nabla_{n+1}\Pint(f;y,t)|^q dydt\big)^{\frac{1}{q}},
	\end{equation}
	and the $\Remain_{s,q}$-function is given by
	\begin{equation}\label{eq1-27}
		\Remain_{s,q}(f)(x)\!:=\!\big(\!\!\int_{\bbbr^n}\!\!\!|x-y|^{-n-sq}\!\cdot\![\!\int_0^{2|x-y|}\!\!\!\!\!\!\!
		|\partial_{n+1}\Pint(f;y,t)|t^s dt]^q dy\big)^{\frac{1}{q}}.
	\end{equation}
We refer to section \ref{Preliminaries} for the definitions of derivatives of the Poisson integral $\Pint(f;y,t)$. As another consequence of Theorem \ref{theorem3}, the boundedness properties of the above operators are given below. The proof of Corollary \ref{corollary2} is also given in section \ref{proofs.of.corollaries.1&2}.
\begin{corollary}\label{corollary2}
For $2\leq q<\infty$, $1<\lambda<\infty$ and $0<s<1$, the operators $\LPG_{\lambda,q}$ and $\Remain_{s,q}$ defined in (\ref{eq1-26}) and (\ref{eq1-27}) are bounded from $L^q(\bbbr^n)$ to $L^q(\bbbr^n)$.
\end{corollary}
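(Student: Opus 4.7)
The plan is to reduce the $L^q\to L^q$ boundedness of both operators to the special case $p=q$ of inequality (\ref{eq1-25}) from Corollary \ref{corollary1}, namely
$$\int_{\bbbr^n}\int_0^{\infty} t^{q-1}|\nabla_{n+1}\Pint(f;y,t)|^q\,dt\,dy \;\lesssim\; \|f\|_{L^q(\bbbr^n)}^q, \qquad 2\le q<\infty,\; f\in L^q(\bbbr^n).$$
Both estimates then follow by raising $\|\cdot\|_{L^q}^q$, applying Tonelli, and performing the spatial integration against the weight first, so that the exponents of $t$ collapse onto $t^{q-1}$.

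For $\LPG_{\lambda,q}$, polar coordinates and the substitution $r=tu$ yield
$$\int_{\bbbr^n}\Big(\frac{t}{t+|x-y|}\Big)^{\lambda n}dx \;=\; \omega_{n-1}\Big(\int_0^{\infty}\frac{u^{n-1}}{(1+u)^{\lambda n}}\,du\Big)\, t^n,$$
where the hypothesis $\lambda>1$ is exactly what makes the $u$-integral finite. This $t^n$ cancels the $t^{-n}$ factor built into the definition of $\LPG_{\lambda,q}$, leaving precisely the left-hand side of the displayed inequality above.

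For $\Remain_{s,q}$, I would introduce an auxiliary parameter $\alpha\in(0,s)$ and apply H\"older's inequality with exponents $q$ and $q/(q-1)$ to the inner $t$-integral by splitting
$$|\partial_{n+1}\Pint(f;y,t)|\,t^s \;=\; \bigl(|\partial_{n+1}\Pint(f;y,t)|\,t^{(q-1)/q+\alpha}\bigr)\cdot t^{s-(q-1)/q-\alpha}.$$
The second factor is integrable on $(0,2|x-y|)$ precisely because $\alpha<s$, and produces a gain of $C\,|x-y|^{s-\alpha}$. Substituting back into the definition of $\Remain_{s,q}$ and raising to the $q$-th power gives
$$\Remain_{s,q}(f)(x)^q \;\lesssim\; \int_{\bbbr^n}\frac{1}{|x-y|^{n+\alpha q}}\int_0^{2|x-y|}|\partial_{n+1}\Pint(f;y,t)|^q\,t^{q-1+\alpha q}\,dt\,dy.$$
Integrating in $x$ and applying Tonelli, the spatial integral becomes $\int_{|x-y|\ge t/2}|x-y|^{-n-\alpha q}\,dx\sim t^{-\alpha q}$, whose convergence needs $\alpha>0$. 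The two powers of $t^{\alpha q}$ cancel, and the remaining expression is bounded by (\ref{eq1-25}) after invoking $|\partial_{n+1}\Pint|\le|\nabla_{n+1}\Pint|$.

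The main obstacle is purely one of exponent bookkeeping: the range $0<\alpha<s$ has to be nonempty, which is guaranteed by the assumption $s>0$, and the analogous role in the $\LPG_{\lambda,q}$ estimate is played by $\lambda>1$. The naive attempt $\alpha=0$ produces the divergent integral $\int|x-y|^{-n}\,dx$, so the auxiliary parameter is essential even though the argument is otherwise a routine double application of Tonelli and H\"older. No deep analytic ingredient is needed beyond Theorem \ref{theorem3}.
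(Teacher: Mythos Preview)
Your proposal is correct and follows essentially the same argument as the paper; your auxiliary parameter $\alpha\in(0,s)$ is just the paper's choice $A\in(1/q',\,s+1/q')$ after the shift $A=\alpha+1/q'$, and the H\"older splitting, Tonelli step, and cancellation of the $t$-powers against the spatial integral are identical. The paper also explicitly notes that only $|\partial_{n+1}\Pint|$ appears in $\Remain_{s,q}$ before invoking (\ref{eq1-25}), exactly as you do.
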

We now propose the weak type $(p,p)$ boundedness theorems for the generalized Littlewood-Paley $\LPG_{\lambda,q}$-function and the $\Remain_{s,q}$-function.
\begin{theorem}\label{theorem4}
For $1<p<q$, $2\leq q<\infty$, $\lambda=\frac{q}{p}>1$, and $0<n(\frac{1}{p}-\frac{1}{q})<1$, the operator $\LPG_{\lambda,q}$ defined in (\ref{eq1-26}) is of weak type $(p,p)$ for functions in $L^p(\bbbr^n)$ and satisfies the inequality
\begin{equation}\label{eq1-31}
\Lebes^n(\{x\in\bbbr^n:\LPG_{\lambda,q}(f)(x)>\alpha\})\lesssim\alpha^{-p}\|f\|_{L^p(\bbbr^n)}^p,
\end{equation}
where the constant only depends on some fixed parameters, $\alpha$ is a positive number and $f$ is a function in $L^p(\bbbr^n)$.
\end{theorem}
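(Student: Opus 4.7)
The plan is to adapt the classical Calder\'on--Zygmund approach used by C.~Fefferman in \cite[Theorem~1]{fefferman1970} for the case $q=2$, replacing his Plancherel-based $L^2$-estimate by the $L^q$-boundedness in Corollary~\ref{corollary2} and the critical inequality (\ref{eq1-25}) proved here. Given $f\in L^p(\bbbr^n)$ and $\alpha>0$, I would apply the Calder\'on--Zygmund decomposition to $|f|^p$ at height $\alpha^p$ to write $f=g+\sum_k b_k$, where each $b_k$ is supported on a dyadic cube $Q_k$ with $\int b_k=0$, the $Q_k$'s are pairwise disjoint, $\|g\|_{L^\infty(\bbbr^n)}\lesssim\alpha$, $\|b_k\|_{L^\infty(\bbbr^n)}\lesssim\alpha$, and $\sum_k|Q_k|\lesssim\alpha^{-p}\|f\|_{L^p(\bbbr^n)}^p$. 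Since the $L^q(dy\,dt)$-norm defining $\LPG_{\lambda,q}$ is subadditive, $\LPG_{\lambda,q}(f)\leq\LPG_{\lambda,q}(g)+\LPG_{\lambda,q}(b)$, and the desired weak-type bound reduces to treating $g$ and $b=\sum_k b_k$ separately.

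For the good part, $\|g\|_{L^q(\bbbr^n)}^q\leq\|g\|_{L^\infty(\bbbr^n)}^{q-p}\|g\|_{L^p(\bbbr^n)}^p\lesssim\alpha^{q-p}\|f\|_{L^p(\bbbr^n)}^p$; combining this with the $L^q$-boundedness of $\LPG_{\lambda,q}$ from Corollary~\ref{corollary2} (valid because $\lambda=q/p>1$ and $q\geq 2$) and Chebyshev's inequality immediately yields
\begin{equation*}
\Lebes^n(\{x\in\bbbr^n:\LPG_{\lambda,q}(g)(x)>\alpha/2\})\lesssim\alpha^{-q}\|g\|_{L^q(\bbbr^n)}^q\lesssim\alpha^{-p}\|f\|_{L^p(\bbbr^n)}^p.
\end{equation*}

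For the bad part, I would enlarge each cube $Q_k$ by a fixed factor to obtain $Q_k^*$, form $E=\bigcup_k Q_k^*$ so that $|E|\lesssim\alpha^{-p}\|f\|_{L^p(\bbbr^n)}^p$, and reduce via Chebyshev to showing
\begin{equation*}
\int_{E^c}\LPG_{\lambda,q}(b)(x)^q\,dx\lesssim\alpha^{q-p}\|f\|_{L^p(\bbbr^n)}^p.
\end{equation*}
Interchanging the order of integration expresses this integral as $\int_0^\infty\!\!\int_{\bbbr^n}|\nabla_{n+1}\Pint(b;y,t)|^q\,t^{q-n-1}K(y,t)\,dy\,dt$, where $K(y,t):=\int_{E^c}(t/(t+|x-y|))^{\lambda n}\,dx$. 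A direct computation shows $K(y,t)\lesssim t^n$ everywhere (since $\lambda n>n$) and $K(y,t)\lesssim t^n(t/\dist(y,E^c))^{(\lambda-1)n}$ when $y\in E$; the exponent $(\lambda-1)n=(q-p)n/p$ appearing here is precisely what the critical choice $\lambda=q/p$ is engineered to supply, and is what allows one to trade an $L^q$-estimate on $b_k$ for an $L^p$-type estimate on $f$ after summation.

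To finish, the plan is to decompose the $(y,t)$-upper half-space into the Carleson tents $\widehat{Q_k^*}=\{(y,t):y\in Q_k^*,\,0<t\leq\ell(Q_k)\}$ and their complements. Inside $\widehat{Q_k^*}$ the factor $(t/\ell(Q_k))^{(\lambda-1)n}$ from $K$ combines with (\ref{eq1-25}) applied to $b_k$ and the bound $\|b_k\|_{L^q(\bbbr^n)}^q\lesssim\alpha^q|Q_k|$ to yield a contribution of order $\alpha^q|Q_k|$; outside the tents, the cancellation $\int b_k=0$ is invoked by replacing $\nabla_{n+1}P_t(y-z)$ with $\nabla_{n+1}P_t(y-z)-\nabla_{n+1}P_t(y-z_k)$ (with $z_k$ the centre of $Q_k$), and the H\"ormander-type smoothness of $\nabla_{n+1}P_t$ advertised in Lemma~\ref{lemma4} is used to gain decay in $\ell(Q_k)/(|y-z_k|+t)$. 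The principal obstacle, familiar from Fefferman's original $q=2$ argument, is the nonlinearity produced by raising the sum $\nabla_{n+1}\Pint(b;y,t)=\sum_k\nabla_{n+1}\Pint(b_k;y,t)$ to the $q$-th power; I expect to control this through the subharmonicity of $|\nabla_{n+1}\Pint(b;y,t)|^q$ from Lemma~\ref{lemma10}, which permits pointwise values to be replaced by local averages over $(y,t)$-balls so that contributions from distinct cubes can be decoupled, after which the final summation in $k$ is absorbed by the finite overlap of the tents and the estimate $\sum_k|Q_k|\lesssim\alpha^{-p}\|f\|_{L^p(\bbbr^n)}^p$.
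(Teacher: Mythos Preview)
Your outline follows Fefferman's $q=2$ scheme closely, and the treatment of the good part is correct and matches the paper.  The gap is in the bad part: your proposal never gives a workable mechanism for the nonlinearity $\bigl|\nabla_{n+1}\Pint(\sum_k b_k;y,t)\bigr|^q$.  Inside a tent $\widehat{Q_k^*}$ you want to invoke (\ref{eq1-25}) for the single piece $b_k$, but the integrand carries the full sum $b$, and for $q\neq2$ there is no Plancherel/orthogonality to pass from $b$ to $b_k$.  The appeal to subharmonicity (Lemma~\ref{lemma10}) does not help here: replacing $|\nabla_{n+1}\Pint(b;y,t)|^q$ by a local average over an $(n+1)$-ball does nothing to separate the contributions of different $b_k$'s, and in the paper that lemma is used only in the proof of Theorem~\ref{theorem2}, not of Theorem~\ref{theorem4}.

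The paper's resolution is structurally different from a tent decomposition.  It splits the $y$-integration (not the $(y,t)$-space) into $\Omega^c$ and the Whitney cubes $I_m$, and on each $I_m$ further splits $b=\bnearIm+\bfarIm$ according to (\ref{near})--(\ref{far}).  The key linearising device you are missing is the pointwise bound of Lemma~\ref{lemma6}: $|\partial_k\Pint(b;y,t)|\lesssim\alpha/t$ for $y\in\Omega^c$ and $|\partial_k\Pint(\bfarIm;y,t)|\lesssim\alpha/t$ for $y\in I_m$.  This lets one strip off a factor $(\alpha/t)^{q-1}$ and reduce the $A_k$ and $B_k$ pieces to genuine $L^1$-estimates handled by the H\"ormander condition via Lemma~\ref{lemma7}.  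For the remaining near piece $D(x)$, integrating in $x\in\Omega^c$ against the weight $(t/(t+|x-y|))^{\lambda n}$ produces exactly $\|\LPg_{\kappa,q}(\bnearIm)\|_{L^q}^q$ with $\kappa=-n(\tfrac1p-\tfrac1q)$; this is where the hypothesis $0<n(\tfrac1p-\tfrac1q)<1$ is actually used, since Theorem~\ref{theorem3} requires $|\kappa|<1$.  Theorem~\ref{theorem3} followed by Hardy--Littlewood--Sobolev then gives $\|\LPg_{\kappa,q}(\bnearIm)\|_{L^q}^q\lesssim\|\bnearIm\|_{L^p}^q\lesssim\alpha^q\Lebes^n(I_m)^\lambda$, and summing over $m$ closes the estimate.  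None of these three ingredients---Lemma~\ref{lemma6}, the near/far splitting, and Theorem~\ref{theorem3} at negative index---appears in your plan.
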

The proof of Theorem \ref{theorem4} can be found in section \ref{proof.of.theorem4}. We notice that when $q=2$ in (\ref{eq1-26}), the function $\LPG_{\lambda,2}(f)(x)$ is the classical Littlewood-Paley $g_{\lambda}^*$-function, which was introduced and studied in \cite{Stein1961-2,Stein1971,fefferman1970}, and Theorem \ref{theorem4} is a generalization of \cite[Theorem 1]{fefferman1970}. Comparing Theorem \ref{theorem4} with \cite[Theorem 1]{fefferman1970}, we find that the extra condition $0<n(\frac{1}{p}-\frac{1}{q})<1$ in Theorem \ref{theorem4} is due to the requirement of Theorem \ref{theorem3}, which has been used to overcome the absence of Plancherel's identity in the case $q\neq 2$. The weak type $(1,1)$ boundedness of the classical Littlewood-Paley $g_{\lambda}^*$-function is given in \cite[Lemma 2]{Stein1961-2}. In \cite{SXY2014}, S. Shi, Q. Xue, and K. Yabuta study the multilinear generalization of the classical Littlewood-Paley $g_{\lambda}^*$-function given by
	\begin{equation}\label{eq1-36}
		g_{\lambda}^*(\vec{f})(x)=\big(\!\!\int_0^{\infty}\!\!\!\!\int_{\bbbr^n}\!\!\big(\frac{t}{t+|x-z|}\big)^{\lambda n} \cdot|G_t(\vec{f})(z)|^2\frac{dzdt}{t^{n+1}}\big)^{\frac{1}{2}},
	\end{equation}
	where $x\in\bbbr^n$, $\vec{f}=(f_1,f_2,\cdots,f_m)\in\Sw(\bbbr^n)\times\Sw(\bbbr^n)\times\cdots\times\Sw(\bbbr^n)$, and
	\begin{equation*}
		G_t(\vec{f})(z)=\frac{1}{t^{mn}}\int_{(\bbbr^n)^m}K\bigg(\frac{z-y_1}{t},\cdots,\frac{z-y_m}{t}\bigg)
		\prod_{i=1}^m f_i(y_i)dy_i,
	\end{equation*}
	and the function $K$ defined on $(\bbbr^n)^m$ is a multilinear standard kernel. These authors prove in \cite[Theorem 1.1 and Theorem 1.2]{SXY2014} that the multilinear Littlewood-Paley $g_{\lambda}^*$-function given in (\ref{eq1-36}) is bounded from $L^1\times\cdots\times L^1$ to $L^{\frac{1}{m},\infty}$, and also possesses the strong type weighted estimate from $L^{p_1}(\omega_1)\times\cdots\times L^{p_m}(\omega_m)$ to $L^p(\gamma_{\vec{\omega}})$ if each $p_i>1$ and the weak type weighted estimate from $L^{p_1}(\omega_1)\times\cdots\times L^{p_m}(\omega_m)$ to $L^{p,\infty}(\gamma_{\vec{\omega}})$ if at least one $p_i=1$, where $$\gamma_{\vec{\omega}}=\prod_{i=1}^m\omega_i^{\frac{p}{p_i}}$$ and each $\omega_i$ is a nonnegative weight function on $\bbbr^n$, and $p_1,\cdots,p_m\in[1,\infty)$ and $p$ satisfy the condition
	\begin{equation}\label{eq1-37}
		\frac{1}{p}=\frac{1}{p_1}+\cdots+\frac{1}{p_m}.
	\end{equation}
	In \cite[Theorem 1.3]{HX2019}, S. He and Q. Xue prove that the multilinear Littlewood-Paley $g_{\lambda}^*$-function given in (\ref{eq1-36}) is bounded from $H^{p_1}(\bbbr^n)\times\cdots\times H^{p_m}(\bbbr^n)$ to $L^p(\bbbr^n)$, where $p,p_1,\cdots,p_m$ satisfy the condition (\ref{eq1-37}) and
	\begin{equation*}
		\frac{nm}{nm+\gamma}<p_i\leq1\qquad\text{for $i=1,\cdots,m$},
	\end{equation*}
	and $\gamma$ is a positive number determined by the multilinear standard kernel $K$, and $H^{p_i}(\bbbr^n)$ $(i=1,\cdots,m)$ are Hardy spaces. The interested reader can refer to \cite{HLYY2019,MNY2010,LuTao2017,TW2014,MY2008,JYYZ2022,CLYY2017,CWYZ2020,MW1974,CX2021,HYY2016,HYY2018,CHIFRRSY,CLX2017} for more studies on the classical Littlewood-Paley $g_{\lambda}^*$-function.\\
	
In order to complete the proof of Theorem \ref{theorem2}, we will need the following Theorem \ref{theorem5}. From Step 6 of the proof of Theorem \ref{theorem2} given in section \ref{proof.of.theorem2}, we see that the condition $s=n(\frac{1}{p}-\frac{1}{q})$ in Theorem \ref{theorem2} is due to the weak type $(p,p)$ boundedness of the $\Remain_{s,q}$-function. The proof of Theorem \ref{theorem5} is in section \ref{proof.of.theorem5}.
\begin{theorem}\label{theorem5}
For $1<p<q$, $2\leq q<\infty$, and $0<s=n(\frac{1}{p}-\frac{1}{q})<1$, the operator $\Remain_{s,q}$ defined in (\ref{eq1-27}) is of weak type $(p,p)$ for functions in $L^p(\bbbr^n)$ and satisfies the inequality
\begin{equation}\label{eq1-32}
\Lebes^n(\{x\in\bbbr^n:\Remain_{s,q}(f)(x)>\alpha\})\lesssim\alpha^{-p}\|f\|_{L^p(\bbbr^n)}^p,
\end{equation}
where the constant only depends on some fixed parameters, $\alpha$ is a positive number and $f$ is a function in $L^p(\bbbr^n)$.
\end{theorem}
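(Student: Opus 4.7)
The plan is to follow the standard Calder\'on--Zygmund framework for weak-type estimates: decompose $f$ at level $\alpha$, handle the good part by the strong $(q,q)$ boundedness already obtained in Corollary \ref{corollary2}, and handle the bad part using the mean-zero cancellation of its pieces together with smoothness of derivatives of the Poisson kernel.

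Given $f\in L^p(\bbbr^n)$ and $\alpha>0$, I would first perform the Calder\'on--Zygmund decomposition at level $\alpha$, producing pairwise disjoint dyadic cubes $\{Q_j\}$ with $\alpha^p<|Q_j|^{-1}\int_{Q_j}|f|^p\leq 2^n\alpha^p$ and $\sum_j|Q_j|\lesssim\alpha^{-p}\|f\|_{L^p}^p$, and write $f=g+b$ with $b=\sum_j b_j$, $\operatorname{supp} b_j\subseteq Q_j$, $\int b_j=0$, and $|g(x)|\leq C\alpha$ a.e. Since $\Remain_{s,q}$ is sublinear (the inner quantity $\int_0^{2|x-y|}|\partial_{n+1}\Pint(\cdot;y,t)|t^s dt$ is sublinear in its argument, and the outer $L^q(d\mu_x)$ norm with $d\mu_x(y)=|x-y|^{-n-sq}dy$ obeys Minkowski),
\[
\Lebes^n(\{\Remain_{s,q}(f)>\alpha\})\leq\Lebes^n(\{\Remain_{s,q}(g)>\alpha/2\})+\Lebes^n(\{\Remain_{s,q}(b)>\alpha/2\}).
\]

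For the good term, $|g|\leq C\alpha$ and $\|g\|_{L^p}^p\leq\|f\|_{L^p}^p$ give $\|g\|_{L^q}^q\leq(C\alpha)^{q-p}\|f\|_{L^p}^p$, so by Corollary \ref{corollary2} and Chebyshev's inequality the good-part measure is bounded by $\alpha^{-q}\|g\|_{L^q}^q\lesssim\alpha^{-p}\|f\|_{L^p}^p$.

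For the bad term, I would set $Q_j^*=c\cdot Q_j$ for a suitable dilate and $E=\bigcup_j Q_j^*$; then $|E|\lesssim\alpha^{-p}\|f\|_{L^p}^p$, so it is enough to estimate the measure of $\{x\in E^c:\Remain_{s,q}(b)(x)>\alpha/2\}$. By sublinearity $\Remain_{s,q}(b)\leq\sum_j\Remain_{s,q}(b_j)$, and the goal of this step is the $L^1$-on-$E^c$ bound
\[
\int_{E^c}\Remain_{s,q}(b_j)(x)\,dx\lesssim\|b_j\|_{L^1},
\]
uniformly in $j$. Once this is established, summing over $j$ and using $\sum_j\|b_j\|_{L^1}\lesssim\alpha\sum_j|Q_j|\lesssim\alpha^{1-p}\|f\|_{L^p}^p$, followed by Chebyshev, yields the required $\alpha^{-p}\|f\|_{L^p}^p$ bound.

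The main obstacle, and the place where the critical relation $s=n(\tfrac{1}{p}-\tfrac{1}{q})$ will enter to balance scales, is precisely the $L^1$-on-$E^c$ estimate above. To attack it, I would use the mean-zero property of $b_j$ on $Q_j$ with center $y_j$ to write
\[
\partial_{n+1}\Pint(b_j;y,t)=\int_{Q_j}b_j(z)\bigl[\partial_{n+1}P_t(y-z)-\partial_{n+1}P_t(y-y_j)\bigr]\,dz,
\]
and combine this with the pointwise smoothness estimate $|\nabla\partial_{n+1}P_t(w)|\lesssim(t+|w|)^{-n-2}$ (which follows from \eqref{eq1-3-2}) to produce
\[
|\partial_{n+1}\Pint(b_j;y,t)|\lesssim\ell_j\|b_j\|_{L^1}\cdot(t+|y-y_j|+\ell_j)^{-n-2},
\]
where $\ell_j$ is the sidelength of $Q_j$. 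Substituting this into the definition of $\Remain_{s,q}(b_j)(x)^q$, carrying out the $t$-integral on $(0,2|x-y|)$ (splitting at $t\sim|y-y_j|+\ell_j$ and using $0<s<1$ to guarantee convergence at both endpoints), taking the $q$-th root, and finally integrating in $x\in E^c$ will reduce to a purely geometric sum over $j$ of homogeneous integrals in $|x-y_j|/\ell_j$ whose convergence is governed by the exponent $s$; the critical choice $s=n(\tfrac{1}{p}-\tfrac{1}{q})$ is what makes the resulting powers combine correctly so that the bound is linear in $\|b_j\|_{L^1}$ with a constant independent of $\ell_j$. The delicate bookkeeping here — the coupling between the $t$-upper limit $2|x-y|$, the measure $|x-y|^{-n-sq}dy$, and the parameter $s$ inside the inner integrand — is the technical heart of the proof.
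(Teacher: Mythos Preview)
Your outline for the good part is fine, but the heart of the argument --- the claimed bound $\int_{E^c}\Remain_{s,q}(b_j)(x)\,dx\lesssim\|b_j\|_{L^1}$ with constant independent of $\ell_j$ --- is false, and no amount of bookkeeping with the critical exponent will save it. To see this, grant your kernel estimate and look only at the contribution to $\Remain_{s,q}(b_j)(x)$ from $y$ in (a fixed dilate of) $Q_j$. For such $y$ and for $|x-y|\gtrsim\ell_j$, the inner $t$-integral picks up its mass near $t\sim\ell_j$ and gives $\sim\|b_j\|_{L^1}\ell_j^{s-n}$. The $y$-integral over $|y-y_j|\lesssim\ell_j$ then yields, for $|x-y_j|=R\gg\ell_j$,
\[
\Remain_{s,q}(b_j)(x)\gtrsim\|b_j\|_{L^1}\,\ell_j^{\,s-n+n/q}\,R^{-n/q-s}.
\]
Since $n/q+s=n/p<n$, the function $R\mapsto R^{-n/q-s}$ is \emph{not} integrable over $\{R\gtrsim\ell_j\}$ in $\bbbr^n$, so $\Remain_{s,q}(b_j)\notin L^1(E^c)$. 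The scaling simply does not close; the critical relation $s=n(1/p-1/q)$ is exactly what makes the decay too slow for an $L^1$ argument.

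The paper avoids this by \emph{not} decomposing $\Remain_{s,q}(b)$ atom by atom. Instead it splits the inner $y$-integral according to where $y$ lies relative to the Whitney cubes: $y\in\Omega^c$, $y\in I_m$ with $I_j$ ``far from'' $I_m$, and $y\in I_m$ with $I_j$ ``near'' $I_m$. For the first two pieces it uses the collective pointwise bound $|\partial_{n+1}\Pint(b;y,t)|\lesssim\alpha/t$ (Lemma \ref{lemma6}) to peel off $q-1$ powers, reducing the $q$-th power to a linear expression to which the H\"ormander-type estimates of Lemma \ref{lemma7} apply. For the ``near'' piece it abandons the $L^1$ route entirely: after throwing away $\Omega$ it applies Chebyshev at level $q$ on $\Omega^c$, dominates the inner $t$-integral by the Riesz potential $|z|^{s-n}\!*\!|\bnearIm|$, and invokes Hardy--Littlewood--Sobolev (Lemma \ref{lemma5}) --- this is precisely where the critical exponent $s=n(1/p-1/q)$ enters. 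Both the $\alpha/t$ delinearization and the $L^q$/HLS treatment of the local piece are missing from your plan.
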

We comment that conclusions obtained so far can be used to deduce inequality (\ref{eq1-16}) under certain conditions, and both of the inequalities (\ref{eq1-12}) and (\ref{eq1-16}) are untrue when $s\leq\max\{0,n(\frac{1}{p}-\frac{1}{q})\}$. The proofs of Corollary \ref{corollary3} and Corollary \ref{corollary4} can be found in section \ref{proofs.of.corollaries.3&4}.
\begin{corollary}\label{corollary3}
If $1<p<q$, $2\leq q<\infty$, $n(\frac{1}{p}-\frac{1}{q})<s<1$, and $f\in\Sw_0(\bbbr^n)$ is a real-valued function, then we have
\begin{equation}\label{eq1.87}
\|\Diff_{s,q}f\|_{L^p(\bbbr^n)}\lesssim\|f\|_{\Lps}.
\end{equation}
\end{corollary}
\begin{corollary}\label{corollary4}
Assume that $0<p,q<\infty$ and $-\infty<s\leq\max\{0,n(\frac{1}{p}-\frac{1}{q})\}$. For every nonzero function $f\in\Sw_0(\bbbr^n)$, we have
\begin{equation}\label{eq1.88}
\|\Diff_{s,q}f\|_{L^p(\bbbr^n)}=\infty.
\end{equation}
Therefore, both of the inequalities (\ref{eq1-12}) and (\ref{eq1-16}) cannot be true.
\end{corollary}
\paragraph*{\textbf{Notation}} 
In this paper, we will flexibly use the following notations for inequalities. Let $X,Y$ be two expressions, then ``$X\lesssim Y$'' means $X$ is dominated by a constant multiple of $Y$ and the constant is determined by some fixed parameters. In particular, ``$X\lesssim 1$'' means $X$ is bounded from above by a positive finite constant whose value is determined by some fixed parameters. If $X\lesssim Y$ and $Y\lesssim X$, then we consider $X$ and $Y$ are equivalent and write $X\sim Y$. Furthermore, when we want to emphasize the existence of a constant and the dependence of this constant on some fixed parameters, we usually write it like ``$X\leq C\cdot Y$ where $C=C(n,p)$'' and it means $X$ is dominated by the product of the constant $C$ and the expression $Y$, and the value of the constant $C$ only depends on the parameters $n$ and $p$. Moreover, unless otherwise stated, we will number constants in different sections independently. For example, we may have constants $C_1=C_1(n,p)$, $C_2=C_2(n,p,q)$, $C_3=C_3(q,s)$ in section $1$ and we may also have constants $C_1=C_1(n,p)$, $C_2=C_2(p,q,s)$, $C_3=C_3(s,\lambda)$ in section $2$. However, the constant $C_1$ of section $1$ may be different from the constant $C_1$ of section $2$ even though they depend on the same parameters $n$ and $p$. And the value of $C_2$ in section $1$ depends only on parameters $n,p,q$ while the value of $C_2$ in section $2$ depends only on parameters $p,q,s$. In this way, a change in the numbering of constants in one section will not affect the numbering of constants in other sections of the paper. Other useful notations in this paper include the following. If $E$ is a subset of $\bbbr^n$, then $\Ccinfty(E)$ represents the set of all the smooth functions whose support sets are compact sets contained in $E$. For two nonempty sets $A$ and $B$ in $\bbbr^n$, we use $\Lebes^n(A)$ to denote the $n$-dimensional Lebesgue measure of $A$ and $\dist(A,B)=\inf_{x\in A,y\in B}|x-y|$. We use $B^n(x,r)$ to denote the ball in $\bbbr^n$ centered at $x\in\bbbr^n$ with radius $r>0$, then the unit ball in $\bbbr^n$ is given as $B^n(0,1)$ and its volume is $\nu_n=\Lebes^n(B^n(0,1))$ and also $\Lebes^n(B^n(x,r))=\nu_n r^n$. The unit sphere in $\bbbr^n$ is written as $\unitsph$ and its surface area is denoted by $\omega_{n-1}$. If a ball in $\bbbr^{n+1}$ is centered at $(x,t)\in\bbbr^{n+1}$ and has radius $r$, we denote such a ball by $B^{n+1}((x,t),r)$, then the unit ball in $\bbbr^{n+1}$ is given as $B^{n+1}((0,0),1)$ and its volume is $\nu_{n+1}=\Lebes^{n+1}(B^{n+1}((0,0),1))$ and also $\Lebes^{n+1}(B^{n+1}((x,t),r))=\nu_{n+1}r^{n+1}$. A cone in $\bbbr^{n+1}_+$, whose vertex is the point $(x,0)\in\bbbr^{n+1}$ for some $x\in\bbbr^n$ and whose aperture is the positive number $a>0$, is denoted by $\gamma_a(x):=\{(z,t)\in\bbbr^{n+1}_+:|z-x|<at\}$. If the set $I$ represents a cube in $\bbbr^n$ with sides parallel to the coordinate axes, then we denote the center of cube $I$ by $c(I)$ and the side length of cube $I$ by $l(I)$. In addition, given a sequence $\{f_k(x)\}_{k\in\bbbz}$ of functions defined on $\bbbr^n$ and $0<p,q\leq\infty$, we use the notation $\|\{f_k\}_{k\in\bbbz}\|_{L^p(l^q)}$ to denote the $\|\cdot\|_{L^p(\bbbr^n)}$-quasinorm of the $\|\cdot\|_{l^q}$-quasinorm of the sequence $\{f_k(x)\}_{k\in\bbbz}$. And ``$\mvint_{}$'' denotes the mean value integral.\\

\section{Preliminaries}\label{Preliminaries}
In order to write this paper in a more self-included fashion, we also cite useful results directly from the literature and the proofs of cited results can be found from their respective source. Recall the definition of the gamma function below
\begin{equation}\label{eq1-1}
\varGamma(z)\!:=\!\!\!\int_0^{\infty}\!\!\!\!\!t^{z-1}e^{-t}dt\quad\text{for a complex number $z$ with $\operatorname{Re}z\!>\!0$,}
\end{equation}
and the gamma function has a meromorphic extension for all $z\in\bbbc$ (cf. \cite[Appendix A.5]{14classical}). We denote the Poisson kernel by $P(x)=C_0(1+|x|^2)^{-\frac{n+1}{2}}$ for $x\in\bbbr^n$, and we also designate here that throughout the whole paper, we denote
	\begin{equation}\label{eq1-2}
		C_0:=\frac{\varGamma(\frac{n+1}{2})}{\pi^{\frac{n+1}{2}}}\quad\text{(cf. \cite[section 2.1.2]{14classical}).}
	\end{equation}
Furthermore, we use the notation $P_t(x)=\frac{1}{t^n}P(\frac{x}{t})=C_0 t(t^2+|x|^2)^{-\frac{n+1}{2}}$ for $x\in\bbbr^n$ and $t>0$ then it is a well-known result that 
	\begin{equation}\label{eq1-3}
		\FT_n P_t(\xi)=\FT_n P(t\xi)=e^{-2\pi t|\xi|}.
	\end{equation}
	For $1\leq k\leq n$, we have
	\begin{equation}\label{eq1-3-1}
		\partial_k[P_t(x)]=\frac{\partial}{\partial x_k}[P_t(x)]=\frac{-(n+1)C_0tx_k}{(t^2+|x|^2)^{\frac{n+3}{2}}},
	\end{equation}
	and we can use integration by parts with respect to $x_k$ and (\ref{eq1-3}) to find that the Fourier transform $\FT_n[\partial_k[P_t(x)]](\xi)$ is given by the formula
	\begin{equation}\label{eq2-81}
		\FT_n[\partial_k[P_t(x)]](\xi)=2\pi i\xi_k\cdot e^{-2\pi t|\xi|}.
	\end{equation}
	Furthermore, we have
	\begin{equation}\label{eq1.60}
		\int_{\bbbr^n}\partial_k[P_t(x)]dx=\FT_n[\partial_k[P_t(x)]](0)=0.
	\end{equation}
	When $k=n+1$, we have
	\begin{equation}\label{eq1-3-2}
		\partial_{n+1}[P_t(x)]=\frac{\partial}{\partial t}[P_t(x)]=
		\frac{C_0}{(t^2+|x|^2)^{\frac{n+1}{2}}}-\frac{C_0(n+1)t^2}{(t^2+|x|^2)^{\frac{n+3}{2}}},
	\end{equation}
	and we can use the dominated convergence theorem to exchange the differentiation with respect to $t\in(0,\infty)$ and the Fourier transform with respect to $x\in\bbbr^n$ and then we can obtain
\begin{equation}\label{eq2-82}
\FT_n[\partial_{n+1}[P_t(x)]](\xi)=\partial_{n+1}[\FT_n P_t](\xi)=-2\pi|\xi|\cdot e^{-2\pi t|\xi|}.
\end{equation}
Moreover, we have
\begin{equation}\label{eq1.61}
\int_{\bbbr^n}\partial_{n+1}[P_t(x)]dx=\FT_n[\partial_{n+1}[P_t(x)]](0)=0.
\end{equation}
Given an appropriate function $f$ defined on $\bbbr^n$, the Poisson integral of this function is a function defined on $\bbbr^{n+1}_+:=\{(x,t):x\in\bbbr^n,t>0\}$ and is denoted by 
	\begin{equation}\label{eq1-4}
		\Pint(f;x,t):=f*P_t(x)=\int_{\bbbr^n}f(y)\cdot\frac{C_0 t}{(t^2+|x-y|^2)^{\frac{n+1}{2}}}dy.
	\end{equation}
When $f\in L^p(\bbbr^n)$ for $1\leq p\leq\infty$, as demonstrated in the proof of Lemma \ref{lemma10}, we can use the dominated convergence theorem and some direct calculations to obtain
\begin{align}
&\partial_k\Pint(f;x,t)\!\!:=\!\!\frac{\partial}{\partial x_k}(\!\Pint(f;x,t))
\!\!=\!\!f*\partial_k[P_t(\cdot)](x)\nonumber\\
&=\int_{\bbbr^n}\!\!\!\!f(y)\!\cdot\!
\frac{-(n+1)C_0 t(x_k-y_k)}{(t^2+|x-y|^2)^{\frac{n+3}{2}}}dy,\label{eq1-5}
\end{align}
where $x_k$ and $y_k$ are the $k$-th coordinates of $x$ and $y$, respectively, and $1\leq k\leq n$. We also have
\begin{align}
&\partial_{n+1}\Pint(f;x,t):=\frac{\partial}{\partial t}(\Pint(f;x,t))
=f*\partial_{n+1}[P_t(\cdot)](x)\nonumber\\
&=\int_{\bbbr^n}f(y)\cdot\big(\frac{C_0}{(t^2+|x-y|^2)^{\frac{n+1}{2}}}-
\frac{C_0(n+1)t^2}{(t^2+|x-y|^2)^{\frac{n+3}{2}}}\big)dy.\label{eq1-6}
\end{align}
Furthermore, $\nabla_n\Pint(f;x,t)$ represents the $n$-dimensional gradient vector of the Poisson integral $\Pint(f;x,t)$ with respect to $x\in\bbbr^n$, and $\nabla_{n+1}\Pint(f;x,t)$ represents the $(n+1)$-dimensional gradient vector of the Poisson integral $\Pint(f;x,t)$ with respect to both $x\in\bbbr^n$ and $t>0$. 
\begin{lemma}\label{lemma15}
(i) If $F(x)\in\Lloc$ satisfies the condition that
\begin{equation}\label{eq2.206}
\int_{\bbbr^n}F(x)\!\cdot\!g(x)dx=0\quad\text{for all $g\in\Sw(\bbbr^n)$},
\end{equation}
then $F(x)=0$ for almost every $x\in\bbbr^n$.\\
(ii) If $\{F_t(x)\}_{t>0}\subseteq\Lloc$ is a family of uniformly integrable functions so that
\begin{equation}\label{eq2.207}
\lim_{t\rightarrow0}\int_{\bbbr^n}F_t(x)\!\cdot\!g(x)dx=0\quad\text{for all $g\in\Sw(\bbbr^n)$},
\end{equation}
and if there exists a subset $E\subseteq\bbbr^n$ so that for every $x\in E$ and uniformly for all $t\in(0,\infty)$, we have
\begin{equation}\label{eq2.208}
F_t(x)=\lim_{\delta\rightarrow0}\mvint_{B^n(x,\delta)}F_t(y)dy,
\end{equation}
then
\begin{equation}\label{eq2.209}
\lim_{t\rightarrow0}F_t(x)=0\quad\text{for every $x\in E\subseteq\bbbr^n$}.
\end{equation}
In particular, if $\{F_t(x)\}_{t>0}$ is a family of equicontinuous and uniformly integrable functions on $\bbbr^n$ that also satisfies condition (\ref{eq2.207}), then
\begin{equation}\label{eq2.210}
\lim_{t\rightarrow0}F_t(x)=0\quad\text{for every $x\in\bbbr^n$}.
\end{equation}
\end{lemma}
\begin{proof}[Proof of Lemma \ref{lemma15}]
Step 1: We prove part (i) by proving that condition (\ref{eq2.206}) implies $\int_K F(x)dx=0$ for every compact set $K\subseteq\bbbr^n$. Given a compact set $K\subseteq\bbbr^n$ and $\varepsilon>0$, we denote its $\varepsilon$-neighborhood is
\begin{equation}\label{eq2.211}
K_{\varepsilon}:=\bigcup_{x\in K}B^n(x,\varepsilon)=\{y\in\bbbr^n:\dist(\{y\},K)<\varepsilon\},
\end{equation}
and we can find a function $\varphi_{\varepsilon}(x)\in\Ccinfty(\bbbr^n)\subseteq\Sw(\bbbr^n)$ so that all of the following conditions are satisfied,
\begin{equation}\label{eq2.212}
0\leq\varphi_{\varepsilon}(x)\leq1\text{ for all $x\in\bbbr^n$,}
\end{equation}
\begin{equation}\label{eq2.213}
\text{$\varphi_{\varepsilon}(x)=1$ if $x\in K$, and $spt.\varphi_{\varepsilon}\subseteq K_{\varepsilon}$.}
\end{equation}
Then condition (\ref{eq2.206}) implies for all $\varepsilon>0$, we have
\begin{equation}\label{eq2.214}
\int_{\bbbr^n}F(x)\!\cdot\!\varphi_{\varepsilon}(x)dx=\int_{K_{\varepsilon}}F(x)\!\cdot\!\varphi_{\varepsilon}(x)dx=0.
\end{equation}
Thus we obtain the following estimate,
\begin{align}
&\big|\int_K F(x)dx\big|=\big|\int_K F(x)dx-
\int_{K_{\varepsilon}}F(x)\!\cdot\!\varphi_{\varepsilon}(x)dx\big|\nonumber\\
&=\!\big|\!\!\int_K\!\!\!F(x)dx\!-\!\!\int_{K}\!\!\!F(x)\!\cdot\!\varphi_{\varepsilon}(x)dx\!-\!\!
\int_{K_{\varepsilon}\setminus K}\!\!\!\!\!\!\!\!\!\!F(x)
\!\cdot\!\varphi_{\varepsilon}(x)dx\big|\nonumber\\
&\leq\int_{K_{\varepsilon}\setminus K}\!\!\!\!\!\!\!|F(x)|\!\cdot\!\varphi_{\varepsilon}(x)dx
\leq\int_{K_{\varepsilon}\setminus K}\!\!\!\!\!\!\!|F(x)|dx.\label{eq2.215}
\end{align}
Since $F(x)\in\Lloc$ and since $\lim_{\varepsilon\rightarrow0}\Lebes^n(K_{\varepsilon}\setminus K)=0$, we have
\begin{equation*}
\lim_{\varepsilon\rightarrow0}\int_{K_{\varepsilon}\setminus K}\!\!\!\!\!\!\!|F(x)|dx=0.
\end{equation*}
Sending $\varepsilon\rightarrow0$ in (\ref{eq2.215}) yields $\int_K F(x)dx=0$. Invoking the condition $F(x)\in\Lloc$ and the Lebesgue differentiation theorem, we obtain for almost every $x\in\bbbr^n$,
\begin{equation*}
F(x)=\lim_{\delta\rightarrow0}\mvint_{B^n(x,\delta)}F(y)dy=0.
\end{equation*}
Step 2: We prove part (ii). Let $x\in E\subseteq\bbbr^n$ and $\rho>0$. By condition (\ref{eq2.208}), there exists a small number $\delta_0>0$ so that uniformly for all $t\in(0,\infty)$, we have
\begin{equation}\label{eq2.216}
\big|F_t(x)-\mvint_{B^n(x,\delta_0)}F_t(y)dy\big|<\frac{1}{3}\rho.
\end{equation}
Since $\{F_t(x)\}_{t>0}$ is uniformly integrable on $\bbbr^n$, there exists a small number $\varepsilon_0>0$ so that uniformly for all $t\in(0,\infty)$, we have
\begin{equation}\label{eq2.217}
\frac{1}{\Lebes^n(B^n(x,\delta_0))}\int_{\delta_0\leq|y|<\delta_0+\varepsilon_0}|F_t(y)|dy<
\frac{1}{3}\rho.
\end{equation}
Now we pick a function $\varphi(x)\in\Ccinfty(\bbbr^n)\subseteq\Sw(\bbbr^n)$ satisfying the following conditions that $0\leq\varphi(y)\leq1$ for all $y\in\bbbr^n$, $\varphi(y)=1$ if $|y-x|<\delta_0$, and $\varphi(y)=0$ if $|y-x|\geq\delta_0+\varepsilon_0$. By condition (\ref{eq2.207}), we have
\begin{equation*}
\lim_{t\rightarrow0}\int_{\bbbr^n}\!\!\!\!F_t(y)\!\cdot\!\varphi(y)dy
=\lim_{t\rightarrow0}\int_{B^n(x,\delta_0+\varepsilon_0)}\!\!\!\!\!\!\!\!F_t(y)\!\cdot\!\varphi(y)dy=0,
\end{equation*}
hence there exists a positive number $t_0>0$ such that $0<t<t_0$ implies
\begin{equation}\label{eq2.218}
\frac{1}{\Lebes^n(B^n(x,\delta_0))}\!\cdot\!\big|\int_{B^n(x,\delta_0+\varepsilon_0)}
\!\!\!\!\!\!\!\!F_t(y)\!\cdot\!\varphi(y)dy\big|<\frac{1}{3}\rho.
\end{equation}
Combining (\ref{eq2.216}), (\ref{eq2.217}), and (\ref{eq2.218}) all together yields that for every $x\in E$, $|F_t(x)|<\rho$ whenever $0<t<t_0$, then (\ref{eq2.209}) is a consequence of the arbitrariness of $\rho>0$. If $\{F_t(x)\}_{t>0}$ is a family of equicontinuous functions on $\bbbr^n$, then condition (\ref{eq2.208}) is satisfied for every $x\in E=\bbbr^n$. The proof of Lemma \ref{lemma15} is complete.
\end{proof}
The following lemma is the famous Whitney decomposition lemma of open sets in $\bbbr^n$.
	\begin{lemma}[cf. Appendix J of \cite{14classical}]\label{lemma1}
		Let $\Omega$ be an open nonempty proper subset of $\bbbr^n$. Then there exists a family of closed cubes $\{Q_j\}_j$ (called the Whitney cubes of $\Omega$) such that\\
		(a) $\bigcup_j Q_j=\Omega$ and the $Q_j$'s have disjoint interiors.\\
		(b) $\sqrt{n}l(Q_j)\leq\dist(Q_j,\Omega^c)\leq 4\sqrt{n}l(Q_j)$. Thus $10\sqrt{n}Q_j$ meets $\Omega^c$.\\
		(c) If the boundaries of two cubes $Q_j$ and $Q_k$ touch, then $$\frac{1}{4}\leq\frac{l(Q_j)}{l(Q_k)}\leq 4.$$
		(d) For a given $Q_j$, there exist at most $12^n\!-\!4^n$ cubes $Q_k$ that touch it.\\
		(e) Let $0<\varepsilon<\frac{1}{4}$. If $Q_j^*$ has the same center as $Q_j$ and $l(Q_j^*)=(1+\varepsilon)l(Q_j)$ then 
		$$\chi_{\Omega}=\sum_j\chi_{Q_j^*}\leq 12^n-4^n+1.$$
	\end{lemma}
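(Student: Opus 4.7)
The approach is the classical Whitney dyadic scale-matching construction. For each $k \in \bbbz$, let $\mathcal{D}_k$ denote the family of closed dyadic cubes of side length $2^{-k}$, and consider the initial selection
$$\mathcal{F}_0 := \bigl\{Q \in \bigcup_{k \in \bbbz} \mathcal{D}_k : \sqrt{n}\, l(Q) \leq \dist(Q,\Omega^c) \leq 4\sqrt{n}\, l(Q)\bigr\}.$$
For fixed $x \in \Omega$, let $Q_k(x) \in \mathcal{D}_k$ be the unique dyadic cube containing $x$, and set $r_k := \dist(Q_k(x),\Omega^c)/l(Q_k(x))$. From $Q_{k+1}(x) \subset Q_k(x)$ and the triangle inequality one checks $r_{k+1} \geq 2 r_k$ and $r_{k+1} \leq 2 r_k + 2\sqrt{n}$, while $r_k \to 0$ as $k \to -\infty$ (the denominator blows up while the numerator stays bounded by $\dist(x,\Omega^c)$) and $r_k \to +\infty$ as $k \to +\infty$. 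Taking $k_0 := \min\{k : r_k \geq \sqrt{n}\}$, the estimate $r_{k_0} \leq 2 r_{k_0-1} + 2\sqrt{n} < 4\sqrt{n}$ shows $Q_{k_0}(x) \in \mathcal{F}_0$, hence $\bigcup_{Q \in \mathcal{F}_0} Q = \Omega$.

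Since any two dyadic cubes are either nested or have disjoint interiors, I would extract the final family $\{Q_j\}_j$ by discarding every cube of $\mathcal{F}_0$ strictly contained in another member of $\mathcal{F}_0$. The surviving cubes still cover $\Omega$ (the enclosing maximal cube absorbs what was discarded) and have pairwise disjoint interiors, yielding (a); property (b) is built into the definition of $\mathcal{F}_0$. For (c), if $Q_j$ and $Q_k$ touch along their boundaries then $\dist(Q_j,Q_k) = 0$, so by the triangle inequality,
$$\sqrt{n}\, l(Q_k) \leq \dist(Q_k,\Omega^c) \leq \dist(Q_j,\Omega^c) + \operatorname{diam}(Q_j) \leq 5\sqrt{n}\, l(Q_j),$$
which forces $l(Q_k)/l(Q_j) \leq 4$ since the ratio is a dyadic power of $2$; symmetry then gives (c).

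For (d), every cube $Q_k$ touching $Q_j$ satisfies $l(Q_k) \geq l(Q_j)/4$ by (c) and is contained in the concentric dilate $3 Q_j$ of $Q_j$; a direct count of dyadic cubes of such sizes fitting inside $3Q_j$, minus the $4^n$ dyadic sub-cubes of $Q_j$ itself (which are not neighbors of $Q_j$ in $\{Q_j\}_j$ after the maximality reduction), yields the bound $12^n - 4^n$. For (e), when $\varepsilon < 1/4$ the dilated cube $Q_j^*$ extends beyond $\partial Q_j$ by at most $\tfrac{\varepsilon}{2} l(Q_j) < \tfrac{1}{8} l(Q_j)$, so $Q_j^* \cap Q_k^* \neq \emptyset$ forces $Q_j$ and $Q_k$ to be equal or to touch in the sense of (c); the pointwise multiplicity estimate on $\sum_j \chi_{Q_j^*}$ then follows from (d) applied at the unique $Q_{j_0}$ containing the given point. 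The main obstacle I expect is the precise calibration of the window $[\sqrt{n}, 4\sqrt{n}]$ in the selection step: the multiplicative gap of $4$ is forced by the between-scales estimate $r_{k+1} \leq 2 r_k + 2\sqrt{n}$, and any narrower window would allow the sequence $r_k$ to leap across without ever landing inside, breaking the covering claim. Everything else is routine dyadic geometry.
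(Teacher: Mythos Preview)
The paper does not give its own proof of this lemma; it merely cites Appendix~J of Grafakos, \emph{Classical Fourier Analysis}, and then moves on to Lemma~\ref{lemma2}. Your write-up is exactly the standard construction found in that reference, and the covering argument via the ratio $r_k=\dist(Q_k(x),\Omega^c)/l(Q_k(x))$ together with the maximality extraction is correct and cleanly presented.

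Two small points of imprecision are worth flagging. In (d), it is not true that a touching cube $Q_k$ is contained in $3Q_j$ when $l(Q_k)=4\,l(Q_j)$; what one actually shows is that each touching $Q_k$ contains a dyadic sub-cube of side $l(Q_j)/4$ that lies in $3Q_j\setminus\operatorname{int}(Q_j)$, and these sub-cubes are pairwise disjoint, giving the bound $12^n-4^n$. In (e), the implication ``$Q_j^*\cap Q_k^*\neq\emptyset\Rightarrow Q_j$ touches $Q_k$'' is not a consequence of the dyadic geometry alone (non-touching dyadic cubes can have arbitrarily large size ratio); one must also invoke (b) to rule out the case $l(Q_k)\gg l(Q_j)$. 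Both gaps are routine to fill and do not affect the overall correctness of your outline.
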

	In Lemma \ref{lemma2} below we show that if two Whitney cubes produced by Lemma \ref{lemma1} are sufficiently far away from each other, then the distance between one arbitrary point in one cube and another arbitrary point in the other cube is comparable to the distance between the two centers of these cubes.
	\begin{lemma}\label{lemma2}
		Let $\Omega$ be an open nonempty proper subset of $\bbbr^n$ and the collection $\{Q_j\}_j$ of Whitney cubes of $\Omega$ satisfies the conclusions of Lemma \ref{lemma1}. Then for two cubes $Q_j$ and $Q_m$ in this collection we further have the following properties.\\
		(1) If $|c(Q_m)-c(Q_j)|>11(\sqrt{n}+1)l(Q_m)$, and if $y\in Q_m$, $z\in Q_j$ are arbitrary points in the respective cube, then we have
		\begin{equation}\label{eq.lemma2-1}
			|y-c(Q_j)|\geq\frac{\sqrt{n}+1}{2}l(Q_j),
		\end{equation}
		and
		\begin{equation}\label{eq.lemma2-2}
			C_1|y-c(Q_j)|\leq|y-z|\leq C_2|y-c(Q_j)|,
		\end{equation}
		where $C_1=1-\frac{\sqrt{n}}{\sqrt{n}+1}$, $C_2=1+\frac{\sqrt{n}}{\sqrt{n}+1}$ and $0<C_1<1<C_2$. Furthermore, we have
		\begin{equation}\label{eq.lemma2-3}
			C_3|c(Q_m)-c(Q_j)|\leq|y-z|\leq C_4|c(Q_m)-c(Q_j)|,
		\end{equation}
		where $C_3=(1-\frac{\sqrt{n}}{\sqrt{n}+1})(1-\frac{\sqrt{n}}{22(\sqrt{n}+1)})$, $C_4=(1+\frac{\sqrt{n}}{\sqrt{n}+1})(1+\frac{\sqrt{n}}{22(\sqrt{n}+1)})$, and $0<C_3<1<C_4$.\\
		(2) If $|c(Q_m)-c(Q_j)|\leq 11(\sqrt{n}+1)l(Q_m)$, then the cube $Q_j$ is contained in the $n$-dimensional ball that has the same center with $Q_m$ and has radius $11(\sqrt{n}+1)^2 l(Q_m)$.
	\end{lemma}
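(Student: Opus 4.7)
The plan is to prove part (1) by first establishing the pointwise lower bound $|y - c(Q_j)| \geq \tfrac{\sqrt{n}+1}{2}l(Q_j)$ via contradiction, after which the remaining two-sided estimates reduce to routine triangle-inequality manipulations. Assuming toward contradiction that $|y - c(Q_j)| < \tfrac{\sqrt{n}+1}{2}l(Q_j)$, combining with $|c(Q_m) - c(Q_j)| \leq |c(Q_m) - y| + |y - c(Q_j)| < \tfrac{\sqrt{n}}{2} l(Q_m) + \tfrac{\sqrt{n}+1}{2}l(Q_j)$ and the hypothesis $|c(Q_m) - c(Q_j)| > 11(\sqrt{n}+1) l(Q_m)$ yields $l(Q_j) > \tfrac{21\sqrt{n}+22}{\sqrt{n}+1} l(Q_m) > 21 l(Q_m)$. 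Property (c) of Lemma \ref{lemma1} then forces $Q_j$ and $Q_m$ to be disjoint (touching Whitney cubes satisfy $l(Q_j)/l(Q_m) \leq 4$), so $y \notin Q_j$.

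The line segment from $y$ to $c(Q_j)$ therefore crosses $\partial Q_j$ at some point $p$; since the distance from the center of a cube to any of its faces is $\tfrac{1}{2}l(Q_j)$, we have $|p - c(Q_j)| \geq \tfrac{1}{2}l(Q_j)$, and hence $\dist(y, Q_j) \leq |y - p| = |y - c(Q_j)| - |p - c(Q_j)| \leq \tfrac{\sqrt{n}}{2} l(Q_j)$. Letting $q \in Q_j$ be the closest point to $y$, for any $w \in \Omega^c$ the Whitney lower bound $|q - w| \geq \dist(Q_j,\Omega^c)\geq \sqrt{n}\,l(Q_j)$ gives $|y - w| \geq |q - w| - |y - q| \geq \tfrac{\sqrt{n}}{2}l(Q_j)$, so $\dist(y,\Omega^c)\geq \tfrac{\sqrt{n}}{2}l(Q_j)$. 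On the other hand, the standard Whitney estimate $\dist(y,\Omega^c)\leq \dist(Q_m,\Omega^c) + \sqrt{n}\,l(Q_m) \leq 5\sqrt{n}\,l(Q_m)$ gives $l(Q_j)\leq 10\,l(Q_m)$, contradicting $l(Q_j) > 21\,l(Q_m)$.

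Given the key inequality, $|z - c(Q_j)| \leq \tfrac{\sqrt{n}}{2}l(Q_j) \leq \tfrac{\sqrt{n}}{\sqrt{n}+1}|y - c(Q_j)|$ yields $C_1|y-c(Q_j)|\leq|y-z|\leq C_2|y-c(Q_j)|$ via the triangle inequality, and chaining with $\big||y-c(Q_j)|-|c(Q_m)-c(Q_j)|\big|\leq \tfrac{\sqrt{n}}{2}l(Q_m)\leq \tfrac{\sqrt{n}}{22(\sqrt{n}+1)}|c(Q_m)-c(Q_j)|$ (which invokes the hypothesis) produces the stated bounds with constants $C_3$, $C_4$. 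For part (2), one bounds $l(Q_j)$ from above: $\sqrt{n}\,l(Q_j) \leq \dist(c(Q_j),\Omega^c) \leq \dist(c(Q_m),\Omega^c) + |c(Q_m) - c(Q_j)| \leq \tfrac{9\sqrt{n}}{2}l(Q_m) + 11(\sqrt{n}+1)l(Q_m)$ gives $l(Q_j)\leq \big(\tfrac{9}{2}+\tfrac{11(\sqrt{n}+1)}{\sqrt{n}}\big)l(Q_m)$. Then for $z\in Q_j$, the triangle inequality $|z - c(Q_m)| \leq \tfrac{\sqrt{n}}{2}l(Q_j) + |c(Q_j) - c(Q_m)|$ produces $|z-c(Q_m)|\leq \tfrac{75\sqrt{n}+66}{4}l(Q_m)$, and an elementary check confirms $\tfrac{75\sqrt{n}+66}{4}\leq 11(\sqrt{n}+1)^2$ for all $n\geq 1$.

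The main obstacle is the key inequality $|y-c(Q_j)|\geq \tfrac{\sqrt{n}+1}{2}l(Q_j)$: a naive approach relying only on the upper bound $l(Q_j)\leq \tfrac{9}{2}l(Q_m)+|c(Q_j)-c(Q_m)|/\sqrt{n}$ combined with the triangle inequality fails to close the argument uniformly in $n$ (it breaks down in low dimensions), so the geometric contradiction above, which uses the Whitney lower bound $\dist(Q_j,\Omega^c)\geq \sqrt{n}\,l(Q_j)$ to squeeze $\dist(y,\Omega^c)$ between two incompatible quantities, is essential.
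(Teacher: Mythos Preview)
Your proof is correct. The arguments for \eqref{eq.lemma2-2} and \eqref{eq.lemma2-3} match the paper's exactly; the differences lie in how you establish the key inequality \eqref{eq.lemma2-1} and how you bound $l(Q_j)$ in part (2).

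For \eqref{eq.lemma2-1}, the paper argues directly by splitting into the cases $l(Q_j)<20\,l(Q_m)$ (where the triangle inequality from $|c(Q_m)-c(Q_j)|>11(\sqrt{n}+1)l(Q_m)$ already suffices) and $l(Q_j)\geq 20\,l(Q_m)$ (where one picks $w\in\Omega^c$ close to $Q_m$ and uses $|y-c(Q_j)|\geq |w-c(Q_j)|-|w-y|\geq \dist(Q_j,\Omega^c)+\tfrac{1}{2}l(Q_j)-7\sqrt{n}\,l(Q_m)$). Your contradiction argument is really the same dichotomy in disguise: the assumed failure of \eqref{eq.lemma2-1} forces $l(Q_j)>21\,l(Q_m)$ (the ``large'' case), and then the Whitney lower bound $\dist(Q_j,\Omega^c)\geq \sqrt{n}\,l(Q_j)$ is what yields the contradiction, just as it drives the paper's second case. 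So the two proofs use identical ingredients, only organized differently.

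For part (2), the paper's route is shorter: since $Q_j\neq Q_m$ have disjoint interiors, their inscribed balls are disjoint, giving $\tfrac{1}{2}l(Q_j)+\tfrac{1}{2}l(Q_m)\leq |c(Q_j)-c(Q_m)|\leq 11(\sqrt{n}+1)l(Q_m)$ and hence $l(Q_j)<22(\sqrt{n}+1)l(Q_m)$; then $|z-c(Q_m)|\leq \tfrac{\sqrt{n}}{2}l(Q_j)+11(\sqrt{n}+1)l(Q_m)<11(\sqrt{n}+1)^2 l(Q_m)$ with no residual numerical check. Your approach via $\dist(c(Q_j),\Omega^c)$ also works, but the inscribed-ball observation avoids invoking the Whitney distance bound entirely and lands exactly on the stated radius.
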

	\begin{proof}[Proof of Lemma \ref{lemma2}]
		First we prove Lemma \ref{lemma2} (1). If $|c(Q_m)-c(Q_j)|>11(\sqrt{n}+1)l(Q_m)$ and $l(Q_j)<20l(Q_m)$, then for $y\in Q_m$ we have
		\begin{align}
			&|y-c(Q_j)|\geq |c(Q_m)-c(Q_j)|-|c(Q_m)-y|\nonumber\\
			&>11(\sqrt{n}+1)l(Q_m)-\frac{\sqrt{n}}{2}l(Q_m)\nonumber\\
			&>(\frac{21}{2}\sqrt{n}+11)\cdot\frac{l(Q_j)}{20}>\frac{\sqrt{n}+1}{2}\cdot l(Q_j).\label{eq17}
		\end{align}
		If $|c(Q_m)-c(Q_j)|>11(\sqrt{n}+1)l(Q_m)$ and $l(Q_j)\geq 20l(Q_m)$, then we pick $w\in\Omega^c$ and $y'\in Q_m$ such that
		\begin{equation}\label{eq18}
			\dist(Q_m,\Omega^c)\leq|w-y'|<\frac{3}{2}\dist(Q_m,\Omega^c)\leq 6\sqrt{n}l(Q_m),
		\end{equation}
		by Lemma \ref{lemma1} (b) and the definition of $\dist(Q_m,\Omega^c)$. Since both $y$ and $y'$ belong to $Q_m$, we have
		\begin{equation}\label{eq19}
			|w-y|\leq|w-y'|+|y'-y|<7\sqrt{n}l(Q_m).
		\end{equation}
		Because the ball centered at $c(Q_j)$ and inscribed in the cube $Q_j$ has radius $\frac{1}{2}l(Q_j)$, we can obtain
		\begin{align}
			&|y-c(Q_j)|\geq|w-c(Q_j)|-|w-y|\nonumber\\
			&>\dist(Q_j,\Omega^c)+\frac{1}{2}l(Q_j)-7\sqrt{n}l(Q_m)\nonumber\\
			&\geq(\sqrt{n}+\frac{1}{2})l(Q_j)-\frac{7\sqrt{n}}{20}l(Q_j)>\frac{\sqrt{n}+1}{2}\cdot l(Q_j).\label{eq20}
		\end{align}
		Therefore (\ref{eq.lemma2-1}) is proved. If $z\in Q_j$, then we use (\ref{eq.lemma2-1}) to obtain
		\begin{align}
			&|y-z|\leq|y-c(Q_j)|+|c(Q_j)-z|\nonumber\\
			&\leq|y-c(Q_j)|+\frac{\sqrt{n}}{2}l(Q_j)\nonumber\\
			&\leq(1+\frac{\sqrt{n}}{1+\sqrt{n}})|y-c(Q_j)|,\nonumber
		\end{align}
		and
		\begin{align}
			&|y-z|\geq|y-c(Q_j)|-|c(Q_j)-z|\nonumber\\
			&\geq|y-c(Q_j)|-\frac{\sqrt{n}}{2}l(Q_j)\nonumber\\
			&\geq(1-\frac{\sqrt{n}}{1+\sqrt{n}})|y-c(Q_j)|.\nonumber
		\end{align}
		Thus (\ref{eq.lemma2-2}) is proved. When $|c(Q_m)-c(Q_j)|>11(\sqrt{n}+1)l(Q_m)$ and $y\in Q_m$, we have
		\begin{align}
			&|y-c(Q_j)|\leq|y-c(Q_m)|+|c(Q_m)-c(Q_j)|\nonumber\\
			&\leq\frac{\sqrt{n}}{2}l(Q_m)+|c(Q_m)-c(Q_j)|\nonumber\\
			&<(1+\frac{\sqrt{n}}{22(\sqrt{n}+1)})|c(Q_m)-c(Q_j)|,\label{eq21}
		\end{align}
		and
		\begin{align}
			&|y-c(Q_j)|\geq|c(Q_m)-c(Q_j)|-|y-c(Q_m)|\nonumber\\
			&\geq|c(Q_m)-c(Q_j)|-\frac{\sqrt{n}}{2}l(Q_m)\nonumber\\
			&>(1-\frac{\sqrt{n}}{22(\sqrt{n}+1)})|c(Q_m)-c(Q_j)|.\label{eq22}
		\end{align}
		Applying (\ref{eq21}) to the right side of (\ref{eq.lemma2-2}) and applying (\ref{eq22}) to the left side of (\ref{eq.lemma2-2}) yields the desired inequality (\ref{eq.lemma2-3}).\\
		
		\begin{figure}[!htbp]
			\begin{tikzpicture}
				\begin{scope}[xshift=4.25cm,yshift=-1.8cm]
					\node[right] at (0,0) {$c(Q_m)$};
					\fill (0,0) circle[radius=0.05];
					\draw[line width=1] (2,2) -- node[right] {$l(Q_m)$} (2,-2);
					\draw[line width=1] (2,-2) -- (-2,-2);
					\draw[line width=1] (-2,-2) -- (-2,2);
					\draw[line width=1] (-2,2) -- (2,2);
					\draw[dotted] (0,0) -- (0,2);
					\draw[domain=0:360,smooth,variable=\t] plot ({2*cos(\t)},{2*sin(\t)});
				\end{scope}
				\begin{scope}
					\node[below] at (0,0) {$c(Q_j)$};
					\fill (0,0) circle[radius=0.05];
					\draw[line width=1] (-1,-1) -- node[left] {$l(Q_j)$} (-1,1);
					\draw[line width=1] (-1,1) -- (1,1);
					\draw[line width=1] (1,1) -- (1,-1);
					\draw[line width=1] (1,-1) -- (-1,-1);
					\draw[dotted] (0,0) -- (0,1);
					\draw[domain=0:360,smooth,variable=\t] plot ({1*cos(\t)},{1*sin(\t)});
				\end{scope}
				\draw[dotted] (0,0) -- (4.25,-1.8);
			\end{tikzpicture}
			\caption{Lemma \ref{lemma2} (2). If cubes $Q_j$ and $Q_m$ have disjoint interiors, then the two $n$-dimensional balls inscribed in $Q_j$ and $Q_m$ are disjoint balls, and thus the sum of radii of these two balls is less than or equal to the distance between the centers of these two balls.}
			\label{figure2}
		\end{figure}
		Now we prove Lemma \ref{lemma2} (2). If $|c(Q_m)-c(Q_j)|\leq 11(\sqrt{n}+1)l(Q_m)$, then the conclusion is trivial if $Q_m=Q_j$. If $Q_m\neq Q_j$, then they must have disjoint interiors and hence the two $n$-dimensional balls inscribed in $Q_m$ and $Q_j$, i.e. the balls $B^n(c(Q_m),\frac{1}{2}l(Q_m))$ and $B^n(c(Q_j),\frac{1}{2}l(Q_j))$, must be disjoint balls and we have
		\begin{equation}
			\frac{1}{2}l(Q_m)+\frac{1}{2}l(Q_j)\leq|c(Q_m)-c(Q_j)|\leq 11(\sqrt{n}+1)l(Q_m),
		\end{equation}
		and thus
		\begin{equation}\label{eq23}
			l(Q_j)<22(\sqrt{n}+1)l(Q_m).
		\end{equation}
		For $z\in Q_j$, we can obtain the following inequality
		\begin{align*}
			&|z-c(Q_m)|\leq|z-c(Q_j)|+|c(Q_j)-c(Q_m)|\\
			&\leq\frac{\sqrt{n}}{2}l(Q_j)+11(\sqrt{n}+1)l(Q_m)\\
			&<11(\sqrt{n}+1)^2 l(Q_m),
		\end{align*}
		therefore the cube $Q_j$ is contained in the ball 
		\begin{equation*}
			B^n(c(Q_m),11(\sqrt{n}+1)^2 l(Q_m)).\qedhere
		\end{equation*}
	\end{proof}
	According to Lemma \ref{lemma2}, for two Whitney cubes $Q_j$ and $Q_m$ of the set $\Omega$, we define the following relations:
	\begin{align}
		\text{``$Q_j$ near $Q_m$''}&\text{ means }
		|c(Q_m)-c(Q_j)|\leq 11(\sqrt{n}+1)l(Q_m),\label{near}\\
		\text{``$Q_j$ far from $Q_m$''}&\text{ means }
		|c(Q_m)-c(Q_j)|>11(\sqrt{n}+1)l(Q_m),\label{far}
	\end{align}
	where $c(Q_j)$ and $c(Q_m)$ represent the centers of cubes $Q_j$ and $Q_m$, respectively, and $l(Q_m)$ is the side length of cube $Q_m$. Lemma \ref{lemma3} below is cited from \cite{fefferman1970} and it has been used frequently through out the proofs of our main results so we also include a detailed proof for this lemma. For a locally integrable function $f$, we denote the uncentered Hardy-Littlewood maximal function by
	\begin{equation*}
		M(f)(x)=\sup_{x\in I}\mvint_I|f(y)|dy,
	\end{equation*}
	where the supremum is taken over all the cubes $I\subseteq\bbbr^n$ containing the point $x\in\bbbr^n$.
\begin{lemma}[cf. \cite{fefferman1970}]\label{lemma3}
For $1\leq p<\infty$, let $f\in L^p(\bbbr^n)$ be a function and let $\alpha$ be a real number satisfying $0<\alpha<\esssup_{x\in\bbbr^n}|f(x)|$. Then the set $\Omega:=\{x\in\bbbr^n:M(|f|^p)(x)>\alpha^p\}$ is an open nonempty proper subset of $\bbbr^n$, and there exist a collection of cubes $\{I_j\}_j$ and two functions, the ``good'' function $g$ and the ``bad'' function $b$, satisfying the following conditions.\\
(i) The $I_j$'s are pairwise disjoint, $\Omega=\bigcup_j I_j$ and $$\sum_j\Lebes^n(I_j)\leq C_5\alpha^{-p}\|f\|_{L^p(\bbbr^n)}^p.$$
(ii) For almost every $x\notin\Omega$, $|f(x)|\leq\alpha$.\\
(iii) For any one of the cubes $\{I_j\}_j$, $\mvint_{I_j}|f(y)|^p dy\leq C_6\alpha^p$.\\
(iv) For any cube $I_j$ of the collection, if $\tilde{I_j}$ denotes the cube concentric with $I_j$ but with twice the side length of $I_j$. Then every point of $\bbbr^n$ lies in at most $[\nu_n(40n)^n]+1$ cubes of the collection $\{\tilde{I_j}\}_j$.\\
(v) $f=g+b$ and $b$ is supported in $\Omega$.\\
(vi) We have $\mvint_{\!I_j}\!|b(y)|^p dy\!\!\leq\!\!C_7\alpha^p$ and $\int_{\!I_j}\!|b(y)|dy\!\!\leq\!\!C_8\alpha\Lebes^n(I_j)$ for each cube $I_j$.\\
(vii) $\int_{I_j}b(y)dy=0$ for each cube $I_j$.\\
(viii) For almost every $x\!\in\!\bbbr^n$, $|g(x)|\!\leq\!C_9\alpha$ and $\|g\|_{L^p(\bbbr^n)}\!\leq\!\|f\|_{L^p(\bbbr^n)}$.\\
The dependence of the above constants on parameters is given as follows: $C_5=C_5(n)$, $C_6=C_6(n)$, $C_7=C_7(n,p)$, $C_8=C_8(n,p)$ and $C_9=C_9(n,p)$.
\end{lemma}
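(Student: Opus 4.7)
The plan is to combine the classical Calder\'on--Zygmund stopping-time idea with the Whitney decomposition already supplied by Lemma \ref{lemma1}. First I would verify the structural claims about $\Omega$: the set is open because $M(|f|^p)$ is lower semicontinuous on $\bbbr^n$; it is nonempty because $\alpha<\esssup|f|$ means there is a set $E$ of positive measure where $|f|^p>\alpha^p$, and by the Lebesgue differentiation theorem $M(|f|^p)\geq|f|^p>\alpha^p$ on a full-measure subset of $E$; it is proper because the weak-type $(1,1)$ bound of the uncentered maximal operator applied to $|f|^p\in L^1(\bbbr^n)$ yields $\Lebes^n(\Omega)\leq C\alpha^{-p}\|f\|_{L^p(\bbbr^n)}^p<\infty$, which also proves (i) once we decompose.

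Next I would apply Lemma \ref{lemma1} to $\Omega$ to produce the closed cubes $\{I_j\}_j$ with pairwise disjoint interiors, $\bigcup_j I_j=\Omega$, and $\sqrt{n}\,l(I_j)\leq\dist(I_j,\Omega^c)\leq 4\sqrt{n}\,l(I_j)$. Property (iii) would then follow by picking, for each $I_j$, a witness $w\in\Omega^c$ with $|w-c(I_j)|\leq(\tfrac{9\sqrt{n}}{2})l(I_j)$; the cube $I_j^\sharp$ centered at $c(I_j)$ of side length $C\, l(I_j)$ (large enough to contain both $I_j$ and $w$, with $C=C(n)$) satisfies $M(|f|^p)(w)\geq\mvint_{I_j^\sharp}|f|^p\geq(|I_j|/|I_j^\sharp|)\mvint_{I_j}|f|^p$, and the left side is $\leq\alpha^p$ because $w\in\Omega^c$. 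Property (ii) is immediate from Lebesgue differentiation applied to $|f|^p$ at $\Omega^c$.

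For property (iv), which I expect to be the trickiest piece because Lemma \ref{lemma1}(e) only provides bounded overlap for dilations $1+\varepsilon$ with $\varepsilon<\tfrac14$, I would use a direct packing argument. Fix $x\in\bbbr^n$ and suppose $x\in\tilde{I_j}$ for cubes $I_{j_1},\dots,I_{j_k}$. Then $|x-c(I_{j_i})|\leq l(I_{j_i})$ and $\dist(x,I_{j_i})\leq \tfrac12 l(I_{j_i})$, so combining with the Whitney property I obtain two-sided comparability of $l(I_{j_i})$ with $\dist(x,\Omega^c)$, specifically $\frac{2}{11\sqrt{n}}\dist(x,\Omega^c)\leq l(I_{j_i})\leq \frac{2}{2\sqrt{n}-1}\dist(x,\Omega^c)$. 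In particular every $I_{j_i}$ sits inside the common ball $B^n(x,R)$ with $R\lesssim\dist(x,\Omega^c)$, and their interiors are disjoint, so summing volumes gives $k\leq\nu_n(R/l_{\min})^n\leq\nu_n(40n)^n$, as claimed.

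Finally I would set $c_j:=\mvint_{I_j}f\,dy$, $b_j:=(f-c_j)\chi_{I_j}$, $b:=\sum_j b_j$, and $g:=f-b$. Then (v) and (vii) are immediate from the definitions, while (vi) for $p\geq 1$ follows from Minkowski/Jensen together with (iii): $\mvint_{I_j}|b|^p\lesssim\mvint_{I_j}|f|^p+|c_j|^p\lesssim\mvint_{I_j}|f|^p\lesssim\alpha^p$. For (viii), on each $I_j$ we have $|g|=|c_j|\leq(\mvint_{I_j}|f|^p)^{1/p}\lesssim\alpha$ by Jensen and (iii), while on $\Omega^c$ we have $|g|=|f|\leq\alpha$ almost everywhere by (ii); the norm estimate $\|g\|_{L^p(\bbbr^n)}\leq\|f\|_{L^p(\bbbr^n)}$ follows by applying Jensen cube-by-cube on $\Omega$ and identifying $g=f$ on $\Omega^c$. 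Tracking the constants through these estimates yields the stated parameter dependencies $C_5,C_6=C_6(n)$ and $C_7,C_8,C_9$ depending on $(n,p)$.
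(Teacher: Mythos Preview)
Your proposal is correct and follows essentially the same route as the paper: verify $\Omega$ is open/nonempty/proper via Lebesgue differentiation and the weak-type $(1,1)$ bound for $M$, apply the Whitney decomposition of Lemma~\ref{lemma1}, prove (iii) by enlarging $I_j$ until it touches $\Omega^c$ and testing the maximal function at a witness point, prove (iv) by a packing argument based on two-sided comparability of $l(I_j)$ with $\dist(x,\Omega^c)$, and define $g,b$ via cube averages. One small slip: for $x\in\tilde{I_j}$ the correct Euclidean bounds are $|x-c(I_j)|\leq\sqrt{n}\,l(I_j)$ and $\dist(x,I_j)\leq\tfrac{\sqrt{n}}{2}l(I_j)$, not the constants you wrote; the paper tracks these carefully to arrive at $\tfrac{1}{8\sqrt{n}}\dist(x,\Omega^c)\leq l(I_j)\leq 3\dist(x,\Omega^c)$ and containment in a ball of radius $5\sqrt{n}\dist(x,\Omega^c)$, which is what actually yields the bound $[\nu_n(40n)^n]+1$.
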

\begin{proof}[Proof of Lemma \ref{lemma3}]
First we prove $\Omega=\{x\in\bbbr^n:M(|f|^p)(x)>\alpha^p\}$ is an open nonempty proper subset of $\bbbr^n$. If for all $x\in\bbbr^n$, $M(|f|^p)(x)\leq\alpha^p$, then $f\in L^p(\bbbr^n)$ for $1\leq p<\infty$ implies $|f(x)|^p\in L^1(\bbbr^n)$ is locally integrable on $\bbbr^n$. By Lebesgue differentiation theorem, we have 
\begin{equation}\label{eq1.86}
|f(x)|^p\!=\!\lim_{\substack{x=c(I)\\l(I)\rightarrow 0}}\mvint_I\!|f(y)|^p dy\!\leq\!\alpha^p
\quad\text{for almost every $x\!\in\!\bbbr^n$.} 
\end{equation}
Thus we have $\esssup_{x\in\bbbr^n}|f(x)|\leq\alpha<\esssup_{x\in\bbbr^n}|f(x)|$ and this is a contradiction, hence $\Omega$ is nonempty. If $x\in\Omega$ and the uncentered Hardy-Littlewood maximal function satisfies $M(|f|^p)(x)>\alpha^p$, then there exists a cube $I_x\ni x$ such that $\mvint_{I_x}|f(y)|^p dy>\alpha^p$. And for every $z\in I_x$, we have $M(|f|^p)(z)\geq\mvint_{I_x}|f(y)|^p dy>\alpha^p$ and thus the interior of the cube $I_x$ is included in $\Omega$, hence the set $\Omega$ is an open set. If $\Omega$ is all of $\bbbr^n$ and $\Lebes^n(\Omega)=\infty$, because the uncentered Hardy-Littlewood maximal function $M$ is a bounded operator from $L^1(\bbbr^n)$ into $L^{1,\infty}(\bbbr^n)$, then by \cite[Theorem 2.1.6]{14classical} we have
		\begin{equation}\label{eq24}
			\alpha^p\Lebes^n(\Omega)\leq C_5\| |f|^p \|_{L^1(\bbbr^n)}=C_5\|f\|_{L^p(\bbbr^n)}^p<\infty,
		\end{equation}
where $C_5=C_5(n)$. This is a contradiction, therefore $\Omega$ must be a proper subset of $\bbbr^n$. Now because $\Omega$ satisfies all the conditions of Lemma \ref{lemma1}, we apply Lemma \ref{lemma1} to $\Omega$ and obtain a collection of cubes $\{I_j\}_j$ having disjoint interiors, and $\Omega=\bigcup_j I_j$. By (\ref{eq24}), we have
		\begin{equation}\label{eq25}
			\sum_j\Lebes^n(I_j)=\Lebes^n(\Omega)\leq C_5\alpha^{-p}\|f\|_{L^p(\bbbr^n)}^p,
		\end{equation}
and Lemma \ref{lemma3} (i) is proved. To prove Lemma \ref{lemma3} (ii), since $|f|^p\in L^1(\bbbr^n)$ and almost every point in $\bbbr^n$ is a Lebesgue point of $|f|^p$, thus Lemma \ref{lemma3} (ii) is a consequence of (\ref{eq1.86}). To prove Lemma \ref{lemma3} (iii), let $I_j$ be a cube in the collection, by Lemma \ref{lemma1} we have
		\begin{equation}\label{eq26}
			\sqrt{n}l(I_j)\leq\dist(I_j,\Omega^c)\leq 4\sqrt{n}l(I_j),
		\end{equation}
		thus if $\bar{I_j}$ is the cube concentric with $I_j$ but with the side length $l(\bar{I_j})=10\sqrt{n}l(I_j)$ then the set $\bar{I_j}\bigcap\Omega^c$ is nonempty. Otherwise, $\bar{I_j}$ is contained in $\Omega$ and hence
		\begin{equation}\label{eq27}
			\dist(I_j,\Omega^c)\geq\dist(I_j,\bar{I_j}^c)\geq \frac{1}{2}l(\bar{I_j})-\frac{1}{2}l(I_j)>4\sqrt{n}l(I_j),
		\end{equation}
		and (\ref{eq27}) poses a contradiction to (\ref{eq26}). And if $w$ is a point in the set $\bar{I_j}\bigcap\Omega^c$, then we have
		\begin{equation}\label{eq28}
			\frac{\int_{I_j}|f(y)|^p dy}{(10\sqrt{n})^n\Lebes^n(I_j)}\leq
			\mvint_{\bar{I_j}}|f(y)|^p dy\leq M(|f|^p)(w)\leq\alpha^p.
		\end{equation}
		This inequality proves Lemma \ref{lemma3} (iii) with $C_6=(10\sqrt{n})^n$. To prove Lemma \ref{lemma3} (iv), let $x\in\bbbr^n$ and $x\in\tilde{I_j}$ for some $j$, then $|x-c(I_j)|\leq\sqrt{n}l(I_j)$. This tells $x$ must belong to $\Omega$, otherwise $$\sqrt{n}l(I_j)\leq \dist(I_j,\Omega^c)<|c(I_j)-x|\leq\sqrt{n}l(I_j),$$ and we obtain a contradiction. Therefore $x\in\Omega$ and $\dist(\{x\},\Omega^c)>0$. Let $y\in\Omega^c$, $z\in I_j$ satisfy
		\begin{equation}\label{eq29}
			\sqrt{n}l(I_j)\leq\dist(I_j,\Omega^c)\leq|y-z|<\frac{3}{2}\dist(I_j,\Omega^c)\leq 6\sqrt{n}l(I_j).
		\end{equation}
		Then $|y-c(I_j)|\leq|y-z|+|z-c(I_j)|<6\sqrt{n}l(I_j)+\frac{1}{2}\sqrt{n}l(I_j)$, and we have
		\begin{equation}\label{eq30}
			\dist(\{x\},\Omega^c)\leq|x-y|\leq|x-c(I_j)|+|y-c(I_j)|<8\sqrt{n}l(I_j).
		\end{equation}
		Furthermore, if $w\in\Omega^c$ satisfies
		\begin{equation}\label{eq31}
			\dist(\{x\},\Omega^c)\leq|w-x|<\frac{3}{2}\dist(\{x\},\Omega^c),
		\end{equation}
		then because the ball centered at $c(I_j)$ with radius $\frac{1}{2}l(I_j)$ is inscribed in the cube $I_j$, we have
		\begin{align}
			&|w-x|\geq|w-c(I_j)|-|x-c(I_j)|\nonumber\\
			&\geq\dist(I_j,\Omega^c)+\frac{1}{2}l(I_j)-\sqrt{n}l(I_j)\geq\frac{1}{2}l(I_j).\label{eq32}
		\end{align}
		Inequalities (\ref{eq30}), (\ref{eq31}), (\ref{eq32}) tell us that the condition $x\in\tilde{I_j}$ implies
		\begin{equation}\label{eq33}
			\frac{\dist(\{x\},\Omega^c)}{8\sqrt{n}}\leq l(I_j)\leq 3\cdot\dist(\{x\},\Omega^c).
		\end{equation}
		Hence the cube $I_j$ is contained in a ball centered at $x$ with radius $5\sqrt{n}\cdot \dist(\{x\},\Omega^c)$ and the side length $l(I_j)$ is at least $\frac{\dist(\{x\},\Omega^c)}{8\sqrt{n}}$. There can be at most $[\nu_n(40n)^n]+1$ such cubes that satisfy this condition, and Lemma \ref{lemma3} (iv) is proved. Now we define
		\begin{align}
			g(x)=
			\begin{cases}
				\mvint_{I_j}f(y)dy &\text{ if }x\in I_j\\
				f(x)               &\text{ if }x\notin\Omega
			\end{cases}
			&\quad\text{ and }
			b(x)=f(x)-g(x),\label{eq34}
		\end{align}
		then $b(x)$ is supported in $\Omega$ and Lemma \ref{lemma3} (vii) is obvious by definition. To prove Lemma \ref{lemma3} (vi), we use Lemma \ref{lemma3} (iii) and Jensen's inequality for $1\leq p<\infty$ to obtain
		\begin{align}
			&\mvint_{I_j}|b(y)|^p dy\leq \mvint_{I_j}2^{p-1}(|f(y)|^p+|g(y)|^p)dy\nonumber\\
			&\leq 2^{p-1}(C_6\alpha^p+(\mvint_{I_j}|f(y)|dy)^p)\nonumber\\
			&\leq 2^{p-1}(C_6\alpha^p+\mvint_{I_j}|f(y)|^p dy)\leq C_7\alpha^p,\nonumber
		\end{align}
		where $C_7=2^p C_6=2^p(10\sqrt{n})^n$. And the estimate for $\int_{I_j}|b(y)|dy$ is obtained by H\"{o}lder's inequality. To prove Lemma \ref{lemma3} (viii), when $x\notin\Omega$, we have $|g(x)|=|f(x)|\leq\alpha$ by (\ref{eq34}) and Lemma \ref{lemma3} (ii). When $x\in I_j$ for some $j$, we use (\ref{eq34}), Jensen's inequality for $1\leq p<\infty$ and Lemma \ref{lemma3} (iii) to obtain
		\begin{equation}\label{eq35}
			|g(x)|\leq\mvint_{I_j}|f(y)|dy\leq(\mvint_{I_j}|f(y)|^p dy)^{\frac{1}{p}}\leq C_9\alpha,
		\end{equation}
		where $C_9=(10\sqrt{n})^{\frac{n}{p}}$. Furthermore, we have
		\begin{align}
			&\|g\|_{L^p(\bbbr^n)}^p=\int_{\Omega^c}|f(x)|^p dx+\sum_j\int_{I_j}|g(x)|^p dx\nonumber\\
			&=\int_{\Omega^c}|f(x)|^p dx+\sum_j\Lebes^n(I_j)(\mvint_{I_j}|f(y)|dy)^p\nonumber\\
			&\leq\int_{\Omega^c}|f(x)|^p dx+\sum_j\Lebes^n(I_j)\mvint_{I_j}|f(y)|^p dy\nonumber\\
			&=\int_{\Omega^c}|f(x)|^p dx+\int_{\bigcup_j I_j}|f(y)|^p dy=\|f\|_{L^p(\bbbr^n)}^p.\nonumber
		\end{align}
		The proof of Lemma \ref{lemma3} is now complete.
	\end{proof}
	The next Lemma \ref{lemma4} shows that derivatives of the Poisson kernel $P_t(x)$ satisfy the H\"{o}rmander type condition.
	\begin{lemma}\label{lemma4}
		Let $\beta\in\bbbr$ and $\beta>1$. Then for $1\leq k\leq n$, the following quantities can be estimated from above by a positive finite constant whose value only depends on $n$ and $\beta$, 
		\begin{align}
			&\sup_{x\in\bbbr^n}\int_0^{\infty}\int_{|y|>\beta|x|}\bigg|\frac{t(y_k-x_k)}{(t^2+|y-x|^2)^{\frac{n+3}{2}}}-
			\frac{ty_k}{(t^2+|y|^2)^{\frac{n+3}{2}}}\bigg|dydt,\label{eq.lemma4-1}\\
			&\sup_{x\in\bbbr^n}\int_0^{\infty}\int_{|y|>\beta|x|}\bigg|\frac{1}{(t^2+|y-x|^2)^{\frac{n+1}{2}}}-
			\frac{1}{(t^2+|y|^2)^{\frac{n+1}{2}}}\bigg|dydt,\label{eq.lemma4-2}\\
			&\sup_{x\in\bbbr^n}\int_0^{\infty}\int_{|y|>\beta|x|}\bigg|\frac{t^2}{(t^2+|y-x|^2)^{\frac{n+3}{2}}}-
			\frac{t^2}{(t^2+|y|^2)^{\frac{n+3}{2}}}\bigg|dydt,\label{eq.lemma4-3}
		\end{align}
		where $x_k$ and $y_k$ are the $k$-th coordinates of $x,y\in\bbbr^n$, respectively.
	\end{lemma}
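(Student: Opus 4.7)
The plan is to estimate each of the three integrals (\ref{eq.lemma4-1})--(\ref{eq.lemma4-3}) by the same mean value theorem argument. For a fixed $t>0$, let $g_t(y)$ denote the kernel inside the absolute value (for instance $g_t(y)=ty_k(t^2+|y|^2)^{-(n+3)/2}$ in case (\ref{eq.lemma4-1})). I would write
\begin{equation*}
g_t(y-x)-g_t(y)=-\int_0^1 x\cdot(\nabla_y g_t)(y-sx)\,ds,
\end{equation*}
so that the integrand is dominated by $|x|\cdot\sup_{0\le s\le 1}|(\nabla_y g_t)(y-sx)|$. The crucial geometric observation, used to pass from $|y-sx|$ to $|y|$ in the denominators, is that on $\{|y|>\beta|x|\}$ and $s\in[0,1]$ one has $|y-sx|\ge|y|-|x|>(1-\tfrac{1}{\beta})|y|$, so $(t^2+|y-sx|^2)^{1/2}\sim_\beta(t^2+|y|^2)^{1/2}$ with constants depending only on $\beta$.

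Next I would compute $\nabla_y g_t$ in each of the three cases. By routine calculation, in case (\ref{eq.lemma4-1}) one obtains $|\nabla_y g_t(y)|\lesssim_n t(t^2+|y|^2)^{-(n+3)/2}$; in case (\ref{eq.lemma4-2}), $|\nabla_y g_t(y)|\lesssim_n |y|(t^2+|y|^2)^{-(n+3)/2}$; in case (\ref{eq.lemma4-3}), $|\nabla_y g_t(y)|\lesssim_n t^2|y|(t^2+|y|^2)^{-(n+5)/2}$. Combining these with the comparability above, the integrand in each case is bounded by $|x|$ times a function of $t$ and $|y|$ alone, with constants depending only on $n$ and $\beta$.

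Then I would integrate first in $t$, using the elementary identities (obtained by the substitution $t=|y|u$)
\begin{equation*}
\int_0^{\infty}\!\!t(t^2+|y|^2)^{-\frac{n+3}{2}}dt\sim_n|y|^{-(n+1)},\qquad
\int_0^{\infty}\!\!(t^2+|y|^2)^{-\frac{n+3}{2}}dt\sim_n|y|^{-(n+2)},
\end{equation*}
\begin{equation*}
\int_0^{\infty}\!\!t^2(t^2+|y|^2)^{-\frac{n+5}{2}}dt\sim_n|y|^{-(n+2)}.
\end{equation*}
In each of the three cases this produces a pointwise bound of the form $|x|\cdot C_n|y|^{-(n+1)}$ on the full integrand, valid uniformly on the region $|y|>\beta|x|$. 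Integrating finally in $y$ over that region via polar coordinates gives $\int_{|y|>\beta|x|}|y|^{-(n+1)}dy=C_n'(\beta|x|)^{-1}$, so the total integral is bounded by $|x|\cdot C_n\cdot C_n'\cdot(\beta|x|)^{-1}=C(n,\beta)$, uniformly in $x$, as claimed.

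There is no real obstacle beyond bookkeeping: the only point that needs care is ensuring, via the lower bound $|y-sx|\ge(1-\tfrac{1}{\beta})|y|$, that the supremum over the segment from $y-x$ to $y$ can be replaced by an expression in $|y|$ alone with a $\beta$-dependent constant; the singular factor $|x|$ from the mean value step is then exactly what the $y$-integral $\sim|x|^{-1}$ needs to absorb, so the dimensions match and the bound is scale-free.
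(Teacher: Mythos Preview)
Your proof is correct and follows essentially the same strategy as the paper: the mean value theorem along the segment from $y$ to $y-x$, combined with the key comparability $|y-sx|\ge(1-\tfrac{1}{\beta})|y|$ on the region $|y|>\beta|x|$. The main difference is in bookkeeping: the paper applies the mean value theorem only to the denominator factors $(t^2+|y-rx|^2)^{-(n+3)/2}$ and $(t^2+|y-rx|^2)^{-(n+1)/2}$ and handles the numerator $y_k-x_k$ in case~(\ref{eq.lemma4-1}) by an extra triangle-inequality splitting, whereas you apply it directly to the full kernel $g_t$; and the paper splits the $t$-integral at $t=|x|$ with separate crude bounds in each range, whereas you integrate cleanly in $t$ first via the substitution $t=|y|u$ and then in $y$. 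Your ordering is a bit more streamlined, but the underlying idea is identical.
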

	\begin{proof}[Proof of Lemma \ref{lemma4}]
		When $|y|>\beta|x|$, $\beta>1$ and $0\leq r\leq 1$, we have
		\begin{align}
			|y-rx|&\geq|y|-r|x|\geq|y|-|x|>(1-\frac{1}{\beta})|y|,\nonumber\\
			|y-rx|&\leq|y|+r|x|\leq|y|+|x|<(1+\frac{1}{\beta})|y|,\nonumber
		\end{align}
		hence we obtain
		\begin{equation}\label{eq46}
			(1-\frac{1}{\beta})|y|<|y-rx|<(1+\frac{1}{\beta})|y|.
		\end{equation}
		If we let $h_1(r)=(t^2+|y-rx|^2)^{-\frac{n+3}{2}}$ and $h_2(r)=(t^2+|y-rx|^2)^{-\frac{n+1}{2}}$ and let $h_1'(r)$ and $h_2'(r)$ denote their respective derivatives, then by the fundamental theorem of calculus, we have the following estimates,
		\begin{align}
			&\bigg|\frac{1}{(t^2+|y-x|^2)^{\frac{n+3}{2}}}-\frac{1}{(t^2+|y|^2)^{\frac{n+3}{2}}}\bigg|\nonumber\\
			&\leq\int_0^1|h_1'(r)|dr\leq\int_0^1\frac{(n+3)|y-rx|\cdot|x|}{(t^2+|y-rx|^2)^{\frac{n+5}{2}}}dr
			\nonumber\\
			&\leq\frac{C_{10}|y|\cdot|x|}{(t^2+|y|^2)^{\frac{n+5}{2}}},\label{eq47}
		\end{align}
		and
		\begin{align}
			&\bigg|\frac{1}{(t^2+|y-x|^2)^{\frac{n+1}{2}}}-\frac{1}{(t^2+|y|^2)^{\frac{n+1}{2}}}\bigg|\nonumber\\
			&\leq\int_0^1|h_2'(r)|dr\leq\int_0^1\frac{(n+1)|y-rx|\cdot|x|}{(t^2+|y-rx|^2)^{\frac{n+3}{2}}}dr
			\nonumber\\
			&\leq\frac{C_{11}|y|\cdot|x|}{(t^2+|y|^2)^{\frac{n+3}{2}}},\label{eq48}
		\end{align}
		where $C_{10}=C_{10}(n,\beta)$ and $C_{11}=C_{11}(n,\beta)$. To prove (\ref{eq.lemma4-1}) for $1\leq k\leq n$, we use (\ref{eq46}) and (\ref{eq47}) and calculate as follows,
		\begin{align}
			&\int_0^{\infty}\int_{|y|>\beta|x|}\bigg|\frac{t(y_k-x_k)}{(t^2+|y-x|^2)^{\frac{n+3}{2}}}-
			\frac{ty_k}{(t^2+|y|^2)^{\frac{n+3}{2}}}\bigg|dydt\nonumber\\
			&\leq\int_0^{\infty}\int_{|y|>\beta|x|}\bigg|\frac{t(y_k-x_k)}{(t^2+|y-x|^2)^{\frac{n+3}{2}}}-
			\frac{ty_k}{(t^2+|y-x|^2)^{\frac{n+3}{2}}}\bigg|\nonumber\\
			&\quad+\bigg|\frac{ty_k}{(t^2+|y-x|^2)^{\frac{n+3}{2}}}-\frac{ty_k}{(t^2+|y|^2)^{\frac{n+3}{2}}}\bigg|dydt\nonumber\\
			&\leq C_{12}\int_0^{\infty}\int_{|y|>\beta|x|}\frac{t|x|}{(t^2+|y-x|^2)^{\frac{n+3}{2}}}+
			\frac{t|y|^2\cdot|x|}{(t^2+|y|^2)^{\frac{n+5}{2}}}dydt\nonumber\\
			&\leq C_{13}\int_0^{\infty}\int_{|y|>\beta|x|}\frac{t|x|}{(t^2+|y|^2)^{\frac{n+3}{2}}}dydt,
			\label{eq49}
		\end{align}
		where $C_{12}=C_{12}(n,\beta)$ and $C_{13}=C_{13}(n,\beta)$. We write the double integral in the last line of (\ref{eq49}) as below
		\begin{equation}\label{eq50}
			\int_0^{|x|}\int_{|y|>\beta|x|}\frac{t|x|}{(t^2+|y|^2)^{\frac{n+3}{2}}}dydt+
			\int_{|x|}^{\infty}\int_{|y|>\beta|x|}\frac{t|x|}{(t^2+|y|^2)^{\frac{n+3}{2}}}dydt.
		\end{equation}
		For the first term in (\ref{eq50}), we apply the inequality $(t^2+|y|^2)^{\frac{n+3}{2}}\geq|y|^{n+3}$ and obtain
		\begin{align}
			&\int_0^{|x|}\int_{|y|>\beta|x|}\frac{t|x|}{(t^2+|y|^2)^{\frac{n+3}{2}}}dydt\nonumber\\
			&\leq\int_0^{|x|}\int_{|y|>\beta|x|}\frac{t|x|}{|y|^{n+3}}dydt\nonumber\\
			&=C_{14}|x|^2\cdot|x|\cdot|x|^{-3}\lesssim 1,\label{eq51}
		\end{align}
		where $C_{14}=C_{14}(n,\beta)$. For the second term in (\ref{eq50}), we consider $(t^2+|y|^2)^{\frac{n+3}{2}}$ is the $(n+3)$-fold product of the term $(t^2+|y|^2)^{\frac{1}{2}}$ and in this product, we apply the inequality $(t^2+|y|^2)^{\frac{1}{2}}\geq t$ twice, apply the inequality $(t^2+|y|^2)^{\frac{1}{2}}\geq(2t|y|)^{\frac{1}{2}}$ once and apply the inequality $(t^2+|y|^2)^{\frac{1}{2}}\geq|y|$ for $n$ times and obtain
		\begin{align}
			&\int_{|x|}^{\infty}\int_{|y|>\beta|x|}\frac{t|x|}{(t^2+|y|^2)^{\frac{n+3}{2}}}dydt\nonumber\\
			&\leq\int_{|x|}^{\infty}\int_{|y|>\beta|x|}\frac{t|x|}{t^2\cdot(2t|y|)^{\frac{1}{2}}\cdot|y|^n}dydt
			\nonumber\\
			&=C_{15}|x|\cdot|x|^{-\frac{1}{2}}\cdot|x|^{-\frac{1}{2}}\lesssim 1,\label{eq52}
		\end{align}
		where $C_{15}=C_{15}(n,\beta)$. Combining (\ref{eq49}), (\ref{eq50}), (\ref{eq51}) and (\ref{eq52}) yields (\ref{eq.lemma4-1}) can be estimated from above by a positive finite constant whose value only depends on $n$ and $\beta$. To prove (\ref{eq.lemma4-2}), we use (\ref{eq48}) and estimate as below
		\begin{align}
			&\int_0^{\infty}\int_{|y|>\beta|x|}\bigg|\frac{1}{(t^2+|y-x|^2)^{\frac{n+1}{2}}}-
			\frac{1}{(t^2+|y|^2)^{\frac{n+1}{2}}}\bigg|dydt\nonumber\\
			&\leq C_{11}\int_0^{\infty}\int_{|y|>\beta|x|}\frac{|y|\cdot|x|}{(t^2+|y|^2)^{\frac{n+3}{2}}}dydt
			\nonumber\\
			&=C_{11}\int_0^{|x|}\int_{|y|>\beta|x|}\frac{|y|\cdot|x|}{(t^2+|y|^2)^{\frac{n+3}{2}}}dydt\nonumber\\
			&\quad+C_{11}\int_{|x|}^{\infty}\int_{|y|>\beta|x|}\frac{|y|\cdot|x|}{(t^2+|y|^2)^{\frac{n+3}{2}}}dydt.\label{eq53}
		\end{align}
		For the first term of (\ref{eq53}), we use the inequality $(t^2+|y|^2)^{\frac{n+3}{2}}\geq|y|^{n+3}$ and obtain
		\begin{align}
			&\int_0^{|x|}\int_{|y|>\beta|x|}\frac{|y|\cdot|x|}{(t^2+|y|^2)^{\frac{n+3}{2}}}dydt\nonumber\\
			&\leq\int_0^{|x|}\int_{|y|>\beta|x|}\frac{|x|}{|y|^{n+2}}dydt\nonumber\\
			&=C_{16}|x|\cdot|x|\cdot|x|^{-2}\lesssim 1,\label{eq54}
		\end{align}
		where $C_{16}=C_{16}(n,\beta)$. For the second term of (\ref{eq53}), we consider $(t^2+|y|^2)^{\frac{n+3}{2}}$ is the $(n+3)$-fold product of the term $(t^2+|y|^2)^{\frac{1}{2}}$ and in this product, we apply the inequality $(t^2+|y|^2)^{\frac{1}{2}}\geq t$ once, apply the inequality $(t^2+|y|^2)^{\frac{1}{2}}\geq(2t|y|)^{\frac{1}{2}}$ once and apply the inequality $(t^2+|y|^2)^{\frac{1}{2}}\geq|y|$ for $(n+1)$ times and obtain
		\begin{align}
			&\int_{|x|}^{\infty}\int_{|y|>\beta|x|}\frac{|y|\cdot|x|}{(t^2+|y|^2)^{\frac{n+3}{2}}}dydt\nonumber\\
			&\leq\int_{|x|}^{\infty}\int_{|y|>\beta|x|}\frac{|y|\cdot|x|}{t\cdot(2t|y|)^{\frac{1}{2}}\cdot|y|^{n+1}}dydt\nonumber\\
			&=C_{17}|x|\cdot|x|^{-\frac{1}{2}}\cdot|x|^{-\frac{1}{2}}\lesssim 1,\label{eq55}
		\end{align}
		where $C_{17}=C_{17}(n,\beta)$. Combining (\ref{eq53}), (\ref{eq54}) and (\ref{eq55}) yields (\ref{eq.lemma4-2}) can be estimated from above by a positive finite constant whose value only depends on $n$ and $\beta$. To prove (\ref{eq.lemma4-3}), we use (\ref{eq47}) and estimate as below
		\begin{align}
			&\int_0^{\infty}\int_{|y|>\beta|x|}\bigg|\frac{t^2}{(t^2+|y-x|^2)^{\frac{n+3}{2}}}-
			\frac{t^2}{(t^2+|y|^2)^{\frac{n+3}{2}}}\bigg|dydt\nonumber\\
			&\leq C_{10}\int_0^{\infty}\int_{|y|>\beta|x|}\frac{t^2|y|\cdot|x|}{(t^2+|y|^2)^{\frac{n+5}{2}}}dydt
			\nonumber\\
			&=C_{10}\int_0^{|x|}\int_{|y|>\beta|x|}\frac{t^2|y|\cdot|x|}{(t^2+|y|^2)^{\frac{n+5}{2}}}dydt
			\nonumber\\
			&\quad+C_{10}\int_{|x|}^{\infty}\int_{|y|>\beta|x|}\frac{t^2|y|\cdot|x|}{(t^2+|y|^2)^{\frac{n+5}{2}}}dydt.\label{eq56}
		\end{align}
		To estimate the first term of (\ref{eq56}), we apply the inequality $(t^2+|y|^2)^{\frac{n+5}{2}}\geq|y|^{n+5}$ and obtain
		\begin{align}
			&\int_0^{|x|}\int_{|y|>\beta|x|}\frac{t^2|y|\cdot|x|}{(t^2+|y|^2)^{\frac{n+5}{2}}}dydt\nonumber\\
			&\leq\int_0^{|x|}\int_{|y|>\beta|x|}\frac{t^2|x|}{|y|^{n+4}}dydt\nonumber\\
			&=C_{18}|x|\cdot|x|^3\cdot|x|^{-4}\lesssim 1,\label{eq57}
		\end{align}
		where $C_{18}=C_{18}(n,\beta)$. For the second term of (\ref{eq56}), we consider $(t^2+|y|^2)^{\frac{n+5}{2}}$ is the $(n+5)$-fold product of the term $(t^2+|y|^2)^{\frac{1}{2}}$ and in this product, we apply the inequality $(t^2+|y|^2)^{\frac{1}{2}}\geq t$ for three times, apply the inequality $(t^2+|y|^2)^{\frac{1}{2}}\geq(2t|y|)^{\frac{1}{2}}$ once and apply the inequality $(t^2+|y|^2)^{\frac{1}{2}}\geq|y|$ for $(n+1)$ times and obtain
		\begin{align}
			&\int_{|x|}^{\infty}\int_{|y|>\beta|x|}\frac{t^2|y|\cdot|x|}{(t^2+|y|^2)^{\frac{n+5}{2}}}dydt
			\nonumber\\
			&\leq\int_{|x|}^{\infty}\int_{|y|>\beta|x|}\frac{t^2|y|\cdot|x|}{t^3\cdot(2t|y|)^{\frac{1}{2}}\cdot|y|^{n+1}}dydt\nonumber\\
			&=C_{19}|x|\cdot|x|^{-\frac{1}{2}}\cdot|x|^{-\frac{1}{2}}\lesssim 1,\label{eq58}
		\end{align}
		where $C_{19}=C_{19}(n,\beta)$. Combining (\ref{eq56}), (\ref{eq57}) and (\ref{eq58}) yields (\ref{eq.lemma4-3}) can be estimated from above by a positive finite constant whose value only depends on $n$ and $\beta$.
	\end{proof}
	\begin{lemma}\label{lemma6}
		Let $f\in L^p(\bbbr^n)$ for $1\leq p<\infty$, and assume $0<\alpha<\esssup_{x\in\bbbr^n}|f(x)|$ and $\Omega=\{x\in\bbbr^n:M(|f|^p)(x)>\alpha^p\}$ are given by Lemma \ref{lemma3}. And let $\{I_j\}_j$ be the associated Whitney cubes and $f=g+b$ as in Lemma \ref{lemma3}. Let $\chi_{I_j}$ denote the characteristic function of the cube $I_j$ and $b_j\!=\!b\!\cdot\!\chi_{I_j}$. Then the following estimates are true.\\
		(1) For $1\leq k\leq n+1$ and $y\in\Omega^c$, we have
		\begin{equation}\label{eq2-1}
			\big|\partial_k\Pint(\sum_j b_j;y,t)\big|\lesssim\frac{\alpha}{t},
		\end{equation}
		where the constant depends on $n,p$.\\
		(2) For $1\leq k\leq n+1$, $I_m$ is a Whitney cube and $y\in I_m$, we have
		\begin{equation}\label{eq2-2}
			\big|\partial_k\Pint(\sum_{\substack{j\\I_j\text{ far from }I_m}} b_j;y,t)\big|\lesssim\frac{\alpha}{t},
		\end{equation}
		where the constant depends on $n,p$.
	\end{lemma}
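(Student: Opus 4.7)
\textbf{Proof proposal for Lemma \ref{lemma6}.} The plan is to handle both parts by the same cancellation-plus-mean-value argument on each Whitney cube, with the only difference being the source of the geometric separation between $y$ and the cubes in question. Since $\int_{I_j} b_j(z)\,dz=0$ by Lemma \ref{lemma3} (vii), for each $j$ I would subtract the constant $\partial_k P_t(y-c(I_j))$ and write
$$\partial_k\Pint(b_j;y,t)=\int_{I_j}b_j(z)\bigl[\partial_k P_t(y-z)-\partial_k P_t(y-c(I_j))\bigr]\,dz.$$
Differentiating (\ref{eq1-3-1})--(\ref{eq1-3-2}) one more time and collecting like terms yields the unified spatial gradient estimate
$$|\nabla_x\,\partial_k P_t(x)|\lesssim(t^2+|x|^2)^{-(n+2)/2}\qquad\text{for all }1\leq k\leq n+1,$$
so the mean value theorem bounds $|\partial_k P_t(y-z)-\partial_k P_t(y-c(I_j))|$ by $l(I_j)$ times the supremum of this gradient on the segment joining $z$ and $c(I_j)$.

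The next step is to show that for every $\zeta$ in that segment (hence in $I_j$) one has both $|y-\zeta|\sim|y-c(I_j)|$ and $l(I_j)\lesssim|y-c(I_j)|$, uniformly. For part (1), since $y\in\Omega^c$ and $c(I_j)\in I_j$, Lemma \ref{lemma1} (b) gives $\sqrt{n}\,l(I_j)\leq\dist(I_j,\Omega^c)\leq|y-c(I_j)|$, from which both comparabilities follow by splitting into the cases $|y-c(I_j)|\geq 2\sqrt{n}\,l(I_j)$ and $|y-c(I_j)|<2\sqrt{n}\,l(I_j)$ and applying the triangle inequality. For part (2), these are exactly (\ref{eq.lemma2-1}) and (\ref{eq.lemma2-2}) of Lemma \ref{lemma2} (1) applied with $Q_m=I_m$, $Q_j=I_j$ under the hypothesis ``$I_j$ far from $I_m$'' in the sense of (\ref{far}).

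Inserting these bounds together with the estimate $\int_{I_j}|b_j|\leq C_8\alpha\,\Lebes^n(I_j)$ from Lemma \ref{lemma3} (vi) gives
$$|\partial_k\Pint(b_j;y,t)|\lesssim\alpha\cdot l(I_j)^{n+1}\cdot(t^2+|y-c(I_j)|^2)^{-(n+2)/2}\sim\alpha\int_{I_j}\frac{l(I_j)\,dz}{(t^2+|y-z|^2)^{(n+2)/2}}.$$
Using $l(I_j)\lesssim|y-z|\leq(t^2+|y-z|^2)^{1/2}$ shaves one off the exponent, and summing over the (pairwise disjoint) relevant $I_j$'s produces
$$\sum_j|\partial_k\Pint(b_j;y,t)|\lesssim\alpha\int_{\bbbr^n}\frac{dz}{(t^2+|y-z|^2)^{(n+1)/2}}=\frac{C\alpha}{t},$$
the last equality being the scale-invariant computation $\int_{\bbbr^n}(t^2+|w|^2)^{-(n+1)/2}\,dw=C_n/t$ obtained by setting $w=tu$. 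This absolute convergence simultaneously justifies interchanging the sum with the differentiation in $\partial_k\Pint(\sum_j b_j;\cdot)$, so both parts follow.

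The main obstacle I anticipate is securing the uniform comparability $|y-\zeta|\sim|y-c(I_j)|$ for all $\zeta\in I_j$, because this is what permits evaluating the mean value estimate at the single representative point $c(I_j)$ before the $z$-integration (and hence converts the sum over $j$ into an integral over $\bbbr^n$). In part (1) this comes cheaply from the Whitney distance bound, but in part (2) it is genuinely delicate: $y$ merely lies inside some other cube $I_m$ and could a priori sit very close to an $I_j$. This is precisely the scenario ruled out by the threshold $11(\sqrt{n}+1)\,l(I_m)$ in (\ref{far}) and by the constants $C_1,C_2$ in Lemma \ref{lemma2} (1); without the ``far from'' hypothesis the argument breaks down, which is exactly why part (2) excludes the near cubes in its statement.
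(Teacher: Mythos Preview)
Your argument is correct, but it takes a different route from the paper's own proof. You use the cancellation $\int_{I_j}b_j=0$ (Lemma~\ref{lemma3}~(vii)) together with a mean value estimate on $\partial_kP_t$ to gain the extra factor $l(I_j)$, then absorb that factor via $l(I_j)\lesssim(t^2+|y-z|^2)^{1/2}$ to land on the kernel $(t^2+|y-z|^2)^{-(n+1)/2}$. The paper does not use the cancellation at all here: it simply inserts the explicit bounds $|\partial_kP_t(y-z)|\lesssim t|y-z|(t^2+|y-z|^2)^{-(n+3)/2}$ for $1\leq k\leq n$ and the analogous two-term bound for $k=n+1$, replaces $|y-z|$ by $|y-c(I_j)|$ using the same comparability you identify, applies $\int_{I_j}|b|\lesssim\alpha\,\Lebes^n(I_j)$, and sums the resulting integrals directly to $\int_{\bbbr^n}t|w|(t^2+|w|^2)^{-(n+3)/2}\,dw\lesssim t^{-1}$. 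The upshot is that the paper's proof is strictly more elementary for this particular lemma (one fewer tool), and it makes clear that the mean-zero property of $b_j$ is not yet needed; the cancellation is reserved for Lemma~\ref{lemma7}, where it is genuinely required to make the H\"ormander-type integrals of Lemma~\ref{lemma4} converge. Your approach, by contrast, gives a unified treatment of all $1\leq k\leq n+1$ through the single gradient bound $|\nabla_x\partial_kP_t(x)|\lesssim(t^2+|x|^2)^{-(n+2)/2}$, avoiding the case split between $k\leq n$ and $k=n+1$ that the paper carries out separately.
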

	\begin{proof}[Proof of Lemma \ref{lemma6}](1) If $1\leq k\leq n$, $y\in\Omega^c$ and $z\in I_j$, by Lemma \ref{lemma1} (b) and the fact that the $n$-dimensional ball centered at $c(I_j)$ and inscribed in the Whitney cube $I_j$ has radius $\frac{1}{2}l(I_j)$, we can obtain
		\begin{equation}\label{eq2-3}
			|y-c(I_j)|\geq\dist(I_j,\Omega^c)+\frac{1}{2}l(I_j)>\frac{\sqrt{n}+1}{2}\cdot l(I_j),
		\end{equation}
		and thus
		\begin{equation}\label{eq2-4}
			(1-\frac{\sqrt{n}}{\sqrt{n}+1})\cdot|y-c(I_j)|\leq|y-z|
			\leq(1+\frac{\sqrt{n}}{\sqrt{n}+1})\cdot|y-c(I_j)|.
		\end{equation}
		Furthermore, we also have
		\begin{align}
			&\sum_j\Lebes^n(I_j)\cdot\frac{t|y-c(I_j)|}{(t^2+|y-c(I_j)|^2)^{\frac{n+3}{2}}}\nonumber\\
			&=\sum_j\int_{I_j}\frac{t|y-c(I_j)|}{(t^2+|y-c(I_j)|^2)^{\frac{n+3}{2}}}dz\nonumber\\
			&\lesssim\sum_j\int_{I_j}\frac{t|y-z|}{(t^2+|y-z|^2)^{\frac{n+3}{2}}}dz\nonumber\\
			&\lesssim\int_{\bbbr^n}\frac{t|y-z|}{(t^2+|y-z|^2)^{\frac{n+3}{2}}}dz\lesssim t^{-1},\label{eq2-5}
		\end{align}
		where we use the fact that $\{I_j\}_j$ have disjoint interiors and $\bigcup_j I_j=\Omega\subseteq\bbbr^n$, and the fact that the integral $$\int_{\bbbr^n}\frac{|z|}{(1+|z|^2)^{\frac{n+3}{2}}}dz$$ converges to a positive finite constant whose value only depends on the dimension $n$. Applying (\ref{eq1-5}), (\ref{eq2-4}), (\ref{eq2-5}), and Lemma \ref{lemma3} (vi) yields
		\begin{align}
			&\big|\partial_k\Pint(\sum_j b_j;y,t)\big|\nonumber\\
			&\lesssim\sum_j\int_{I_j}|b(z)|\cdot\frac{t|y-z|}{(t^2+|y-z|^2)^{\frac{n+3}{2}}}dz\nonumber\\
			&\lesssim\sum_j\int_{I_j}|b(z)|dz\cdot\frac{t|y-c(I_j)|}{(t^2+|y-c(I_j)|^2)^{\frac{n+3}{2}}}\nonumber\\
			&\lesssim\sum_j\alpha\Lebes^n(I_j)\cdot\frac{t|y-c(I_j)|}{(t^2+|y-c(I_j)|^2)^{\frac{n+3}{2}}}
			\lesssim\frac{\alpha}{t},\label{eq2-6}
		\end{align}
		where the constants in (\ref{eq2-6}) depend on $n,p$. If $k=n+1$, $y\in\Omega^c$ and $z\in I_j$, then we still use (\ref{eq2-3}) and (\ref{eq2-4}) to obtain
		\begin{align}
			&\sum_j\Lebes^n(I_j)\cdot\big(\frac{1}{(t^2+|y-c(I_j)|^2)^{\frac{n+1}{2}}}
			+\frac{t^2}{(t^2+|y-c(I_j)|^2)^{\frac{n+3}{2}}}\big)\nonumber\\
			&=\sum_j\int_{I_j}\frac{1}{(t^2+|y-c(I_j)|^2)^{\frac{n+1}{2}}}
			+\frac{t^2}{(t^2+|y-c(I_j)|^2)^{\frac{n+3}{2}}}dz\nonumber\\
			&\lesssim\sum_j\int_{I_j}\frac{1}{(t^2+|y-z|^2)^{\frac{n+1}{2}}}
			+\frac{t^2}{(t^2+|y-z|^2)^{\frac{n+3}{2}}}dz\nonumber\\
			&\lesssim\int_{\bbbr^n}\frac{1}{(t^2+|y-z|^2)^{\frac{n+1}{2}}}
			+\frac{t^2}{(t^2+|y-z|^2)^{\frac{n+3}{2}}}dz\lesssim t^{-1},\label{eq2-7}
		\end{align}
		where we use the fact that the Whitney cubes $\{I_j\}_j$ have disjoint interiors and $\bigcup_j I_j=\Omega\subseteq\bbbr^n$, and the fact that both integrals 
		$$\int_{\bbbr^n}\frac{1}{(1+|z|^2)^{\frac{n+1}{2}}}dz\quad\text{and}\quad \int_{\bbbr^n}\frac{1}{(1+|z|^2)^{\frac{n+3}{2}}}dz$$ converge to positive finite constants whose values only depend on the dimension $n$. Applying (\ref{eq1-6}), (\ref{eq2-4}), (\ref{eq2-7}), and Lemma \ref{lemma3} (vi) yields
\begin{align}
&\big|\partial_{n+1}\Pint(\sum_j b_j;y,t)\big|\nonumber\\
&\lesssim\!\!\sum_j\!\!\int_{I_j}\!\!\!|b(z)|\!\cdot\!\big(\frac{1}{(t^2\!+\!|y\!-\!z|^2)^{\frac{n+1}{2}}}\!+\!\frac{t^2}{(t^2\!+\!|y\!-\!z|^2)^{\frac{n+3}{2}}}\big)dz\nonumber\\
&\lesssim\!\!\sum_j\!\!\int_{I_j}\!\!\!|b(z)|dz\!\cdot\!\big(\frac{1}{(t^2\!+\!|y\!-\!c(I_j)|^2)^{\frac{n+1}{2}}}\!+\!\frac{t^2}{(t^2\!+\!|y\!-\!c(I_j)|^2)^{\frac{n+3}{2}}}\big)\nonumber\\
&\lesssim\!\!\sum_j\!\alpha\Lebes^n(I_j)\!\cdot\!\big(\frac{1}{(t^2\!+\!|y\!-\!c(I_j)|^2)^{\frac{n+1}{2}}}\!+\!\frac{t^2}{(t^2\!+\!|y\!-\!c(I_j)|^2)^{\frac{n+3}{2}}}\big)\nonumber\\
&\lesssim\frac{\alpha}{t},\label{eq2-8}
\end{align}
where the constants in (\ref{eq2-8}) depend on $n,p$. The proof of Lemma \ref{lemma6} (1) is now complete.\\
		
		(2) Assume $1\leq k\leq n$, $I_m$ is a Whitney cube and $y\in I_m$, and if $I_j$ is another Whitney cube satisfying the relation (\ref{far}) between the two cubes $I_j$ and $I_m$, then we can use inequality (\ref{eq.lemma2-3}) of Lemma \ref{lemma2} for $y\in I_m$ and $z\in I_j$ to obtain the following estimate
		\begin{align}
			&\sum_{\substack{j\\I_j\text{ far from }I_m}}\Lebes^n(I_j)\cdot
			\frac{t|c(I_m)-c(I_j)|}{(t^2+|c(I_m)-c(I_j)|^2)^{\frac{n+3}{2}}}\nonumber\\
			&=\sum_{\substack{j\\I_j\text{ far from }I_m}}\int_{I_j}
			\frac{t|c(I_m)-c(I_j)|}{(t^2+|c(I_m)-c(I_j)|^2)^{\frac{n+3}{2}}}dz\nonumber\\
			&\lesssim\sum_{\substack{j\\I_j\text{ far from }I_m}}\int_{I_j}
			\frac{t|y-z|}{(t^2+|y-z|^2)^{\frac{n+3}{2}}}dz\nonumber\\
			&\lesssim\int_{\bbbr^n}\frac{t|y-z|}{(t^2+|y-z|^2)^{\frac{n+3}{2}}}dz\lesssim t^{-1},\label{eq2-9}
		\end{align}
		where the constants in (\ref{eq2-9}) depend on $n$. Applying (\ref{eq1-5}), inequality (\ref{eq.lemma2-3}) of Lemma \ref{lemma2}, Lemma \ref{lemma3} (vi), and (\ref{eq2-9}) yields the following
		\begin{align}
			&\big|\partial_k\Pint(\sum_{\substack{j\\I_j\text{ far from }I_m}} b_j;y,t)\big|\nonumber\\
			&\lesssim\sum_{\substack{j\\I_j\text{ far from }I_m}}\int_{I_j}|b(z)|\cdot
			\frac{t|y-z|}{(t^2+|y-z|^2)^{\frac{n+3}{2}}}dz\nonumber\\
			&\lesssim\sum_{\substack{j\\I_j\text{ far from }I_m}}\int_{I_j}|b(z)|dz\cdot
			\frac{t|c(I_m)-c(I_j)|}{(t^2+|c(I_m)-c(I_j)|^2)^{\frac{n+3}{2}}}\nonumber\\
			&\lesssim\sum_{\substack{j\\I_j\text{ far from }I_m}}\alpha\Lebes^n(I_j)\cdot
			\frac{t|c(I_m)-c(I_j)|}{(t^2+|c(I_m)-c(I_j)|^2)^{\frac{n+3}{2}}}\lesssim\frac{\alpha}{t},\label{eq2-10}
		\end{align}
		where the constants in (\ref{eq2-10}) depend on $n,p$. Assume $k=n+1$, $I_m$ is a Whitney cube and $y\in I_m$, and if $I_j$ is another Whitney cube satisfying the relation (\ref{far}) between the two cubes $I_j$ and $I_m$, then we can use inequality (\ref{eq.lemma2-3}) of Lemma \ref{lemma2} for $y\in I_m$ and $z\in I_j$ to obtain the following estimate
		\begin{align}
			&\sum_{\substack{j\\I_j\text{ far from }I_m}}
			\Lebes^n(I_j)\big(\frac{1}{(t^2+|c(I_m)-c(I_j)|^2)^{\frac{n+1}{2}}}\nonumber\\
			&\quad+\frac{t^2}{(t^2+|c(I_m)-c(I_j)|^2)^{\frac{n+3}{2}}}\big)\nonumber\\
			&=\sum_{\substack{j\\I_j\text{ far from }I_m}}
			\int_{I_j}\frac{1}{(t^2+|c(I_m)-c(I_j)|^2)^{\frac{n+1}{2}}}\nonumber\\
			&\quad+\frac{t^2}{(t^2+|c(I_m)-c(I_j)|^2)^{\frac{n+3}{2}}}dz\nonumber\\
			&\lesssim\sum_{\substack{j\\I_j\text{ far from }I_m}}\int_{I_j}\frac{1}{(t^2+|y-z|^2)^{\frac{n+1}{2}}}
			+\frac{t^2}{(t^2+|y-z|^2)^{\frac{n+3}{2}}}dz\nonumber\\
			&\lesssim\int_{\bbbr^n}\frac{1}{(t^2+|y-z|^2)^{\frac{n+1}{2}}}
			+\frac{t^2}{(t^2+|y-z|^2)^{\frac{n+3}{2}}}dz\lesssim t^{-1},\label{eq2-11}
		\end{align}
		where the constants in (\ref{eq2-11}) depend on the dimension $n$. Applying (\ref{eq1-6}), inequality (\ref{eq.lemma2-3}) of Lemma \ref{lemma2}, Lemma \ref{lemma3} (vi), and (\ref{eq2-11}) yields
		\begin{align}
			&\big|\partial_{n+1}\Pint(\sum_{\substack{j\\I_j\text{ far from }I_m}} b_j;y,t)\big|\nonumber\\
			&\lesssim\!\!\sum_{\substack{j\\I_j\text{ far from }I_m}}\!\!\!\!\!\!\!\int_{I_j}\!\!|b(z)|
			\big\{\frac{1}{(t^2+|y-z|^2)^{\frac{n+1}{2}}}+\frac{t^2}{(t^2+|y-z|^2)^{\frac{n+3}{2}}}\big\}dz\nonumber\\
			&\lesssim\sum_{\substack{j\\I_j\text{ far from }I_m}}\int_{I_j}|b(z)|dz\cdot
			\big\{\frac{1}{(t^2+|c(I_m)-c(I_j)|^2)^{\frac{n+1}{2}}}\nonumber\\
			&\quad+\frac{t^2}{(t^2+|c(I_m)-c(I_j)|^2)^{\frac{n+3}{2}}}\big\}\nonumber\\
			&\lesssim\sum_{\substack{j\\I_j\text{ far from }I_m}}\alpha\Lebes^n(I_j)\cdot
			\big\{\frac{1}{(t^2+|c(I_m)-c(I_j)|^2)^{\frac{n+1}{2}}}\nonumber\\
			&\quad+\frac{t^2}{(t^2+|c(I_m)-c(I_j)|^2)^{\frac{n+3}{2}}}\big\}\lesssim\frac{\alpha}{t},\label{eq2-12}
		\end{align}
		where the constants in (\ref{eq2-12}) depend on $n,p$. The proof of Lemma \ref{lemma6} (2) is now complete.
	\end{proof}
\begin{lemma}\label{lemma7}
Let $f\in L^p(\bbbr^n)$ for $1\leq p<\infty$, and assume $0<\alpha<\esssup_{x\in\bbbr^n}|f(x)|$ and $\Omega=\{x\in\bbbr^n:M(|f|^p)(x)>\alpha^p\}$ are given by Lemma \ref{lemma3}. And let $\{I_j\}_j$ be the associated Whitney cubes and $f=g+b$ as in Lemma \ref{lemma3}. Let $\chi_{I_j}$ denote the characteristic function of the cube $I_j$ and $b_j=b\cdot\chi_{I_j}$. Then for $1\leq k\leq n+1$, we have the following estimates,
\begin{align}
\int_0^{\infty}\!\!\!\!\int_{\Omega^c}\!\!\big|\partial_k\Pint(\sum_j b_j;y,t)\big|dydt
&\!\lesssim\!\sum_j\int_{I_j}|b(z)|dz,\label{eq2-13}\\
\sum_m\!\!\int_0^{\infty}\!\!\!\!\int_{I_m}\!\!\big|\partial_k\Pint(\sum_{\substack{j\\I_j\text{ far from }I_m}} b_j;y,t)\big|dydt
&\!\lesssim\!\sum_j\int_{I_j}|b(z)|dz,\label{eq2-14}
\end{align}
and the constants in (\ref{eq2-13}) and (\ref{eq2-14}) only depend on $n$.
\end{lemma}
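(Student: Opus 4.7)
The plan is to exploit the vanishing-mean property $\int_{I_j} b\,dz = 0$ of Lemma \ref{lemma3}(vii) to subtract off the value of the Poisson-derivative kernel at the cube's centre $c(I_j)$, and then to use the H\"ormander-type integrals of Lemma \ref{lemma4} to absorb the resulting differences. First I would apply the triangle inequality inside the $(y,t)$-integral to split each estimate as a sum over $j$ of the localized quantity $\int_0^\infty\!\!\int_E|\partial_k\Pint(b_j;y,t)|\,dy\,dt$, where $E=\Omega^c$ for (\ref{eq2-13}). For (\ref{eq2-14}) I would additionally interchange the summations in $m$ and $j$, and then use the pairwise disjointness of the Whitney cubes from Lemma \ref{lemma1} to replace $\sum_{m:I_j\text{ far from }I_m}\int_{I_m}$ by a single integral over the union $U_j:=\bigcup_{m:I_j\text{ far from }I_m}I_m$. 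Thus both cases reduce to proving
\begin{equation*}
\int_0^\infty\!\!\int_E|\partial_k\Pint(b_j;y,t)|\,dy\,dt\lesssim \int_{I_j}|b(z)|\,dz,
\end{equation*}
with a constant depending only on $n$, for $E=\Omega^c$ or $E=U_j$.

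To prove this localized estimate, I would use the cancellation of $b_j$ to rewrite
\begin{equation*}
\partial_k\Pint(b_j;y,t)=\int_{I_j}b(z)\bigl[\partial_k P_t(y-z)-\partial_k P_t(y-c(I_j))\bigr]\,dz,
\end{equation*}
then pull $|b(z)|$ outside by Fubini, and make the change of variables $u=y-c(I_j)$, $x=z-c(I_j)$, so that $|x|\leq\tfrac{\sqrt n}{2}l(I_j)$. The crucial geometric observation, common to both cases, is that for $y\in E$ the shifted point $u$ satisfies
\begin{equation*}
|u|\geq\tfrac{\sqrt n+1}{2}l(I_j)\geq\beta|x|\qquad\text{with}\qquad\beta:=\tfrac{\sqrt n+1}{\sqrt n}>1.
\end{equation*}
For $E=\Omega^c$ this is exactly the calculation already appearing in (\ref{eq2-3}); for $E=U_j$ it is precisely inequality (\ref{eq.lemma2-1}) of Lemma \ref{lemma2} applied to every $I_m\subset U_j$.

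Once this uniform placement in the H\"ormander regime $|u|>\beta|x|$ is secured, I would identify $|\partial_k P_t(u-x)-\partial_k P_t(u)|$ via (\ref{eq1-3-1})--(\ref{eq1-3-2}) with a constant multiple (depending only on $n$) of the integrand of (\ref{eq.lemma4-1}) when $1\leq k\leq n$, or with a sum of constant multiples of the integrands of (\ref{eq.lemma4-2}) and (\ref{eq.lemma4-3}) when $k=n+1$. Lemma \ref{lemma4}, with $\beta$ as above, then supplies an $O_n(1)$ bound on $\int_0^\infty\!\!\int_{|u|>\beta|x|}|\partial_k P_t(u-x)-\partial_k P_t(u)|\,du\,dt$, uniformly in $x$. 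Plugging back and summing over $j$, using that $\sum_j\int_{I_j}|b(z)|\,dz=\int_\Omega|b(z)|\,dz$, yields both (\ref{eq2-13}) and (\ref{eq2-14}).

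I do not anticipate a serious obstacle. The only subtle step is the bookkeeping in case (\ref{eq2-14}): one must reorder the double sum over $(m,j)$ and then use disjointness to collapse the $m$-sum into a single integral whose domain still lies in the H\"ormander regime $|u|>\beta|x|$. Lemma \ref{lemma2} was set up precisely to produce this relation in the ``far from'' case, so the heavy lifting has already been done upstream, and what remains is the substitution and an application of Lemma \ref{lemma4}.
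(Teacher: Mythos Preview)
Your proposal is correct and follows essentially the same route as the paper's proof: use the cancellation $\int_{I_j}b=0$ to replace $\partial_k P_t(y-z)$ by the difference $\partial_k P_t(y-z)-\partial_k P_t(y-c(I_j))$, apply Fubini (and for (\ref{eq2-14}) reorder the $(m,j)$-sums and collapse to $U_j$ by disjointness), change variables to $u=y-c(I_j)$, verify the H\"ormander placement $|u|\geq\beta|z-c(I_j)|$ via (\ref{eq2-3}) or (\ref{eq.lemma2-1}), and invoke Lemma~\ref{lemma4}. The only cosmetic difference is that the paper uses $\beta=2$ for $E=\Omega^c$ (from the slightly sharper bound $|u|\geq(\sqrt n+\tfrac12)l(I_j)$ in (\ref{eq2-17})) and $\beta=1+\tfrac{1}{\sqrt n}$ for $E=U_j$, whereas you use the single value $\beta=1+\tfrac{1}{\sqrt n}$ throughout; either works.
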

\begin{proof}[Proof of Lemma \ref{lemma7}]
Notice that by Lemma \ref{lemma3} (vi) and (i), we have
\begin{equation*}
\sum_j\int_{I_j}|b(z)|dz\lesssim\alpha^{1-p}\|f\|_{L^p(\bbbr^n)}^p<\infty,
\end{equation*}
with the implicit constant depending on $n$ and $p$. First we prove (\ref{eq2-13}) and (\ref{eq2-14}) when $1\leq k\leq n$. Due to Lemma \ref{lemma3} (vii) and (\ref{eq1-5}), $\int_{I_j}b(z)dz=0$ and we can write $\partial_k\Pint(b_j;y,t)$ identically as
		\begin{equation}\label{eq2-15}
			(n+1)C_0\!\!\int_{I_j}\!\!b(z)\big\{\frac{t(y_k-c(I_j)_k)}{(t^2+|y-c(I_j)|^2)^{\frac{n+3}{2}}}
			-\frac{t(y_k-z_k)}{(t^2+|y-z|^2)^{\frac{n+3}{2}}}\big\}dz,
		\end{equation}
		where $1\leq k\leq n$, and $y_k$, $z_k$, $c(I_j)_k$ are the $k$-th coordinates of $y$, $z$, $c(I_j)$ in $\bbbr^n$, respectively. Since the Whitney cubes have disjoint interiors, then by applying (\ref{eq2-15}) and by exchanging the order of integration, we can rewrite the left side of (\ref{eq2-13}) equivalently as below
		\begin{align}
			&\int_0^{\infty}\int_{\Omega^c}\big|\sum_j\partial_k\Pint(b_j;y,t)\big|dydt\nonumber\\
			&\lesssim\sum_j\int_{I_j}|b(z)|\int_0^{\infty}\int_{\Omega^c}
			\big|\frac{t(y_k-c(I_j)_k)}{(t^2+|y-c(I_j)|^2)^{\frac{n+3}{2}}}\nonumber\\
			&\quad-\frac{t(y_k-z_k)}{(t^2+|y-z|^2)^{\frac{n+3}{2}}}\big|dydtdz,\label{eq2-16}
		\end{align}
		where the constant depends on $n$. If we use the change of variable $w=y-c(I_j)$ for $y\in\Omega^c$ and if $z\in I_j$, then from Lemma \ref{lemma1} (b) and the fact that the $n$-dimensional ball centered at $c(I_j)$ and inscribed in $I_j$ has radius $\frac{1}{2}l(I_j)$, we can deduce the inequality $|z-c(I_j)|\leq\frac{\sqrt{n}}{2}l(I_j)$. Furthermore, we can obtain
		\begin{equation}\label{eq2-17}
			|w|\geq\dist(I_j,\Omega^c)+\frac{1}{2}l(I_j)\geq(\sqrt{n}+\frac{1}{2})l(I_j)>2|z-c(I_j)|.
		\end{equation}
		Now we can invoke Lemma \ref{lemma4} (\ref{eq.lemma4-1}) with $\beta=2$ and obtain the following estimate for the inside double integral of (\ref{eq2-16}) as below
		\begin{align}
			&\int_0^{\infty}\int_{\Omega^c}\big|\frac{t(y_k-c(I_j)_k)}{(t^2+|y-c(I_j)|^2)^{\frac{n+3}{2}}} -\frac{t(y_k-z_k)}{(t^2+|y-z|^2)^{\frac{n+3}{2}}}\big|dydt\nonumber\\
			&\leq\!\!\!\int_0^{\infty}\!\!\!\!\int_{|w|>2|z-c(I_j)|}\!\big|\frac{t w_k}{(t^2+|w|^2)^{\frac{n+3}{2}}} \!-\!\frac{t(w_k-z_k+c(I_j)_k)}{(t^2+|w-z+c(I_j)|^2)^{\frac{n+3}{2}}}\big|dwdt\nonumber\\
			&\lesssim1,\label{eq2-18}
		\end{align}
		where the constant in (\ref{eq2-18}) only depends on $n$. Inserting (\ref{eq2-18}) into (\ref{eq2-16}) yields the estimate (\ref{eq2-13}) for $1\leq k\leq n$. To prove (\ref{eq2-14}) for $1\leq k\leq n$, we can still use (\ref{eq2-15}) and exchange the order of integration and the summation over $j$ to estimate the left side of (\ref{eq2-14}) from above by the following
		\begin{align}
			&\sum_m\sum_{\substack{j\\I_j\text{ far from }I_m}}\int_0^{\infty}\int_{I_m}\big|\partial_k\Pint( b_j;y,t)\big|dydt\nonumber\\
			&\lesssim\sum_m\sum_{\substack{j\\I_j\text{ far from }I_m}}\int_{I_j}|b(z)|\int_0^{\infty}\int_{I_m}
			\big|\frac{t(y_k-c(I_j)_k)}{(t^2+|y-c(I_j)|^2)^{\frac{n+3}{2}}}\nonumber\\
			&\quad-\frac{t(y_k-z_k)}{(t^2+|y-z|^2)^{\frac{n+3}{2}}}\big|dydtdz,\label{eq2-19}
		\end{align}
		and the constant depends on $n$. Notice that after exchanging the order of the summation over $m$ and the summation over $j$ in the above expression (\ref{eq2-19}), we can rewrite (\ref{eq2-19}) identically as the expression below
		\begin{equation}\label{eq2-20}
			\sum_j\!\!\int_{I_j}\!\!|b(z)|\!\!\int_0^{\infty}\!\!\!\!\int_{U_j}\!\!\big|\frac{t(y_k-c(I_j)_k)}{(t^2+|y-c(I_j)|^2)^{\frac{n+3}{2}}}-\frac{t(y_k-z_k)}{(t^2+|y-z|^2)^{\frac{n+3}{2}}}\big|dydtdz,
		\end{equation}
		where the set $U_j$ is the union of all the Whitney cube $I_m$'s that satisfy the condition ``$I_j$ far from $I_m$'' for some fixed $j$. By the definition (\ref{far}), when $y\in U_j$, $y$ belongs to some Whitney cube $I_m$ and the centers of $I_m$ and $I_j$ satisfy the condition $|c(I_m)-c(I_j)|>11(\sqrt{n}+1)l(I_m)$. By Lemma \ref{lemma2} (1), we can further deduce that $|y-c(I_j)|\geq\frac{\sqrt{n}+1}{2}\cdot l(I_j)$. Moreover, if $z\in I_j$ for this $j$ then $|z-c(I_j)|\leq\frac{\sqrt{n}}{2}\cdot l(I_j)$. From the above analysis, we can conclude that
		\begin{equation}\label{eq2-21}
			|y-c(I_j)|\geq(1+\frac{1}{\sqrt{n}})|z-c(I_j)|
		\end{equation}
		if $y\in U_j$ and $z\in I_j$ for some fixed $j$. Therefore applying the change of variable $w=y-c(I_j)$, the above conclusion (\ref{eq2-21}) and Lemma \ref{lemma4} (\ref{eq.lemma4-1}) with $\beta=1+\frac{1}{\sqrt{n}}$, we can estimate the inside double integral of (\ref{eq2-20}) as follows
		\begin{align}
			&\int_0^{\infty}\!\!\!\!\int_{U_j}\big|\frac{t(y_k-c(I_j)_k)}{(t^2+|y-c(I_j)|^2)^{\frac{n+3}{2}}}-\frac{t(y_k-z_k)}{(t^2+|y-z|^2)^{\frac{n+3}{2}}}\big|dydt\nonumber\\
			&\lesssim\int_0^{\infty}\!\!\!\!\int_{\bar{U}_j}\big|\frac{tw_k}{(t^2+|w|^2)^{\frac{n+3}{2}}}-\frac{t[w_k-(z_k-c(I_j)_k)]}{(t^2+|w-(z-c(I_j))|^2)^{\frac{n+3}{2}}}\big|dwdt\nonumber\\
			&\lesssim 1,\label{eq2-22}
		\end{align}
		where $\bar{U}_j=\{w\in\bbbr^n:|w|\geq(1+\frac{1}{\sqrt{n}})\cdot|z-c(I_j)|\}$ and the constants in (\ref{eq2-22}) depend on $n$. Inserting (\ref{eq2-22}) into (\ref{eq2-20}) and combining the result with (\ref{eq2-19}) yield the estimate (\ref{eq2-14}) for $1\leq k\leq n$.\\
		
		Second, we prove (\ref{eq2-13}) and (\ref{eq2-14}) when $k=n+1$. Since $\int_{I_j}b(z)dz=0$, we can use formula (\ref{eq1-6}) to rewrite $\partial_{n+1}\Pint(b_j;y,t)$ equivalently as the sum of the following two terms,
		\begin{equation}\label{eq2-23}
			\int_{I_j}b(z)\cdot\big(\frac{C_0}{(t^2+|y-z|^2)^{\frac{n+1}{2}}}
			-\frac{C_0}{(t^2+|y-c(I_j)|^2)^{\frac{n+1}{2}}}\big)dz
		\end{equation}
		and
		\begin{equation}\label{eq2-24}
			\int_{I_j}b(z)\cdot\big(\frac{(n+1)C_0 t^2}{(t^2+|y-c(I_j)|^2)^{\frac{n+3}{2}}}
			-\frac{(n+1)C_0 t^2}{(t^2+|y-z|^2)^{\frac{n+3}{2}}}\big)dz.
		\end{equation}
		Inserting (\ref{eq2-23}) and (\ref{eq2-24}) into the left side of (\ref{eq2-13}) yields that the term $$\int_0^{\infty}\int_{\Omega^c}\big|\partial_{n+1}\Pint(\sum_j b_j;y,t)\big|dydt$$
		can be estimated from above by a constant multiple of the sum of the following two terms,
		\begin{equation}\label{eq2-25}
			\sum_j\!\!\int_{I_j}\!\!|b(z)|\!\!\int_0^{\infty}\!\!\!\!\int_{\Omega^c}\!\!\big|
			\frac{1}{(t^2+|y-z|^2)^{\frac{n+1}{2}}}-\frac{1}{(t^2+|y-c(I_j)|^2)^{\frac{n+1}{2}}}\big|dydtdz
		\end{equation}
		and
		\begin{equation}\label{eq2-26}
			\sum_j\!\!\int_{I_j}\!\!|b(z)|\!\!\int_0^{\infty}\!\!\!\!\int_{\Omega^c}\!\!\big|
			\frac{t^2}{(t^2+|y-c(I_j)|^2)^{\frac{n+3}{2}}}-\frac{t^2}{(t^2+|y-z|^2)^{\frac{n+3}{2}}}\big|dydtdz,
		\end{equation}
		and the constant depends on $n$. If we use the change of variable $w=y-c(I_j)$ for $y\in\Omega^c$ and if $z\in I_j$, then with the same reason we still have the inequality (\ref{eq2-17}). Applying Lemma \ref{lemma4} (\ref{eq.lemma4-2}) with $\beta=2$, we can estimate the inside double integral of (\ref{eq2-25}) as follows
		\begin{align}
			&\int_0^{\infty}\int_{\Omega^c}\big|\frac{1}{(t^2+|y-z|^2)^{\frac{n+1}{2}}}
			-\frac{1}{(t^2+|y-c(I_j)|^2)^{\frac{n+1}{2}}}\big|dydt\nonumber\\
			&\lesssim\!\!\int_0^{\infty}\!\!\!\!\int_{|w|>2|z-c(I_j)|}\!\!\!\!\!\!\!\!\!\!\!\!\!\!\!\!\!\!\!\!|(t^2+|w-(z-c(I_j))|^2)^{-\frac{n+1}{2}}\!-\!(t^2+|w|^2)^{-\frac{n+1}{2}}|dwdt\nonumber\\
			&\lesssim 1,\label{eq2-27}
		\end{align}
		where the constants in (\ref{eq2-27}) only depend on $n$. Applying Lemma \ref{lemma4} (\ref{eq.lemma4-3}) with $\beta=2$, we can estimate the inside double integral of (\ref{eq2-26}) as follows
		\begin{align}
			&\int_0^{\infty}\int_{\Omega^c}\big|\frac{t^2}{(t^2+|y-c(I_j)|^2)^{\frac{n+3}{2}}}
			-\frac{t^2}{(t^2+|y-z|^2)^{\frac{n+3}{2}}}\big|dydt\nonumber\\
			&\lesssim\!\!\int_0^{\infty}\!\!\!\!\int_{|w|>2|z-c(I_j)|}\!\!\!\!\!\!\!\!\!\!\!\!\!\!\!\!\!\!\!\!|t^2(t^2+|w|^2)^{-\frac{n+3}{2}}
			\!-\!t^2(t^2+|w-(z-c(I_j))|^2)^{-\frac{n+3}{2}}|dwdt\nonumber\\
			&\lesssim 1,\label{eq2-28}
		\end{align}
		and the constants in (\ref{eq2-28}) depend on $n$. Inserting (\ref{eq2-27}), (\ref{eq2-28}) into (\ref{eq2-25}), (\ref{eq2-26}), respectively, and combining the results with the left side of (\ref{eq2-13}) yield the estimate (\ref{eq2-13}) for $k=n+1$. To prove estimate (\ref{eq2-14}) when $k=n+1$, we still rewrite $\partial_{n+1}\Pint(b_j;y,t)$ equivalently as the sum of (\ref{eq2-23}) and (\ref{eq2-24}), and estimate the term $$\sum_m\int_0^{\infty}\int_{I_m}\big|\partial_{n+1}\Pint(\sum_{\substack{j\\I_j\text{ far from }I_m}} b_j;y,t)\big|dydt$$ from above by a constant multiple of the sum of the following two terms,
		\begin{align}
			&\sum_m\sum_{\substack{j\\I_j\text{ far from }I_m}}\int_{I_j}|b(z)|\nonumber\\
			&\cdot\int_0^{\infty}\!\!\!\!\int_{I_m}\big|\frac{1}{(t^2+|y-z|^2)^{\frac{n+1}{2}}}
			-\frac{1}{(t^2+|y-c(I_j)|^2)^{\frac{n+1}{2}}}\big|dydtdz\label{eq2-29}
		\end{align}
		and
		\begin{align}
			&\sum_m\sum_{\substack{j\\I_j\text{ far from }I_m}}\int_{I_j}|b(z)|\nonumber\\
			&\cdot\int_0^{\infty}\!\!\!\!\int_{I_m}\big|\frac{t^2}{(t^2+|y-c(I_j)|^2)^{\frac{n+3}{2}}}
			-\frac{t^2}{(t^2+|y-z|^2)^{\frac{n+3}{2}}}\big|dydtdz,\label{eq2-30}
		\end{align}
and the constant depends on $n$. Notice that after exchanging the order of the summation over $m$ and the summation over $j$ in the two expressions above, we can rewrite (\ref{eq2-29}) and (\ref{eq2-30}) identically as the expressions below,
\begin{equation}\label{eq2-31}
\sum_j\!\!\int_{I_j}\!\!|b(z)|\!\!\int_0^{\infty}\!\!\!\!\int_{U_j}\!\!\big|\frac{1}{(t^2\!+\!|y\!-\!z|^2)^{\frac{n+1}{2}}}\!-\!\frac{1}{(t^2\!+\!|y\!-\!c(I_j)|^2)^{\frac{n+1}{2}}}\big|dydtdz
\end{equation}
and
\begin{equation}\label{eq2-32}
\sum_j\!\!\int_{I_j}\!\!|b(z)|\!\!\int_0^{\infty}\!\!\!\!\int_{U_j}\!\!\big|\frac{t^2}{(t^2\!+\!|y\!-\!c(I_j)|^2)^{\frac{n+3}{2}}}\!-\!\frac{t^2}{(t^2\!+\!|y\!-\!z|^2)^{\frac{n+3}{2}}}\big|dydtdz,
\end{equation}
where $U_j$ is the union of all the Whitney cube $I_m$'s that satisfy the condition ``$I_j$ far from $I_m$'' for some fixed $j$. By the definition (\ref{far}), when $y\in U_j$, $y$ belongs to some Whitney cube $I_m$ and the centers of $I_m$ and $I_j$ satisfy the condition $|c(I_m)-c(I_j)|>11(\sqrt{n}+1)l(I_m)$. By inequality (\ref{eq.lemma2-1}) of Lemma \ref{lemma2}, we can deduce that $|y-c(I_j)|\geq\frac{\sqrt{n}+1}{2}\cdot l(I_j)$. Moreover, if $z\in I_j$ for this $j$ then $|z-c(I_j)|\leq\frac{\sqrt{n}}{2}\cdot l(I_j)$. From the above analysis, we can conclude that (\ref{eq2-21}) is still true for the variables $y$ and $z$ appearing in (\ref{eq2-31}) and (\ref{eq2-32}). Recall that $$\bar{U}_j=\{w\in\bbbr^n:|w|\geq(1+\frac{1}{\sqrt{n}})|z-c(I_j)|\}.$$
Applying the change of variable $w=y-c(I_j)$ and Lemma \ref{lemma4} (\ref{eq.lemma4-2}) with $\beta=1+\frac{1}{\sqrt{n}}$, we can estimate the inside double integral of (\ref{eq2-31}) as below
		\begin{align}
			&\int_0^{\infty}\int_{U_j}\big|\frac{1}{(t^2+|y-z|^2)^{\frac{n+1}{2}}}
			-\frac{1}{(t^2+|y-c(I_j)|^2)^{\frac{n+1}{2}}}\big|dydt\nonumber\\
			&\lesssim\!\!\int_0^{\infty}\!\!\!\int_{\bar{U}_j}\!\big|\frac{1}{(t^2+|w-(z-c(I_j))|^2)^{\frac{n+1}{2}}}
			-\frac{1}{(t^2+|w|^2)^{\frac{n+1}{2}}}\big|dwdt\nonumber\\
			&\lesssim 1,\label{eq2-33}
		\end{align}
		where the constants in (\ref{eq2-33}) depend on $n$. Applying the change of variable $w=y-c(I_j)$ and Lemma \ref{lemma4} (\ref{eq.lemma4-3}) with $\beta=1+\frac{1}{\sqrt{n}}$, we can estimate the inside double integral of (\ref{eq2-32}) as below
		\begin{align}
			&\int_0^{\infty}\int_{U_j}\big|\frac{t^2}{(t^2+|y-c(I_j)|^2)^{\frac{n+3}{2}}}
			-\frac{t^2}{(t^2+|y-z|^2)^{\frac{n+3}{2}}}\big|dydt\nonumber\\
			&\lesssim\!\!\int_0^{\infty}\!\!\!\int_{\bar{U}_j}\!\big|\frac{t^2}{(t^2+|w|^2)^{\frac{n+3}{2}}}
			\!-\!\frac{t^2}{(t^2+|w-(z-c(I_j))|^2)^{\frac{n+3}{2}}}\big|dwdt\nonumber\\
			&\lesssim 1,\label{eq2-34}
		\end{align}
		and the constants in (\ref{eq2-34}) depend on $n$. Inserting (\ref{eq2-33}), (\ref{eq2-34}) into (\ref{eq2-31}), (\ref{eq2-32}), respectively, and combining the results with (\ref{eq2-29}), (\ref{eq2-30}) yield the estimate (\ref{eq2-14}) for $k=n+1$. Now we finish the proof of Lemma \ref{lemma7}.
	\end{proof}
We rephrase part of \cite[Theorem 6.1.2.]{14classical} with the language from the theory of functional and function representatives.
\begin{lemma}\label{lemma12}
Assume that $1<p<\infty$, and we have the following conclusions.\\
(i) If $f\in\Sw_0'(\bbbr^n)$ has a function representative in the sense of $\Sw_0'(\bbbr^n)$ and this function representative is in $L^p(\bbbr^n)$, then $f\in\dot{F}^0_{p,2}(\bbbr^n)$ and we have
\begin{equation}\label{eq2.170}
\|f\|_{\dot{F}^0_{p,2}(\bbbr^n)}\lesssim\|f\|_{L^p(\bbbr^n)}.
\end{equation}
(ii) If $f\in\dot{F}^0_{p,2}(\bbbr^n)$, then $f$ has a function representative in the sense of $\Sw_0'(\bbbr^n)$ and this function representative is in $L^p(\bbbr^n)$, and we have
\begin{equation}\label{eq2.171}
\|f\|_{L^p(\bbbr^n)}\lesssim\|f\|_{\dot{F}^0_{p,2}(\bbbr^n)}.
\end{equation}
The implicit constants in (\ref{eq2.170}) and (\ref{eq2.171}) depend on $n$, $p$, and $\psi$, where $\psi\in\Sw_0(\bbbr^n)$ is the fixed Schwartz function satisfying conditions (\ref{eq1-7}), (\ref{eq1-8}), and (\ref{eq1.82}). Therefore, when $1<p<\infty$, the condition $f\in\dot{F}^0_{p,2}(\bbbr^n)$ implies
\begin{equation}\label{eq2.219}
\|f\|_{\dot{F}^0_{p,2}(\bbbr^n)}\sim\|f\|_{L^p(\bbbr^n)}.
\end{equation}
\end{lemma}
\begin{proof}[Proof of Lemma \ref{lemma12}]
Step 1: The proof is very much alike to \cite[the proof of Theorem 6.1.2.]{14classical}, therefore we will only sketch it. To prove Lemma \ref{lemma12} (i), we assume the function $F(x)\in\functrep_{0}(f)\cap L^p(\bbbr^n)$. Because the function (\ref{eq2.220}) is in $\Sw_{0}(\bbbr^n)$ for every $j\in\bbbz$ and $x\in\bbbr^n$,
\begin{equation}\label{eq2.220}
y\in\bbbr^n\longmapsto\psi_{2^{-j}}(x-y),
\end{equation}
and we recall the function $F_j(x)$ defined in (\ref{eq1.117}), then we have
\begin{equation}\label{eq2.221}
F_j(x)\!=<\!f,\psi_{2^{-j}}(x\!-\!\cdot)\!>=\!\!\!\int_{\bbbr^n}\!\!\!\!\!F(y)\!\cdot\!\psi_{2^{-j}}
(x\!-\!y)dy\!=\!F\!*\!\psi_{2^{-j}}(x),
\end{equation}
and $F_j(x)\in\functrep(f*\psi_{2^{-j}})\cap L^p(\bbbr^n)$. Furthermore, Definition \ref{definition1} implies
\begin{align}
&\|f\|_{\dot{F}^0_{p,2}(\bbbr^n)}=\big(\int_{\bbbr^n}\big(\sum_{j\in\bbbz}|f*\psi_{2^{-j}}(x)|^2\big)
^{\frac{p}{2}}dx\big)^{\frac{1}{p}}\nonumber\\
&=\big(\int_{\bbbr^n}\big(\sum_{j\in\bbbz}|F_j(x)|^2\big)^{\frac{p}{2}}dx\big)^{\frac{1}{p}}.
\label{eq2.226}
\end{align}
We first prove Lemma \ref{lemma12} (i) when $p=2$. Let $g\in\Sw(\bbbr^n)$. Then the function $g*\psi_{2^{-j}}(x)$ is in $\Sw(\bbbr^n)$ and Plancherel's identity is applicable. Since the fixed function $\psi\in\Sw_0(\bbbr^n)$ with conditions (\ref{eq1-7}), (\ref{eq1-8}), and (\ref{eq1.82}), also satisfies \cite[(6.1.3)]{14classical}, we can use the argument given for \cite[(6.1.9), (6.1.10), (6.1.11), and (6.1.12)]{14classical} to deduce that
\begin{equation}\label{eq2.222}
\|g\|_{\dot{F}^0_{2,2}(\bbbr^n)}^2=\int_{\bbbr^n}\sum_{j\in\bbbz}|g*\psi_{2^{-j}}(x)|^2 dx\lesssim
\|g\|_{L^2(\bbbr^n)}^2,
\end{equation}
and the implicit constant in (\ref{eq2.222}) depends on $n,\psi$. Since the function $F(x)\in\functrep_{0}(f)\cap L^2(\bbbr^n)$ when $p=2$, we can find a sequence $\{g_u\}_{u\in\bbbn}\subseteq\Sw(\bbbr^n)$ so that $g_u\rightarrow F$ in $L^2(\bbbr^n)$ as $u\rightarrow\infty$. Then \cite[Theorem 1.2.10. (Minkowski's inequality)]{14classical} and (\ref{eq2.221}) imply for each fixed $j\in\bbbz$,
\begin{equation}\label{eq2.223}
\lim_{u\rightarrow\infty}\|F_j-g_u*\psi_{2^{-j}}\|_{L^2(\bbbr^n)}=0.
\end{equation}
Now we fix $N\in\bbbn_0$ and use the triangle inequality and (\ref{eq2.222}) to obtain that for all $u\in\bbbn$,
\begin{align}
&\int_{\bbbr^n}\sum_{|j|\leq N}|F_j(x)|^2 dx\nonumber\\
&\lesssim\!\!\!\int_{\bbbr^n}\sum_{|j|\leq N}\!|F_j(x)\!-\!g_u\!*\!\psi_{2^{-j}}(x)|^2
\!+\!\!\!\sum_{|j|\leq N}\!|g_u\!*\!\psi_{2^{-j}}(x)|^2 dx\nonumber\\
&\lesssim\!\!\!\sum_{|j|\leq N}\!\|F_j\!-\!g_u\!*\!\psi_{2^{-j}}\|_{L^2(\bbbr^n)}^2
\!+\!\!\!\int_{\bbbr^n}\sum_{j\in\bbbz}\!|g_u\!*\!\psi_{2^{-j}}(x)|^2 dx\nonumber\\
&\lesssim\sum_{|j|\leq N}\|F_j-g_u*\psi_{2^{-j}}\|_{L^2(\bbbr^n)}^2+\|g_u\|_{L^2(\bbbr^n)}^2,
\label{eq2.224}
\end{align}
where the implicit constants in (\ref{eq2.224}) depend on $n,\psi$ and do not depend on $u\in\bbbn$ and $N\in\bbbn_0$. First sending $u\rightarrow\infty$ and then sending $N\rightarrow\infty$ in the estimate (\ref{eq2.224}) yields
\begin{equation}\label{eq2.225}
\|f\|_{\dot{F}^0_{2,2}(\bbbr^n)}^2\!=\!\!\!\int_{\bbbr^n}\sum_{j\in\bbbz}\!|F_j(x)|^2 dx
\!\lesssim\!\|F\|_{L^2(\bbbr^n)}^2\!=\!\|f\|_{L^2(\bbbr^n)}^2
\end{equation}
under the condition that $f\in\Sw_0'(\bbbr^n)$ has a function representative $F(x)\in\functrep_{0}(f)\cap L^2(\bbbr^n)$. The proof of Lemma \ref{lemma12} (i) when $p=2$ is complete. For the proof of Lemma \ref{lemma12} (i) when $p\neq2$, we define the operator $\vec{T}$ below,
\begin{equation*}
\vec{T}(F)(x)\!\!=\!\!\big\{\!\!\!\int_{\bbbr^n}\!\!\!\!\!F(y)\!\cdot\!\psi_{2^{-j}}(x\!-\!y)dy
\big\}_{j\in\bbbz}\!\!=\!\!\big\{F\!*\!\psi_{2^{-j}}(x)\big\}_{j\in\bbbz},
\end{equation*}
and the rest of the proof will be the same as the proof for \cite[(6.1.4)]{14classical}, therefore we omit it.\\

Step 2: We prove Lemma \ref{lemma12} (ii). When $1<p<\infty$ and $f\in\dot{F}^0_{p,2}(\bbbr^n)\subseteq\Sw_{0}'(\bbbr^n)$, Proposition \ref{proposition1} (iv) implies $f\!*\!\psi_{2^{-j}}\in\Sw'(\bbbr^n)$ and the function $F_j(x)\in\functrep(f\!*\!\psi_{2^{-j}})$ for every $j\in\bbbz$, where $F_j(x)$ is defined in (\ref{eq1.117}). Associated to an arbitrary finite subset $\bbbz'$ of the set $\bbbz$ of all integers, we define the finite sum $S_{\bbbz'}(f)$ of tempered distributions by
\begin{equation}\label{eq2.227}
S_{\bbbz'}(f):=\sum_{j\in\bbbz'}f\!*\!\psi_{2^{-j}}.
\end{equation}
Let $g\in\Sw(\bbbr^n)$, then $g\!*\!\psi_{2^{-j}}\in\Sw_{0}(\bbbr^n)$ for all $j\in\bbbz$. We deduce from conditions (\ref{eq1-7}) and (\ref{eq1-8}) that
\begin{align}
&<f\!*\!\psi_{2^{-j}},g>=\sum_{l=-1}^1<f\!*\!\psi_{2^{-j}},g\!*\!\psi_{2^{-j-l}}>\nonumber\\
&=\sum_{l=-1}^1\int_{\bbbr^n}F_j(x)\cdot g\!*\!\psi_{2^{-j-l}}(x)dx.\label{eq2.228}
\end{align}
We use Cauchy-Schwartz inequality, H\"{o}lder's inequality with $\frac{1}{p}+\frac{1}{p'}=1$, and (\ref{eq2.226}) to obtain the following estimate, 
\begin{align}
&|<S_{\bbbz'}(f),g>|\leq\sum_{j\in\bbbz'}|<f\!*\!\psi_{2^{-j}},g>|\nonumber\\
&\leq\sum_{l=-1}^1\int_{\bbbr^n}\sum_{j\in\bbbz'}|F_j(x)|\cdot|g\!*\!\psi_{2^{-j-l}}(x)|dx\nonumber\\
&\leq\!\!\!\sum_{l=-1}^1\!\!\big(\!\!\int_{\bbbr^n}\!\!\!\big(\!\sum_{j\in\bbbz'}\!|F_j(x)|^2
\big)^{\!\frac{p}{2}}dx\!\big)^{\!\frac{1}{p}}\!\cdot\!\big(\!\!\int_{\bbbr^n}\!\!\!\big(\!
\sum_{j\in\bbbz'}\!|g\!*\!\psi_{2^{-j-l}}(x)|^2\big)^{\!\frac{p'}{2}}\!dx\big)^{\!\frac{1}{p'}}
\nonumber\\
&\leq\!\!\!\sum_{l=-1}^1\|f\|_{\dot{F}^0_{p,2}(\bbbr^n)}\!\cdot\!\big(\!\!\int_{\bbbr^n}\!\!\!\big(\!
\sum_{j\in\bbbz'}\!|g\!*\!\psi_{2^{-j-l}}(x)|^2\big)^{\!\frac{p'}{2}}\!dx\big)^{\!\frac{1}{p'}}.
\label{eq2.229}
\end{align}
Since $g\in\Sw(\bbbr^n)\subseteq L^{p'}(\bbbr^n)$ and $1<p'<\infty$, Lemma \ref{lemma12} (i) implies
\begin{equation}\label{eq2.230}
\big(\!\!\int_{\bbbr^n}\!\!\!\big(\!\sum_{j\in\bbbz}\!|g\!*\!\psi_{2^{-j}}(x)|^2\big)^{\!\frac{p'}{2}}
\!dx\big)^{\!\frac{1}{p'}}\lesssim\|g\|_{L^{p'}(\bbbr^n)}<\infty,
\end{equation}  
and the implicit constant in (\ref{eq2.230}) depends on $n,p,\psi$. Setting $\bbbz'=\{j\in\bbbz:N<|j|\leq M\}$ in (\ref{eq2.229}) for $N\in\bbbn_0$, $M\in\bbbn_0$, and $N<M$ yields the sequence
\begin{equation*}
\{<\sum_{|j|\leq N}f\!*\!\psi_{2^{-j}},g>\}_{N\in\bbbn_0}
\end{equation*}
is a Cauchy sequence of scalars and the limit in (\ref{eq2.231}) exists for every $g\in\Sw(\bbbr^n)$ and defines a linear functional on $\Sw(\bbbr^n)$,
\begin{equation}\label{eq2.231}
<S(f),g>:=\lim_{N\rightarrow\infty}<\sum_{|j|\leq N}f\!*\!\psi_{2^{-j}},g>.
\end{equation}
Replacing $\bbbz'$ in (\ref{eq2.229}) by the finite subset $\bbbz''=\{j\in\bbbz:|j|\leq N\}$ and using (\ref{eq2.230}) and (\ref{eq2.231}) yields that for all $g\in\Sw(\bbbr^n)$,
\begin{equation}\label{eq2.232}
|<S(f),g>|\lesssim\|f\|_{\dot{F}^0_{p,2}(\bbbr^n)}\cdot\|g\|_{L^{p'}(\bbbr^n)}.
\end{equation}
If $h$ is a function in $L^{p'}(\bbbr^n)$ and $\{g_u\}_{u\in\bbbn}\subseteq\Sw(\bbbr^n)$ is a sequence converging to $h$ in $L^{p'}(\bbbr^n)$, then the sequence $\{g_u\}_{u\in\bbbn}$ is Cauchy in the $\|\cdot\|_{L^{p'}(\bbbr^n)}$-norm. Applying (\ref{eq2.232}) to the function $(g_u-g_v)\in\Sw(\bbbr^n)$ yields that $\{<S(f),g_u>\}_{u\in\bbbn}$ is a Cauchy sequence of scalars, and we can define
\begin{equation}\label{eq2.233}
<S(f),h>:=\lim_{u\rightarrow\infty}<S(f),g_u>,
\end{equation}
and hence
\begin{equation}\label{eq2.234}
|\!<\!S(f),h\!>\!|\lesssim\|f\|_{\dot{F}^0_{p,2}(\bbbr^n)}\cdot\|h\|_{L^{p'}(\bbbr^n)},
\end{equation}
where the implicit constant in (\ref{eq2.234}) depends on $n,p,\psi$. Therefore the linear functional $S(f)$ defined in (\ref{eq2.231}) extends to a continuous linear functional on $L^{p'}(\bbbr^n)$, and by the Riesz representation theorem, there exists a function $F(x)\in L^p(\bbbr^n)$ such that the following equation (\ref{eq2.235}) is true for all $h\in L^{p'}(\bbbr^n)$,
\begin{equation}\label{eq2.235}
<S(f),h>=\int_{\bbbr^n}F(x)\cdot h(x)dx,
\end{equation} 
and
\begin{equation}\label{eq2.236}
\|F\|_{L^p(\bbbr^n)}\lesssim\|f\|_{\dot{F}^0_{p,2}(\bbbr^n)}.
\end{equation}
According to (\ref{eq1-9}), the sequence $\{\sum_{|j|\leq N}f\!*\!\psi_{2^{-j}}\}_{N\in\bbbn_0}$ of partial sums converges to $f\in\Sw_0'(\bbbr^n)$ in the sense of $\Sw_0'(\bbbr^n)$. Therefore for all $\phi\in\Sw_{0}(\bbbr^n)\subseteq\Sw(\bbbr^n)\subseteq L^{p'}(\bbbr^n)$, we deduce from (\ref{eq2.231}) and (\ref{eq2.235}) that
\begin{align}
&<f,\phi>=\lim_{N\rightarrow\infty}<\sum_{|j|\leq N}f\!*\!\psi_{2^{-j}},\phi>\nonumber\\
&=<S(f),\phi>=\int_{\bbbr^n}F(x)\cdot\phi(x)dx,\label{eq2.237}
\end{align}
thus the function $F(x)\in\functrep_{0}(f)$. The inequality (\ref{eq2.171}) is a consequence of (\ref{eq2.236}). The proof of Lemma \ref{lemma12} is complete.
\end{proof}
\begin{lemma}\label{lemma16}
For every $f\in\Sw_{0}'(\bbbr^n)$, $1\leq k\leq n$, and $0<t<\infty$, all of the following equations (\ref{eq2-35}), (\ref{eq2-36}), and (\ref{eq2.172}) are true in the sense of $\Sw_0'(\bbbr^n)$,
\begin{align}
f*\partial_{n+1}[P_t(\cdot)]&=\iFT_n[-2\pi|\xi| e^{-2\pi t|\xi|}\FT_n f],\label{eq2-35}\\
f*\partial_{k}[P_t(\cdot)]&=\iFT_n[2\pi i\xi_k e^{-2\pi t|\xi|}\FT_n f],\label{eq2-36}\\
f*P_t&=\iFT_n[e^{-2\pi t|\xi|}\FT_n f],\label{eq2.172}
\end{align}
where both sides of the equations (\ref{eq2-35}), (\ref{eq2-36}), and (\ref{eq2.172}) are tempered distributions, and $f$ appearing on the left sides of (\ref{eq2-35}), (\ref{eq2-36}), (\ref{eq2.172}) is the given continuous linear functional in $\Sw_{0}'(\bbbr^n)$, and $\FT_n f$ appearing on the right sides of (\ref{eq2-35}), (\ref{eq2-36}), (\ref{eq2.172}) is the distributional Fourier transform of the extended version of $f\in\Sw_{0}'(\bbbr^n)$. Furthermore, if $f\in\Sw_{0}'(\bbbr^n)$ has a function representative $f(x)\in\functrep_{0}(f)\cap L^{p_0}(\bbbr^n)$ for some $1\leq p_0\leq\infty$, then we have for every $1\leq k\leq n+1$,
\begin{align}
f*\partial_{k}[P_t(\cdot)](x)&\in\functrep_{0}(f*\partial_{k}[P_t(\cdot)])\cap L^{p_0}(\bbbr^n),
\label{eq2.238}\\
f*P_t(x)&\in\functrep_{0}(f*P_t)\cap L^{p_0}(\bbbr^n).\label{eq2.239}
\end{align}
Moreover, we deduce from formulae (\ref{eq1-4}), (\ref{eq1-5}), and (\ref{eq1-6}) that the functions $\Pint(f;x,t)$ and $\partial_k\Pint(f;x,t)$ are all in $L^{p_0}(\bbbr^n)$, thus the following integrals in (\ref{eq2.240}) and (\ref{eq2.241}) define tempered distributions denoted by the symbols $\Pint(f;x,t)$ and $\partial_k\Pint(f;x,t)$ respectively,
\begin{align}
&\int_{\bbbr^n}\Pint(f;x,t)\cdot g(x)dx,\label{eq2.240}\\
&\int_{\bbbr^n}\partial_k\Pint(f;x,t)\cdot g(x)dx,\label{eq2.241}
\end{align}
where $g\in\Sw(\bbbr^n)$ and $1\leq k\leq n+1$, and hence we have
\begin{align}
\Pint(f;x,t)&\in\functrep(\Pint(f;x,t)),\label{eq2.242}\\
\partial_k\Pint(f;x,t)&\in\functrep(\partial_k\Pint(f;x,t)).\label{eq2.243}
\end{align}
In addition, the equations (\ref{eq2.244}) and (\ref{eq2.245}) below hold true in the sense of $\Sw_{0}'(\bbbr^n)$ for all $1\leq k\leq n+1$,
\begin{align}
\Pint(f;x,t)&=f*P_t,\label{eq2.244}\\
\partial_k\Pint(f;x,t)&=f*\partial_{k}[P_t(\cdot)],\label{eq2.245}
\end{align}
and both sides of equations (\ref{eq2.244}) and (\ref{eq2.245}) are tempered distributions.
\end{lemma}
\begin{proof}[Proof of Lemma \ref{lemma16}]
Step 1: We prove equations (\ref{eq2-35}), (\ref{eq2-36}), and (\ref{eq2.172}). Let $f\in\Sw_{0}'(\bbbr^n)$. For $1\leq k\leq n$ and $0<t<\infty$, we denote
\begin{equation}\label{eq2.246}
\multi_{n+1}(\xi)=-2\pi|\xi| e^{-2\pi t|\xi|},
\end{equation}
\begin{equation}\label{eq2.247}
\multi_{k}(\xi)=-2\pi i\xi_k e^{-2\pi t|\xi|},\quad\multi_{0}(\xi)=e^{-2\pi t|\xi|}.
\end{equation}
We recall (\ref{eq1-3}), (\ref{eq1-3-1}), (\ref{eq2-81}), (\ref{eq1-3-2}), (\ref{eq2-82}) and obtain
\begin{equation}\label{eq2.248}
\FT_n\multi_{0}(x)\!=\!P_t(x),\quad\FT_n\multi_{k}(x)\!=\!\partial_{k}[P_t(x)]\quad\text{for $1\!\leq\!k\!\leq\!n\!+\!1$}.
\end{equation}
And we also have that for all $0\leq k\leq n+1$, the function $\multi_{k}(\xi)\in L^1(\bbbr^n)$ satisfies condition (\ref{eq1.64}), and derivatives of $\multi_{k}(\xi)$ are not continuous at $\xi=0$. Therefore by Proposition \ref{proposition1} (ii), all the left sides of the equations (\ref{eq2-35}), (\ref{eq2-36}), (\ref{eq2.172}) are well-defined continuous linear functionals in $\Sw_{0}'(\bbbr^n)$ and extend to tempered distributions defined on all of $\Sw(\bbbr^n)$ by the Hahn-Banach theorem. Furthermore, equations (\ref{eq1.111}) and (\ref{eq1.112}) imply for all $\phi\in\Sw_{0}(\bbbr^n)$ and $0\leq k\leq n+1$,
\begin{align}
&<f*\FT_n\multi_{k},\phi>=<f,(\iFT_n\multi_{k})*\phi>\nonumber\\
&=<f,\iFT_n[\multi_k(\xi)\FT_n\phi(\xi)]>.\label{eq2.249}
\end{align}
The distributional Fourier transform $\FT_n f$ of the extended version of $f\in\Sw_{0}'(\bbbr^n)$ is a tempered distribution in $\Sw'(\bbbr^n)$, and we can write the right sides of (\ref{eq2-35}), (\ref{eq2-36}), (\ref{eq2.172}) equivalently as follows,
\begin{align}
\iFT_n[-2\pi|\xi| e^{-2\pi t|\xi|}\FT_n f]&=\iFT_n[\multi_{n+1}(\xi)\FT_n f],\label{eq2.250}\\
\iFT_n[2\pi i\xi_k e^{-2\pi t|\xi|}\FT_n f]&=-\iFT_n[\multi_{k}(\xi)\FT_n f],\label{eq2.251}\\
\iFT_n[e^{-2\pi t|\xi|}\FT_n f]&=\iFT_n[\multi_{0}(\xi)\FT_n f],\label{eq2.252}
\end{align}
where $1\leq k\leq n$ in (\ref{eq2.251}). Proposition \ref{proposition1} (i) indicates that for every $0\leq k\leq n+1$, the linear functional 
\begin{equation}\label{eq2.196}
\multi_{k}(\xi)\cdot\FT_n f,
\end{equation}
which is initially defined on $\Sw_{00}(\bbbr^n)$ by the following equation
\begin{equation}\label{eq1-55}
<\!\multi_{k}(\xi)\!\cdot\!\FT_n f,g\!>=<\!\FT_n f,\multi_{k}\!\cdot\!g\!>\quad\text{for $g\!\in\!\Sw_{00}(\bbbr^n)$},
\end{equation}
is a continuous linear functional on $\Sw_{00}(\bbbr^n)$ in the topology of $\Sw(\bbbr^n)$. According to the Hahn-Banach theorem, i.e. \cite[3.3 Theorem]{Rudin.Funct.Anal}, this linear functional defined in (\ref{eq2.196}) and (\ref{eq1-55}) has a continuous linear extension on all of $\Sw(\bbbr^n)$ in the topology of $\Sw(\bbbr^n)$, and we still denote the extended tempered distribution by (\ref{eq2.196}). The action of the extended tempered distribution (\ref{eq2.196}) on functions in $\Sw_{00}(\bbbr^n)$ is given by (\ref{eq1-55}). And the distributional inverse Fourier transform of the extended tempered distribution (\ref{eq2.196}), as given in \cite[Definition 2.3.7.]{14classical}, is well-defined by the following equation
\begin{equation}\label{eq1-56}
<\!\iFT_n[\multi_{k}(\xi)\!\cdot\!\FT_n f],\phi\!>=<\!\multi_{k}(\xi)\!\cdot\!\FT_n f,\iFT_n\phi\!>
\end{equation}
for functions $\phi\in\Sw(\bbbr^n)$. Furthermore, if $\phi\in\Sw_0(\bbbr^n)$ then $\iFT_n\phi\in\Sw_{00}(\bbbr^n)$, and we can deduce from equation (\ref{eq1.67}) that for all $0\!\leq\!k\!\leq\!n+1$,
\begin{align}
&<\iFT_n[\multi_{k}(\xi)\!\cdot\!\FT_n f],\phi>\nonumber\\
&=<\FT_n f,\multi_{k}\cdot\iFT_n\phi>\nonumber\\
&=<f,\FT_n[\multi_{k}(\xi)\iFT_n\phi(\xi)]>,\nonumber\\
&=<f,\iFT_n[\multi_{k}(-\xi)\FT_n\phi(\xi)]>,\label{eq1-57}
\end{align}
where the penultimate line of (\ref{eq1-57}) is due to the definition of distributional Fourier transforms. Since $\multi_{k}(-\xi)\cdot\FT_n\phi(\xi)\in\Sw_{00}(\bbbr^n)$ and $\iFT_n[\multi_{k}(-\xi)\FT_n\phi(\xi)]\in\Sw_{0}(\bbbr^n)$, the last line of (\ref{eq1-57}) is the action of the extended version of $f\in\Sw_{0}'(\bbbr^n)$ on the function $\iFT_n[\multi_{k}(-\xi)\FT_n\phi(\xi)]$ and coincides with the action of the original continuous linear functional $f\in\Sw_{0}'(\bbbr^n)$ on the function $\iFT_n[\multi_{k}(-\xi)\FT_n\phi(\xi)]$. Combining (\ref{eq2.249}), (\ref{eq2.250}), (\ref{eq2.251}), (\ref{eq2.252}), and (\ref{eq1-57}) all together with the fact (\ref{eq2.253}) proves equations (\ref{eq2-35}), (\ref{eq2-36}), and (\ref{eq2.172}) in the sense of $\Sw_0'(\bbbr^n)$,
\begin{equation}\label{eq2.253}
\multi_{k}(-\xi)=
\begin{cases}
\multi_{k}(\xi)&\quad\text{if $k\in\{0,n+1\}$},\\
-\multi_{k}(\xi)&\quad\text{if $1\leq k\leq n$}.
\end{cases}
\end{equation}

Step 2: If $f\in\Sw_{0}'(\bbbr^n)$ has a function representative $f(x)\in\functrep_{0}(f)\cap L^{p_0}(\bbbr^n)$ for some $1\leq p_0\leq\infty$ and $\phi\in\Sw_{0}(\bbbr^n)$, then we deduce from Proposition \ref{proposition1} (ii), (\ref{eq2.246}), (\ref{eq2.247}), and (\ref{eq2.248}) that for every $0\leq k\leq n+1$, the function $(\iFT_n\multi_{k})*\phi$ is in $\Sw_{0}(\bbbr^n)$, thus we invoke H\"{o}lder's inequality with $\frac{1}{p_0}+\frac{1}{p_0'}=1$ and Young's inequality to justify the exchange of the order of integration and obtain
\begin{align}
&<f*\FT_n\multi_{k},\phi>=<f,(\iFT_n\multi_{k})*\phi>\nonumber\\
&=\!\!\!\int_{\bbbr^n}\!\!\!\!f(x)\!\cdot\!(\iFT_n\multi_{k})\!*\!\phi(x)dx
\!=\!\!\!\int_{\bbbr^n}\!\!\!\!f\!*\!\FT_n\multi_{k}(x)\!\cdot\!\phi(x)dx.\label{eq2.254}
\end{align}
Therefore (\ref{eq2.238}) and (\ref{eq2.239}) are proven, and $f$ appearing on the left sides of (\ref{eq2.238}) and (\ref{eq2.239}) is the function representative $f(x)\in\functrep_{0}(f)\cap L^{p_0}(\bbbr^n)$. Formulae (\ref{eq1-4}), (\ref{eq1-5}), (\ref{eq1-6}), and Young's inequality imply the functions $\Pint(f;x,t)$ and $\partial_k\Pint(f;x,t)$ for $1\leq k\leq n+1$ are all in $L^{p_0}(\bbbr^n)$, thus an application of H\"{o}lder's inequality with $\frac{1}{p_0}+\frac{1}{p_0'}=1$ and the estimate (\ref{eq2.255}) indicate the integrals in (\ref{eq2.240}) and (\ref{eq2.241}) define tempered distributions denoted by the symbols $\Pint(f;x,t)$ and $\partial_k\Pint(f;x,t)$ respectively, and (\ref{eq2.242}) and (\ref{eq2.243}) are proven,
\begin{equation}\label{eq2.255}
\|g\|_{L^{p_0'}(\bbbr^n)}\lesssim\rho_N(g)\quad\text{for $g\in\Sw(\bbbr^n)$},
\end{equation}
where $\rho_N(g)$ is defined in (\ref{eq1.89}), and the implicit constant and the number $N\in\bbbn_0$ in (\ref{eq2.255}) depend on $n,p_0$ and do not depend on $g$. Combining (\ref{eq1-4}), (\ref{eq2.239}), and (\ref{eq2.242}) yields that
\begin{align}
&\Pint(f;x,t)=f*P_t(x)\in\functrep_{0}(f*P_t)\cap\functrep(\Pint(f;x,t))\nonumber\\
&\subseteq\functrep_{0}(f*P_t)\cap\functrep_{0}(\Pint(f;x,t)),\label{eq2.256}
\end{align}
therefore the function $\Pint(f;x,t)$ is a common function representative of tempered distributions $f*P_t$ and $\Pint(f;x,t)$ in the sense of $\Sw_{0}'(\bbbr^n)$, and equation (\ref{eq2.244}) is proven. Similarly, we use (\ref{eq1-5}), (\ref{eq1-6}), (\ref{eq2.238}), and (\ref{eq2.243}) to deduce that for $1\leq k\leq n+1$,
\begin{align}
&\partial_k\Pint(f;x,t)=f*\partial_{k}[P_t(\cdot)](x)\nonumber\\
&\in\functrep_{0}(f*\partial_{k}[P_t(\cdot)])\cap
\functrep(\partial_k\Pint(f;x,t))\nonumber\\
&\subseteq\functrep_{0}(f*\partial_{k}[P_t(\cdot)])\cap\functrep_{0}(\partial_k\Pint(f;x,t)),
\label{eq2.257}
\end{align}
and the function $\partial_k\Pint(f;x,t)$ is a common function representative of tempered distributions $f*\partial_{k}[P_t(\cdot)]$ and $\partial_k\Pint(f;x,t)$ in the sense of $\Sw_{0}'(\bbbr^n)$, thus equation (\ref{eq2.245}) is proven. The proof of Lemma \ref{lemma16} is now complete.
\end{proof}
\begin{lemma}\label{lemma8}
Assume that $f\in\Sw_{0}'(\bbbr^n)$ and $\multi(\xi)\in L^1(\bbbr^n)$ is a smooth function on $\bbbr^n\setminus\{0\}$ that satisfies condition (\ref{eq1.64}). For every $j\in\bbbz$, we denote $f_j:=f*\psi_{2^{-j}}\in\Sw'(\bbbr^n)$ then the following equation (\ref{eq2.258}) is true in the sense of $\Sw_{0}'(\bbbr^n)$,
\begin{equation}\label{eq2.258}
f_j*\FT_n\multi=(f*\psi_{2^{-j}})*\FT_n\multi=(f*\FT_n\multi)*\psi_{2^{-j}},
\end{equation}
therefore we have
\begin{equation}\label{eq2.259}
f*\FT_n\multi=\sum_{j\in\bbbz}f_j*\FT_n\multi,
\end{equation}
and the series in (\ref{eq2.259}) converges to $f*\FT_n\multi\in\Sw_{0}'(\bbbr^n)$ in the sense of $\Sw_{0}'(\bbbr^n)$. In particular, setting $\multi(\xi)=-2\pi|\xi|\cdot e^{-2\pi t|\xi|}$ yields the following equation (\ref{eq2.260}) is true in the sense of $\Sw_{0}'(\bbbr^n)$ for all $j\in\bbbz$ and $t\in(0,\infty)$,
\begin{equation}\label{eq2.260}
f_j*\partial_{n+1}[P_t(\cdot)]=(f*\partial_{n+1}[P_t(\cdot)])*\psi_{2^{-j}},
\end{equation}
and we also have
\begin{equation}\label{eq2.261}
f*\partial_{n+1}[P_t(\cdot)]=\sum_{j\in\bbbz}f_j*\partial_{n+1}[P_t(\cdot)],
\end{equation}
where the series in (\ref{eq2.261}) converges to $f*\partial_{n+1}[P_t(\cdot)]\in\Sw_{0}'(\bbbr^n)$ in the sense of $\Sw_{0}'(\bbbr^n)$.
\end{lemma}
\begin{proof}[Proof of Lemma \ref{lemma8}]
Let $f\in\Sw_{0}'(\bbbr^n)$. Since $\psi\in\Sw_0(\bbbr^n)$ satisfies the conditions (\ref{eq1-7}), (\ref{eq1-8}), and (\ref{eq1.82}), we deduce from Proposition \ref{proposition1} (iv) that for every $j\in\bbbz$, $\psi_{2^{-j}}(x)=2^{jn}\psi(2^j x)\in\Sw_{0}(\bbbr^n)$ and $f_j=f*\psi_{2^{-j}}\in\Sw'(\bbbr^n)\subseteq\Sw_{0}'(\bbbr^n)$. When $\phi\in\Sw_{0}(\bbbr^n)$ and $\tilde{\psi}_{2^{-j}}(x)=2^{jn}\psi(-2^j x)$, we have $\iFT_n\multi*\phi\in\Sw_{0}(\bbbr^n)\subseteq\Sw(\bbbr^n)$ and we use (\ref{eq1.111}), (\ref{eq1.112}), and (\ref{eq1.122}) to obtain 
\begin{align}
&<f_j*\FT_n\multi,\phi>=<(f*\psi_{2^{-j}})*\FT_n\multi,\phi>\nonumber\\
&=<f*\psi_{2^{-j}},\iFT_n\multi*\phi>=<f,\tilde{\psi}_{2^{-j}}*\iFT_n\multi*\phi>\nonumber\\
&=<f,\iFT_n[\FT_n\psi(-2^{-j}\xi)\cdot\multi(\xi)\cdot\FT_n\phi(\xi)]>.\label{eq2.262}
\end{align}
Since $\tilde{\psi}_{2^{-j}}*\phi\in\Sw_{0}(\bbbr^n)$, Proposition \ref{proposition1} (ii) implies $f*\FT_n\multi$ is a well-defined continuous linear functional in $\Sw_{0}'(\bbbr^n)$ and we use (\ref{eq1.111}), (\ref{eq1.112}), and (\ref{eq1.122}) to obtain
\begin{align}
&<(f*\FT_n\multi)*\psi_{2^{-j}},\phi>=<f*\FT_n\multi,\tilde{\psi}_{2^{-j}}*\phi>\nonumber\\
&=<f,\iFT_n\multi*(\tilde{\psi}_{2^{-j}}*\phi)>\nonumber\\
&=<f,\iFT_n[\multi(\xi)\cdot\FT_n\psi(-2^{-j}\xi)\cdot\FT_n\phi(\xi)]>.\label{eq2.263}
\end{align}
Equations (\ref{eq2.262}) and (\ref{eq2.263}) prove (\ref{eq2.258}) for every $j\in\bbbz$. Since $f*\FT_n\multi\in\Sw_{0}'(\bbbr^n)$, equation (\ref{eq2.259}) is a consequence of (\ref{eq2.258}) and (\ref{eq1-9}) if we replace $f\in\Sw_{0}'(\bbbr^n)$ in (\ref{eq1-9}) by $f*\FT_n\multi\in\Sw_{0}'(\bbbr^n)$. By setting $\multi(\xi)=-2\pi|\xi|\cdot e^{-2\pi t|\xi|}$, we have $\FT_n\multi(x)=\partial_{n+1}[P_t(x)]$ for all $t\in(0,\infty)$. The proof of Lemma \ref{lemma8} is now complete.
\end{proof}
The following lemma is cited from \cite[section 2.4.4]{14modern} and is also introduced in \cite[Theorem 4.14, Chapter VI]{SteinWeissFourier}. It lays the foundation for the justification of subharmonicity needed in this paper.
	\begin{lemma}[cf. Lemma 2.4.9 of \cite{14modern}]\label{lemma9}
		Let $u_j$ be real-valued harmonic functions on $\bbbr^{n+1}$ satisfying the system of equations
		\begin{align}
			&\sum_{j=1}^{n+1}\frac{\partial u_j}{\partial x_j}=0,\label{eq2-50}\\
			&\frac{\partial u_j}{\partial x_k}-\frac{\partial u_k}{\partial x_j}=0,\quad\text{for }k,j\in\{1,\cdots,n+1\} \text{ and }k\neq j,\label{eq2-51}
		\end{align}
		and let $F=(u_1,\cdots,u_{n+1})$. Then the function
		\begin{equation*}
			|F|^q=(\sum_{j=1}^{n+1}|u_j|^2)^{\frac{q}{2}}
		\end{equation*}
		is subharmonic when $q\geq\frac{n-1}{n}$, i.e. it satisfies $\Delta(|F|^q)\geq 0$, on $\bbbr^{n+1}_+$.
	\end{lemma}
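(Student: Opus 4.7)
The plan is to verify $\Delta(|F|^q) \geq 0$ pointwise on the open set $\{F \neq 0\}$ where $|F|^q$ is smooth, and then extend to all of $\bbbr^{n+1}$ by a standard regularization. First I would compute the Laplacian directly. Writing $G = |F|^2 = \sum_j u_j^2$ and using $\Delta u_j = 0$, one gets $\nabla G = 2\sum_j u_j \nabla u_j$ and $\Delta G = 2\sum_j |\nabla u_j|^2$. Since $|F|^q = G^{q/2}$, the chain rule gives
\begin{equation*}
\Delta(|F|^q) \;=\; q\,|F|^{q-4}\Bigl[(q-2)\Bigl|\sum_{j=1}^{n+1} u_j \nabla u_j\Bigr|^2 \;+\; |F|^2 \sum_{j=1}^{n+1}|\nabla u_j|^2\Bigr].
\end{equation*}
For $q\geq 2$ both terms inside the bracket are nonnegative and there is nothing to prove, so the substantive range is $(n-1)/n \leq q < 2$, where I must establish the algebraic inequality
\begin{equation*}
(2-q)\Bigl|\sum_{j=1}^{n+1} u_j \nabla u_j\Bigr|^2 \;\leq\; |F|^2 \sum_{j=1}^{n+1}|\nabla u_j|^2.
\end{equation*}

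Next I would exploit the structural conditions (\ref{eq2-50})--(\ref{eq2-51}). Let $M=(M_{jk})$ be the $(n+1)\times(n+1)$ Jacobian with $M_{jk}=\partial_k u_j$. Then (\ref{eq2-51}) says $M$ is symmetric and (\ref{eq2-50}) says $\mathrm{tr}(M)=0$. Viewing $F$ as a column vector, the symmetry gives $\sum_j u_j\nabla u_j = MF$, and $\sum_j |\nabla u_j|^2 = \mathrm{tr}(M^2)$. Diagonalizing $M = O^{T}\Lambda O$ with $\Lambda=\mathrm{diag}(\lambda_1,\ldots,\lambda_{n+1})$, $\sum_i\lambda_i = 0$, and setting $\eta=OF$, the desired inequality reduces to
\begin{equation*}
(2-q)\sum_{i=1}^{n+1}\lambda_i^2 \eta_i^2 \;\leq\; \Bigl(\sum_{i=1}^{n+1}\eta_i^2\Bigr)\Bigl(\sum_{i=1}^{n+1}\lambda_i^2\Bigr).
\end{equation*}

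The key lemma I would isolate is the sharp traceless bound: \emph{if $\sum_i\lambda_i=0$ then $\max_i \lambda_i^2 \leq \tfrac{n}{n+1}\sum_i \lambda_i^2$.} Indeed, if $|\lambda_1|$ is the largest, then $\sum_{i\geq 2}\lambda_i = -\lambda_1$ and Cauchy--Schwarz yields $\sum_{i\geq 2}\lambda_i^2 \geq \lambda_1^2/n$, hence $\sum_i\lambda_i^2 \geq \lambda_1^2(n+1)/n$. Applying this bound gives
\begin{equation*}
\sum_i\lambda_i^2\eta_i^2 \;\leq\; (\max_i\lambda_i^2)\sum_i\eta_i^2 \;\leq\; \tfrac{n}{n+1}\,|F|^2\sum_i\lambda_i^2,
\end{equation*}
so the required inequality holds precisely when $(2-q)\cdot\tfrac{n}{n+1}\leq 1$, i.e., $q\geq(n-1)/n$. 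This is exactly the threshold in the statement, which also explains its sharpness.

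Finally, to dispose of the zero set $\{F=0\}$ where $|F|^q$ may fail to be $C^2$ when $q<2$, I would replace $|F|^q$ by the smooth regularization $(|F|^2+\varepsilon)^{q/2}$, repeat the same computation (which works uniformly in $\varepsilon>0$ since the algebraic step never used $|F|>0$), and let $\varepsilon\to 0^{+}$ to conclude subharmonicity of $|F|^q$ in the mean-value (equivalently distributional) sense on all of $\bbbr^{n+1}_+$. The main obstacle is really the single algebraic inequality in the sub-quadratic range $q<2$; the Laplacian computation is routine, but extracting the optimal constant $n/(n+1)$ from the traceless symmetric condition is precisely what pins down the threshold $q\geq(n-1)/n$.
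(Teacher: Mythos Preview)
Your proposal is correct and follows exactly the classical argument. The paper does not supply its own proof of this lemma; it cites Lemma~2.4.9 of \cite{14modern} (and Stein--Weiss, Chapter~VI, Theorem~4.14), and the subsequent Remark in the paper explicitly notes that the cited proof proceeds by diagonalizing the real symmetric Jacobian matrix $A=\{\partial u_k/\partial x_j\}$, which is precisely your approach---Laplacian computation, reduction to the traceless symmetric matrix $M$, orthogonal diagonalization, and the sharp bound $\max_i\lambda_i^2\le\tfrac{n}{n+1}\sum_i\lambda_i^2$ for traceless vectors.
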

	\begin{remark}\label{remark1}
		From the proof of Lemma \ref{lemma9} given in \cite{14modern}, we know the condition that $u_j$ is a real-valued function on $\bbbr^{n+1}$ for every $1\leq j\leq n+1$, is a necessary condition. This is because the proof relies on the diagonalization of the $(n+1)\times(n+1)$ matrix
		\begin{equation}\label{eq2-52}
			A:=\bigg\{\frac{\partial u_k}{\partial x_j}\bigg\}^{n+1}_{k,j=1},
		\end{equation}
		and the condition (\ref{eq2-51}) shows that the matrix $A$ is real symmetric and thus is a Hermitian matrix if $u_j$'s ($1\leq j\leq n+1$) are all real-valued functions on $\bbbr^{n+1}$, hence the diagonalization of a Hermitian matrix is justified. However, if $u_j$'s ($1\leq j\leq n+1$) are complex-valued functions, then (\ref{eq2-51}) only tells us that the matrix $A$ defined in (\ref{eq2-52}) is a complex symmetric matrix and may not be a Hermitian matrix, since a Hermitian matrix is a complex conjugate symmetric matrix.
	\end{remark}
	\begin{lemma}\label{lemma10}
		If $1\leq p\leq\infty$ and $f\in L^p(\bbbr^n)$ is a real-valued function, and if $F(x,t)=\nabla_{n+1}\Pint(f;x,t)$, then $|F|^q=|\nabla_{n+1}\Pint(f;x,t)|^q$ is subharmonic on $\bbbr^{n+1}_+$ when $q\geq\frac{n-1}{n}$.
	\end{lemma}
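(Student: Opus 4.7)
The plan is to apply Lemma \ref{lemma9} directly to the vector field $F = (u_1, \ldots, u_{n+1})$ where $u_j(x,t) := \partial_j \Pint(f;x,t)$ for $1 \leq j \leq n$ and $u_{n+1}(x,t) := \partial_{n+1} \Pint(f;x,t)$, viewed as functions on $\bbbr^{n+1}_+$. The three hypotheses of Lemma \ref{lemma9} need to be verified: (a) each $u_j$ is real-valued; (b) each $u_j$ is harmonic on $\bbbr^{n+1}_+$; (c) the system of equations (\ref{eq2-50})--(\ref{eq2-51}) holds.

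For (a), the hypothesis that $f$ is real-valued is crucial, as emphasized in Remark \ref{remark1}. Since $P_t(x)$ and its derivatives with respect to $x$ and $t$ are real-valued (this is evident from (\ref{eq1-3-1}) and (\ref{eq1-3-2})), the explicit integral representations (\ref{eq1-5}) and (\ref{eq1-6}) show that each $u_j$ is real-valued. For (b), I would first verify that $\Pint(f;x,t)$ itself is harmonic on $\bbbr^{n+1}_+$ by differentiating twice under the integral sign in (\ref{eq1-4}): on any compact set $K \subset \bbbr^{n+1}_+$ (where $t$ stays bounded away from $0$ and $\infty$), all derivatives of $P_t(x-y)$ with respect to $(x,t)$ are dominated as functions of $y$ by a constant multiple of $(1+|y|^2)^{-(n+1)/2}$, which lies in $L^{p'}(\bbbr^n)$ for every $1 \leq p \leq \infty$, so H\"{o}lder's inequality permits the interchange. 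Since $(x,t) \mapsto P_t(x-y)$ is a direct computation known to satisfy $\Delta_{(x,t)} P_t(x-y) = 0$ on $\bbbr^{n+1}_+$ for each fixed $y$, we conclude $\Delta_{(x,t)} \Pint(f;x,t) = 0$. Iterating the same dominated-convergence argument shows $\Pint(f;\cdot,\cdot) \in C^\infty(\bbbr^{n+1}_+)$, and differentiation commutes with the Laplacian, so each $u_j = \partial_j \Pint(f;\cdot,\cdot)$ is harmonic on $\bbbr^{n+1}_+$.

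For (c), equation (\ref{eq2-50}) becomes
\begin{equation*}
\sum_{j=1}^{n+1} \partial_j u_j = \sum_{j=1}^{n+1} \partial_j^2 \Pint(f;x,t) = \Delta_{(x,t)} \Pint(f;x,t) = 0
\end{equation*}
by the harmonicity established in (b). Equation (\ref{eq2-51}) reads $\partial_j u_k - \partial_k u_j = \partial_j \partial_k \Pint(f;\cdot,\cdot) - \partial_k \partial_j \Pint(f;\cdot,\cdot) = 0$, which is simply the equality of mixed partial derivatives for the $C^\infty$ function $\Pint(f;\cdot,\cdot)$ on $\bbbr^{n+1}_+$. Therefore Lemma \ref{lemma9} applies and gives that $|F|^q = |\nabla_{n+1}\Pint(f;x,t)|^q$ is subharmonic on $\bbbr^{n+1}_+$ for $q \geq \frac{n-1}{n}$.

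The main obstacle is really only the bookkeeping involved in justifying that $\Pint(f;\cdot,\cdot)$ is smooth enough to allow termwise differentiation and to apply the pointwise harmonicity of the Poisson kernel, but this is entirely routine given the uniform decay of the Poisson kernel's derivatives on compact subsets of $\bbbr^{n+1}_+$ and the fact that $f \in L^p(\bbbr^n)$. Once smoothness is in hand, the result reduces cleanly to Lemma \ref{lemma9}; no additional analytic input is required.
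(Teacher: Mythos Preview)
Your proposal is correct and follows essentially the same approach as the paper: verify the hypotheses of Lemma \ref{lemma9} for $u_j=\partial_j\Pint(f;x,t)$ by checking real-valuedness, harmonicity via the Laplacian of the Poisson integral, and equations (\ref{eq2-50})--(\ref{eq2-51}) via harmonicity and equality of mixed partials. The paper merely spends more space on the differentiation-under-the-integral step, constructing explicit dominating functions for (\ref{eq1-5}) and (\ref{eq1-6}) and then for the second-order mixed partials, whereas you handle this with a uniform bound on compact subsets of $\bbbr^{n+1}_+$; both justifications are valid and the logical structure is identical.
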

	\begin{proof}[Proof of Lemma \ref{lemma10}]
		If $f$ is real-valued, then the component functions of $F(x,t)$ are all real-valued functions on $\bbbr^{n+1}_+$. To apply Lemma \ref{lemma9}, we need to check the component functions of $F(x,t)$ satisfy the system of equations (\ref{eq2-50}) and (\ref{eq2-51}). Now we prove formulae (\ref{eq1-5}) and (\ref{eq1-6}) when $f$ is a function in $L^p(\bbbr^n)$ for some $1\leq p\leq\infty$. We will only prove formula (\ref{eq1-5}) when $k=1$. The proof for the other cases of (\ref{eq1-5}) when $k=2,\cdots,n$ will be similar. Let $\vec{e}_1$ be the unit elementary vector in $\bbbr^n$ whose first coordinate is $1$ and all the other coordinates are $0$. For $x=(x_1,x_2,\cdots,x_n)\in\bbbr^n$ and $y=(y_1,y_2,\cdots,y_n)\in\bbbr^n$, we denote $x'=(x_2,\cdots,x_n)\in\bbbr^{n-1}$ and $y'=(y_2,\cdots,y_n)\in\bbbr^{n-1}$. Let $h\neq0$ be a real number, then we have
\begin{equation}\label{eq2-151}
\partial_1\Pint(f;x,t)\!=\!\lim_{h\rightarrow0}\!\int_{\bbbr^n}\!\!\!\!\frac{f(y)}{h}\!\cdot\!
\big[P_t(x\!+\!h\vec{e}_1\!-\!y)\!-\!P_t(x\!-\!y)\big]dy
\end{equation}
		for every $x\in\bbbr^n$ and every $0<t<\infty$. Denote
		\begin{align}
			&E^{(1)}:=\{y\in\bbbr^n:t<|x_1-y_1|\},\label{eq2-152}\\
			&E^{(2)}:=\{y\in\bbbr^n:t\geq|x_1-y_1|\},\label{eq2-153}
		\end{align}
		and the characteristic functions associated with these two sets are denoted as $\chi_{E^{(1)}}$ and $\chi_{E^{(2)}}$, respectively. We claim that the dominating function for the integrand on the right side of (\ref{eq2-151}) is given by
		\begin{equation}\label{eq2-154}
			\Upsilon_1(y)=\frac{\chi_{E^{(1)}}(y)\cdot|f(y)|}{(t^2+\frac{1}{4}|x-y|^2)^{\frac{n+1}{2}}}
			+\frac{\chi_{E^{(2)}}(y)\cdot|f(y)|}{(t^2+|x'-y'|^2)^{\frac{n+1}{2}}}
		\end{equation} 
		uniformly for $h\in\bbbr$ with $0<|h|<\frac{t}{2}$. We use the mean value theorem and formula (\ref{eq1-3-1}) and find a real number $h^*$ between $0$ and $h$ so that
		\begin{align}
			&\bigg|\frac{f(y)}{h}\cdot\big[P_t(x+h\vec{e}_1-y)-P_t(x-y)\big]\bigg|\nonumber\\
			&=\big|f(y)\cdot\partial_1[P_t(x+h^*\vec{e}_1-y)]\big|\nonumber\\
			&\lesssim\frac{|f(y)|}{(t^2+|x'-y'|^2+(h^*+x_1-y_1)^2)^{\frac{n+1}{2}}}\nonumber\\
			&\lesssim\frac{|f(y)|}{(t^2+|x'-y'|^2+g(|h^*|))^{\frac{n+1}{2}}},\label{eq2-155}
		\end{align}
		where the implicit constant depends on $n$, and $g(|h^*|)$ is the evaluation of the nonnegative quadratic function
		\begin{equation}\label{eq2-156}
			g(u)=u^2-2|x_1-y_1|u+|x_1-y_1|^2
		\end{equation}
		at $u=|h^*|$. When $y\in E^{(1)}$, we have $0<|h^*|<|h|<\frac{t}{2}<\frac{1}{2}|x_1-y_1|$. On the closed and bounded interval $[0,\frac{1}{2}|x_1-y_1|]$, the nonnegative quadratic function $g(u)$ defined in (\ref{eq2-156}) is decreasing and achieves a nonnegative minimum value at $u=\frac{1}{2}|x_1-y_1|$, and this nonnegative minimum value is $\frac{1}{4}|x_1-y_1|^2$. Recall that
		$$|x'-y'|^2=\sum_{j=2}^n(x_j-y_j)^2,$$
		and then we have
		\begin{align}
			&(\ref{eq2-155})\leq\frac{|f(y)|}{(t^2+|x'-y'|^2+\frac{1}{4}|x_1-y_1|^2)^{\frac{n+1}{2}}}\nonumber\\
			&\leq\frac{|f(y)|}{(t^2+\frac{1}{4}|x'-y'|^2+\frac{1}{4}|x_1-y_1|^2)^{\frac{n+1}{2}}}=\Upsilon_1(y),
			\label{eq2-157}
		\end{align}
		if $y\in E^{(1)}$. By using the nonnegativity of the function $g(u)$, we also have
		\begin{equation}\label{eq2-158}
			(\ref{eq2-155})\leq\frac{|f(y)|}{(t^2+|x'-y'|^2)^{\frac{n+1}{2}}}=\Upsilon_1(y)
		\end{equation}
		if $y\in E^{(2)}$. Inequalities (\ref{eq2-155}), (\ref{eq2-157}), (\ref{eq2-158}) together prove the claim (\ref{eq2-154}). It is not hard to see that we can use H\"{o}lder's inequality with $\frac{1}{p}+\frac{1}{p'}=1$ to obtain
		\begin{align}
			&\int_{E^{(1)}}\frac{|f(y)|}{(t^2+\frac{1}{4}|x-y|^2)^{\frac{n+1}{2}}}dy\nonumber\\
			&\leq\|f\|_{L^p(\bbbr^n)}\cdot\|\frac{1}{(t^2+\frac{1}{4}|y|^2)^{\frac{n+1}{2}}}\|_{L^{p'}(\bbbr^n)}<\infty,
			\label{eq2-159}
		\end{align}
		where $1\leq p\leq\infty$. Recall the definition of the set $E^{(2)}$ given in (\ref{eq2-153}) and also that $dy'=dy_2\cdots dy_n$, then we have
		$$E^{(2)}=[x_1-t,x_1+t]\times\bbbr^{n-1},$$
		and we use H\"{o}lder's inequality with $\frac{1}{p}+\frac{1}{p'}=1$ to obtain
		\begin{align}
			&\int_{E^{(2)}}\frac{|f(y)|}{(t^2+|x'-y'|^2)^{\frac{n+1}{2}}}dy\nonumber\\
			&\leq\|f\|_{L^p(E^{(2)})}\cdot\bigg[\int_{E^{(2)}}\frac{1}{(t^2+|x'-y'|^2)^{\frac{(n+1)p'}{2}}}dy
			\bigg]^{\frac{1}{p'}}\nonumber\\
			&=\|f\|_{L^p(E^{(2)})}\cdot\bigg[2t\int_{\bbbr^{n-1}}\frac{1}{(t^2+|y'|^2)^{\frac{(n+1)p'}{2}}}dy'
			\bigg]^{\frac{1}{p'}}\nonumber\\
			&\leq\|f\|_{L^p(\bbbr^n)}\cdot(2t)^{\frac{1}{p'}}\cdot\bigg[\int_{\bbbr^{n-1}}
			\frac{1}{(t^2+|y'|^2)^{\frac{(n+1)p'}{2}}}dy'\bigg]^{\frac{1}{p'}}<\infty,\label{eq2-160}
		\end{align}
		where $1<p\leq\infty$ in (\ref{eq2-160}). For the case where $p=1$, we have
		\begin{align}
			&\int_{E^{(2)}}\frac{|f(y)|}{(t^2+|x'-y'|^2)^{\frac{n+1}{2}}}dy\nonumber\\
			&\leq\|f\|_{L^1(E^{(2)})}\cdot\esssup_{y\in E^{(2)}}\frac{1}{(t^2+|x'-y'|^2)^{\frac{n+1}{2}}}\nonumber\\
			&=\|f\|_{L^1(E^{(2)})}\cdot\esssup_{y'\in\bbbr^{n-1}}\frac{1}{(t^2+|y'|^2)^{\frac{n+1}{2}}}\leq
			\frac{\|f\|_{L^1(\bbbr^n)}}{t^{n+1}}<\infty.\label{eq2-161}
		\end{align}
		Inequalities (\ref{eq2-159}), (\ref{eq2-160}), (\ref{eq2-161}) show that the dominating function $\Upsilon_1(y)$ is in $L^1(\bbbr^n)$ whenever $f$ is a function in $L^p(\bbbr^n)$ for some $1\leq p\leq\infty$. Hence by invoking the dominated convergence theorem, we can bring the limit into the integral sign on the right side of (\ref{eq2-151}) and obtain the formula (\ref{eq1-5}) when $k=1$. Now we prove formula (\ref{eq1-6}). Let $h\in\bbbr$, then we have
		\begin{equation}\label{eq2-162}
			\partial_{n+1}\Pint(f;x,t)=\lim_{h\rightarrow0}\int_{\bbbr^n}\frac{f(x-y)}{h}\cdot\big[P_{t+h}(y)-P_t(y)\big]
			dy
		\end{equation}
		for every $x\in\bbbr^n$ and every $0<t<\infty$. We claim that the dominating function for the integrand on the right side of (\ref{eq2-162}) is given by
		\begin{equation}\label{eq2-163}
			\Upsilon_2(y)=\frac{|f(x-y)|}{(\frac{1}{4}t^2+|y|^2)^{\frac{n+1}{2}}}
		\end{equation} 
		uniformly for $h\in\bbbr$ with $0<|h|<\frac{t}{2}$. We use the mean value theorem and formula (\ref{eq1-3-2}) and find a real number $h^*$ between $0$ and $h$ so that
		\begin{align}
			&\bigg|\frac{f(x-y)}{h}\cdot\big[P_{t+h}(y)-P_t(y)\big]\bigg|\nonumber\\
			&=\big|f(x-y)\cdot\partial_{n+1}[P_{t+h^*}(y)]\big|\nonumber\\
			&\lesssim\frac{|f(x-y)|}{((t+h^*)^2+|y|^2)^{\frac{n+1}{2}}},\label{eq2-164}
		\end{align}
		where the implicit constant in (\ref{eq2-164}) depends on $n$. When $0<|h^*|<|h|<\frac{t}{2}$, we have $\frac{1}{2}t<t+h^*<\frac{3}{2}t$ and hence (\ref{eq2-164}) can be dominated by the function $\Upsilon_2(y)$ for every $y\in\bbbr^n$. And a simple application of H\"{o}lder's inequality with $\frac{1}{p}+\frac{1}{p'}=1$ shows that the dominating function $\Upsilon_2(y)$ is in $L^1(\bbbr^n)$ whenever $f$ is a function in $L^p(\bbbr^n)$ for some $1\leq p\leq\infty$. Hence by invoking the dominated convergence theorem, we can bring the limit into the integral sign on the right side of (\ref{eq2-162}) and obtain the formula (\ref{eq1-6}).\\
		
		If $k,j\in\{1,\cdots,n\}$ and $k\neq j$, we can use the same method as above but with a small number of modifications, and the dominated convergence theorem, and the assumption that $f\in L^p(\bbbr^n)$ for some $p$ with $1\leq p\leq\infty$, and the formulae (\ref{eq1-5}) and (\ref{eq1-6}) to obtain
		\begin{align}
			&\partial_k^2\Pint(f;x,t)=f*\partial_k^2[P_t(\cdot)](x)\nonumber\\
			&\!=\!\!\int_{\bbbr^n}\!\!\!\!f(y)\!\cdot\!\big[\frac{-(n+1)C_0 t}{(t^2+|x-y|^2)^{\frac{n+3}{2}}}\!+\!
			\frac{(n+3)(n+1)C_0 t(x_k-y_k)^2}{(t^2+|x-y|^2)^{\frac{n+5}{2}}}\big]dy\label{eq2.167}
		\end{align}
		for $1\leq k\leq n$, and
		\begin{align}
			&\partial_{n+1}^2\Pint(f;x,t)=f*\partial_{n+1}^2[P_t(\cdot)](x)\nonumber\\
			&\!=\!\!\int_{\bbbr^n}\!\!\!\!f(y)\!\cdot\!\big[\frac{-3(n+1)C_0 t}{(t^2+|x-y|^2)^{\frac{n+3}{2}}}\!+\!
			\frac{(n+3)(n+1)C_0 t^3}{(t^2+|x-y|^2)^{\frac{n+5}{2}}}\big]dy,\label{eq2.168}
		\end{align}
therefore we can continue the above calculations with the dominated convergence theorem and obtain
		\begin{equation*}
			\sum_{k=1}^{n+1}\partial_k^2\Pint(f;x,t)=\Delta\Pint(f;x,t)=0
		\end{equation*}
and the Poisson integral $\Pint(f;x,t)$ is a smooth and harmonic function defined on $\bbbr^{n+1}_+$\! whenever $f\in L^p(\bbbr^n)$ for some $p\in[1,\infty]$, and thus condition (\ref{eq2-50}) is satisfied. Furthermore, we have
		\begin{align}
			&\partial_j\partial_k\Pint(f;x,t)\nonumber\\
			&=\int_{\bbbr^n}f(y)\cdot\frac{(n+1)(n+3)C_0 t(y_k-x_k)(y_j-x_j)}{(t^2+|x-y|^2)^{\frac{n+5}{2}}}dy,
			\label{eq2-54}
		\end{align}
		and similarly, $\partial_k\partial_j\Pint(f;x,t)$ has the same expression as given in (\ref{eq2-54}). Therefore we have $\partial_j\partial_k\Pint(f;x,t)=\partial_k\partial_j\Pint(f;x,t)$ for $k,j\in\{1,\cdots,n\}$ and $k\neq j$. If $1\leq k\leq n$, we use the same method, the dominated convergence theorem, the assumption that $f\in L^p(\bbbr^n)$ with $1\leq p\leq\infty$, and the formula (\ref{eq1-6}) to obtain
		\begin{align}
			&\partial_k\partial_{n+1}\Pint(f;x,t)\nonumber\\
			&\!=\!\!\!\!\int_{\bbbr^n}\!\!\!\!\!\!f(y)\!\cdot\!\!\bigg[\!\frac{(n+1)(n+3)C_0 t^2(x_k-y_k)}{(t^2+|x-y|^2)^{\frac{n+5}{2}}}\!-\!\frac{(n+1)C_0(x_k-y_k)}{(t^2+|x-y|^2)^{\frac{n+3}{2}}}\!\bigg]
			\!dy,\label{eq2-55}
		\end{align}
		and similarly, $\partial_{n+1}\partial_k\Pint(f;x,t)$ has the same expression as given in (\ref{eq2-55}). Hence we also have $\partial_k\partial_{n+1}\Pint(f;x,t)=\partial_{n+1}\partial_k\Pint(f;x,t)$ for $1\leq k\leq n$. By using the dominated convergence theorem and some direct calculations, we have that all the component functions of $F(x,t)$ are also harmonic on $\bbbr^{n+1}_+$\!\!.\! By now we have shown $F(x,t)=\nabla_{n+1}\Pint(f;x,t)$ satisfies all the conditions of Lemma \ref{lemma9} and thus $|\nabla_{n+1}\Pint(f;x,t)|^q$ is subharmonic on $\bbbr^{n+1}_+$ when $q\geq\frac{n-1}{n}$.
	\end{proof}
\begin{lemma}\label{lemma11}
Let $0<s<1<p<\infty$. Assume that $f\in\Lps$ has a real-valued function representative $f(x)\in\functrep_{0}(f)\cap W^{1,p_0}(\bbbr^n)$ for some $1\leq p_0\leq\infty$, then $\varGamma(\frac{-s}{2})\Lift_s f\in\Sw'(\bbbr^n)$ has a real-valued function representative $F(x)\in\functrep_{0}(\varGamma(\frac{-s}{2})\Lift_s f)\cap L^p(\bbbr^n)$, where $\Lift_s f=\iFT_n(|\xi|^s\FT_n f)$ is the tempered distribution considered in Remark \ref{remark3} and $\varGamma(\frac{-s}{2})$ is the evaluation of the meromorphic extension of the gamma function at $\frac{-s}{2}$.
\end{lemma}
\begin{proof}[Proof of Lemma \ref{lemma11}]
From Definition \ref{definition2}, we know that $f$ is an element of $\Lps$ if and only if $\Lift_s f$ has a function representative (in the sense of $\Sw_0'(\bbbr^n)$) in $L^p(\bbbr^n)$. Now we prove that the product $\varGamma(\frac{-s}{2})\Lift_s f$ has a real-valued function representative $F(x)$ in the sense of $\Sw_0'(\bbbr^n)$ and $F(x)\in L^p(\bbbr^n)$. The existence of $F(x)$ is provided by Definition \ref{definition2}, and we have
\begin{equation*}
\|F\|_{L^p(\bbbr^n)}=|\varGamma(\frac{-s}{2})|\cdot(\int_{\bbbr^n}|\Lift_s f|^p dx)^{\frac{1}{p}}<\infty.
\end{equation*}
We prove $F(x)$ is a real-valued function for almost every $x\in\bbbr^n$. Let $\phi\in\Sw_0(\bbbr^n)$ satisfy conditions (\ref{Sw0.condition.1}), (\ref{Sw0.condition.2}), (\ref{Sw0.condition.3}). We recall the distribution $u_z$ defined in \cite[Definition 2.4.5]{14classical}. If $z\in\bbbc$ is a complex number with $\Rez z>-n$, then
\begin{equation}\label{eq2-57}
<u_z,g>=\int_{\bbbr^n}\frac{\pi^{\frac{z+n}{2}}}{\varGamma(\frac{z+n}{2})}\cdot|x|^z g(x)dx.
\end{equation}
And from \cite[7. of Examples 2.3.5.]{14classical}, we see that $u_z$ defined in (\ref{eq2-57}) is a tempered distribution on $\bbbr^n$ whenever $\Rez z>-n$. Indeed, if $g\in\Sw(\bbbr^n)$, then
\begin{align}
&|<u_z,g>|\nonumber\\
&\lesssim\int_{|x|<1}|x|^{\Rez z}\cdot|g(x)|dx\nonumber\\
&\quad+\int_{|x|\geq1}\frac{|x|^{\Rez z}}{(1+|x|)^M}\cdot(1+|x|)^M|g(x)|dx\nonumber\\
&\lesssim\int_{|x|<1}|x|^{\Rez z}dx\cdot\|g\|_{L^{\infty}(\bbbr^n)}\nonumber\\
&\quad+\int_{|x|\geq1}\frac{|x|^{\Rez z}}{(1+|x|)^M}dx\cdot\|(1+|\cdot|)^M|g(\cdot)|\|_{L^{\infty}(\bbbr^n)},\label{eq2-115}
\end{align}
and both integrals on the right end of (\ref{eq2-115}) converge since we can choose $M$ to be a sufficiently large positive integer so that $M>n+\Rez z>0$, and $\|(1+|\cdot|)^M|g(\cdot)|\|_{L^{\infty}(\bbbr^n)}$ can be estimated from above by a finite sum of Schwartz seminorms of $g$. We also deduce from (\ref{eq2-57}) that when $\Rez z>-n$, the locally integrable function (\ref{eq2.173}) belongs to the collection $\functrep(u_z)$, 
\begin{equation}\label{eq2.173}
x\in\bbbr^n\longmapsto\frac{\pi^{\frac{z+n}{2}}}{\varGamma(\frac{z+n}{2})}\cdot|x|^z. 
\end{equation}
Furthermore, given an arbitrary nonnegative integer $N$, \cite[expression (2.4.8)]{14classical} yields an analytic extension of $<u_z,g>$ for $g\in\Sw(\bbbr^n)$ to the case where $z\!\in\!\bbbc$ satisfies $\Rez z>-N-n-1$, and we have
		\begin{align}
			&<u_z,g>\nonumber\\
			&=\int_{|x|\geq 1}\frac{\pi^{\frac{z+n}{2}}}{\varGamma(\frac{z+n}{2})}\cdot|x|^z g(x)dx\nonumber\\
			&\quad+\sum_{|\alpha|\leq N}\frac{\pi^{\frac{z+n}{2}}}{\varGamma(\frac{z+n}{2})}\cdot
			\frac{\frac{1}{\alpha!}\int_{\unitsph}\theta^{\alpha}d\theta}{|\alpha|+z+n}\cdot
			(\partial^{\alpha}g)(0)\nonumber\\
			&\quad+\int_{|x|<1}\frac{\pi^{\frac{z+n}{2}}}{\varGamma(\frac{z+n}{2})}\cdot
			\bigg\{g(x)-\sum_{|\alpha|\leq N}\frac{(\partial^{\alpha}g)(0)}{\alpha!}\cdot x^{\alpha}\bigg\}
			\cdot|x|^z dx,\label{eq2-58}
		\end{align}
where $\alpha!=\alpha_1!\cdots\alpha_n!$ for the multi-index $\alpha=(\alpha_1,\cdots,\alpha_n)$ and $\varGamma(\frac{z+n}{2})$ in the expression (\ref{eq2-58}) is the evaluation of the meromorphic extension of the gamma function at $\frac{z+n}{2}$. We can use Taylor's formula for the Schwartz function $g(x)$ at $x=0$ with integral form of the remainder to deduce that the expression
$$g(x)-\sum_{|\alpha|\leq N}\frac{(\partial^{\alpha}g)(0)}{\alpha!}\cdot x^{\alpha}$$
in the last term on the right side of (\ref{eq2-58}), can be bounded from above by the product of $|x|^{N+1}$ and a finite sum of Schwartz seminorms of $g$, and thus the term $<u_z,g>$ as given in (\ref{eq2-58}) can be estimated from above by a finite sum of Schwartz seminorms of $g$ and hence defines a tempered distribution on $\bbbr^n$ if $\Rez z>-N-n-1$. By \cite[Theorem 2.4.6]{14classical}, we have the distributional identity $\FT_n u_z=u_{-n-z}$ for all $z\in\bbbc$. Now we apply this identity to the case where $z=s\in(0,1)$ then we have that the integral in (\ref{eq2.197}) defines a tempered distribution denoted by the symbol $|\xi|^s$, and the locally integrable function $|\xi|^s$ is in $\functrep(|\xi|^s)$,
\begin{equation}\label{eq2.197}
\int_{\bbbr^n}|\xi|^s\cdot g(\xi)d\xi\quad\text{for $g\in\Sw(\bbbr^n)$}.
\end{equation}
Furthermore, the locally integrable function
\begin{equation}\label{eq2-116}
\xi\in\bbbr^n\longmapsto\frac{\pi^{\frac{s+n}{2}}}{\varGamma(\frac{s+n}{2})}\cdot|\xi|^s 
\end{equation}
is a common function representative in
\begin{equation}\label{eq2.198}
\functrep(\frac{\pi^{\frac{s+n}{2}}}{\varGamma(\frac{s+n}{2})}\cdot|\xi|^s)\bigcap\functrep(u_s),
\end{equation}
therefore the following equation (\ref{eq2-59}) holds true in the sense of $\Sw'(\bbbr^n)$,
\begin{equation}\label{eq2-59}
\frac{\pi^{\frac{s+n}{2}}}{\varGamma(\frac{s+n}{2})}\cdot\FT_n(|\xi|^s)=\FT_n(u_s)=u_{-n-s},
\end{equation}
and $\FT_n(|\xi|^s)$ is the tempered distribution $\varGamma(\frac{s+n}{2})\pi^{-\frac{s+n}{2}}u_{-n-s}$, which can be obtained from (\ref{eq2-58}) by choosing $N=0$, since $-n>-n-s>-n-1$. For an arbitrary Schwartz function $g\in\Sw(\bbbr^n)$, we apply \cite[Proposition 2.3.22. (11)]{14classical} to obtain
\begin{align}
&<\FT_n(|\xi|^s)*\phi,g>=<\iFT_n[\FT_n(|\xi|^s)*\phi],\FT_n g>\nonumber\\
&=<|\xi|^s\cdot\iFT_n\phi,\FT_n g>=<|\xi|^s,\FT_n g\cdot\iFT_n\phi>\nonumber\\
&=\int_{\bbbr^n}|\xi|^s\!\cdot\!\FT_n g(\xi)\!\cdot\!\iFT_n\phi(\xi)d\xi,\label{eq2.199}
\end{align}
where the last equation in (\ref{eq2.199}) is because $\FT_n g(\xi)\cdot\iFT_n\phi(\xi)\!\in\!\Sw(\bbbr^n)$ and $|\xi|^s\!\in\!\functrep(|\xi|^s)$. Since $\phi\in\Sw_0(\bbbr^n)$ and $\iFT_n\phi\in\Sw_{00}(\bbbr^n)$, and the function $\multi(\xi)=|\xi|^s$ satisfies condition (\ref{eq1.64}), and thus $|\xi|^s\!\cdot\!\iFT_n\phi(\xi)$ in the last line of (\ref{eq2.199}) is a function in $\Sw_{00}(\bbbr^n)$ by Proposition \ref{proposition1}. Furthermore, we deduce from \cite[Theorem 2.2.14. (5)]{14classical} that
\begin{equation}\label{eq2.200}
(\ref{eq2.199})=\int_{\bbbr^n}\FT_n[|\xi|^s\!\cdot\!\iFT_n\phi(\xi)](x)\!\cdot\!g(x)dx,
\end{equation}
and $\FT_n(|\xi|^s)*\phi\in\Sw'(\bbbr^n)$ has the function representative $(\ref{eq2-117})\in\functrep(\FT_n(|\xi|^s)*\phi)\cap\Lloc$,
\begin{equation}\label{eq2-117}
x\in\bbbr^n\longmapsto\FT_n[|\xi|^s\cdot\iFT_n\phi(\xi)](x)\in\Sw_0(\bbbr^n).
\end{equation}
From \cite[Theorem 2.3.20.]{14classical} , we know that the smooth function (\ref{eq2-118}) is also in $\functrep(\FT_n(|\xi|^s)*\phi)\cap\Lloc$. Thus Lemma \ref{lemma15} (i) and the smoothness of relevant functions imply the functions (\ref{eq2-117}) and (\ref{eq2-118}) are equal,
\begin{equation}\label{eq2-118}
x\in\bbbr^n\longmapsto<\FT_n(|\xi|^s),\phi(x-\cdot)>.
\end{equation}
Now we combine (\ref{eq2-117}), (\ref{eq2-118}), (\ref{eq2-59}), and (\ref{eq2-58}) with $N=0$ to obtain the following equation for every $x\in\bbbr^n$,
\begin{align}
&\FT_n[|\xi|^s\cdot\iFT_n\phi(\xi)](x)=<\FT_n(|\xi|^s),\phi(x-\cdot)>\nonumber\\
&=\frac{\varGamma(\frac{s+n}{2})}{\pi^{\frac{s+n}{2}}}\cdot<u_{-n-s},\phi(x-\cdot)>\nonumber\\
&=\frac{\varGamma(\frac{s+n}{2})}{\varGamma(\frac{-s}{2})\pi^{s+\frac{n}{2}}}\cdot\big\{
\int_{|y|\geq 1}\phi(x-y)\cdot|y|^{-n-s}dy-\frac{\phi(x)}{s}\cdot\omega_{n-1}\nonumber\\
&\quad+\int_{|y|<1}(\phi(x-y)-\phi(x))\cdot|y|^{-n-s}dy\big\}.\label{eq2-60}
\end{align}
According to Remark \ref{remark3}, the action of the tempered distribution $\Lift_s f$ on the Schwartz function $\phi\in\Sw_0(\bbbr^n)$ is given by
\begin{equation}\label{eq2-56}
<\Lift_s f,\phi>=<f,\FT_n[|\xi|^s\iFT_n\phi(\xi)]>.
\end{equation}
Combining (\ref{eq2-117}), (\ref{eq2-60}), and (\ref{eq2-56}) together, we observe the condition that $f(x)$ is a real-valued function representative in $\functrep_{0}(f)\cap W^{1,p_0}(\bbbr^n)$ for some $p_0\in[1,\infty]$ guarantees that we can exchange the order of integration and apply proper changes of variables to obtain
\begin{align}
&<\varGamma(\frac{-s}{2})\Lift_s f,\phi>\nonumber\\
&=\varGamma(\frac{-s}{2})\cdot<f,\FT_n[|\xi|^s\iFT_n\phi(\xi)]>\nonumber\\
&=\int_{\bbbr^n}f(x)\cdot\varGamma(\frac{-s}{2})\FT_n[|\xi|^s\iFT_n\phi(\xi)](x)dx\nonumber\\
&=\int_{\bbbr^n}\frac{\varGamma(\frac{s+n}{2})}{\pi^{s+\frac{n}{2}}}\cdot\big\{
\int_{|y|\geq 1}f(x+y)\cdot|y|^{-n-s}dy-\frac{\omega_{n-1}f(x)}{s}\nonumber\\
&\quad+\int_{|y|<1}(f(x+y)-f(x))\cdot|y|^{-n-s}dy\big\}\cdot\phi(x)dx,\label{eq2-61}
\end{align}
where we notice that $\varGamma(\frac{-s}{2})$ has been canceled out in the process, and $\phi$ is an arbitrary Schwartz function in $\Sw_0(\bbbr^n)$. Equation (\ref{eq2-61}) and Minkowski's integral inequality show that $\varGamma(\frac{-s}{2})\Lift_s f\in\Sw'(\bbbr^n)$ has the function representative $(\ref{eq2-62})\in\functrep_{0}(\varGamma(\frac{-s}{2})\Lift_s f)\cap L^{p_0}(\bbbr^n)$,
\begin{equation}\label{eq2-62}
\frac{\varGamma(\frac{s+n}{2})}{\pi^{s+\frac{n}{2}}}\!\cdot\!\big\{\!\!\!
\int_{|y|\geq 1}\!\!\!\!\frac{f(x\!+\!y)}{|y|^{n+s}}dy\!-\!\frac{\omega_{n-1}f(x)}{s}\!
+\!\!\!\int_{|y|<1}\!\!\!\!\!\!\!\!\!\frac{f(x\!+\!y)\!-\!f(x)}{|y|^{n+s}}dy\big\}.
\end{equation}
Because $f$ is a real-valued function in $W^{1,p_0}(\bbbr^n)$ for some $p_0\in[1,\infty]$, both integrals in (\ref{eq2-62}) converge absolutely in the Riemann sense for almost every $x\in\bbbr^n$ and have real values. Because $\frac{s+n}{2}>0$, the definition (\ref{eq1-1}) tells us that $\varGamma(\frac{s+n}{2})$ is a well-defined positive finite real number. Therefore (\ref{eq2-62}) is a real-valued function for almost every $x\in\bbbr^n$. We have obtained two function representatives of $\varGamma(\frac{-s}{2})\Lift_s f$ in the sense of $\Sw_0'(\bbbr^n)$, i.e. the real-valued function representative $(\ref{eq2-62})\!\in\!L^{p_0}(\bbbr^n)$ and the function representative $F(x)\!\in\!L^p(\bbbr^n)$ provided by Definition \ref{definition2}. We claim that the difference between (\ref{eq2-62}) and $F(x)$ is a real-valued polynomial, and hence $F(x)$ is a real-valued function for almost every $x\in\bbbr^n$. Since $(\ref{eq2-62})\in\functrep_0(\varGamma(\frac{-s}{2})\Lift_s f)\cap L^{p_0}(\bbbr^n)$ and $F(x)\in\functrep_0(\varGamma(\frac{-s}{2})\Lift_s f)\cap L^{p}(\bbbr^n)$, the difference $[(\ref{eq2-62})-F(x)]$ is in $L^{p_0}(\bbbr^n)+L^{p}(\bbbr^n)\subseteq\Lloc$ and there exists $\tilde{f}\in\Sw'(\bbbr^n)$ so that $[(\ref{eq2-62})-F(x)]$ is in $\functrep(\tilde{f})$ and satisfies condition (\ref{eq1.101}). By Proposition \ref{proposition2}, the difference $[(\ref{eq2-62})-F(x)]$ equals a polynomial function for almost every $x\in\bbbr^n$. Let $i=\sqrt{-1}$. Assume that $F(x)=u(x)+i\!\cdot\! v(x)$ and 
\begin{equation}\label{eq2.169}
(\ref{eq2-62})\!=\!F(x)\!+\!P_1(x)\!+\!i\!\cdot\!P_2(x)\quad\text{for almost every $x\!\in\!\bbbr^n$,}
\end{equation}
where the real-valued functions $u(x)$ and $v(x)$ are the real and imaginary parts of $F(x)$, respectively, and $P_1(x)$, $P_2(x)$ are real-valued polynomials of $x\in\bbbr^n$. Since (\ref{eq2-62}) is real-valued for almost every $x\in\bbbr^n$, equation (\ref{eq2.169}) indicates $v(x)+P_2(x)=0$ for almost every $x\in\bbbr^n$. The condition $F(x)\in L^p(\bbbr^n)$ implies
		\begin{align*}
			&\int_{\bbbr^n}|P_2(x)|^p dx=\int_{\bbbr^n}|v(x)|^p dx\\
			&\leq\int_{\bbbr^n}\big(\sqrt{u(x)^2+v(x)^2}\big)^p dx=\int_{\bbbr^n}|F(x)|^p dx<\infty,
		\end{align*}
therefore $v(x)=P_2(x)=0$ for almost every $x\in\bbbr^n$. The proof of Lemma \ref{lemma11} is now complete.
\end{proof}
In the following Lemma \ref{lemma5}, we cite the famous Hardy-Littlewood-Sobolev inequality from \cite[chapter VIII, 4.2]{SteinHarmonic}, and then we provide a useful result about the operator $\Lift_s f$ immediately after.
\begin{lemma}\label{lemma5}
If $0<\gamma<n$, $1<p<q<\infty$, $\frac{1}{q}=\frac{1}{p}-\frac{n-\gamma}{n}$, and $C_{20}=C_{20}(p,q)$ is a positive finite constant, then we have
\begin{equation*}
\|f*(|y|^{-\gamma})\|_{L^q(\bbbr^n)}\leq C_{20}\cdot\|f\|_{L^p(\bbbr^n)}.
\end{equation*}
\end{lemma}
\begin{lemma}\label{lemma13}
Let $1<p<q<\infty$, $-n<s<0$, and $s=-n(\frac{1}{p}-\frac{1}{q})$. If $f\in\Sw_0'(\bbbr^n)$ has a function representative in the sense of $\Sw_0'(\bbbr^n)$ and this function representative is in $L^p(\bbbr^n)$, then $\Lift_s f=\iFT_n[|\xi|^s\cdot\FT_n f]\in\Sw'(\bbbr^n)$ and the function (\ref{eq2.174}) belongs to $\functrep_0(\Lift_s f)\cap L^q(\bbbr^n)$,
\begin{equation}\label{eq2.174}
x\in\bbbr^n\longmapsto\frac{\varGamma(\frac{s+n}{2})}{\pi^{s+\frac{n}{2}}\varGamma(\frac{-s}{2})}
\int_{\bbbr^n}f(y)\cdot|y-x|^{-n-s}dy.
\end{equation}
Furthermore, we have
\begin{equation}\label{eq2.175}
\|\Lift_s f\|_{L^q(\bbbr^n)}\lesssim\|f\|_{L^p(\bbbr^n)},
\end{equation}
and the implicit constant in (\ref{eq2.175}) depends on $n$, $p$, and $q$.
\end{lemma}
\begin{proof}[Proof of Lemma \ref{lemma13}]
Let $\phi\!\in\!\Sw_0(\bbbr^n)$. According to Remark \ref{remark3}, we have
\begin{equation}\label{eq2.176}
<\Lift_s f,\phi>=<f,\FT_n[|\xi|^s\cdot\iFT_n\phi(\xi)]>.
\end{equation}
Since $-n<s<0$, the integral in (\ref{eq2.371}) defines a tempered distribution denoted by the symbol $|\xi|^s$ so that the locally integrable function $|\xi|^s$ is in $\functrep(|\xi|^s)$ and satisfies condition (\ref{eq1.64}), 
\begin{equation}\label{eq2.371}
\int_{\bbbr^n}|\xi|^s\cdot g(\xi)d\xi\text{ for $g\in\Sw(\bbbr^n)$}.
\end{equation}
Since $\iFT_n\phi\in\Sw_{00}(\bbbr^n)$, Proposition \ref{proposition1} implies $|\xi|^s\!\cdot\!\iFT_n\phi(\xi)\!\in\!\Sw_{00}(\bbbr^n)$ and $\FT_n[|\xi|^s\!\cdot\!\iFT_n\phi(\xi)](x)\!\in\!\Sw_0(\bbbr^n)$. Since $\FT_n(|\xi|^s)$ represents the distributional Fourier transform of $|\xi|^s\in\Sw'(\bbbr^n)$, we deduce from \cite[Theorem 2.3.20.]{14classical} that $\FT_n(|\xi|^s)*\phi\in\Sw'(\bbbr^n)$ and the smooth function (\ref{eq2.202}) is in $\functrep(\FT_n(|\xi|^s)*\phi)\cap\Lloc$,
\begin{equation}\label{eq2.202}
x\in\bbbr^n\longmapsto<\FT_n(|\xi|^s),\phi(x-\cdot)>.
\end{equation}
Let $g\in\Sw(\bbbr^n)$ be an arbitrary Schwartz function and we denote $\tilde{\phi}(x)=\phi(-x)$, then we have
\begin{align}
&<\FT_n(|\xi|^s)*\phi,g>=<\FT_n(|\xi|^s),\tilde{\phi}*g>\nonumber\\
&=<|\xi|^s,\FT_n[\tilde{\phi}*g]>=<|\xi|^s,\iFT_n\phi\cdot\FT_n g>\nonumber\\
&=\int_{\bbbr^n}|\xi|^s\!\cdot\!\iFT_n\phi(\xi)\!\cdot\!\FT_n g(\xi)d\xi,\label{eq2.203}
\end{align}
where the last equation in (\ref{eq2.203}) is because $\iFT_n\phi(\xi)\!\cdot\!\FT_n g(\xi)\!\in\!\Sw(\bbbr^n)$ and $|\xi|^s\!\in\!\functrep(|\xi|^s)$. Since $|\xi|^s\!\cdot\!\iFT_n\phi(\xi)\in\Sw_{00}(\bbbr^n)$, then \cite[Theorem 2.2.14. (5)]{14classical} implies that
\begin{equation}
(\ref{eq2.203})=\int_{\bbbr^n}\FT_n[|\xi|^s\!\cdot\!\iFT_n\phi(\xi)](x)\!\cdot\!g(x)dx,
\end{equation}
and thus the function (\ref{eq2.204}) is in $\functrep(\FT_n(|\xi|^s)*\phi)\cap\Lloc$,
\begin{equation}\label{eq2.204}
x\in\bbbr^n\longmapsto\FT_n[|\xi|^s\cdot\iFT_n\phi(\xi)](x)\in\Sw_0(\bbbr^n),
\end{equation}
where $|\xi|^s$ in (\ref{eq2.204}) is a function. Therefore Lemma \ref{lemma15} (i) and the smoothness of relevant functions imply the functions (\ref{eq2.202}) and (\ref{eq2.204}) are equal for every $x\in\bbbr^n$. We recall the tempered distribution $u_z$ defined by (\ref{eq2-57}). Since $-n<s<0$, the locally integrable function 
\begin{equation}\label{eq2.178}
\xi\in\bbbr^n\longmapsto\frac{\pi^{\frac{s+n}{2}}}{\varGamma(\frac{s+n}{2})}\cdot|\xi|^s
\end{equation}
is a common function representative in
\begin{equation}\label{eq2.205}
\functrep(\frac{\pi^{\frac{s+n}{2}}}{\varGamma(\frac{s+n}{2})}\cdot|\xi|^s)\bigcap\functrep(u_s).
\end{equation}
And \cite[Theorem 2.4.6]{14classical} tells us that the distributional identity (\ref{eq2-59}) is true for $s\in(-n,0)$. Since $-n-s>-n$, equation (\ref{eq2-57}) implies $u_{-n-s}\in\Sw'(\bbbr^n)$ and the function (\ref{eq2.179}) is in $\functrep(u_{-n-s})$,
\begin{equation}\label{eq2.179}
y\in\bbbr^n\longmapsto\frac{\pi^{\frac{-s}{2}}}{\varGamma(\frac{-s}{2})}\cdot|y|^{-n-s}.
\end{equation}
We have obtained the following equation (\ref{eq2.180}) for every $x\in\bbbr^n$,
\begin{align}
&\FT_n[|\xi|^s\!\cdot\!\iFT_n\phi(\xi)](x)=<\FT_n[|\xi|^s],\phi(x-\cdot)>\nonumber\\
&=<\!\!\frac{\varGamma(\frac{s+n}{2})}{\pi^{\frac{s+n}{2}}}\FT_n u_s,\phi(x\!-\!\cdot)\!\!>
=<\!\!\frac{\varGamma(\frac{s+n}{2})}{\pi^{\frac{s+n}{2}}}u_{-n-s},\phi(x\!-\!\cdot)\!\!>\nonumber\\
&=\int_{\bbbr^n}\frac{\varGamma(\frac{s+n}{2})}{\pi^{s+\frac{n}{2}}\varGamma(\frac{-s}{2})}\cdot
|y|^{-n-s}\phi(x-y)dy\nonumber\\
&=\int_{\bbbr^n}\frac{\varGamma(\frac{s+n}{2})}{\pi^{s+\frac{n}{2}}\varGamma(\frac{-s}{2})}\cdot
|x-z|^{-n-s}\phi(z)dz.\label{eq2.180}
\end{align}
Combining (\ref{eq2.176}) and (\ref{eq2.180}) together with the condition that $f\in\Sw_0'(\bbbr^n)$ has a function representative (in the sense of $\Sw_0'(\bbbr^n)$) in $L^p(\bbbr^n)$ denoted by $f(x)$, we obtain
\begin{equation}\label{eq2.181}
<\!\Lift_s f,\phi\!\!>=\!\!\!\int_{\bbbr^n}\!\int_{\bbbr^n}\!
\frac{\varGamma(\frac{s+n}{2})}{\pi^{s+\frac{n}{2}}\varGamma(\frac{-s}{2})}\!\cdot\! f(x)\!\cdot\!|x\!-\!z|^{-n-s}\!\cdot\!\phi(z)dzdx
\end{equation}
for all $\phi\in\Sw_0(\bbbr^n)$. An application of Lemma \ref{lemma5} and H\"{o}lder's inequality with $\frac{1}{q}+\frac{1}{q'}=1$ yields that the function (\ref{eq2.182}) is in $L^q(\bbbr^n)$,
\begin{equation}\label{eq2.182}
z\in\bbbr^n\longmapsto\int_{\bbbr^n}|f(x)|\!\cdot\!|x\!-\!z|^{-n-s}dx,
\end{equation}
and the integral (\ref{eq2.183}) is finite,
\begin{equation}\label{eq2.183}
\int_{\bbbr^n}\!\int_{\bbbr^n}\!|f(x)|\!\cdot\!|x\!-\!z|^{-n-s}\!\cdot\!|\phi(z)|dxdz<\infty.
\end{equation}
We invoke \cite[Tonelli's theorem and Corollary 7 in section 20.1]{real.analysis.royden} and \cite[Theorem 11 in section 20.2]{real.analysis.royden} to deduce the nonnegative function
$$|f(x)|\!\cdot\!|x\!-\!z|^{-n-s}\!\cdot\!|\phi(z)|$$
is integrable over the product space $\bbbr^n\times\bbbr^n$ with respect to the product measure $d(x,z)$, and hence we can apply \cite[Fubini's theorem in section 20.1]{real.analysis.royden} to exchange the order of integration in (\ref{eq2.181}) and obtain that the tempered distribution $\Lift_s f$ has the function representative (\ref{eq2.174}) in the sense of $\Sw_0'(\bbbr^n)$. The fact that the function (\ref{eq2.174}) belongs to $L^q(\bbbr^n)$ and the ensuing estimate (\ref{eq2.175}) are consequences of Lemma \ref{lemma5}. The proof of Lemma \ref{lemma13} is now complete.
\end{proof}
\begin{lemma}\label{lemma14}
If $1<p<\infty$, $0<s<\frac{n}{p}$, and $f\in\Lps$ has a function representative $f(x)\in\functrep_{0}(f)\cap L^{p_0}(\bbbr^n)$ for some $1\leq p_0\leq\infty$, then the Poisson integral $\Pint(f;x,t)$ is a smooth harmonic function on $\bbbr^{n+1}_+$ and belongs to $L^{\infty}(\bbbr^n)\cap L^{p_0}(\bbbr^n)$ for every $0<t<\infty$. Thus there exists a tempered distribution, denoted by the same symbol $\Pint(f;x,t)$, so that the Poisson integral $\Pint(f;x,t)$ is in $\functrep(\Pint(f;x,t))$. Furthermore, the tempered distribution $\Pint(f;x,t)$ has the differentiable function representative $(\ref{eq6.3})\in\functrep_{0}(\Pint(f;x,t))\cap L^{\infty}(\bbbr^n)$ for every $t>0$,
\begin{equation}\label{eq6.3}
x\in\bbbr^n\longmapsto\frac{(2\pi)^s}{\varGamma(s)}\int_0^{\infty}\!\!\!\!\!
\Pint(\Lift_s f;x,t+r)r^{s-1}dr.
\end{equation}
Therefore, the difference of the Poisson integral $\Pint(f;x,t)$ and the function (\ref{eq6.3}) is a polynomial function for every $x\in\bbbr^n$ whose coefficients are functions of $t\in(0,\infty)$. Since this difference is also a function in $L^{\infty}(\bbbr^n)$ with respect to variable $x\in\bbbr^n$, we have
\begin{equation}\label{eq2.185}
\Pint(f;x,t)-\frac{(2\pi)^s}{\varGamma(s)}\int_0^{\infty}\!\!\!\!\!\Pint(\Lift_s f;x,t+r)r^{s-1}dr
=G(t)
\end{equation}
for all $x\in\bbbr^n$ and $t\in(0,\infty)$, where $G(t)$ is a differentiable function of $t\in(0,\infty)$. Moreover, the function (\ref{eq2.188}) is differentiable on $\bbbr^{n+1}_+$,
\begin{equation}\label{eq2.188}
(x,t)\in\bbbr^{n+1}_+\longmapsto\frac{(2\pi)^s}{\varGamma(s)}\int_0^{\infty}\!\!\!\!\!
\Pint(\Lift_s f;x,t+r)r^{s-1}dr,
\end{equation}
and the following equations are true for $1\leq k\leq n$ and for all $x\in\bbbr^n$ and $t\in(0,\infty)$,
\begin{align}
\partial_k\Pint(f;x,t)&\!\!=\!\frac{(2\pi)^s}{\varGamma(s)}\!\!
\int_0^{\infty}\!\!\partial_k\Pint(\Lift_s f;x,t\!+\!r)\!\cdot\!r^{s-1}dr,\label{eq2.186}\\
\partial_{n+1}\Pint(f;x,t)&\!\!=\!\frac{(2\pi)^s}{\varGamma(s)}\!\!
\int_0^{\infty}\!\!\!\!\partial_{n+1}\Pint(\Lift_s f;x,t\!+\!r)\!\cdot\!r^{s-1}dr
\!+\!G'(t),\label{eq2.187}
\end{align}
where $G'(t)$ is the first-order derivative of $G(t)$. In addition, for every $x\in\bbbr^n$, the function (\ref{eq2.264}) is continuous on $(0,\infty)$,
\begin{equation}\label{eq2.264}
t\!\in\!(0,\infty)\!\longmapsto\!\frac{(2\pi)^s}{\varGamma(s)}\!\!
\int_0^{\infty}\!\!\!\!\partial_{n+1}\Pint(\Lift_s f;x,t\!+\!r)\!\cdot\!r^{s-1}dr.
\end{equation}
The function (\ref{eq2.265}) is well-defined and finite for almost every $x\in\bbbr^n$,
\begin{equation}\label{eq2.265}
x\!\in\!\bbbr^n\!\longmapsto\!\frac{(2\pi)^s}{\varGamma(s)}\!\!
\int_0^{\infty}\!\!\!\!\Pint(\Lift_s f;x,r)\!\cdot\!r^{s-1}dr.
\end{equation}
And we can find a sequence $\{t_k\}_{k\in\bbbn}$ of positive real numbers so that the following conditions are satisfied,
\begin{equation}\label{eq2.266}
0<t_{k+1}<t_k<\infty,\qquad\lim_{k\rightarrow\infty}t_k=0,
\end{equation}
\begin{align}
&\lim_{k\rightarrow\infty}\frac{(2\pi)^s}{\varGamma(s)}\!\!
\int_0^{\infty}\!\!\!\!\Pint(\Lift_s f;x,t_k+r)\!\cdot\!r^{s-1}dr\nonumber\\
&=\!\frac{(2\pi)^s}{\varGamma(s)}\!\!\int_0^{\infty}\!\!\!\!\Pint(\Lift_s f;x,r)\!\cdot\!r^{s-1}dr
\text{ for almost every $x\!\in\!\bbbr^n$}.\label{eq2.267}
\end{align}
Therefore, for every real number $c\in(0,\infty)$ and for almost every $x\in\bbbr^n$, the following equation (\ref{eq2.268}) is true and both sides of equation (\ref{eq2.268}) are well-defined and finite,
\begin{align}
&\frac{(2\pi)^s}{\varGamma(s)}\!\!\int_0^c\!\!\int_0^{\infty}\!\!\!\!
\partial_{n+1}\Pint(\Lift_s f;x,u\!+\!r)\!\cdot\!r^{s-1}drdu\nonumber\\
&=\!\frac{(2\pi)^s}{\varGamma(s)}\!\big[\!\!\int_0^{\infty}\!\!\!\!\!\!\!
\Pint(\Lift_s f;x,c\!+\!r)\!\cdot\!r^{s-1}dr\!-\!\!\!\int_0^{\infty}\!\!\!\!\!\!\!
\Pint(\Lift_s f;x,r)\!\cdot\!r^{s-1}dr\big].\label{eq2.268}
\end{align}
\end{lemma}
\begin{proof}[Proof of Lemma \ref{lemma14}]
Step 1: The fact that the function $\Pint(f;x,t)$ is smooth and harmonic on $\bbbr^{n+1}_+$ and belongs to $L^{\infty}(\bbbr^n)\cap L^{p_0}(\bbbr^n)$ is a consequence of (\ref{eq1-4}), H\"{o}lder's inequality, \cite[Theorem 1.2.12 (Young's inequality)]{14classical}, and the proof of Lemma \ref{lemma10}. Then H\"{o}lder's inequality also implies the following expression
\begin{equation}\label{eq2.189}
\bigg|\int_{\bbbr^n}\Pint(f;x,t)\!\cdot\!g(x)dx\bigg|
\end{equation}
can be estimated from above by a finite sum of Schwartz seminorms of $g$ whenever $g\in\Sw(\bbbr^n)$, and thus the integral in (\ref{eq2.189}) defines a tempered distribution denoted by the symbol $\Pint(f;x,t)$. We have the following equation (\ref{eq2.193}) for all $g\in\Sw(\bbbr^n)$,
\begin{equation}\label{eq2.193}
<\Pint(f;x,t),g>=\!\!\int_{\bbbr^n}\Pint(f;x,t)\!\cdot\!g(x)dx,
\end{equation} 
hence $\Pint(f;x,t)\in\functrep(\Pint(f;x,t))\subseteq\functrep_0(\Pint(f;x,t))$. Let $\phi\in\Sw_0(\bbbr^n)$, we use Lemma \ref{lemma16} (\ref{eq2.172}) and (\ref{eq2.244}), and Proposition \ref{proposition1} (i) to obtain the following
\begin{align}
&<\Pint(f;x,t),\phi>=<e^{-2\pi t|\xi|}\cdot\FT_n f,\iFT_n\phi>\nonumber\\
&=<\FT_n f,e^{-2\pi t|\xi|}\cdot\iFT_n\phi>,\label{eq2-126}
\end{align}
where $\FT_n f\in\Sw'(\bbbr^n)$ in (\ref{eq2-126}) is the distributional Fourier transform of the extended version of $f\in\Lps\subseteq\Sw_{0}'(\bbbr^n)$ according to Lemma \ref{lemma16}. Notice that since $\iFT_n\phi\in\Sw_{00}(\bbbr^n)$, $e^{-2\pi t|\xi|}\cdot\FT_n f$ in the first line of (\ref{eq2-126}) is the product of the function $e^{-2\pi t|\xi|}$ and the tempered distribution $\FT_n f$. From the discussion in Remark \ref{remark3}, we see that the distributional Fourier transform $\FT_n\Lift_s f\in\Sw'(\bbbr^n)\subseteq\Sw_{00}'(\bbbr^n)$ yields the extended tempered distribution denoted as $|\xi|^{s}\cdot\FT_n f$, and this extended tempered distribution coincides with the product of the function $|\xi|^{s}$ and the tempered distribution $\FT_n f$ when it is acting on functions in $\Sw_{00}(\bbbr^n)$. The function
\begin{equation}\label{eq2.166}
\xi\in\bbbr^n\longmapsto|\xi|^{-s}\cdot e^{-2\pi t|\xi|}
\end{equation}
satisfies condition (\ref{eq1.64}), and we deduce from Proposition \ref{proposition1} (i) that the product of the function (\ref{eq2.166}) and the tempered distribution $\FT_n\Lift_s f$ is a well-defined continuous linear functional on the subspace $\Sw_{00}(\bbbr^n)$ in the topology of $\Sw(\bbbr^n)$, and we have
		\begin{equation}\label{eq2-127}
			<|\xi|^{-s}\cdot e^{-2\pi t|\xi|}\cdot\FT_n\Lift_s f,g>=
			<\FT_n\Lift_s f,|\xi|^{-s}\cdot e^{-2\pi t|\xi|}\cdot g>,
		\end{equation}
whenever $g\!\in\!\Sw_{00}(\bbbr^n)$. Furthermore, the function $|\xi|^{-s}\!\cdot\!e^{-2\pi t|\xi|}\!\cdot\!g(\xi)$ also belongs to the subspace $\Sw_{00}(\bbbr^n)$ by Proposition \ref{proposition1}, then we have
\begin{align}
&<\!\FT_n\Lift_s f,|\xi|^{-s}\!\cdot\!e^{-2\pi t|\xi|}\!\cdot\!g\!>=
<\!|\xi|^{s}\!\cdot\!\FT_n f,|\xi|^{-s}\!\cdot\!e^{-2\pi t|\xi|}\!\cdot\!g\!>\nonumber\\
&=<\!\FT_n f,|\xi|^{s}\!\cdot\!|\xi|^{-s}\!\cdot\!e^{-2\pi t|\xi|}\!\cdot\!g\!>
=<\!\FT_n f,e^{-2\pi t|\xi|}\!\cdot\!g\!>,\label{eq2-128}
\end{align}
where the second equation of (\ref{eq2-128}) is due to (\ref{eq1.65}). Combining (\ref{eq2-126}) with equations (\ref{eq2-127}) and (\ref{eq2-128}) where $g$ is replaced by $\iFT_n\phi\in\Sw_{00}(\bbbr^n)$, we can establish the following equation for all $\phi\in\Sw_{0}(\bbbr^n)$,
\begin{equation}\label{eq2-129}
<\Pint(f;x,t),\phi>
=<|\xi|^{-s}\!\cdot\!e^{-2\pi t|\xi|}\!\cdot\!\FT_n\Lift_s f,\iFT_n\phi>.
\end{equation}
We use the gamma function identity
\begin{equation}\label{eq6.2}
|\xi|^{-s}=\frac{(2\pi)^s}{\varGamma(s)}\int_0^{\infty}e^{-2\pi|\xi|r}r^{s-1}dr,
\end{equation}
and (\ref{eq6.2}) is true for all $0<s<\infty$. Then we insert (\ref{eq6.2}) into (\ref{eq2-129}) and obtain
\begin{align}
&<\Pint(f;x,t),\phi>\nonumber\\
&=<\!\frac{(2\pi)^s}{\varGamma(s)}\!\!\int_0^{\infty}\!\!\!\!e^{-2\pi|\xi|(r+t)}r^{s-1}dr
\!\cdot\!\FT_n\Lift_s f,\iFT_n\phi\!>.\label{eq2-86}
\end{align}
We use a simple change of variable $u=2\pi|\xi|r$ and we have
\begin{equation}\label{eq2-95}
\rho(\xi):=\frac{(2\pi)^s}{\varGamma(s)}\int_0^{\infty}e^{-2\pi|\xi|(r+t)}r^{s-1}dr=|\xi|^{-s}\cdot e^{-2\pi t|\xi|}.
\end{equation}
Notice that derivatives of $\rho(\xi)$ are not continuous at $\xi=0$ and $\rho(\xi)$ satisfies the condition (\ref{eq1.64}). Thus Proposition \ref{proposition1} (i) tells us that
\begin{align}
&<\!\frac{(2\pi)^s}{\varGamma(s)}\!\!\int_0^{\infty}\!\!\!\!e^{-2\pi|\xi|(r+t)}r^{s-1}dr\!\cdot\!
\FT_n\Lift_s f,\iFT_n\phi\!>\nonumber\\
&=<\!\FT_n\Lift_s f,\frac{(2\pi)^s}{\varGamma(s)}\!\!\int_0^{\infty}\!\!\!\!e^{-2\pi|\xi|(r+t)}r^{s-1}dr
\!\cdot\!\iFT_n\phi\!>\label{eq2-94}
\end{align}
is a well-defined expression. Since $0<s<\frac{n}{p}<n$, the easily observed integrability of $\rho(\xi)$ on $\bbbr^n$ justifies the application of Fubini's theorem, and we can use formula (\ref{eq1-3}) to obtain
\begin{equation}\label{eq2-87}
\iFT_n\rho(x)=\frac{(2\pi)^s}{\varGamma(s)}\int_0^{\infty}P_{r+t}(x)r^{s-1}dr,
\end{equation}
where $P_{r+t}(x)=C_0(r+t)((r+t)^2+|x|^2)^{-\frac{n+1}{2}}$. Since $\rho(\xi)\in L^1(\bbbr^n)$, $\iFT_n\phi(\xi)\in\Sw_{00}(\bbbr^n)$, and $\rho\cdot\iFT_n\phi\in\Sw_{00}(\bbbr^n)$ by Proposition \ref{proposition1}, then we use Fubini's theorem to obtain
\begin{equation*}
\FT_n(\rho\cdot\iFT_n\phi)(x)=\FT_n\rho*\phi(x)\in\Sw_0(\bbbr^n).
\end{equation*}
From Definition \ref{definition2} and the assumption $f\in\Lps$, we deduce that the function representative in the sense of $\Sw_0'(\bbbr^n)$, of the tempered distribution $\Lift_s f$, is a function in $L^p(\bbbr^n)$ and we denote this function by $\Lift_s f(x)$. Therefore we have
\begin{align}
&(\ref{eq2-94})=<\FT_n\Lift_s f,\rho\cdot\iFT_n\phi>=<\Lift_s f,\FT_n(\rho\cdot\iFT_n\phi)>\nonumber\\
&=<\Lift_s f,\FT_n\rho*\phi>=\int_{\bbbr^n}\Lift_s f(x)\cdot\FT_n\rho*\phi(x)dx\nonumber\\
&=\int_{\bbbr^n}\int_{\bbbr^n}\Lift_s f(x)\cdot\iFT_n\rho(y-x)\cdot\phi(y)dydx,\label{eq2-98}
\end{align}
where we use the fact that $\rho(\cdot)$ is a radial function in $L^1(\bbbr^n)$ then $\FT_n\rho(\cdot)$ is a radial function in $L^{\infty}(\bbbr^n)$. We consider the convolution
\begin{align}
&\iFT_n\rho*\Lift_s f(y)=\frac{(2\pi)^s}{\varGamma(s)}\int_{\bbbr^n}\!\!\!\Lift_s f(y\!-\!x)\!\int_0^{\infty}\!\!\!\!P_{r+t}(x)r^{s-1}drdx\nonumber\\
&=\frac{(2\pi)^s}{\varGamma(s)}\int_{\bbbr^n}\int_0^{\infty}\Lift_s f(y-x)P_{r+t}(x)r^{s-1}drdx.\label{eq2-89}
\end{align}
To justify the exchanging of the order of integration in (\ref{eq2-89}), we show that (\ref{eq2-89}) converges absolutely. First, we have
		\begin{align}
			&\int_{\bbbr^n}\int_0^{t}|\Lift_s f(y-x)|\cdot\frac{C_0(r+t)}{((r+t)^2+|x|^2)^{\frac{n+1}{2}}}\cdot
			r^{s-1}drdx\nonumber\\
			&\lesssim\int_{\bbbr^n}\int_0^{t}|\Lift_s f(y-x)|\cdot\frac{t}{(t^2+|x|^2)^{\frac{n+1}{2}}}\cdot
			r^{s-1}drdx\nonumber\\
			&\lesssim\int_{\bbbr^n}|\Lift_s f(y-x)|\cdot\frac{t^{1+s}}{(t^2+|x|^2)^{\frac{n+1}{2}}}dx\nonumber\\
			&\lesssim\|\Lift_s f\|_{L^p(\bbbr^n)}\cdot
			\bigg(\int_{\bbbr^n}\frac{t^{(1+s)p'}}{(t^2+|x|^2)^{\frac{(n+1)p'}{2}}}dx\bigg)^{\frac{1}{p'}}\nonumber\\
			&\lesssim\|f\|_{\Lps}<\infty,\label{eq2-90}
		\end{align}
where the implicit constants depend on $n,s,t,p$, and we use H\"{o}lder's inequality with $\frac{1}{p}+\frac{1}{p'}=1$. Second, since $0<s<\frac{n}{p}$, we can pick a positive number $\delta$ satisfying the condition
$$1+s<\delta<1+\frac{n}{p},$$
and we have
		\begin{align}
			&\int_{\bbbr^n}\int_t^{\infty}|\Lift_s f(y-x)|\cdot\frac{C_0(r+t)}{((r+t)^2+|x|^2)^{\frac{n+1}{2}}}\cdot
			r^{s-1}drdx\nonumber\\
			&\lesssim\int_{\bbbr^n}\int_t^{\infty}\frac{|\Lift_s f(y-x)|\cdot r^s}
			{((r+t)^2+|x|^2)^{\frac{\delta}{2}}\cdot((r+t)^2+|x|^2)^{\frac{n+1-\delta}{2}}}drdx\nonumber\\
			&\lesssim\int_{\bbbr^n}\int_t^{\infty}
			\frac{|\Lift_s f(y-x)|\cdot r^{s-\delta}}{(t^2+|x|^2)^{\frac{n+1-\delta}{2}}}drdx,\label{eq2-91}
		\end{align}
where the implicit constants depend on $n$. The choice of $\delta$ implies both of the integrals in (\ref{eq2.184}) converge,
\begin{equation}\label{eq2.184}
\int_t^{\infty}r^{s-\delta}dr\qquad\text{and}\qquad\int_{\bbbr^n}
\frac{1}{(t^2+|x|^2)^{\frac{(n+1-\delta)p'}{2}}}dx.
\end{equation}
Thus we have
		\begin{align}
			&(\ref{eq2-91})\lesssim\int_{\bbbr^n}\frac{|\Lift_s f(y-x)|}{(t^2+|x|^2)^{\frac{n+1-\delta}{2}}}dx\nonumber\\
			&\lesssim\|\Lift_s f\|_{L^p(\bbbr^n)}\cdot
			\bigg(\int_{\bbbr^n}\frac{1}{(t^2+|x|^2)^{\frac{(n+1-\delta)p'}{2}}}dx\bigg)^{\frac{1}{p'}}\nonumber\\
			&\lesssim\|f\|_{\Lps}<\infty,\label{eq2-92}
		\end{align}
		where the implicit constants depend on $n,t,s,\delta,p$, and we use H\"{o}lder's inequality with $\frac{1}{p}+\frac{1}{p'}=1$. Estimates (\ref{eq2-90}), (\ref{eq2-91}), and (\ref{eq2-92}) also show that both of the function $|\iFT_n\rho|*|\Lift_s f|(y)$ and the function $\iFT_n\rho*\Lift_s f(y)$ are bounded functions in $y\in\bbbr^n$, and we have
		\begin{equation*}
			\int_{\bbbr^n}\int_{\bbbr^n}|\Lift_s f(x)|\cdot|\iFT_n\rho(y-x)|\cdot|\phi(y)|dxdy<\infty.
		\end{equation*}
We invoke \cite[Tonelli's theorem and Corollary 7 in section 20.1]{real.analysis.royden} and \cite[Theorem 11 in section 20.2]{real.analysis.royden} to deduce that the nonnegative function
$$|\Lift_s f(x)|\cdot|\iFT_n\rho(y-x)|\cdot|\phi(y)|$$
is integrable over the product space $\bbbr^n\times\bbbr^n$ with respect to the product measure $d(x,y)$, and hence we can apply \cite[Fubini's theorem in section 20.1]{real.analysis.royden} to (\ref{eq2-98}) and obtain
\begin{equation}\label{eq2-99}
(\ref{eq2-98})=\int_{\bbbr^n}\iFT_n\rho*\Lift_s f(y)\cdot\phi(y)dy.
\end{equation}
		Combining (\ref{eq2-90}), (\ref{eq2-91}), and (\ref{eq2-92}) together justifies the application of Fubini's theorem to (\ref{eq2-89}), and hence we have
		\begin{equation}\label{eq2-93}
			\iFT_n\rho*\Lift_s f(y)=\frac{(2\pi)^s}{\varGamma(s)}\int_0^{\infty}\Pint(\Lift_s f;y,r+t)\cdot r^{s-1}dr.
		\end{equation}
Combining (\ref{eq2-86}), (\ref{eq2-94}), (\ref{eq2-98}), (\ref{eq2-99}), and (\ref{eq2-93}) all together yields
\begin{equation}\label{eq2-100}
<\!\Pint(f;x,t),\phi\!>=\!\!\!\int_{\bbbr^n}\!\!\bigg[\!\frac{(2\pi)^s}{\varGamma(s)}\!\!\int_0^{\infty}\!\!\!\!\!\!\!\Pint(\Lift_s f;x,r\!+\!t)\!\cdot\! r^{s-1}dr\!\bigg]\!\!\cdot\!\phi(x)dx
\end{equation}
for all $\phi\in\Sw_0(\bbbr^n)$, and therefore the function (\ref{eq6.3}) is in $L^{\infty}(\bbbr^n)\cap\functrep_0(\Pint(f;x,t))$ for every $t>0$.\\

Step 2: In this step, we prove that the function (\ref{eq2.190}) is differentiable on $\bbbr^{n+1}_+$,
\begin{equation}\label{eq2.190}
(x,t)\in\bbbr^{n+1}_+\longmapsto\!\!\int_0^{\infty}\!\!\!\!\!\Pint(\Lift_s f;x,t+r)r^{s-1}dr,
\end{equation}
and the following equations are true for $1\leq k\leq n$ and for all $x\in\bbbr^n$ and $t\in(0,\infty)$,
\begin{align}
\frac{\partial}{\partial x_k}\!\bigg[\!\int_0^{\infty}\!\!\!\!\!\!\!
\Pint(\Lift_s f;x,t\!+\!r)r^{s-1}dr\!\bigg]&\!\!=\!\!\int_0^{\infty}\!\!\!\!\!\!
\partial_k\Pint(\Lift_s f;x,t\!+\!r)\!\cdot\!r^{s-1}dr,\label{eq2.191}\\
\frac{\partial}{\partial t}\!\bigg[\!\int_0^{\infty}\!\!\!\!\!\!\!
\Pint(\Lift_s f;x,t\!+\!r)r^{s-1}dr\!\bigg]&\!\!=\!\!\int_0^{\infty}\!\!\!\!\!\!
\partial_{n+1}\Pint(\Lift_s f;x,t\!+\!r)\!\cdot\!r^{s-1}dr,\label{eq2.192}
\end{align}
and the right sides of equations (\ref{eq2.191}) and (\ref{eq2.192}) are well-defined functions of $(x,t)\in\bbbr^{n+1}_+$. Fix $x\in\bbbr^n$ and $t\in(0,\infty)$. First, we prove (\ref{eq2.192}) and we have
\begin{align}
&\frac{\partial}{\partial t}\!\bigg[\!\int_0^{\infty}\!\!\!\!\!\!\!
\Pint(\Lift_s f;x,t\!+\!r)r^{s-1}dr\!\bigg]\nonumber\\
&=\!\lim_{h\rightarrow0}\!\int_0^{\infty}\!\!\frac{\Pint(\Lift_s f;x,t\!+\!r\!+\!h)\!-\!\Pint(\Lift_s f;x,t\!+\!r)}{h}\!\cdot\!r^{s-1}dr,\label{eq2.130}
\end{align}
and we claim that the dominating function for the integrand on the right side of (\ref{eq2.130}) is given by
\begin{equation}\label{eq2.131}
U_1(r):=V_1(r;x,\frac{t}{2})\cdot r^{s-1}
\end{equation}
for every $r\in(0,\infty)$ and uniformly for all $h\in\bbbr$ with $0<|h|<\frac{t}{2}$, where
\begin{equation}\label{eq2.132}
V_1(r;x,w):=\int_{\bbbr^n}\frac{|\Lift_s f(x-y)|}{((w+r)^2+|y|^2)^{\frac{n+1}{2}}}dy.
\end{equation}
Since $f\in\Lps$ means $\Lift_s f\in\Sw'(\bbbr^n)$ has a function representative in the sense of $\Sw_0'(\bbbr^n)$ and this function representative is in $L^p(\bbbr^n)$, we denote this function representative by $\Lift_s f(x)$ and use formula (\ref{eq1-6}) and the mean value theorem to find a real number $h^*$ between $0$ and $h$ so that
\begin{align}
&\big|\frac{1}{h}\big[\Pint(\Lift_s f;x,t\!+\!r\!+\!h)
\!-\!\Pint(\Lift_s f;x,t\!+\!r)\big]\big|\nonumber\\
&\!=\!\!\big|\partial_{n+1}\Pint(\Lift_s f;x,t\!+\!r\!+\!h^*)\!\big|\!\!\lesssim\!\!\!
\int_{\bbbr^n}\!\!\frac{|\Lift_s f(x\!-\!y)|}
{(\!(t\!+\!r\!+\!h^*\!)^2\!+\!|y|^2)^{\frac{n+1}{2}}}dy,\label{eq2.133}
\end{align}
where the implicit constant depends on $n$. When $0<r<\infty$ and $0<|h^*|<|h|<\frac{t}{2}$, we have $t+h^*\in(\frac{t}{2},\frac{3t}{2})$ and thus
$$(t+r+h^*)^2+|y|^2>(\frac{t}{2}+r)^2+|y|^2.$$
Inserting this inequality into (\ref{eq2.133}) and combining the result with the integrand on the right side of (\ref{eq2.130}), we obtain
\begin{align}
&\big|\frac{1}{h}\big[\Pint(\Lift_s f;x,t\!+\!r\!+\!h)
\!-\!\Pint(\Lift_s f;x,t\!+\!r)\big]\big|\!\cdot\!r^{s-1}\nonumber\\
&\!\lesssim\!\!\!\int_{\bbbr^n}\!\!\frac{|\Lift_s f(x\!-\!y)|}
{(\!(\frac{t}{2}\!+\!r\!)^2\!+\!|y|^2)^{\frac{n+1}{2}}}dy\!\cdot\!r^{s-1}\!\!=\!\!V_1(r;x,\frac{t}{2})
\!\cdot\!r^{s-1}\!\!=\!U_1(r).\label{eq2.134}
\end{align}
For every $r\in(0,\infty)$ and $w\in(0,\infty)$, we use H\"{o}lder's inequality with $\frac{1}{p}+\frac{1}{p'}=1$ and proper changes of variables to obtain the estimate below,
\begin{align}
&|V_1(r;x,w)|\!\!\leq\!\!\|\Lift_s f\|_{L^p(\bbbr^n)}\!\cdot\!
\bigg(\!\int_{\bbbr^n}\!\!\frac{1}{(\!(w\!+\!r)^2\!+\!|y|^2)^{\frac{(n+1)p'}{2}}}dy
\!\!\bigg)^{\!\!\frac{1}{p'}}\nonumber\\
&=\!\!\|f\|_{\Lps}\!\cdot\!(w+r)^{-\frac{n}{p}-1}\!\cdot\!\bigg(\!\int_{\bbbr^n}\!\!
\frac{1}{(1\!+\!|y|^2)^{\frac{(n+1)p'}{2}}}dy\!\!\bigg)^{\!\!\frac{1}{p'}}\nonumber\\
&\lesssim\!\!\|f\|_{\Lps}\!\cdot\!(w+r)^{-\frac{n}{p}-1},\label{eq2.135}
\end{align}
and the implicit constant in (\ref{eq2.135}) depends on $n,p$. Applying estimate (\ref{eq2.135}) with $w=\frac{t}{2}$ yields that
\begin{equation}\label{eq2.136}
\int_0^{\infty}U_1(r)dr\lesssim\|f\|_{\Lps}\!\cdot\!\int_0^{\infty}\frac{r^{s-1}}
{(\frac{t}{2}+r)^{\frac{n}{p}+1}}dr<\infty,
\end{equation}
since $f\in\Lps$ and $0<s<\frac{n}{p}<\frac{n}{p}+1$. Therefore, the dominating function $U_1(r)$ is integrable on the interval $(0,\infty)$ and by the dominated convergence theorem, we can bring the limit in (\ref{eq2.130}) as $h\rightarrow0$ into the integral sign and obtain equation (\ref{eq2.192}). In addition, we use formula (\ref{eq1-6}) and estimate (\ref{eq2.135}) with $w=t$, then we can obtain
\begin{align}
&\int_0^{\infty}\!\!\!
|\partial_{n+1}\Pint(\Lift_s f;x,t\!+\!r)|\!\cdot\!r^{s-1}dr\nonumber\\
&\lesssim\int_0^{\infty}\!\!\!\!\int_{\bbbr^n}\!\!\frac{|\Lift_s f(x\!-\!y)|}
{(\!(t\!+\!r\!)^2\!+\!|y|^2)^{\frac{n+1}{2}}}dy\!\cdot\!r^{s-1}dr\nonumber\\
&=\!\!\!\int_0^{\infty}\!\!\!\!\!V_1(r;x,t)\!\cdot\!r^{s-1}dr\!\lesssim\!\|f\|_{\Lps}\!\cdot\!\!\!
\int_0^{\infty}\!\!\!\!\!\frac{r^{s-1}}{(t\!+\!r)^{\frac{n}{p}+1}}dr\!<\!\infty,\label{eq2.137}
\end{align}
since $f\in\Lps$, $0<s<\frac{n}{p}<\frac{n}{p}+1$, and the implicit constants in (\ref{eq2.137}) depend on $n,p$. Equation (\ref{eq2.192}) and estimate (\ref{eq2.137}) imply that the partial derivative of the function (\ref{eq2.190}) with respect to variable $t$ exists and is a well-defined function for every point $(x,t)\in\bbbr^{n+1}_+$. Now we prove equation (\ref{eq2.191}). We will only prove equation (\ref{eq2.191}) for the case $k=1$, and the proof of equation (\ref{eq2.191}) when $k\in\{2,\cdots,n\}$ will be similar. We denote $x=(x_1,x_2,\cdots,x_n)=(x_1,x')$, $x'=(x_2,\cdots,x_n)$, and
\begin{equation}\label{eq2.138}
E_2\!:=\!\{y\!\in\!\bbbr^n\!:\!|y_1\!-\!x_1|\!>\!t\},\quad E_3\!:=\!\{y\!\in\!\bbbr^n\!:\!|y_1\!-\!x_1|\!\leq\!t\}.
\end{equation}
And we have
\begin{align}
&\frac{\partial}{\partial x_1}\bigg[\int_0^{\infty}\Pint(\Lift_s f;x,t+r)r^{s-1}dr\bigg]\nonumber\\
&\!=\!\lim_{h\rightarrow0}\!\int_0^{\infty}\!\!
\frac{\Pint(\Lift_s f;\!x_1\!+\!h,\!x'\!\!,t\!+\!r)\!-\!\!\Pint(\Lift_s f;\!x_1,\!x'\!\!,t\!+\!r)}{h}
\!\cdot\!r^{s-1}dr,\label{eq2.139}
\end{align}
and we claim that the dominating function for the integrand on the right side of (\ref{eq2.139}) is
\begin{equation}\label{eq2-138}
U_2(r)+U_3(r)
\end{equation}
for every $r\in(0,\infty)$ and uniformly for all $h\in\bbbr$ with $0<|h|<\frac{t}{2}$, where $U_2(r)=V_2(r;x,t)\!\cdot\!r^{s-1}$, $U_3(r)=V_3(r;x,t)\!\cdot\!r^{s-1}$, and
\begin{align}
&V_2(r;x,t)=\int_{E_2}\frac{|\Lift_s f(y)|}
{\big[(t+r)^2+\frac{1}{4}|x-y|^2\big]^{\frac{n+1}{2}}}dy,\label{eq2.140}\\
&V_3(r;x,t)=\int_{E_3}\frac{|\Lift_s f(y)|}
{\big[(t+r)^2+|x'-y'|^2\big]^{\frac{n+1}{2}}}dy.\label{eq2.141}
\end{align}
By using formula (\ref{eq1-5}) and the mean value theorem, we can find a real number $h^*$ between $0$ and $h$ so that we have
\begin{align}
&\big|\frac{1}{h}\big[\Pint(\Lift_s f;\!x_1\!+\!h,x',t\!+\!r)\!-\!\!
\Pint(\Lift_s f;\!x_1,x',t\!+\!r)\big]\big|\nonumber\\
&=\big|\partial_1\Pint(\Lift_s f;\!x_1\!+\!h^*,x',t\!+\!r)\big|\nonumber\\
&\lesssim\!\!\!\int_{\bbbr^n}\!\!\frac{|\Lift_s f(y)|}
{\big[(t+r)^2+|x'-y'|^2+(h^*+x_1-y_1)^2\big]^{\frac{n+1}{2}}}dy\nonumber\\
&\lesssim\!\!\!\int_{\bbbr^n}\!\!\frac{|\Lift_s f(y)|}
{\big[(t+r)^2+|x'-y'|^2+g(|h^*|)\big]^{\frac{n+1}{2}}}dy,\label{eq2.142}
\end{align}
where the implicit constants in (\ref{eq2.142}) depend on $n$, and $g(|h^*|)$ is the evaluation of the nonnegative quadratic function
\begin{equation}\label{eq2.143}
g(w)=w^2-2|x_1-y_1|w+|x_1-y_1|^2
\end{equation}
at $w=|h^*|$. When $0<|h|<\frac{t}{2}$ and $y\!\in\!E_2$, we have $0<|h^*|<|h|<\frac{1}{2}t<\frac{1}{2}|x_1-y_1|$. And on the closed and bounded interval $[0,\frac{1}{2}|x_1-y_1|]$, the nonnegative quadratic function $g(w)$ defined in (\ref{eq2.143}) is decreasing and achieves a nonnegative minimum value at $w=\frac{1}{2}|x_1-y_1|$, and this nonnegative minimum value is $\frac{1}{4}|x_1-y_1|^2$. We have
\begin{align}
&\int_{E_2}\!\!\frac{|\Lift_s f(y)|}{\big[(t+r)^2+|x'-y'|^2+g(|h^*|)\big]^{\frac{n+1}{2}}}dy\nonumber\\
&\leq\!\!\!\int_{E_2}\!\!\frac{|\Lift_s f(y)|}{\big[\!(t\!+\!r)^2\!+\!|x'\!-\!y'|^2\!+
\!\frac{1}{4}|x_1\!-\!y_1|^2\big]^{\frac{n+1}{2}}}dy\!\leq\!V_2(r;x,t).\label{eq2.144}
\end{align}
When $y\!\in\!E_3$, we use the nonnegativity of the function (\ref{eq2.143}) and obtain
\begin{equation}\label{eq2.145}
\int_{E_3}\!\!\frac{|\Lift_s f(y)|}
{\big[\!(t\!+\!r)^2\!+\!|x'\!-\!y'|^2\!+\!g(|h^*|)\big]^{\frac{n+1}{2}}}dy\!\leq\!V_3(r;x,t).
\end{equation}
Combining (\ref{eq2.142}), (\ref{eq2.144}), (\ref{eq2.145}) with the fact that $\bbbr^n=E_2\cup E_3$, we have proven the following estimate
\begin{align}
&\big|\frac{1}{h}\big[\Pint(\Lift_s f;\!x_1\!+\!h,x',t\!+\!r)\!-\!\!
\Pint(\Lift_s f;\!x_1,x',t\!+\!r)\big]\big|\nonumber\\
&\lesssim V_2(r;x,t)+V_3(r;x,t)\label{eq2.146}
\end{align}
for every $r\in(0,\infty)$ and uniformly for all $h\in\bbbr$ with $0<|h|<\frac{t}{2}$, and the implicit constant in (\ref{eq2.146}) depends on $n$. To prove that we can bring the limit in (\ref{eq2.139}) as $h\rightarrow0$ into the integral sign and obtain equation (\ref{eq2.191}) for $k=1$ by the dominated convergence theorem, it suffices to prove
\begin{align}
&\int_0^{\infty}U_2(r)+U_3(r)dr\nonumber\\
&=\int_0^{\infty}V_2(r;x,t)r^{s-1}dr+\int_0^{\infty}V_3(r;x,t)r^{s-1}dr<\infty.\label{eq2.147}
\end{align}
We have the following estimate (\ref{eq2.148}) for every $r\in(0,\infty)$,
\begin{align}
&V_2(r;x,t)\leq\int_{\bbbr^n}\frac{|\Lift_s f(y)|}
{\big[(t+r)^2+\frac{1}{4}|x-y|^2\big]^{\frac{n+1}{2}}}dy\nonumber\\
&\lesssim\int_{\bbbr^n}\frac{|\Lift_s f(y)|}
{\big[(t+r)^2+|x-y|^2\big]^{\frac{n+1}{2}}}dy=V_1(r;x,t),\label{eq2.148}
\end{align}
where the implicit constant depends on $n$. We use estimate (\ref{eq2.137}) to obtain
\begin{equation}\label{eq2.149}
\int_0^{\infty}\!\!\!V_2(r;x,t)r^{s-1}dr\lesssim\int_0^{\infty}\!\!\!V_1(r;x,t)r^{s-1}dr<\infty.
\end{equation}
And we use H\"{o}lder's inequality to obtain the estimate below,
\begin{align}
&V_3(r;x,t)\!\!\leq\!\!\|\Lift_s f\|_{L^p(\bbbr^n)}\!\cdot\!\!\bigg(\!\int_{E_3}\!\!\!
[(t\!+\!r)^2\!+\!|x'\!-\!y'|^2]^{-\frac{(n+1)p'}{2}}\!dy\!\!\bigg)^{\!\!\frac{1}{p'}}\nonumber\\
&=\|f\|_{\Lps}\!\cdot\!(2t)^{\frac{1}{p'}}\!\cdot\!\bigg(\!\int_{\bbbr^{n-1}}\!\!\!\!\!\!
[(t\!+\!r)^2\!+\!|y'|^2]^{-\frac{(n+1)p'}{2}}\!dy'\!\!\bigg)^{\!\!\frac{1}{p'}}\nonumber\\
&=\|f\|_{\Lps}\!\cdot\!(2t)^{\frac{1}{p'}}\!\cdot\!(t\!+\!r)^{-\frac{n-1}{p}-2}\!\cdot\!
\bigg(\!\int_{\bbbr^{n-1}}\!\!\!\!\!\![1\!+\!|y'|^2]^{-\frac{(n+1)p'}{2}}\!
dy'\!\!\bigg)^{\!\!\frac{1}{p'}}\nonumber\\
&\lesssim\|f\|_{\Lps}\!\cdot\!(2t)^{\frac{1}{p'}}\!\cdot\!(t\!+\!r)^{-\frac{n-1}{p}-2},\label{eq2.150}
\end{align}
and the implicit constant in (\ref{eq2.150}) depends on $n,p$. Thus we obtain
\begin{equation}\label{eq2.151}
\int_0^{\infty}\!\!\!\!\!V_3(r;x,t)\!\cdot\!r^{s-1}dr\!\lesssim\!\|f\|_{\Lps}\!\cdot\!\!\!\int_0^{\infty}\!\!\!\frac{(2t)^{\frac{1}{p'}}\!\!\cdot\!r^{s-1}}{(t\!+\!r)^{\frac{n-1}{p}+2}}dr\!<\!\infty,
\end{equation}
since $1<p<\infty$, $0<s<\frac{n}{p}<\frac{n-1}{p}+2$, and $f\in\Lps$. Combining (\ref{eq2.149}) and (\ref{eq2.151}) proves (\ref{eq2.147}), and hence equation (\ref{eq2.191}) is proved for $k=1$. In addition, we use formula (\ref{eq1-5}) and estimate (\ref{eq2.137}) to obtain
\begin{equation}\label{eq2.152}
\int_0^{\infty}\!\!\!\!\!\!|\partial_1\Pint(\Lift_s f;x,t\!+\!r)|\!\cdot\!r^{s-1}dr\!\lesssim\!\!\!
\int_0^{\infty}\!\!\!\!\!\!V_1(r;x,t)\!\cdot\!r^{s-1}dr\!<\!\infty,
\end{equation}
therefore the right side of equation (\ref{eq2.191}) is a well-defined function on $\bbbr^{n+1}_+$ for $k=1$, and the partial derivative of the function (\ref{eq2.190}) with respect to variable $x_1$ exists and is well-defined for every $x\in\bbbr^n$ and $t\in(0,\infty)$. The proof for the case $k\in\{2,\cdots,n\}$ is similar.\\

Step 3: Since both the Poisson integral $\Pint(f;x,t)$ and the function (\ref{eq6.3}) are continuous for every $t\in(0,\infty)$ and belong to $\functrep_0(\Pint(f;x,t))\cap L^{\infty}(\bbbr^n)$, their difference is a continuous function in $L^{\infty}(\bbbr^n)\subseteq\Lloc$ and there exists $\tilde{f}\in\Sw'(\bbbr^n)$ so that the difference
\begin{equation*}
\text{$[(\ref{eq6.3})\!-\!\Pint(f;x,t)]$ is in $\functrep(\tilde{f})$ and satisfies condition (\ref{eq1.101}).}
\end{equation*}
By Proposition \ref{proposition2}, the difference equals a continuous polynomial for almost every $x\in\bbbr^n$ whose coefficients are functions of $t\in(0,\infty)$, and the continuity of relevant functions indicates such an equality holds true for all $x\in\bbbr^n$ and $t\in(0,\infty)$. Since this difference is also a function in $L^{\infty}(\bbbr^n)$ with respect to variable $x\in\bbbr^n$, the difference of the function (\ref{eq6.3}) and the Poisson integral $\Pint(f;x,t)$ is a function of $t$ for every $x\in\bbbr^n$. Equation (\ref{eq2.185}) is proved. Since the function representative $f(x)$ is in $\functrep_{0}(f)\cap L^{p_0}(\bbbr^n)$ for some $p_0\in[1,\infty]$, formulae (\ref{eq1-5}) and (\ref{eq1-6}) and the proof of Lemma \ref{lemma10} tell us that the Poisson integral $\Pint(f;x,t)$ is smooth on $\bbbr^{n+1}_+$. In Step 2, we have proven that the function (\ref{eq2.188}) is differentiable on $\bbbr^{n+1}_+$. Since the function $G(t)$ appearing in the equation (\ref{eq2.185}) is a function of the single variable $t$, the function $G(t)$ is differentiable at every $t\in(0,\infty)$. Furthermore, we have obtained the following equations for $1\leq k\leq n$ and for all $x\in\bbbr^n$ and $t\in(0,\infty)$,
\begin{align}
\frac{\partial}{\partial x_k}\!\big[\!\Pint(f;x,t)\!\big]&\!\!=\!\frac{\partial}{\partial x_k}\!
\bigg[\!\frac{(2\pi)^s}{\varGamma(s)}\!\!\int_0^{\infty}\!\!\!\!\!\!\!
\Pint(\Lift_s f;x,t\!+\!r)r^{s-1}dr\!\bigg],\label{eq2.153}\\
\frac{\partial}{\partial t}\!\big[\!\Pint(f;x,t)\!\big]&\!\!=\!\frac{\partial}{\partial t}\!
\bigg[\!\frac{(2\pi)^s}{\varGamma(s)}\!\!\int_0^{\infty}\!\!\!\!\!\!\!
\Pint(\Lift_s f;x,t\!+\!r)r^{s-1}dr\!\bigg]\!\!+\!G'(t),\label{eq2.154}
\end{align}
where $G'(t)$ is the first-order derivative of $G(t)$. Then equations (\ref{eq2.186}) and (\ref{eq2.187}) are consequences of (\ref{eq2.153}), (\ref{eq2.154}), (\ref{eq2.191}), and (\ref{eq2.192}).\\

Step 4: For every $x\in\bbbr^n$, we can use a straightforward verification by definition, the mean value theorem, and formula (\ref{eq2.168}) to prove that the function (\ref{eq2.264}) is continuous on $(0,\infty)$. Furthermore, H\"{o}lder's inequality with $\frac{1}{p}\!+\!\frac{1}{p'}\!=\!1$ and the assumption $f\!\in\!\Lps$ indicate that
\begin{align}
&\sup_{x\in\bbbr^n}\!|\Pint(\Lift_s f;x,r)|\!\lesssim\!\|\Lift_s f\|_{L^p(\bbbr^n)}\!\cdot\!
\bigg(\!\int_{\bbbr^n}\!\frac{r^{p'}}{(r^2\!+\!|y|^2)^{\frac{(n+1)p'}{2}}}dy\!\bigg)^{\!\frac{1}{p'}}
\nonumber\\
&\lesssim\|f\|_{\Lps}\cdot r^{-\frac{n}{p}},\label{eq2.269}
\end{align}
where the implicit constants in (\ref{eq2.269}) depend on $n,p$. The condition $0<s<\frac{n}{p}$ implies
\begin{equation}\label{eq2.270}
\sup_{x\in\bbbr^n}\!\int_1^{\infty}\!\!\!\!\!\!|\Pint(\Lift_s f;x,r)|\!\cdot\!r^{s-1}dr\!\lesssim\!
\|f\|_{\Lps}\!\cdot\!\!\!\int_1^{\infty}\!\!\!\!r^{s-\frac{n}{p}-1}dr\!<\!\infty.
\end{equation}
And Minkowski's integral inequality and Young's inequality imply uniformly for all $0\leq t<\infty$,
\begin{align}
&\|\int_0^1\!\!\!|\Pint(\Lift_s f;\cdot,t+r)|\!\cdot\!r^{s-1}dr\|_{L^p(\bbbr^n)}\nonumber\\
&\leq\int_0^1\!\!\!\|\Lift_s f*P_{t+r}\|_{L^p(\bbbr^n)}\!\cdot\!r^{s-1}dr\nonumber\\
&\leq\|f\|_{\Lps}\cdot\|P\|_{L^1(\bbbr^n)}\cdot\int_0^1 r^{s-1}dr<\infty.\label{eq2.271}
\end{align}
Estimates (\ref{eq2.270}) and (\ref{eq2.271}) tell us that the function (\ref{eq2.265}) is well-defined and finite for almost every $x\in\bbbr^n$. Now we prove conclusions (\ref{eq2.266}) and (\ref{eq2.267}). Since the function (\ref{eq6.3}) is well-defined for every $t\in(0,\infty)$, we denote
\begin{align}
&T_t(x)\!:=\!\!\int_0^{\infty}\!\!\!\![\Pint(\Lift_s f;x,t\!+\!r)\!-\!\Pint(\Lift_s f;x,r)]
\!\cdot\!r^{s-1}dr\nonumber\\
&=\!\!\int_0^{\infty}\!\!\!\![\Lift_s f*P_{t+r}(x)\!-\!\Lift_s f*P_{r}(x)]
\!\cdot\!r^{s-1}dr,\label{eq2.272}
\end{align}
\begin{align}
T_t^0(x)&:=\int_0^{1}\!\![\Lift_s f*P_{t+r}(x)-\Lift_s f*P_{r}(x)]\cdot r^{s-1}dr,\label{eq2.273}\\
T_t^{\infty}(x)&:=\int_1^{\infty}\!\!\!\![\Lift_s f*P_{t+r}(x)-\Lift_s f*P_{r}(x)]\cdot r^{s-1}dr,
\label{eq2.274}
\end{align}
where $T_t^0(x)$ is the integral on the interval $r\in(0,1]$, and $T_t^{\infty}(x)$ is the integral on the interval $r\in[1,\infty)$, and we have
\begin{equation}\label{eq2.275}
|T_t(x)|\leq|T_t^0(x)|+|T_t^{\infty}(x)|.
\end{equation}
We prove (\ref{eq2.266}) and (\ref{eq2.267}) by proving that
\begin{equation}\label{eq2.276}
\lim_{t\rightarrow0}\int_K|T_t(x)|dx=0\text{ for every compact set $K\subseteq\bbbr^n$}.
\end{equation}
For every $x\in\bbbr^n$ and every fixed $r\in(0,\infty)$, we use the mean value theorem to find a real number $t^*$ so that $0<t^*<t$, and we also use formula (\ref{eq1-3-2}) and H\"{o}lder's inequality with $\frac{1}{p}\!+\!\frac{1}{p'}\!=\!1$ to obtain
\begin{align}
&|\Lift_s f\!*\!P_{t+r}(x)\!-\!\Lift_s f\!*\!P_{r}(x)|\!\!\leq\!\!\!\!\int_{\bbbr^n}\!\!\!\!\!
|\Lift_s f(x\!-\!y)|\!\cdot\!|P_{t+r}(y)\!-\!P_{r}(y)|dy\nonumber\\
&\leq\|\Lift_s f\|_{L^p(\bbbr^n)}\cdot\|P_{t+r}-P_{r}\|_{L^{p'}(\bbbr^n)}\nonumber\\
&\lesssim\|f\|_{\Lps}\!\cdot\!\bigg(\!\int_{\bbbr^n}\!
\frac{t^{p'}}{((t^*+r)^2\!+\!|y|^2)^{\frac{(n+1)p'}{2}}}dy\!\bigg)^{\!\frac{1}{p'}}\nonumber\\
&\lesssim\!\!\|f\|_{\Lps}\!\cdot\!\bigg(\!\int_{\bbbr^n}\!\!
\frac{t^{p'}}{(r^2\!+\!|y|^2)^{\frac{(n+1)p'}{2}}}dy\!\!\bigg)^{\!\!\frac{1}{p'}}\!\!\!\!
\lesssim\!\!\|f\|_{\Lps}\!\cdot\!r^{-\frac{n}{p}-1}\!\cdot\!t,\label{eq2.277}
\end{align}
where the implicit constants in (\ref{eq2.277}) depend on $n,p$. We deduce from (\ref{eq2.277}) that
\begin{align}
&\lim_{t\rightarrow0}\|T_t^{\infty}\|_{L^{\infty}(\bbbr^n)}\!\leq\!\lim_{t\rightarrow0}
\int_1^{\infty}\!\!\!\!\!\!\|\Lift_s f\!*\!P_{t+r}\!-\!\Lift_s f\!*\!P_{r}\|_{L^{\infty}(\bbbr^n)}
\!\cdot\!r^{s-1}dr\nonumber\\
&\lesssim\lim_{t\rightarrow0}\|f\|_{\Lps}\!\cdot\!\int_1^{\infty}r^{s-\frac{n}{p}-2}dr\!\cdot\!t=0,
\label{eq2.278}
\end{align}
since $0<s<\frac{n}{p}$ implies $\int_1^{\infty}r^{s-\frac{n}{p}-2}dr<\infty$. Therefore we have
\begin{equation}\label{eq2.279}
\lim_{t\rightarrow0}\int_K|T_t^{\infty}(x)|dx=0\text{ for every compact set $K\subseteq\bbbr^n$}.
\end{equation}
In order to prove that
\begin{equation}\label{eq2.280}
\lim_{t\rightarrow0}\int_K|T_t^{0}(x)|dx=0\text{ for every compact set $K\subseteq\bbbr^n$},
\end{equation}
we first prove that
\begin{equation}\label{eq2.281}
\lim_{t\rightarrow0}\int_{\bbbr^n}|T_t^{0}(x)|\!\cdot\!g(x)dx=0\text{ for every $g\in\Sw(\bbbr^n)$}.
\end{equation}
Let $g\in\Sw(\bbbr^n)$ and we recall (\ref{eq2.273}) to obtain
\begin{align}
&\int_{\bbbr^n}|T_t^{0}(x)\cdot g(x)|dx\nonumber\\
&\leq\!\!\!\int_{\bbbr^n}\!\int_0^{1}\!\!\!\!|\Lift_s f\!*\!P_{t+r}(x)\!-\!\Lift_s f\!*\!P_{r}(x)|
\!\cdot\!|g(x)|r^{s-1}drdx.\label{eq2.282}
\end{align}
Applying H\"{o}lder's inequality with $\frac{1}{p}\!+\!\frac{1}{p'}\!=\!1$, Young's inequality, and the assumption that $f\in\Lps$ in a sequence yields
\begin{align}
&\int_0^{1}\!\!\!\int_{\bbbr^n}\!\!\!\!|\Lift_s f\!*\!P_{t+r}(x)\!-\!\Lift_s f\!*\!P_{r}(x)|
\!\cdot\!|g(x)|r^{s-1}dxdr\nonumber\\
&\leq\!\!\!\int_0^{1}\!\!\!\!\|\Lift_s f\!*\!P_{t+r}\!-\!\Lift_s f\!*\!P_{r}\|_{L^p(\bbbr^n)}\!\cdot\!
\|g\|_{L^{p'}(\bbbr^n)}\!\cdot\!r^{s-1}dr\nonumber\\
&\leq\!\!\!\int_0^{1}\!\!\!\!\{\|\Lift_s f\!*\!P_{t+r}\|_{L^p(\bbbr^n)}\!+\!
\|\Lift_s f\!*\!P_{r}\|_{L^p(\bbbr^n)}\}\!\cdot\!\|g\|_{L^{p'}(\bbbr^n)}\!\cdot\!r^{s-1}dr\nonumber\\
&\leq2\|f\|_{\Lps}\!\cdot\!\|P\|_{L^1(\bbbr^n)}\!\cdot\!\|g\|_{L^{p'}(\bbbr^n)}\!\cdot\!
\int_0^1 r^{s-1}dr<\infty.\label{eq2.283}
\end{align}
Estimate (\ref{eq2.283}) justifies the application of Fubini's theorem and the exchange of the order of integration in the second line of (\ref{eq2.282}), and we deduce from (\ref{eq2.282}) that
\begin{align}
&\lim_{t\rightarrow0}\int_{\bbbr^n}|T_t^{0}(x)\cdot g(x)|dx\nonumber\\
&\leq\lim_{t\rightarrow0}\int_0^{1}\!\!\!\int_{\bbbr^n}\!\!\!\!
|\Lift_s f\!*\!P_{t+r}(x)\!-\!\Lift_s f\!*\!P_{r}(x)|\!\cdot\!|g(x)|dx\!\cdot\!r^{s-1}dr\nonumber\\
&=\!\!\!\int_0^{1}\!\!\lim_{t\rightarrow0}\!\int_{\bbbr^n}\!\!\!\!
|\Lift_s f\!*\!P_{t+r}(x)\!-\!\Lift_s f\!*\!P_{r}(x)|\!\cdot\!|g(x)|dx\!\cdot\!r^{s-1}dr,
\label{eq2.284}
\end{align}
where the last equation of (\ref{eq2.284}) is due to the dominated convergence theorem, since estimate (\ref{eq2.283}) also shows that uniformly for all $t\in(0,\infty)$, the dominating function for the integrand function
\begin{equation}\label{eq2.285}
r\!\in\!(0,1]\longmapsto\!\!\int_{\bbbr^n}\!\!\!\!
|\Lift_s f\!*\!P_{t+r}(x)\!-\!\Lift_s f\!*\!P_{r}(x)|\!\cdot\!|g(x)|dx\!\cdot\!r^{s-1}
\end{equation}
is given by the following
\begin{equation}\label{eq2.286}
r\!\in\!(0,1]\longmapsto2\|f\|_{\Lps}\!\cdot\!\|P\|_{L^1(\bbbr^n)}\!\cdot\!\|g\|_{L^{p'}(\bbbr^n)}\!\cdot\!r^{s-1},
\end{equation}
and this dominating function (\ref{eq2.286}) is in $L^1((0,1])$. Inserting estimate (\ref{eq2.277}) into (\ref{eq2.285}) yields that the integrand function (\ref{eq2.285}) can be estimated from above by the function (\ref{eq2.287}) pointwise in $r$,
\begin{equation}\label{eq2.287}
r\!\in\!(0,1]\longmapsto\|f\|_{\Lps}\!\cdot\!\|g\|_{L^1(\bbbr^n)}\!\cdot\!r^{s-\frac{n}{p}-2}
\!\cdot\!t.
\end{equation}
Thus the integrand function (\ref{eq2.285}) converges to $0$ as $t\rightarrow0$ for every $r\!\in\!(0,1]$. Combining this conclusion with (\ref{eq2.284}) proves (\ref{eq2.281}). As a consequence of Minkowski's integral inequality, Young's inequality, and the estimate (\ref{eq2.271}), we have
\begin{equation}\label{eq2.288}
\sup_{0<t<\infty}\!\!\|T_t^0\|_{L^p(\bbbr^n)}\!\leq\!2\|f\|_{\Lps}\!\cdot\!
\|P\|_{L^1(\bbbr^n)}\!\cdot\!\!\int_0^1\!\!r^{s-1}dr\!<\!\infty.
\end{equation}
Given a compact set $K\subseteq\bbbr^n$ and $\varepsilon>0$, we denote its $\varepsilon$-neighborhood by (\ref{eq2.211}), and we can find a function $\varphi_{\varepsilon}(x)\in\Ccinfty(\bbbr^n)\subseteq\Sw(\bbbr^n)$ so that all the conditions (\ref{eq2.212}) and (\ref{eq2.213}) are satisfied. Then H\"{o}lder's inequality with $\frac{1}{p}\!+\!\frac{1}{p'}\!=\!1$ and condition (\ref{eq2.288}) imply
\begin{align}
&\int_{K_{\varepsilon}\setminus K}|T_t^0(x)|dx\leq\|T_t^0\|_{L^p(K_{\varepsilon}\setminus K)}\!\cdot\!
\Lebes^n(K_{\varepsilon}\setminus K)^{\frac{1}{p'}}\nonumber\\
&\leq\sup_{0<t<\infty}\!\!\|T_t^0\|_{L^p(\bbbr^n)}\!\cdot\!
\Lebes^n(K_{\varepsilon}\setminus K)^{\frac{1}{p'}}.\label{eq2.289}
\end{align}
Therefore, given $\rho>0$, there exists $\varepsilon_0>0$ so small that uniformly for all $t\in(0,\infty)$, we have
\begin{equation}\label{eq2.290}
\int_{K_{\varepsilon_0}\setminus K}|T_t^0(x)|dx<\frac{1}{2}\rho.
\end{equation}
Since $\varphi_{\varepsilon_0}\in\Sw(\bbbr^n)$ satisfies conditions (\ref{eq2.212}) and (\ref{eq2.213}), we infer from (\ref{eq2.281}) that
\begin{equation}\label{eq2.291}
\lim_{t\rightarrow0}\int_{\bbbr^n}\!\!\!\!\!|T_t^{0}(x)|\!\cdot\!\varphi_{\varepsilon_0}(x)dx\!=\!
\lim_{t\rightarrow0}\int_{K_{\varepsilon_0}}\!\!\!\!\!|T_t^{0}(x)|\!\cdot\!\varphi_{\varepsilon_0}(x)dx\!=\!0,
\end{equation}
and thus we can find $t_0>0$ so that $0<t<t_0$ implies
\begin{equation}\label{eq2.292}
0\leq\int_{K_{\varepsilon_0}}|T_t^{0}(x)|\!\cdot\!\varphi_{\varepsilon_0}(x)dx<\frac{1}{2}\rho.
\end{equation}
We use conditions (\ref{eq2.212}) and (\ref{eq2.213}) to obtain the estimate (\ref{eq2.293}) below whenever $0<t<t_0$,
\begin{align}
&\int_K|T_t^0(x)|dx\nonumber\\
&\leq\!\!\big|\!\!\int_K\!\!\!|T_t^0(x)|dx\!-\!\!\!\int_{K_{\varepsilon_0}}\!\!\!\!\!\!\!|T_t^{0}(x)|
\!\cdot\!\varphi_{\varepsilon_0}(x)dx\big|\!+\!\!\!\int_{K_{\varepsilon_0}}\!\!\!\!\!\!\!|T_t^{0}(x)|
\!\cdot\!\varphi_{\varepsilon_0}(x)dx\nonumber\\
&\leq\int_{K_{\varepsilon_0}\setminus K}\!\!\!\!|T_t^{0}(x)|\!\cdot\!
\varphi_{\varepsilon_0}(x)dx+\frac{1}{2}\rho<\rho.\label{eq2.293}
\end{align}
Indeed, conditions (\ref{eq2.212}), (\ref{eq2.213}), (\ref{eq2.289}) imply that 
\begin{equation}\label{eq2.294}
\int_K|T_t^0(x)|dx\!=\!\lim_{\varepsilon\rightarrow0}\int_{\bbbr^n}
|T_t^{0}(x)|\!\cdot\!\varphi_{\varepsilon}(x)dx
\end{equation}
uniformly for all $t\in(0,\infty)$, hence we can exchange the order of limit below and invoke (\ref{eq2.281}) to obtain
\begin{align}
&\lim_{t\rightarrow0}\int_K|T_t^0(x)|dx\!=\!\lim_{t\rightarrow0}
\lim_{\varepsilon\rightarrow0}\int_{\bbbr^n}|T_t^{0}(x)|\!\cdot\!\varphi_{\varepsilon}(x)dx\nonumber\\
&\!=\!\lim_{\varepsilon\rightarrow0}\lim_{t\rightarrow0}\int_{\bbbr^n}
|T_t^{0}(x)|\!\cdot\!\varphi_{\varepsilon}(x)dx\!=\!0,\label{eq2.295}
\end{align}
where $K$ is an arbitrary compact set in $\bbbr^n$. Combining (\ref{eq2.275}), (\ref{eq2.279}), and (\ref{eq2.295}) proves (\ref{eq2.276}). In particular, if $B^n(0,N)$ denotes the ball in $\bbbr^n$ centered at the origin with radius $N\in\bbbn$, then we have
\begin{equation}\label{eq2.296}
\lim_{t\rightarrow0}\int_{B^n(0,N)}|T_t(x)|dx=0,
\end{equation}
and we can find a decreasing sequence $\{t_k^{(N)}\}_{k\in\bbbn}$ of positive real numbers so that for almost every $x\in B^n(0,N)$,
\begin{equation}\label{eq2.297}
\lim_{k\rightarrow\infty}t_k^{(N)}=0\quad\text{and}\quad\lim_{k\rightarrow\infty}T_{t_k^{(N)}}(x)=0.
\end{equation}
Therefore, we recall (\ref{eq2.272}), then the application of a classical diagonal argument by extracting a subsequence from a subsequence yields that there exists a sequence $\{t_k\}_{k\in\bbbn}$ of positive real numbers so that the following conditions are satisfied,
\begin{equation}\label{eq2.298}
0<t_{k+1}<t_k<\infty,\qquad\lim_{k\rightarrow\infty}t_k=0,
\end{equation}
\begin{align}
&\lim_{k\rightarrow\infty}\int_0^{\infty}\!\!\!\!\Pint(\Lift_s f;x,t_k+r)\!\cdot\!r^{s-1}dr\nonumber\\
&=\!\!\!\int_0^{\infty}\!\!\!\!\Pint(\Lift_s f;x,r)\!\cdot\!r^{s-1}dr
\text{ for almost every $x\!\in\!\bbbr^n$},\label{eq2.299}
\end{align}
and conclusions (\ref{eq2.266}) and (\ref{eq2.267}) are proven. For every real number $c\in(0,\infty)$ and for almost every $x\!\in\!\bbbr^n$, we use equation (\ref{eq2.192}), the sequence $\{t_k\}_{k\in\bbbn}$ found in (\ref{eq2.298}) and (\ref{eq2.299}), the continuity of the function (\ref{eq2.264}), and the fundamental theorem of calculus to deduce that
\begin{align}
&\int_0^c\!\!\int_0^{\infty}\!\!\!\!\partial_{n+1}\Pint(\Lift_s f;x,u\!+\!r)\!\cdot\!r^{s-1}
drdu\nonumber\\
&=\!\!\!\int_0^c\!\frac{\partial}{\partial u}\big[\!\!\int_0^{\infty}\!\!\!\!\Pint(\Lift_s f;x,u\!+\!r)\!\cdot\!r^{s-1}dr\big]du\nonumber\\
&=\lim_{k\rightarrow\infty}\int_{t_k}^c\!\frac{\partial}{\partial u}\big[\!\!\int_0^{\infty}\!\!\!\!
\Pint(\Lift_s f;x,u\!+\!r)\!\cdot\!r^{s-1}dr\big]du\nonumber\\
&=\!\!\lim_{k\rightarrow\infty}\!\int_0^{\infty}\!\!\!\!\!\!\!
\Pint(\Lift_s f;x,c\!+\!r)\!\cdot\!r^{s-1}dr\!-\!\!\!\int_0^{\infty}\!\!\!\!\!\!\!
\Pint(\Lift_s f;x,t_k\!+\!r)\!\cdot\!r^{s-1}dr\nonumber\\
&=\!\!\!\int_0^{\infty}\!\!\!\!\!\!\!\Pint(\Lift_s f;x,c\!+\!r)\!\cdot\!r^{s-1}dr\!-\!\!\!
\int_0^{\infty}\!\!\!\!\!\!\!\Pint(\Lift_s f;x,r)\!\cdot\!r^{s-1}dr.\label{eq2.300}
\end{align}
In the last line of (\ref{eq2.300}), the first term is well-defined and finite since the continuous function (\ref{eq6.3}) is in $L^{\infty}(\bbbr^n)$ for every $t\in(0,\infty)$, and the second term is well-defined and finite for almost every $x\in\bbbr^n$ in view of (\ref{eq2.265}). Therefore the conclusion (\ref{eq2.268}) is proven. The proof of Lemma \ref{lemma14} is now complete.
\end{proof}
\begin{remark}\label{remark2}
Lemma \ref{lemma14} has been used in the proof of Theorem \ref{theorem2} to deduce the important equations (\ref{eq6.45}) and (\ref{eq6.46}). Conclusions of Lemma \ref{lemma14} can also be obtained by assuming additionally that $f$ is an integrable function and using Fubini's theorem. By applying Lemma \ref{lemma14}, we do not assume that $f$ is an integrable function in Theorem \ref{theorem2}.
\end{remark}
\begin{remark}\label{remark4}
We rephrase \cite[Definition 2.2]{Wang2023} in the language of the theory of functional and function representatives. If $f\in\Sw'(\bbbr^n)$ has a function representative $F(x)\in\functrep(f)$, and if the distributional Fourier transform $\FT_n f$ is compactly supported in the ball $B^n(0,t)\subseteq\bbbr^n$ centered at the origin with radius $t>0$, then the associated $n$-dimensional Peetre-Fefferman-Stein maximal function of $f\in\Sw'(\bbbr^n)$ at $x\in\bbbr^n$ is defined to be
\begin{equation}\label{eq2.301}
\PFSmax_n f(x)=\esssup_{z\in\bbbr^n}\frac{|F(x-z)|}{(1+t|z|)^{\frac{n}{r}}},
\end{equation}
where we pick $r$ to be a positive number satisfying either $0<r<\min\{p,q\}$ or $0<r<p$. Then \cite[Remark 2.4]{Wang2023} is true for the rephrased definition of the Peetre-Fefferman-Stein maximal function. Furthermore, if $\varphi\in\Sw(\bbbr^n)$ and $\varphi_{\frac{1}{t}}(x)=t^n\varphi(tx)$, then \cite[Proposition 2.3.22 (11) and Theorem 2.3.20]{14classical} imply $f*\varphi_{\frac{1}{t}}\in\Sw'(\bbbr^n)$ and the distributional Fourier transform $\FT_n[f*\varphi_{\frac{1}{t}}]=\FT_n f\cdot\FT_n\varphi(\frac{\xi}{t})$ is compactly supported in the ball $B^n(0,t)$ due to the support condition of $\FT_n f\in\Sw'(\bbbr^n)$, and the smooth function (\ref{eq2.303}) is in $\functrep(f*\varphi_{\frac{1}{t}})$,
\begin{equation}\label{eq2.303}
x\!\in\!\bbbr^n\!\longmapsto<\!f,\varphi_{\frac{1}{t}}(x-\cdot)\!>=\!\!\!\int_{\bbbr^n}\!\!\!\!\!\!
F(y)\!\cdot\!\varphi_{\frac{1}{t}}(x\!-\!y)dy\!=\!F\!*\!\varphi_{\frac{1}{t}}(x),
\end{equation}
where the equations in (\ref{eq2.303}) are true by the assumption $F(x)\in\functrep(f)$ and the fact that the mapping $y\in\bbbr^n\mapsto\varphi_{\frac{1}{t}}(x-y)$ is a function in $\Sw(\bbbr^n)$ for every $x\in\bbbr^n$ and $t\in(0,\infty)$. As a consequence, the Peetre-Fefferman-Stein maximal function $\PFSmax_n(f*\varphi_{\frac{1}{t}})(x)$ is well-defined by the following expression,
\begin{equation}\label{eq2.304}
\PFSmax_n(f*\varphi_{\frac{1}{t}})(x)=\esssup_{z\in\bbbr^n}
\frac{|F*\varphi_{\frac{1}{t}}(x-z)|}{(1+t|z|)^{\frac{n}{r}}}.
\end{equation}
Then a computational process analogous to \cite[Remark 2.5]{Wang2023} yields
\begin{equation}\label{eq2.305}
\PFSmax_n(f\!*\!\varphi_{\frac{1}{t}})(x)\!\lesssim\!\PFSmax_n f(x)\text{ for all $x\!\in\!\bbbr^n$ and $0\!<\!t\!<\!\infty$},
\end{equation}
where the implicit constant in (\ref{eq2.305}) depends on $n,r,\varphi$ and does not depend on $t$. Assume now that the function representative $F(x)\in\functrep(f)$ satisfies the least additional condition that $F(x)$ is continuous on $\bbbr^n$, then $F(x)\in\Lloc$. And we fix a function $\Phi\in\Sw(\bbbr^n)$ so that
\begin{equation}\label{eq2.302}
\FT_n\Phi(\xi)=
\begin{cases}
1&\quad\text{if $|\xi|\leq1$},\\
0&\quad\text{if $|\xi|\geq2$},
\end{cases}
\end{equation}
then $\FT_n\Phi_{\frac{1}{t}}(\xi)=\FT_n\Phi(\frac{\xi}{t})=1$ if $|\xi|\leq t$, and
\begin{equation}\label{eq2.306}
spt.[1-\FT_n\Phi_{\frac{1}{t}}(\xi)]\subseteq\{\xi\in\bbbr^n:|\xi|\geq t\}.
\end{equation}
Therefore the following equation (\ref{eq2.307}) is true for all $g\in\Sw(\bbbr^n)$,
\begin{align}
&<\!f,g\!>\nonumber\\
&=<\!\FT_n f,\FT_n\Phi_{\frac{1}{t}}(\xi)\!\cdot\!\iFT_n g\!>\!+\!
<\!\FT_n f,[1\!-\!\FT_n\Phi_{\frac{1}{t}}(\xi)]\!\cdot\!\iFT_n g\!>\nonumber\\
&=<\!\FT_n f,\FT_n\Phi_{\frac{1}{t}}(\xi)\!\cdot\!\iFT_n g\!>
=<\!\FT_n\Phi_{\frac{1}{t}}(\xi)\!\cdot\!\FT_n f,\iFT_n g\!>\nonumber\\
&=<\!\iFT_n[\FT_n\Phi_{\frac{1}{t}}(\xi)\!\cdot\!\FT_n f],g\!>
=<\!f*\Phi_{\frac{1}{t}},g\!>,\label{eq2.307}
\end{align}
where we use the support condition of $\FT_n f\in\Sw'(\bbbr^n)$, the fact that the function $[1\!-\!\FT_n\Phi_{\frac{1}{t}}(\xi)]\!\cdot\!\iFT_n g(\xi)\!\in\!\Sw(\bbbr^n)$ is supported in the set (\ref{eq2.306}), \cite[Definition 2.3.15, Definition 2.3.7, and Proposition 2.3.22 (12)]{14classical} in a sequence. Hence, $f=f*\Phi_{\frac{1}{t}}$ in the sense of $\Sw'(\bbbr^n)$ and $$\functrep(f)=\functrep(f*\Phi_{\frac{1}{t}}).$$
Since $F(x)\in\functrep(f)\cap\Lloc$ and the mapping $y\in\bbbr^n\mapsto\Phi_{\frac{1}{t}}(x-y)$ is a function in $\Sw(\bbbr^n)$, \cite[Theorem 2.3.20]{14classical} yields that $f\!*\!\Phi_{\frac{1}{t}}\!\in\!\Sw'(\bbbr^n)$ has the smooth function representative $(\ref{eq2.308})\!\in\!\functrep(f\!*\!\Phi_{\frac{1}{t}})\!\cap\!\Lloc$,
\begin{equation}\label{eq2.308}
x\!\in\!\bbbr^n\!\longmapsto<\!f,\Phi_{\frac{1}{t}}(x-\cdot)\!>=\!\!\!\int_{\bbbr^n}\!\!\!\!\!\!
F(y)\!\cdot\!\Phi_{\frac{1}{t}}(x\!-\!y)dy\!=\!F\!*\!\Phi_{\frac{1}{t}}(x).
\end{equation}
Furthermore, for every multi-index $\alpha$, there exist positive finite constants $C_{\alpha}$ and $K_{\alpha}$ such that
\begin{equation}\label{eq2.309}
|\partial^{\alpha}(f\!*\!\Phi_{\frac{1}{t}})(x)|\!=\!
|\partial^{\alpha}(F\!*\!\Phi_{\frac{1}{t}})(x)|\!\leq\!C_{\alpha}\!\cdot\!(1\!+\!|x|)^{K_{\alpha}}.
\end{equation}
Lemma \ref{lemma15}, the continuity of $F(x)$, and the smoothness of $F\!*\!\Phi_{\frac{1}{t}}(x)$ imply
\begin{equation}\label{eq2.310}
F(x)=F*\Phi_{\frac{1}{t}}(x)\text{ for all $x\in\bbbr^n$},
\end{equation}
thus the continuous function $F(x)$ is smooth on all of $\bbbr^n$, and we deduce from (\ref{eq2.309}) and (\ref{eq2.310}) that for every multi-index $\alpha$ and every $x\!\in\!\bbbr^n$,
\begin{equation}\label{eq2.311}
|\partial^{\alpha}F(x)|\!\leq\!C_{\alpha}\!\cdot\!(1\!+\!|x|)^{K_{\alpha}}.
\end{equation}
Moreover, we use \cite[Definition 2.3.6]{14classical}, integration by parts, and estimate (\ref{eq2.311}) to obtain for all $g\in\Sw(\bbbr^n)$ and all multi-indices $\alpha$,
\begin{align}
&<\partial^{\alpha}f,g>=(-1)^{|\alpha|}<f,\partial^{\alpha}g>\nonumber\\
&=(-1)^{|\alpha|}\int_{\bbbr^n}F(x)\cdot\partial^{\alpha}g(x)dx
=\int_{\bbbr^n}\partial^{\alpha}F(x)\cdot g(x)dx,\label{eq2.312}
\end{align}
therefore the smooth function $\partial^{\alpha}F(x)\in\functrep(\partial^{\alpha}f)\cap\Lloc$. And \cite[Definition 2.3.16 and Proposition 2.3.22 (8)]{14classical} indicate the distributional Fourier transform $\FT_n(\partial^{\alpha}f)\!\in\!\Sw'(\bbbr^n)$ is compactly supported in the ball $B^n(0,t)$, hence the Peetre-Fefferman-Stein maximal function $\PFSmax_n(\partial^{\alpha}f)(x)$ is well-defined by the following expression,
\begin{equation}\label{eq2.313}
\PFSmax_n(\partial^{\alpha}f)(x)=\esssup_{z\in\bbbr^n}
\frac{|\partial^{\alpha}F(x-z)|}{(1+t|z|)^{\frac{n}{r}}}.
\end{equation}
Analogous to the deduction in (\ref{eq2.307}), we have
\begin{equation}\label{eq2.314}
\partial^{\alpha}f=\partial^{\alpha}f*\Phi_{\frac{1}{t}}\text{ in the sense of $\Sw'(\bbbr^n)$}.
\end{equation}
We invoke \cite[Theorem 2.3.20 and Definition 2.3.6]{14classical} and the fact that both of the functions (\ref{eq2.315}) and (\ref{eq2.316}) belong to $\Sw(\bbbr^n)$ for all $x\in\bbbr^n$ and $0<t<\infty$,
\begin{align}
&y\!\in\!\bbbr^n\!\longmapsto\!\Phi_{\frac{1}{t}}(x\!-\!y)\!=\!t^n\!\cdot\!\Phi(t(x\!-\!y)),
\label{eq2.315}\\
&y\!\in\!\bbbr^n\!\longmapsto\!t^{|\alpha|}\!\cdot\!(\partial^{\alpha}\Phi)_{\frac{1}{t}}(x\!-\!y)
\!=\!t^{n+|\alpha|}\!\cdot\!(\partial^{\alpha}\Phi)(t(x\!-\!y)),\label{eq2.316}
\end{align}
then we conclude the smooth function (\ref{eq2.317}) is in $\functrep(\partial^{\alpha}f\!*\!\Phi_{\frac{1}{t}})\!=\!\functrep(\partial^{\alpha}f)$,
\begin{align}
&x\!\in\!\bbbr^n\!\longmapsto<\!\partial^{\alpha}f,\Phi_{\frac{1}{t}}(x\!-\!\cdot)\!>
=\!(-1)^{|\alpha|}\!<\!f,\partial^{\alpha}[\Phi_{\frac{1}{t}}(x\!-\!\cdot)]\!>\nonumber\\
&=<\!f,t^{|\alpha|}\!\cdot\!(\partial^{\alpha}\Phi)_{\frac{1}{t}}(x\!-\!\cdot)\!>=\!\!\!
\int_{\bbbr^n}\!\!\!\!\!\!F(y)\!\cdot\!t^{|\alpha|}\!\cdot\!
(\partial^{\alpha}\Phi)_{\frac{1}{t}}(x\!-\!y)dy\nonumber\\
&=t^{|\alpha|}\!\cdot\!F\!*\!(\partial^{\alpha}\Phi)_{\frac{1}{t}}(x)\in\Lloc.\label{eq2.317}
\end{align}
Therefore Lemma \ref{lemma15} and the smoothness of functions $\partial^{\alpha}F(x)$ and (\ref{eq2.317}) imply for all $x\!\in\!\bbbr^n$, $t\!\in\!(0,\infty)$, and all multi-indices $\alpha$,
\begin{equation}\label{eq2.318}
\partial^{\alpha}F(x)=t^{|\alpha|}\!\cdot\!F\!*\!(\partial^{\alpha}\Phi)_{\frac{1}{t}}(x).
\end{equation}
Now we can write \cite[Lemma 2.6]{Wang2023}, i.e. \cite[Lemma 2.2.3]{14modern}, equivalently in the language of the theory of functional and function representatives as that if $0<r,t<\infty$ and $f\in\Sw'(\bbbr^n)$ has a continuous function representative $F(x)\in\functrep(f)$, and if the distributional Fourier transform $\FT_n f$ is compactly supported in the ball $B^n(0,t)\subseteq\bbbr^n$, then we have
\begin{align}
\PFSmax_n(\partial^{\alpha}f)(x)&\lesssim t\cdot\PFSmax_n f(x),\label{eq2.319}\\
\PFSmax_n f(x)&\lesssim\HLmax_n(|f|^r)(x)^{\frac{1}{r}},\label{eq2.320}
\end{align}
where $\PFSmax_n(\partial^{\alpha}f)(x)$ and $\PFSmax_n f(x)$ are given by (\ref{eq2.313}) and (\ref{eq2.301}) respectively, and $\alpha$ is a multi-index with $|\alpha|=1$, and $\HLmax_n$ is the Hardy-Littlewood maximal operator, and the implicit constants in (\ref{eq2.319}) and (\ref{eq2.320}) depend only on $n,r$ and do not depend on $t$. The proof of (\ref{eq2.319}) and (\ref{eq2.320}) is similar to \cite[the proof of Lemma 2.2.3]{14modern} where the changes can be seen by noticing that equation (\ref{eq2.310}) is the counterpart of \cite[the first equation in the proof of Lemma 2.2.3]{14modern}, and equation (\ref{eq2.318}) with $|\alpha|=1$ can be used to deduce the counterpart of \cite[the second inequality in the proof of Lemma 2.2.3]{14modern}. Since equation (\ref{eq2.310}) tells us that the continuity of the function representative $F(x)\in\functrep(f)\cap\Lloc$ implies its smoothness, \cite[Remark 2.7]{Wang2023} is true and \cite[Remark 2.8]{Wang2023} can be obtained by iterating (\ref{eq2.319}) repeatedly, and we infer from (\ref{eq2.312}) and (\ref{eq2.313}) that
\begin{equation}\label{eq2.321}
\PFSmax_n(\partial^{\alpha}f)(x)\lesssim t^{|\alpha|}\cdot\PFSmax_n f(x)\text{ for all multi-indices $\alpha$},
\end{equation}
where the implicit constant in (\ref{eq2.321}) does not depend on $t$. Furthermore, the Plancherel-Polya-Nikol'skij inequality, i.e. \cite[Lemma 2.11]{Wang2023}, can be rewritten as follows,
\begin{equation}\label{eq2.322}
\|\partial^{\alpha}f\|_{L^q(\bbbr^n)}\lesssim t^{|\alpha|+n(\frac{1}{p}-\frac{1}{q})}\cdot
\|f\|_{L^p(\bbbr^n)}
\end{equation}
whenever $0<t<\infty$, $0<p\leq q\leq\infty$, $\alpha$ is a multi-index, and $f\in\Sw'(\bbbr^n)$ has a continuous function representative $F(x)\in\functrep(f)$, and the distributional Fourier transform $\FT_n f$ is compactly supported in the ball $B^n(0,t)$. The implicit constant in (\ref{eq2.322}) depends on $\alpha,n,p,q$ and does not depend on $t$. The proof of (\ref{eq2.322}) is very much alike to \cite[the proof of Lemma 2.11]{Wang2023}, in which we use (\ref{eq2.310}), (\ref{eq2.321}), and (\ref{eq2.320}). For every $j\in\bbbz$, the function $\psi_{2^{-j}}(x)=2^{jn}\psi(2^j x)$ is in $\Sw_{0}(\bbbr^n)$, where $\psi\in\Sw_{0}(\bbbr^n)$ is the fixed function satisfying conditions (\ref{eq1-7}), (\ref{eq1-8}), and (\ref{eq1.82}). Then for every $f\in\Sw_{0}'(\bbbr^n)$, Proposition \ref{proposition1} (iv) tells us that $f*\psi_{2^{-j}}\in\Sw'(\bbbr^n)$ has the smooth function representative $(\ref{eq2.323})\in\functrep(f*\psi_{2^{-j}})$, 
\begin{equation}\label{eq2.323}
F_j:x\in\bbbr^n\longmapsto<f,\psi_{2^{-j}}(x-\cdot)>.
\end{equation}
When $g\in\Sw(\bbbr^n)$ has 
$$spt.g\subseteq\{\xi\in\bbbr^n:|\xi|<2^{j-1}\text{ or }|\xi|\geq2^{j+1}\},$$
we use formula (\ref{eq1.126}) and \cite[Definition 2.3.15]{14classical} to deduce that
\begin{align}
&<\!\FT_n(f\!*\!\psi_{2^{-j}}),g\!>=<\!\FT_n f\!\cdot\!\FT_n\psi(2^{-j}\xi),g\!>\nonumber\\
&=<\!\FT_n f,\FT_n\psi(2^{-j}\xi)\!\cdot\!g\!>=<\!\FT_n f,0\!>=\!0,\label{eq2.325}
\end{align}
thus the distributional Fourier transform $\FT_n(f*\psi_{2^{-j}})$ is compactly supported in the annulus 
$$\{\xi\in\bbbr^n:2^{j-1}\leq|\xi|<2^{j+1}\}.$$
Therefore the Peetre-Fefferman-Stein maximal function $\PFSmax_n(f*\psi_{2^{-j}})(x)$ is well-defined for every $j\in\bbbz$ by the following expression,
\begin{equation}\label{eq2.324}
\PFSmax_n(f*\psi_{2^{-j}})(x)=\esssup_{z\in\bbbr^n}
\frac{|F_j(x-z)|}{(1+2^{j+1}|z|)^{\frac{n}{r}}}.
\end{equation}
By choosing $0<r<\min\{p,q\}$, we can use (\ref{eq2.320}) and \cite[Theorem 5.6.6]{14classical} to prove that \cite[Remark 2.13]{Wang2023} holds true for $s\in\bbbr$ and $0<p,q<\infty$.
\end{remark}
We provide some supplementary discussion below about Hardy space $H^p(\bbbr^n)$ and rephrase \cite[Theorem 2.1.2]{14modern} in the language of the theory of functional and function representatives.
\begin{definition}\label{definition4}
We fix a function $\Phi\in\Sw(\bbbr^n)$ with $\int_{\bbbr^n}\Phi(x)dx=1$. For $0<p<\infty$, we define the Hardy space $H^p(\bbbr^n)$ is a subspace of $\Sw_{0}'(\bbbr^n)$, and an element $f\in\Sw_{0}'(\bbbr^n)$ is in $H^p(\bbbr^n)$ if and only if the following quasinorm is finite,
\begin{equation}\label{eq2.326}
\|f\|_{H^p(\bbbr^n)}:=\|\sup_{t>0}|f*\Phi_t|\|_{L^p(\bbbr^n)},
\end{equation}
where for every $t\in(0,\infty)$, the convolution $f*\Phi_t\in\Sw_0'(\bbbr^n)$ has a smooth function representative (\ref{eq2.334}) in the sense of $\Sw_0'(\bbbr^n)$ provided by Proposition \ref{proposition1} (ii),
\begin{equation}\label{eq2.334}
F_t:x\in\bbbr^n\longmapsto<f,\Phi_t(x-\cdot)>,
\end{equation}
and $f\!\in\!\Sw'(\bbbr^n)$ appearing in (\ref{eq2.334}) is the extended version of $f\!\in\!\Sw_{0}'(\bbbr^n)$.
\end{definition}
\begin{lemma}\label{lemma17}
(i) If $1<p<\infty$, then every $f\in H^p(\bbbr^n)$ has a function representative $F(x)\in\functrep_{0}(f)\cap L^p(\bbbr^n)$ and we have
\begin{equation}\label{eq2.327}
\|f\|_{L^p(\bbbr^n)}\sim\|f\|_{H^p(\bbbr^n)}=\|\sup_{t>0}|F*\Phi_t|\|_{L^p(\bbbr^n)},
\end{equation}
where the implicit constant depends on $n,p$.\\
(ii) If $p=1$, then every $f\in H^1(\bbbr^n)$ has a function representative $F(x)\in\functrep_{0}(f)\cap L^1(\bbbr^n)$ and we have
\begin{equation}\label{eq2.328}
\|f\|_{L^1(\bbbr^n)}\leq\|f\|_{H^1(\bbbr^n)}=\|\sup_{t>0}|F*\Phi_t|\|_{L^1(\bbbr^n)}.
\end{equation}
\end{lemma}
\begin{proof}[Proof of Lemma \ref{lemma17}]
Step 1: We prove Lemma \ref{lemma17} (i). We observe that the function $\Phi\in\Sw(\bbbr^n)$ fixed in Definition \ref{definition4} is not in $\Sw_{0}(\bbbr^n)$. By Proposition \ref{proposition1} (ii) equation (\ref{eq1.124}), we have $f*\Phi_t\in\Sw_{0}'(\bbbr^n)$, and
\begin{equation}\label{eq2.329}
<f,\phi>=\lim_{t\rightarrow0}<f*\Phi_t,\phi>\text{ for all $\phi\in\Sw_0(\bbbr^n)$}.
\end{equation}
And (\ref{eq1.123}) implies for each $t\in(0,\infty)$, $f*\Phi_t\in\Sw_0'(\bbbr^n)$ has the smooth function representative $(\ref{eq2.330})\in\functrep_0(f*\Phi_t)$,
\begin{equation}\label{eq2.330}
F_t:x\in\bbbr^n\longmapsto<f,\Phi_t(x-\cdot)>,
\end{equation}
where $f\in\Sw'(\bbbr^n)$ appearing in (\ref{eq2.330}) is the extended version of the original continuous linear functional $f\in\Sw_{0}'(\bbbr^n)$. Then the condition $f\in H^p(\bbbr^n)$ implies
\begin{align}
&\sup_{t>0}\|F_t\|_{L^p(\bbbr^n)}=\sup_{t>0}\|f*\Phi_t\|_{L^p(\bbbr^n)}\nonumber\\
&\leq\|\sup_{t>0}|f*\Phi_t|\|_{L^p(\bbbr^n)}=\|f\|_{H^p(\bbbr^n)}<\infty.\label{eq2.331}
\end{align}
Since $L^p(\bbbr^n)$ is the dual space of $L^{p'}(\bbbr^n)$ for $1<p<\infty$ and $\frac{1}{p}+\frac{1}{p'}=1$, we use the Riesz representation theorem and the Banach-Alaoglu theorem to deduce that there exists a function $F(x)\in L^p(\bbbr^n)$ and a sequence $\{t_k\}_{k\in\bbbn}$ of positive real numbers so that
\begin{equation}
0<t_{k+1}<t_k<\infty,\quad\lim_{k\rightarrow\infty}t_k=0,
\end{equation}
\begin{equation}\label{eq2.332}
\int_{\bbbr^n}\!\!\!\!\!\!F(x)\!\cdot\!g(x)dx\!=\!\!\lim_{k\rightarrow\infty}\!
\int_{\bbbr^n}\!\!\!\!\!\!F_{t_k}(x)\!\cdot\!g(x)dx
\text{ for all $g\!\in\!L^{p'}\!(\bbbr^n)$}.
\end{equation}
Combining (\ref{eq2.329}), (\ref{eq2.330}), and (\ref{eq2.332}) yields that for every $\phi\in\Sw_{0}(\bbbr^n)\subseteq\Sw(\bbbr^n)\subseteq L^{p'}\!(\bbbr^n)$, we have
\begin{align}
&<f,\phi>=\lim_{k\rightarrow\infty}<f*\Phi_{t_k},\phi>\nonumber\\
&=\!\lim_{k\rightarrow\infty}\!\int_{\bbbr^n}\!\!\!F_{t_k}(x)\!\cdot\!\phi(x)dx
\!=\!\!\int_{\bbbr^n}\!\!\!F(x)\!\cdot\!\phi(x)dx,\label{eq2.333}
\end{align}
therefore we have proven the function $F(x)\in\functrep_{0}(f)\cap L^p(\bbbr^n)$. Furthermore, we use (\ref{eq2.332}) and (\ref{eq2.331}) to obtain
\begin{align}
&\|f\|_{L^p(\bbbr^n)}=\|F\|_{L^p(\bbbr^n)}=\sup_{\|g\|_{L^{p'}\!(\bbbr^n)}\leq1}
\big|\int_{\bbbr^n}\!\!\!\!\!\!F(x)\!\cdot\!g(x)dx\big|\nonumber\\
&=\sup_{\|g\|_{L^{p'}\!(\bbbr^n)}\leq1}\lim_{k\rightarrow\infty}
\big|\int_{\bbbr^n}\!\!\!\!\!\!F_{t_k}(x)\!\cdot\!g(x)dx\big|\nonumber\\
&\leq\sup_{\|g\|_{L^{p'}\!(\bbbr^n)}\leq1}\lim_{k\rightarrow\infty}\|F_{t_k}\|_{L^p(\bbbr^n)}\cdot
\|g\|_{L^{p'}\!(\bbbr^n)}\nonumber\\
&\leq\sup_{t>0}\|F_t\|_{L^p(\bbbr^n)}\leq\|f\|_{H^p(\bbbr^n)}.\label{eq2.335}
\end{align}
Let $\tilde{\Phi}(x)=\Phi(-x)$, and we have
\begin{equation}\label{eq2.336}
\tilde{\Phi}_t*\phi(x)=\iFT_n[\FT_n\Phi(-t\xi)\cdot\FT_n\phi(\xi)](x)\in\Sw_{0}(\bbbr^n)
\end{equation}
whenever $\phi\in\Sw_{0}(\bbbr^n)$ and $t\in(0,\infty)$. We use Proposition \ref{proposition1} (ii) equation (\ref{eq1.124}) and Fubini's theorem to justify the exchange of the order of integration and obtain
\begin{align}
&<f*\Phi_t,\phi>=<f,\tilde{\Phi}_t*\phi>=\!\!\int_{\bbbr^n}\!\!\!\!F(x)\cdot
\tilde{\Phi}_t*\phi(x)dx\nonumber\\
&=\int_{\bbbr^n}F*\Phi_t(x)\cdot\phi(x)dx\text{ for all $\phi\in\Sw_{0}(\bbbr^n)$}.\label{eq2.337}
\end{align}
Since $F(x)\in\functrep_{0}(f)\cap L^p(\bbbr^n)$ and $\Phi_t(x)=t^{-n}\Phi(\frac{x}{t})\in\Sw(\bbbr^n)$, equation (\ref{eq2.337}) indicates the continuous function $F*\Phi_t(x)\in\functrep_{0}(f*\Phi_t)\cap L^p(\bbbr^n)$ for every $t\in(0,\infty)$. We also deduce from (\ref{eq2.330}) and (\ref{eq2.331}) that the smooth function $F_t(x)\in\functrep_{0}(f*\Phi_t)\cap L^p(\bbbr^n)$ for every $t\in(0,\infty)$. Hence the difference function
\begin{equation}\label{eq2.338}
F*\Phi_t(x)-F_t(x)
\end{equation}
is continuous, belongs to $L^p(\bbbr^n)$, and satisfies all the conditions of Proposition \ref{proposition2}. By Proposition \ref{proposition2} and the continuity of (\ref{eq2.338}), the function (\ref{eq2.338}) equals a continuous polynomial function for all $x\in\bbbr^n$, and the polynomial function is also in $L^p(\bbbr^n)$ and thus is identically zero. We have obtained for all $x\in\bbbr^n$ and $t\in(0,\infty)$,
\begin{equation}\label{eq2.339}
F*\Phi_t(x)=F_t(x),
\end{equation}
and the continuous function representative $F*\Phi_t(x)$ is identical to the function representative $F_t(x)$ specified by Definition \ref{definition4} (\ref{eq2.334}) and is a smooth function on $\bbbr^n$ for each $t\in(0,\infty)$. Furthermore, we have
\begin{align}
&\|f\|_{H^p(\bbbr^n)}=\|\sup_{t>0}|f*\Phi_t|\|_{L^p(\bbbr^n)}\nonumber\\
&=\|\sup_{t>0}|F_t|\|_{L^p(\bbbr^n)}=\|\sup_{t>0}|F*\Phi_t|\|_{L^p(\bbbr^n)}.\label{eq2.340}
\end{align}
The function $\Phi\in\Sw(\bbbr^n)$ has an integrable and radially decreasing majorant and the function $F(x)\in L^p(\bbbr^n)$ is in $\Lloc$, thus \cite[Corollary 2.1.12 and Theorem 2.1.6]{14classical} imply
\begin{equation}\label{eq2.341}
\|f\|_{H^p(\bbbr^n)}\!=\!\|\sup_{t>0}|F\!*\!\Phi_t|\|_{L^p(\bbbr^n)}\!\lesssim\!\|F\|_{L^p(\bbbr^n)}
\!=\!\|f\|_{L^p(\bbbr^n)},
\end{equation}
where the implicit constant in (\ref{eq2.341}) depends on $n,p$.\\

Step 2: We prove Lemma \ref{lemma17} (ii). Conclusions (\ref{eq2.329}) and (\ref{eq2.330}) are still true. Then the condition $f\in H^1(\bbbr^n)$ implies
\begin{align}
&\sup_{t>0}\|F_t\|_{L^1(\bbbr^n)}=\sup_{t>0}\|f*\Phi_t\|_{L^1(\bbbr^n)}\nonumber\\
&\leq\|\sup_{t>0}|f*\Phi_t|\|_{L^1(\bbbr^n)}=\|f\|_{H^1(\bbbr^n)}<\infty.\label{eq2.342}
\end{align}
We follow the argument given for the proof of \cite[Theorem 2.1.2]{14modern}. We denote $C_0(\bbbr^n)$ is the space of continuous functions $g$ defined on $\bbbr^n$ satisfying the condition 
\begin{equation}\label{eq2.369}
\lim_{|x|\rightarrow\infty}g(x)=0.
\end{equation}
The space $C_0(\bbbr^n)$ is equipped with the $\|\cdot\|_{L^{\infty}(\bbbr^n)}$-norm. We also denote $\borel(\bbbr^n)$ is the Borel $\sigma$-algebra on $\bbbr^n$. According to \cite[6.13 Theorem and 6.19 Theorem]{Rudin.Real.Complex.Anal}, every function $F_t(x)\in L^1(\bbbr^n)$ defines a regular complex Borel measure $\lambda_t$, which is given by the following equation,
\begin{equation}\label{eq2.343}
\lambda_t(E)\!\!=\!\!\!\int_{\bbbr^n}\!\!\!\chi_E(x)d\lambda_t(x)\!\!=\!\!\!\int_{\bbbr^n}\!\!\!
\chi_E(x)\!\cdot\!F_t(x)dx\!\!=\!\!\!\int_{E}\!\!\!F_t(x)dx
\end{equation}
for every $E\!\in\!\borel(\bbbr^n)$, and its variation is given by  
\begin{equation}\label{eq2.344}
|\lambda_t|(E)=\int_{E}|F_t(x)|dx,
\end{equation}
thus the total variation is $|\lambda_t|(\bbbr^n)=\|F_t\|_{L^1(\bbbr^n)}$. Furthermore, each $\lambda_t$ is a continuous linear functional defined on $C_0(\bbbr^n)$ by the following equation,
\begin{equation}\label{eq2.345}
<\lambda_t,g>=\!\!\!\int_{\bbbr^n}\!\!\!g(x)d\lambda_t(x)\!\!=\!\!\!\int_{\bbbr^n}\!\!\!
g(x)\!\cdot\!F_t(x)dx
\end{equation}
for every $g\!\in\!C_0(\bbbr^n)$. We deduce from (\ref{eq2.345}) and (\ref{eq2.342}) that for all $g\!\in\!C_0(\bbbr^n)$ with $\|g\|_{L^{\infty}(\bbbr^n)}\leq1$,
\begin{equation}\label{eq2.346}
\sup_{t>0}|\!<\!\lambda_t,g\!>\!|\!\leq\!\sup_{t>0}\|F_t\|_{L^1(\bbbr^n)}\!\cdot\!
\|g\|_{L^{\infty}(\bbbr^n)}\!\leq\!\|f\|_{H^1(\bbbr^n)}.
\end{equation}
By the Banach-Alaoglu theorem, i.e. \cite[3.17 Theorem]{Rudin.Funct.Anal}, we can find a sequence $\{t_k\}_{k\in\bbbn}$ of positive real numbers and a regular complex Borel measure $\lambda$, i.e. a complex Borel measure $\lambda$ defined on $\borel(\bbbr^n)$ whose variation $|\lambda|$ is a finite regular Borel measure, so that
\begin{equation}
0<t_{k+1}<t_k<\infty,\quad\lim_{k\rightarrow\infty}t_k=0,
\end{equation}
\begin{align}
&\int_{\bbbr^n}\!\!\!g(x)d\lambda(x)\!=<\!\lambda,g\!>=\!\!\lim_{k\rightarrow\infty}\!\!
<\!\lambda_{t_k},g\!>\nonumber\\
&=\!\!\lim_{k\rightarrow\infty}\!\int_{\bbbr^n}\!\!\!g(x)\!\cdot\!F_{t_k}(x)dx
\text{ for all $g\!\in\!C_0(\bbbr^n)$.}\label{eq2.347}
\end{align}
Since $f\!\in\!H^1(\bbbr^n)\!\subseteq\!\Sw_{0}'(\bbbr^n)$ and $\phi\!\in\!\Sw_{0}(\bbbr^n)\!\subseteq\!\Sw(\bbbr^n)\!\subseteq\!C_0(\bbbr^n)$, conclusions (\ref{eq2.329}), (\ref{eq2.330}), and (\ref{eq2.347}) imply
\begin{align}
&<f,\phi>=\lim_{k\rightarrow\infty}<f*\Phi_{t_k},\phi>=
\!\!\lim_{k\rightarrow\infty}\!\int_{\bbbr^n}\!\!\!F_{t_k}(x)\!\cdot\!\phi(x)dx\nonumber\\
&=\int_{\bbbr^n}\phi(x)d\lambda(x)=<\lambda,\phi>,\label{eq2.348}
\end{align}
and equation (\ref{eq2.348}) shows the continuous linear functional $f$ can be identified with the regular complex Borel measure $\lambda$ in the sense of $\Sw_{0}'(\bbbr^n)$. Now we prove that $\lambda$ is absolutely continuous with respect to the Lebesgue measure in $\bbbr^n$. Given a set $E\in\borel(\bbbr^n)$ with $\Lebes^n(E)=0$ and a number $\varepsilon>0$, we deduce from (\ref{eq2.342}) that there exists a number $\delta>0$ and an open set $U$ with $E\subseteq U\subseteq\bbbr^n$ so that
\begin{equation}\label{eq2.349}
\Lebes^n(U)<\delta\quad\text{and}\quad\int_U\sup_{t>0}|F_t(x)|dx<\varepsilon.
\end{equation}
We denote
\begin{equation}\label{eq2.350}
\begin{split}
&G(U)\!:=\!\{g\!\in\!C_0(\bbbr^n)\!:\!\text{$g$ has a compact support set}\\
&\text{$spt.g\!\subseteq\!U$ and $\|g\|_{L^{\infty}(\bbbr^n)}\!\leq\!1$}\},
\end{split}
\end{equation}
then we use (\ref{eq2.347}) to estimate the variation $|\lambda|(U)$ as below,
\begin{align}
&|\lambda|(U)=\sup_{g\in G(U)}\big|\int_{\bbbr^n}g(x)d\lambda(x)\big|\nonumber\\
&=\sup_{g\in G(U)}\lim_{k\rightarrow\infty}\big|\int_U g(x)\!\cdot\!F_{t_k}(x)dx\big|\nonumber\\
&\leq\|g\|_{L^{\infty}(\bbbr^n)}\cdot\int_U\sup_{t>0}|F_t(x)|dx<\varepsilon,\label{eq2.351}
\end{align}
and we postpone the proof of the equation in the first line of (\ref{eq2.351}) to Step 3. The arbitrariness of the number $\varepsilon>0$ implies $|\lambda|(E)=0$ and $\lambda(E)=0$, and hence the absolute continuity of $\lambda$ with respect to the Lebesgue measure in $\bbbr^n$. By the Lebesgue-Radon-Nikodym theorem, i.e. \cite[6.10 Theorem]{Rudin.Real.Complex.Anal}, there is a unique function $F(x)\in L^1(\bbbr^n)$ so that for every $E\in\borel(\bbbr^n)$,
\begin{equation}\label{eq2.352}
\lambda(E)\!=\!\!\int_{\bbbr^n}\!\!\!\chi_E(x)d\lambda(x)\!=\!\!\int_{\bbbr^n}\!\!\!\chi_E(x)
\!\cdot\!F(x)dx\!=\!\!\int_E F(x)dx.
\end{equation}
Furthermore, we have for all $g\in C_0(\bbbr^n)$,
\begin{equation}\label{eq2.353}
<\!\lambda,g\!>=\!\!\int_{\bbbr^n}g(x)d\lambda(x)\!=\!\!\int_{\bbbr^n}g(x)\!\cdot\!F(x)dx.
\end{equation}
Combining (\ref{eq2.348}) and (\ref{eq2.353}) yields that for all $\phi\in\Sw_{0}(\bbbr^n)$,
\begin{equation}\label{eq2.354}
<f,\phi>=<\lambda,\phi>=\!\!\int_{\bbbr^n}F(x)\!\cdot\!\phi(x)dx,
\end{equation}
therefore $F(x)\in\functrep_{0}(f)\cap L^1(\bbbr^n)$. Moreover, we deduce from \cite[6.13 Theorem and 6.19 Theorem]{Rudin.Real.Complex.Anal}, (\ref{eq2.352}), (\ref{eq2.353}), (\ref{eq2.347}), and (\ref{eq2.342}) that
\begin{align}
&\|f\|_{L^1(\bbbr^n)}\!=\!\|F\|_{L^1(\bbbr^n)}\!=\!|\lambda|(\bbbr^n)\!=\!\!\sup_{g\in G(\bbbr^n)}
\!\big|\!\!\int_{\bbbr^n}\!\!\!g(x)d\lambda(x)\big|\nonumber\\
&=\sup_{g\in G(\bbbr^n)}\lim_{k\rightarrow\infty}\big|\int_{\bbbr^n}g(x)\cdot F_{t_k}(x)dx\big|
\nonumber\\
&\leq\sup_{g\in G(\bbbr^n)}\lim_{k\rightarrow\infty}\int_{\bbbr^n}\!\!\!|F_{t_k}(x)|dx\!\cdot\!
\|g\|_{L^{\infty}(\bbbr^n)}\!\leq\!\|f\|_{H^1(\bbbr^n)},\label{eq2.355}
\end{align}
where $G(\bbbr^n)$ is the set defined in (\ref{eq2.350}) with $\bbbr^n$ in place of $U$. In addition, for all $\phi\in\Sw_{0}(\bbbr^n)$ and $t\in(0,\infty)$, we recall (\ref{eq2.336}) and use Proposition \ref{proposition1} (ii) equation (\ref{eq1.124}), the condition $F(x)\in\functrep_{0}(f)\cap L^1(\bbbr^n)$, and Fubini's theorem to justify the exchange of the order of integration, then we still have the equation (\ref{eq2.337}), thus the continuous function $F*\Phi_t(x)\in\functrep_{0}(f*\Phi_t)\cap L^1(\bbbr^n)$ for every $t\in(0,\infty)$. Conclusions (\ref{eq2.330}) and (\ref{eq2.342}) indicate the smooth function $F_t(x)\in\functrep_{0}(f*\Phi_t)\cap L^1(\bbbr^n)$ for every $t\in(0,\infty)$. Therefore the difference function (\ref{eq2.338}) is continuous, belongs to $L^1(\bbbr^n)$, and satisfies all the conditions of Proposition \ref{proposition2}. By Proposition \ref{proposition2} and the continuity of (\ref{eq2.338}), the function (\ref{eq2.338}) equals a continuous polynomial function for all $x\in\bbbr^n$, and the polynomial function is also in $L^1(\bbbr^n)$ and thus is identically zero. We have obtained the equation (\ref{eq2.339}) for all $x\in\bbbr^n$ and $t\in(0,\infty)$, and the continuous function representative $F*\Phi_t(x)$ is identical to the function representative $F_t(x)$ specified by Definition \ref{definition4} (\ref{eq2.334}) and is a smooth function on $\bbbr^n$ for each $t\in(0,\infty)$. Furthermore, we have
\begin{align}
&\|f\|_{H^1(\bbbr^n)}=\|\sup_{t>0}|f*\Phi_t|\|_{L^1(\bbbr^n)}\nonumber\\
&=\|\sup_{t>0}|F_t|\|_{L^1(\bbbr^n)}=\|\sup_{t>0}|F*\Phi_t|\|_{L^1(\bbbr^n)}.\label{eq2.356}
\end{align}\\

Step 3: In this step, we prove the equation in the first line of (\ref{eq2.351}). We prove that if $\lambda$ is a regular complex Borel measure defined on $\borel(\bbbr^n)$ and $U$ is an open set in $\bbbr^n$, then
\begin{equation}\label{eq2.357}
|\lambda|(U)=\sup_{g\in G(U)}\big|\int_{\bbbr^n}g(x)d\lambda(x)\big|,
\end{equation}
where $|\lambda|$ is the variation of $\lambda$ and $G(U)$ is the set defined in (\ref{eq2.350}). We denote $\tau$ is the topology on $\bbbr^n$ induced by the Euclidean norm, and $\tau_U$ is the topology on $U$ inherited from $\tau$, thus an open set in $\tau_U$ is the intersection of an open set in $\tau$ and the given open set $U\subseteq\bbbr^n$. We denote $\borel(\bbbr^n)$ is the $\sigma$-algebra of Borel sets in $\bbbr^n$ and $\borel(U)$ is the $\sigma$-algebra of Borel sets in $U$, then $\tau\subseteq\borel(\bbbr^n)$ and $\tau_U\subseteq\borel(U)$. Since both $(\bbbr^n,\tau)$ and $(U,\tau_U)$ are Hausdorff spaces whose open sets separate points, every compact set $K$ in either $(\bbbr^n,\tau)$ or $(U,\tau_U)$ is a closed set and hence is a Borel set. And we have for every Borel set $V\in\borel(U)$, there is a Borel set $W\in\borel(\bbbr^n)$ such that
\begin{equation}\label{eq2.358}
V=U\cap W.
\end{equation}
We denote $\Lambda=\lambda\big|_U$ is the restriction of the regular complex Borel measure $\lambda$ on the open set $U$, then $\Lambda$ is a complex Borel measure defined on the $\sigma$-algebra $\borel(U)$. For $V\in\borel(U)$ and $W\in\borel(\bbbr^n)$ satisfying (\ref{eq2.358}), we have
\begin{equation}\label{eq2.359}
\Lambda(V)=\lambda|_U(V)=\lambda(U\cap W).
\end{equation}
We claim that $|\Lambda|=|\lambda|\big|_U$, i.e. the variation of $\Lambda$ is the restriction of the variation of $\lambda$ on the open set $U$, and
\begin{equation}\label{eq2.360}
|\Lambda|(V)=|\lambda|(U\cap W)
\end{equation}
whenever $V\in\borel(U)$ and $W\in\borel(\bbbr^n)$ are related by (\ref{eq2.358}), therefore $|\Lambda|$ is a regular Borel measure defined on the $\sigma$-algebra $\borel(U)$ and $\Lambda$ is a regular complex Borel measure. If $V=\cup_{j\in\bbbn}V_j$ is a disjoint partition of $V\in\borel(U)$ into sets $\{V_j\}_{j\in\bbbn}\subseteq\borel(U)$, then we can write $V_j=U\cap A_j$ for some $A_j\in\borel(\bbbr^n)$ for every $j\in\bbbn$. Because $V=U\cap W$ for $W\in\borel(\bbbr^n)$, we obtain
\begin{equation*}
U\cap W=\bigcup_{j\in\bbbn}(U\cap A_j)
\end{equation*}
is a disjoint partition of $U\cap W\in\borel(\bbbr^n)$ into sets $\{U\cap A_j\}_{j\in\bbbn}\subseteq\borel(\bbbr^n)$, and thus
\begin{equation*}
\sum_{j=1}^{\infty}|\Lambda(V_j)|=\sum_{j=1}^{\infty}|\lambda(U\cap A_j)|\leq|\lambda|(U\cap W),
\end{equation*}
hence $|\Lambda|(V)\leq|\lambda|(U\cap W)$. If $U\cap W=\cup_{j\in\bbbn}W_j$ is a disjoint partition of $U\cap W\in\borel(\bbbr^n)$ into sets $\{W_j\}_{j\in\bbbn}\subseteq\borel(\bbbr^n)$, then
\begin{equation*}
V=U\cap W=U\cap(\bigcup_{j\in\bbbn}W_j)=\bigcup_{j\in\bbbn}(U\cap W_j)
\end{equation*}
is a disjoint partition of $V\in\borel(U)$ into sets $\{U\cap W_j\}_{j\in\bbbn}\subseteq\borel(U)$. Since $W_j\subseteq U\cap W\subset U$, we let $B_j=U\cap W_j=W_j$ for each $j\in\bbbn$, and we have
\begin{equation*}
\sum_{j=1}^{\infty}|\lambda(W_j)|=\sum_{j=1}^{\infty}|\lambda(U\cap W_j)|
=\sum_{j=1}^{\infty}|\Lambda(B_j)|\leq|\Lambda|(V),
\end{equation*}
thus $|\lambda|(U\cap W)\leq|\Lambda|(V)$, and therefore $|\Lambda|=|\lambda|\big|_U$ and equation (\ref{eq2.360}) is proven. Now we prove $|\Lambda|$ is a regular Borel measure defined on the $\sigma$-algebra $\borel(U)$. If $K$ is a compact set in $(U,\tau_U)$, then $K$ is a compact set in $(\bbbr^n,\tau)$, thus $K$ is closed in both topological spaces $(\bbbr^n,\tau)$ and $(U,\tau_U)$ and belongs to both $\borel(\bbbr^n)$ and $\borel(U)$, and the regularity of $|\lambda|$ and (\ref{eq2.360}) imply 
\begin{equation}\label{eq2.364}
|\Lambda|(K)=|\lambda|(U\cap K)=|\lambda|(K)<\infty. 
\end{equation}
Second, we prove for every $V\in\borel(U)$,
\begin{equation}\label{eq2.361}
|\Lambda|(V)=\inf\{|\Lambda|(E):V\subseteq E,E\in\tau_U\subseteq\borel(U)\}.
\end{equation}
Since the variation $|\Lambda|$ is a positive measure, we have $|\Lambda|(V)\leq|\Lambda|(E)$ whenever $V\subseteq E$ and $E\in\tau_U\subseteq\borel(U)$, thus $|\Lambda|(V)$ is dominated by the right side of equation (\ref{eq2.361}). To prove the reverse inequality, we notice that the condition $V\in\borel(U)$ implies there is $W\in\borel(\bbbr^n)$ so that both (\ref{eq2.358}) and (\ref{eq2.360}) are true. By the regularity of $|\lambda|$, we have
\begin{equation}\label{eq2.362}
|\lambda|(U\!\cap\!W)\!=\!\inf\{|\lambda|(F)\!:\!U\!\cap\!W\!\subseteq\! F,F\!\in\!\tau\!\subseteq\!\borel(\bbbr^n)\}.
\end{equation}
For every $F\!\in\!\tau\!\subseteq\!\borel(\bbbr^n)$ with $U\!\cap\!W\!\subseteq\!F$, we set $E\!=\!U\!\cap\!F$. Since $F$ is an open set in the topology $\tau$, $E$ is an open set in the topology $\tau_U$, and we also have $|\Lambda|(E)=|\lambda|(U\cap F)$ and $V=U\cap W\subseteq U\cap F=E$. Since the variation $|\lambda|$ is a positive measure, we obtain
\begin{align}
&|\lambda|(F)\geq|\lambda|(U\cap F)=|\Lambda|(E)\nonumber\\
&\geq\inf\{|\Lambda|(E):V\subseteq E,E\in\tau_U\subseteq\borel(U)\}.\label{eq2.363}
\end{align}
The arbitrariness of $F$ and the equation (\ref{eq2.360}) prove the reverse inequality, thus equation (\ref{eq2.361}) is proven. Third, we prove that if $E\in\tau_U\subseteq\borel(U)$, or if $E\in\borel(U)$ and $|\Lambda|(E)<\infty$, we have
\begin{equation}\label{eq2.365}
|\Lambda|(E)\!=\!\sup\{|\Lambda|(K)\!:\!K\!\subseteq\!E,\text{$K$ is a compact set in $(U,\tau_U)$}\}.
\end{equation}
Since the variation $|\Lambda|$ is a positive measure, we have $|\Lambda|(E)\geq|\Lambda|(K)$ whenever $K\subseteq E$, thus the right side of equation (\ref{eq2.365}) is dominated by $|\Lambda|(E)$. To prove the reverse inequality, the condition $E\in\borel(U)$ implies there is $F\in\borel(\bbbr^n)$ such that
\begin{equation}\label{eq2.366}
E=U\cap F\quad\text{and}\quad|\Lambda|(E)=|\lambda|(U\cap F).
\end{equation}
When $E\in\tau_U\subseteq\borel(U)$, we can choose $F\in\tau\subseteq\borel(\bbbr^n)$ so that (\ref{eq2.366}) holds true and $U\cap F$ is an open set in $\tau$, because both $U$ and $F$ are open sets in $\tau$. When $E\in\borel(U)$ and $|\Lambda|(E)<\infty$, we can choose $F\in\borel(\bbbr^n)$ so that (\ref{eq2.366}) holds true for $U\cap F\in\borel(\bbbr^n)$ and $|\lambda|(U\cap F)<\infty$. By the regularity of $|\lambda|$, we have
\begin{equation}\label{eq2.367}
\begin{split}
&|\lambda|(U\!\cap\!F)\!=\!\sup\{|\lambda|(K)\!:\!K\!\subseteq\!U\!\cap\!F\!=\!E,\\
&\text{and $K$ is a compact set in $(\bbbr^n,\tau)$}\}.
\end{split}
\end{equation}
Whenever $K$ is a compact set in $(\bbbr^n,\tau)$ and $K\!\subseteq\!U\!\cap\!F\!=\!E$, we have $K=U\cap K$. For an open covering of $U\cap K$ by sets in $\tau_U$, each and every covering open set is the intersection of the open set $U$ and an open set in $\tau$, and these open sets in $\tau$ form an open covering of $K$. Since $K$ is a compact set in $(\bbbr^n,\tau)$, there is a finite subcovering of $K$ by open sets in $\tau$. Hence we can obtain a finite subcovering of $U\cap K$ by open sets in $\tau_U$, and each element of this finite subcovering of $U\cap K$ is the intersection of the open set $U$ and an element of the finite subcovering of $K$ by sets in $\tau$. Therefore $K=U\cap K$ is a compact set in $(U,\tau_U)$. And we have by (\ref{eq2.360}),
\begin{align}
&|\lambda|(K)=|\lambda|(U\cap K)=|\Lambda|(K)\nonumber\\
&\leq\!\sup\{|\Lambda|(K)\!:\!K\!\subseteq\!E,\text{$K$ is a compact set in $(U,\tau_U)$}\}.
\label{eq2.368}
\end{align}
Combining (\ref{eq2.366}), (\ref{eq2.367}), and (\ref{eq2.368}) proves the reverse inequality
\begin{equation}
|\Lambda|(E)\!\leq\!\sup\{|\Lambda|(K)\!:\!K\!\subseteq\!E,\text{$K$ is a compact set in $(U,\tau_U)$}\},
\end{equation}
thus the equation (\ref{eq2.365}) is proven. Therefore the regularity of $|\Lambda|$ is proven and $\Lambda=\lambda\big|_U$ is a regular complex Borel measure defined on $\borel(U)$. The space $(U,\tau_U)$ is a locally compact Hausdorff space. We denote $C_0(U)$ is the space of continuous functions $g$ defined on $U$ with the property (\ref{eq2.369}), and $C_c(U)$ is the space of continuous functions $g$ defined on $U$ with a compact support set $spt.g\subseteq U$. The subspace $C_c(U)$ is dense in the space $C_0(U)$ with respect to the $\|\cdot\|_{L^{\infty}(U)}$-norm. According to \cite[6.19 Theorem]{Rudin.Real.Complex.Anal}, the mapping
\begin{equation}\label{eq2.370}
\iota:f\in C_0(U)\longmapsto\int_U f(x)d\Lambda(x)
\end{equation}
is a continuous linear functional defined on $C_0(U)$, and the norm of this continuous linear functional $\iota$ is the total variation of $\Lambda$. Since the open set $U$ belongs to both $\borel(U)$ and $\borel(\bbbr^n)$, equation (\ref{eq2.360}) indicates
\begin{align}
&|\lambda|(U)=|\Lambda|(U)=\sup_{\substack{g\in C_0(U)\\\|g\|_{L^{\infty}(U)}\leq1}}\big|
\int_U g(x)d\Lambda(x)\big|\nonumber\\
&=\sup_{\substack{g\in C_c(U)\\\|g\|_{L^{\infty}(U)}\leq1}}\big|\int_U g(x)d\Lambda(x)\big|
=\sup_{g\in G(U)}\big|\int_{\bbbr^n}g(x)d\Lambda(x)\big|,
\end{align}
where $G(U)$ is the set defined in (\ref{eq2.350}). The proof of Lemma \ref{lemma17} is complete.
\end{proof}

\section{Proof of Proposition \ref{proposition1}}\label{proof.of.proposition1}
\begin{proof}[Proof of Proposition \ref{proposition1}]
Step 1: Let $|\alpha|=\alpha_1+\alpha_2+\cdots+\alpha_n$ and $g\in\Sw_{00}(\bbbr^n)$. For a real number $L$, $[L]$ denotes the greatest integer that is less than or equal to $L$. Denote $\lambda(\xi)=\multi(\xi)\cdot g(\xi)$, then condition (\ref{eq1.64}) implies that we can find $0\leq L_1,L_2\in\bbbr$ so that $|\multi(\xi)|\lesssim|\xi|^{-L_1}$ if $0<|\xi|\leq1$, and $|\multi(\xi)|\lesssim|\xi|^{L_2}$ if $1<|\xi|<\infty$, thus we apply estimate (\ref{eq1.63}) with $\omega=(0,\cdots,0)=\vec{0}$ and $M=[L_1]+1$ to obtain
\begin{equation*}
0\leq\lim_{|\xi|\rightarrow0}|\lambda(\xi)|\lesssim\lim_{|\xi|\rightarrow0}|\xi|^{-L_1}\cdot|\xi|^{[L_1]+1}=0,
\end{equation*}
hence we can let $\lambda(0)=0$ so that $\lambda(\xi)$ is a continuous function on all of $\bbbr^n$ and has a fast decay rate when $|\xi|$ is large because of $g(\xi)\in\Sw_{00}(\bbbr^n)$ and the condition (\ref{eq1.64}) placed upon $\multi(\xi)$. We investigate the differentiability property of $\lambda(\xi)$ at $\xi=0$. For $1\leq j\leq n$, let $0\neq h\in\bbbr$ and $\vec{e_j}$ be the unit elementary vector whose $j$-th coordinate is $1$ and all the other coordinates are $0$. Then the difference quotient can be estimated from above when $|h|\rightarrow0$ as follows,
\begin{align}
&\bigg|\frac{\lambda(h\vec{e_j})-\lambda(0)}{h}\bigg|
=\frac{|\multi(h\vec{e_j})|\cdot|g(h\vec{e_j})|}{|h|}\nonumber\\
&\lesssim\frac{|h|^{-L_1}}{|h|}\cdot|g(h\vec{e_j})|
\lesssim\frac{|h|^{-L_1}}{|h|}\cdot|h|^{[L_1]+2},\label{eq1.68}
\end{align}
where in (\ref{eq1.68}), we also use estimate (\ref{eq1.63}) with $\omega=\vec{0}$ and $M=[L_1]+2$. The above estimate shows that
\begin{equation*}
\partial_j\lambda(0)=\frac{\partial}{\partial\xi_j}[\lambda(\xi)]\bigg|_{\xi=0}=\lim_{h\rightarrow0}
\frac{\lambda(h\vec{e_j})-\lambda(0)}{h}=0.
\end{equation*}
Therefore all the first order partial derivatives of $\lambda(\xi)$ at $\xi=0$ exist and equal zero. Let $\xi\neq0$ and we can use Leibniz rule to obtain
\begin{equation}\label{eq1.69}
\partial_j\lambda(\xi)=\partial_j\multi(\xi)\cdot g(\xi)+\multi(\xi)\cdot\partial_j g(\xi).
\end{equation}
Furthermore, condition (\ref{eq1.64}) implies that we can find $0\leq L_3,L_4\in\bbbr$ so that $|\partial_j\multi(\xi)|\lesssim|\xi|^{-L_3}$ if $0<|\xi|\leq1$, and $|\partial_j\multi(\xi)|\lesssim|\xi|^{L_4}$ if $1<|\xi|<\infty$, then applying estimate (\ref{eq1.63}) with $\omega=\vec{0}$ and $M=[L_3]+1$ yields
\begin{equation}\label{eq1.70}
|\partial_j\multi(\xi)\cdot g(\xi)|\lesssim|\xi|^{-L_3}\cdot|\xi|^{[L_3]+1}\quad\text{if $0<|\xi|\leq1$},
\end{equation}
and applying estimate (\ref{eq1.63}) with $\omega=\vec{e_j}$ and $M=[L_1]+1$ yields
\begin{equation}\label{eq1.71}
|\multi(\xi)\cdot\partial_j g(\xi)|\lesssim|\xi|^{-L_1}\cdot|\xi|^{[L_1]+1}\quad\text{if $0<|\xi|\leq1$}.
\end{equation}
We deduce from (\ref{eq1.69}), (\ref{eq1.70}), (\ref{eq1.71}) the following estimate
\begin{equation}\label{eq1.72}
|\partial_j\lambda(\xi)|\lesssim|\xi|^{[L_1]+1-L_1}+|\xi|^{[L_3]+1-L_3},
\end{equation}
and hence $\lim_{|\xi|\rightarrow0}\partial_j\lambda(\xi)=0=\partial_j\lambda(0)$ for $1\leq j\leq n$. Therefore all the first order partial derivatives of $\lambda(\xi)$ are continuous at $\xi=0$, and hence are continuous on all of $\bbbr^n$, and have fast decay rates when $|\xi|$ is large because of $g(\xi)\in\Sw_{00}(\bbbr^n)$ and the condition (\ref{eq1.64}) placed upon $\multi(\xi)$. We can continue these computations and find that all the partial derivatives of the function $\lambda(\xi)$ at $\xi=0$ exist and equal $0$. Moreover, applying Leibniz rule yields that a typical partial derivative of $\lambda(\xi)=\multi(\xi)\cdot g(\xi)$ is estimated pointwise from above by a finite linear combination, with coefficients depending on some fixed parameters, of terms of the following form
\begin{equation}\label{eq1.73}
|\partial^{\alpha}\multi(\xi)|\cdot|\partial^{\omega}g(\xi)|,
\end{equation}
where $\alpha$ and $\omega$ are multi-indices. Applying condition (\ref{eq1.64}) and estimate (\ref{eq1.63}) with $M=[L_1^{\alpha}]+1$ shows that $(\ref{eq1.73})\rightarrow0$ as $|\xi|\rightarrow0$. Hence all the partial derivatives of $\lambda(\xi)$ are continuous at $\xi=0$, and thus are continuous on all of $\bbbr^n$, and have fast decay rates when $|\xi|$ is large due to $g(\xi)\in\Sw_{00}(\bbbr^n)$ and condition (\ref{eq1.64}). Therefore we conclude that the function $\lambda(\xi)=\multi(\xi)\cdot g(\xi)$ is a Schwartz function in $\Sw_{00}(\bbbr^n)$.\\ 

Step 2: We prove Proposition \ref{proposition1} (i). Let $f\in\Sw_{00}'(\bbbr^n)$. The product $\multi(\xi)\cdot f$ of the function $\multi(\xi)$ and the continuous linear functional $f\in\Sw_{00}'(\bbbr^n)$, whose action on functions in $\Sw_{00}(\bbbr^n)$ is given by (\ref{eq1.65}), is a well-defined linear functional on the subspace $\Sw_{00}(\bbbr^n)$, because the right side of equation (\ref{eq1.65}) is a well-defined action of $f\in\Sw_{00}'(\bbbr^n)$ on the Schwartz function $\multi\cdot g\in\Sw_{00}(\bbbr^n)$. We show that $\multi(\xi)\cdot f$ is a continuous linear functional on the subspace $\Sw_{00}(\bbbr^n)$ in the topology of $\Sw(\bbbr^n)$. By (\ref{eq1.89}) and (\ref{eq1.107}), we can find $N\in\bbbn_0$ such that for $g\in\Sw_{00}(\bbbr^n)$, we have
\begin{equation}\label{eq1.74}
|\!\!<\!\multi(\xi)\!\cdot\! f,g\!>\!\!|\!=\!|\!\!<\!f,\multi\cdot g\!>\!\!|\!\lesssim\!\!\sum_{\substack{|\beta|\leq N\\|\gamma|\leq N}}
\sup_{\xi\in\bbbr^n}\!\big|\xi^{\beta}\partial_{\xi}^{\gamma}[\multi(\xi)\!\cdot\! g(\xi)]\big|,
\end{equation}
where $\beta$, $\gamma$ are multi-indices, and the nonnegative integer $N$ and the implicit constant in (\ref{eq1.74}) are determined by $f$. A direct application of Leibniz rule tells us that each term on the right side of (\ref{eq1.74}) can be estimated from above by a finite linear combination of terms of the following form,
\begin{equation}\label{eq1.75}
\sup_{\xi\in\bbbr^n}|\xi|^{|\beta|}\cdot\big|(\partial^{\alpha}\multi)(\xi)\big|
\cdot\big|(\partial^{\omega}g)(\xi)\big|,
\end{equation}
and the multi-indices satisfy $\alpha+\omega=\gamma$. The condition (\ref{eq1.64}) implies that we can find $0\leq L_1^{\alpha},L_2^{\alpha}\in\bbbr$ so that 
\begin{equation}\label{eq3.166}
|(\partial^{\alpha}\multi)(\xi)|\lesssim
\begin{cases}
|\xi|^{-L_1^{\alpha}}&\text{if}\quad0<|\xi|\leq1,\\
|\xi|^{L_2^{\alpha}}&\text{if}\quad1<|\xi|<\infty.
\end{cases}
\end{equation}
Applying estimate (\ref{eq1.63}) with $M_1=[\big||\beta|-L_1^{\alpha}\big|]+1>\big||\beta|-L_1^{\alpha}\big|$ yields the following
\begin{align}
&\sup_{|\xi|\leq1}|\xi|^{|\beta|}\cdot\big|(\partial^{\alpha}\multi)(\xi)\big|
\cdot\big|(\partial^{\omega}g)(\xi)\big|\nonumber\\
&\lesssim\sup_{|\xi|\leq1}|\xi|^{|\beta|-L_1^{\alpha}}\cdot\sum_{|\omega'|=M_1}\sup_{x\in\bbbr^n}|(\partial^{\omega+\omega'}g)(x)|\cdot|\xi|^{M_1}\nonumber\\
&\lesssim\sup_{|\xi|\leq1}|\xi|^{M_1-\big||\beta|-L_1^{\alpha}\big|}\cdot
\sum_{|\omega'|=M_1}\sup_{x\in\bbbr^n}|(\partial^{\omega+\omega'}g)(x)|,\label{eq1.76}
\end{align}
and (\ref{eq1.76}) can be estimated from above by a finite sum of Schwartz seminorms of function $g$. Also, we let $M_2=[|\beta|+L_2^{\alpha}]+1>|\beta|+L_2^{\alpha}$, then we have
\begin{align}
&\sup_{|\xi|>1}|\xi|^{|\beta|}\cdot\big|(\partial^{\alpha}\multi)(\xi)\big|
\cdot\big|(\partial^{\omega}g)(\xi)\big|\nonumber\\
&\lesssim\sup_{|\xi|>1}\frac{|\xi|^{|\beta|+L_2^{\alpha}}}{(1+|\xi|)^{M_2}}\cdot(1+|\xi|)^{M_2}
\cdot\big|(\partial^{\omega}g)(\xi)\big|\nonumber\\
&\lesssim\sup_{|\xi|>1}(1+|\xi|)^{M_2}\cdot\big|(\partial^{\omega}g)(\xi)\big|,\label{eq1.77}
\end{align}
and (\ref{eq1.77}) can be estimated from above by a finite sum of Schwartz seminorms of function $g$. Combining (\ref{eq1.74}), (\ref{eq1.75}), (\ref{eq1.76}), and (\ref{eq1.77}), we see that the term $|<\multi(\xi)\cdot f,g>|$ can be estimated from above by a finite sum of Schwartz seminorms of function $g$ whenever $f\in\Sw_{00}'(\bbbr^n)$ and $g\in\Sw_{00}(\bbbr^n)$, and this finite sum is determined by $\multi$ and $f$. Therefore the linear functional $\multi(\xi)\cdot f$ defined by equation (\ref{eq1.65}) is a continuous linear functional on the subspace $\Sw_{00}(\bbbr^n)$ under the topology inherited from $\Sw(\bbbr^n)$. According to the Hahn-Banach theorem, i.e. \cite[3.3 Theorem]{Rudin.Funct.Anal}, this linear functional $\multi(\xi)\cdot f$ has a continuous linear extension to a tempered distribution defined on all of $\Sw(\bbbr^n)$ in the topology of $\Sw(\bbbr^n)$, and we still denote the extended tempered distribution by $\multi(\xi)\cdot f$. The action of the extended tempered distribution $\multi(\xi)\cdot f$ on functions in $\Sw_{00}(\bbbr^n)$ is given by (\ref{eq1.65}). And the distributional inverse Fourier transform of the extended tempered distribution $\multi(\xi)\cdot f$, as given in \cite[Definition 2.3.7.]{14classical}, is well-defined by the following equation
\begin{equation}\label{eq1.78}
<\iFT_n[\multi(\xi)\cdot f],\varphi>=<\multi(\xi)\cdot f,\iFT_n\varphi>
\end{equation}
for a general Schwartz function $\varphi\in\Sw(\bbbr^n)$. Furthermore, if $\varphi\in\Sw_0(\bbbr^n)$ then $\iFT_n\varphi\in\Sw_{00}(\bbbr^n)$, and we can combine (\ref{eq1.78}) and (\ref{eq1.65}) together to deduce that
\begin{equation}\label{eq1.79}
<\!\iFT_n[\multi(\xi)\!\cdot\!f],\varphi\!>=<\!\multi(\xi)\!\cdot\!f,\iFT_n\varphi\!>
=<\!f,\multi\!\cdot\!\iFT_n\varphi\!>.
\end{equation}
We have finished the proof of Proposition \ref{proposition1} (i).\\

Step 3: We prove Proposition \ref{proposition1} (ii). If $\multi\in L^1(\bbbr^n)$ and $g\in L^1(\bbbr^n)$, then we have
\begin{equation}\label{eq3.167}
\iFT_n[\multi(\xi)\FT_n g(\xi)](x)\!=\!\!\int_{\bbbr^n}\!\int_{\bbbr^n}\!\!\!\multi(\xi)\!\cdot\!g(y)\!\cdot\!
e^{2\pi i\xi\cdot(x-y)}dyd\xi, 
\end{equation}
and
\begin{equation*}
\int_{\bbbr^n}\int_{\bbbr^n}|\multi(\xi)|\!\cdot\!|g(y)|dyd\xi<\infty.
\end{equation*}
We invoke \cite[Tonelli's theorem and Corollary 7 in section 20.1]{real.analysis.royden} and \cite[Theorem 11 in section 20.2]{real.analysis.royden} to deduce that the nonnegative function $|\multi(\xi)|\!\cdot\!|g(y)|$ is integrable over the product space $\bbbr^n\times\bbbr^n$ with respect to the product measure $d(y,\xi)$, and hence we can apply \cite[Fubini's theorem in section 20.1]{real.analysis.royden} to justify the exchange of the order of integration in (\ref{eq3.167}) and obtain
\begin{equation}\label{eq3.168}
(\ref{eq3.167})\!=\!\!\int_{\bbbr^n}\!\!\!\iFT_n\multi(x\!-\!y)\!\cdot\!g(y)dy\!=\!(\iFT_n\multi)*g(x).
\end{equation}
Equations (\ref{eq3.167}) and (\ref{eq3.168}) prove (\ref{eq1.110}). When $\varphi\in\Sw_{0}(\bbbr^n)$, we have $\FT_n\varphi\in\Sw_{00}(\bbbr^n)$ and thus $\multi(\xi)\cdot\FT_n\varphi(\xi)\in\Sw_{00}(\bbbr^n)$ and $\iFT_n[\multi(\xi)\cdot\FT_n\varphi(\xi)]\in\Sw_{0}(\bbbr^n)$. Equation (\ref{eq1.111}) is a consequence of (\ref{eq1.110}). Since $(\iFT_n\multi)*\varphi\in\Sw_0(\bbbr^n)$ and $f\in\Sw_{0}'(\bbbr^n)$, the action of the convolution $f*\FT_n\multi$ of the function $\FT_n\multi\in L^{\infty}(\bbbr^n)$ and the continuous linear functional $f\in\Sw_0'(\bbbr^n)$ on the function $\varphi\in\Sw_{0}(\bbbr^n)$, as given by equation (\ref{eq1.112}), is well-defined and makes sense. According to (\ref{eq1.89}) and (\ref{eq1.93}), we can find $N\in\bbbn_0$ so that
\begin{align}
&\big|<f*\FT_n\multi,\varphi>\big|\lesssim\sum_{\substack{|\beta|\leq N\\|\gamma|\leq N}}
\sup_{x\in\bbbr^n}\big|x^{\beta}\partial_x^{\gamma}[(\iFT_n\multi)*\varphi(x)]\big|\nonumber\\
&\lesssim\sum_{\substack{|\beta|\leq N\\|\gamma|\leq N}}\sup_{x\in\bbbr^n}
\big|x^{\beta}\partial_x^{\gamma}[\iFT_n[\multi(\xi)\cdot\FT_n\varphi(\xi)](x)]\big|,\label{eq3.169}
\end{align}
where $\beta$ and $\gamma$ are multi-indices, and $N\in\bbbn_0$ and the implicit constants in (\ref{eq3.169}) are determined by $f$. Since $\multi(\xi)\cdot\FT_n\varphi(\xi)\in\Sw_{00}(\bbbr^n)$ and $\xi^{\gamma}\cdot\multi(\xi)\cdot\FT_n\varphi(\xi)\in\Sw_{00}(\bbbr^n)$, we invoke \cite[Proposition 2.2.11 (9) and (10)]{14classical} to deduce the following formula
\begin{align}
&x^{\beta}\partial_x^{\gamma}[\iFT_n[\multi(\xi)\cdot\FT_n\varphi(\xi)](x)]\nonumber\\
&=x^{\beta}\iFT_n[(2\pi i\xi)^{\gamma}\cdot\multi(\xi)\cdot\FT_n\varphi(\xi)](x)\nonumber\\
&=(-2\pi i)^{-|\beta|}\cdot\iFT_n[\partial^{\beta}_{\xi}
[(2\pi i\xi)^{\gamma}\cdot\multi(\xi)\cdot\FT_n\varphi(\xi)]](x).\label{eq3.170}
\end{align}
Combining (\ref{eq3.169}) and (\ref{eq3.170}) and applying the Leibniz rule, we can obtain that for multi-indices $\beta$ and $\gamma$ with $|\beta|\leq N$ and $|\gamma|\leq N$, each term in the second line of (\ref{eq3.169}) can be estimated from above by a constant multiple of a finite sum of terms of the following form,
\begin{equation}\label{eq3.171}
\int_{\bbbr^n}|\partial_{\xi}^{\lambda}(\xi^{\gamma})|\!\cdot\!|\partial^{\alpha}\multi(\xi)|\!\cdot\!
|\partial^{\omega}\FT_n\varphi(\xi)|d\xi,
\end{equation}
where $\lambda,\alpha,\omega$ are multi-indices that satisfy $\lambda+\alpha+\omega=\beta$ and $\lambda\leq\gamma$, i.e. $0\leq\lambda_j\leq\gamma_j$ for all $1\leq j\leq n$, and the constant depends on $N$. We use estimates (\ref{eq3.166}) and (\ref{Sw0.condition.3}) with $M_3=[L_1^{\alpha}]+1>L_1^{\alpha}$ to deduce the following,
\begin{align}
&\int_{|\xi|\leq1}|\partial_{\xi}^{\lambda}(\xi^{\gamma})|\!\cdot\!|\partial^{\alpha}\multi(\xi)|
\!\cdot\!|\partial^{\omega}\FT_n\varphi(\xi)|d\xi\nonumber\\
&\lesssim\!\!\!\int_{|\xi|\leq1}\!\!\!\!\!\!\!\!\!|\xi|^{|\gamma|-|\lambda|}\!\cdot\!
|\xi|^{-L_1^{\alpha}}\!\cdot\!|\xi|^{M_3}d\xi\!\cdot\!\!\!\!\sum_{|\omega'|=M_3}\sup_{y\in\bbbr^n}
\!|(\partial^{\omega+\omega'}\FT_n\varphi)(y)|\nonumber\\
&\lesssim\sum_{|\omega'|=M_3}\sup_{y\in\bbbr^n}|(\partial^{\omega+\omega'}\FT_n\varphi)(y)|,
\label{eq3.172}
\end{align}
where the implicit constants in (\ref{eq3.172}) depend on $n,N,\multi$. Since the Fourier transform is a homeomorphism from $\Sw(\bbbr^n)$ onto itself, then (\ref{eq3.172}) can be estimated from above by a constant multiple of a finite sum of Schwartz seminorms of $\varphi$. We use estimate (\ref{eq3.166}) and the fact that $\partial^{\omega}\FT_n\varphi\in\Sw(\bbbr^n)$ to deduce that for $M_4=n+N+[L_2^{\alpha}]+1>n+|\gamma|-|\lambda|+L_2^{\alpha}$,
\begin{align}
&\int_{|\xi|>1}|\partial_{\xi}^{\lambda}(\xi^{\gamma})|\!\cdot\!|\partial^{\alpha}\multi(\xi)|
\!\cdot\!|\partial^{\omega}\FT_n\varphi(\xi)|d\xi\nonumber\\
&\lesssim\!\!\!\int_{|\xi|>1}\frac{|\xi|^{|\gamma|-|\lambda|}\!\cdot\!|\xi|^{L_2^{\alpha}}}
{(1+|\xi|)^{M_4}}d\xi\!\cdot\!\sup_{y\in\bbbr^n}(1+|y|)^{M_4}\!\cdot\!
|\partial^{\omega}\FT_n\varphi(y)|\nonumber\\
&\lesssim\sup_{y\in\bbbr^n}(1+|y|)^{M_4}\!\cdot\!|\partial^{\omega}\FT_n\varphi(y)|,\label{eq3.173}
\end{align}
where the implicit constants in (\ref{eq3.173}) depend on $n,N,\multi$. And (\ref{eq3.173}) can be estimated from above by a constant multiple of a finite sum of Schwartz seminorms of $\varphi$. Combining (\ref{eq3.169}), (\ref{eq3.171}), (\ref{eq3.172}), and (\ref{eq3.173}) all together, we conclude that the expression $|<f*\FT_n\multi,\varphi>|$ can be estimated from above by a constant multiple of a finite sum of Schwartz seminorms of $\varphi$, and the constant and the finite sum are both determined by $f$, $\multi$, and $n$. Therefore the convolution $f*\FT_n\multi$ defines a continuous linear functional in $\Sw_{0}'(\bbbr^n)$ by the equation (\ref{eq1.112}). According to the Hahn-Banach theorem, i.e. \cite[3.3 Theorem]{Rudin.Funct.Anal}, this linear functional $f*\FT_n\multi$ has a continuous linear extension to a tempered distribution defined on all of $\Sw(\bbbr^n)$ in the topology of $\Sw(\bbbr^n)$, and we still denote the extended tempered distribution by $f*\FT_n\multi$. The action of the extended tempered distribution $f*\FT_n\multi$ on functions in $\Sw_{0}(\bbbr^n)$ is given by (\ref{eq1.112}). And the distributional inverse Fourier transform of the extended tempered distribution $f*\FT_n\multi$, as given in \cite[Definition 2.3.7.]{14classical}, is well-defined by the following equation
\begin{equation}\label{eq3.174}
<\iFT_n[f*\FT_n\multi],g>=<f*\FT_n\multi,\iFT_n g>
\end{equation}
for a general Schwartz function $g\in\Sw(\bbbr^n)$. Furthermore, if $g\in\Sw_{00}(\bbbr^n)$ then $\iFT_n g\in\Sw_{0}(\bbbr^n)$, and we can combine (\ref{eq3.174}), (\ref{eq1.112}), and (\ref{eq1.111}) together to deduce that
\begin{align}
&<\iFT_n[f*\FT_n\multi],g>=<f*\FT_n\multi,\iFT_n g>\nonumber\\
&=<f,\iFT_n\multi*\iFT_n g>=<f,\iFT_n[\multi(\xi)\!\cdot\!g(\xi)]>.\label{eq3.175}
\end{align}
In particular, when $f\!\in\!\Sw_{0}'(\bbbr^n)$ and $g\!\in\!\Sw(\bbbr^n)$, we set $\multi(\xi)\!=\!\iFT_n g(\xi)\!\in\!\Sw(\bbbr^n)\!\subseteq\!L^1(\bbbr^n)$, then $\multi(\xi)$ satisfies the condition (\ref{eq1.64}) and $\iFT_n\multi\!=\!\tilde{g}$. As a consequence of equation (\ref{eq1.112}), the convolution $f*g$ is a continuous linear functional in $\Sw_0'(\bbbr^n)$ defined by the equation (\ref{eq1.124}). We temporarily denote the extended version of $f\in\Sw_{0}'(\bbbr^n)$ obtained by the Hahn-Banach theorem is $\overline{f}\in\Sw'(\bbbr^n)$. According to \cite[Theorem 2.3.20.]{14classical}, $\overline{f}*g\in\Sw'(\bbbr^n)$ has the smooth function representative (\ref{eq3.195}) in the sense of $\Sw'(\bbbr^n)$,  
\begin{equation}\label{eq3.195}
G:x\in\bbbr^n\longmapsto<\overline{f},g(x-\cdot)>.
\end{equation}
Furthermore, for every multi-index $\alpha$, there exist positive finite constants $C_{\alpha}$ and $K_{\alpha}$ such that
\begin{equation}\label{eq3.196}
|\partial^{\alpha}G(x)|\leq C_{\alpha}\cdot(1+|x|)^{K_{\alpha}}.
\end{equation}
Since $\tilde{g}*\varphi\in\Sw_{0}(\bbbr^n)$ and $\overline{f}\in\Sw'(\bbbr^n)$, \cite[Definition 2.3.13]{14classical} implies for every $\varphi\in\Sw_0(\bbbr^n)$,
\begin{equation}\label{eq3.197}
<\overline{f}*g,\varphi>=<\overline{f},\tilde{g}*\varphi>=<f,\tilde{g}*\varphi>.
\end{equation}
Equations (\ref{eq1.124}) and (\ref{eq3.197}) indicate that $f*g=\overline{f}*g$ in the sense of $\Sw_0'(\bbbr^n)$, thus the function (\ref{eq3.195}) belongs to the collection
\begin{equation}\label{eq3.198}
\functrep(\overline{f}*g)\subseteq\functrep_0(\overline{f}*g)=\functrep_0(f*g).
\end{equation}
The estimate (\ref{eq1.125}) is a consequence of (\ref{eq3.196}). Now we prove the equation (\ref{eq1.115}). If $f\in\Sw_{0}'(\bbbr^n)$, $h\in\Sw(\bbbr^n)$, $\phi\in\Sw_{0}(\bbbr^n)$, and the function $\multi(\xi)$ satisfies the conditions for Proposition \ref{proposition1} (ii), we let $\tilde{h}(x)=h(-x)$ then
\begin{equation*}
	\tilde{h}*\phi(x)=\iFT_n[\FT_n h(-\xi)\!\cdot\!\FT_n\phi(\xi)](x)\in\Sw_{0}(\bbbr^n).
\end{equation*} 
We apply \cite[Definition 2.3.13]{14classical} to the extended tempered distribution $f*\FT_n\multi$ and obtain
\begin{align}
&<(f*\FT_n\multi)*h,\phi>=<f*\FT_n\multi,\tilde{h}*\phi>\nonumber\\
&=<\!f,\iFT_n\multi\!*\!\tilde{h}\!*\!\phi\!>
=<\!f,\iFT_n[\multi(\xi)\!\cdot\!\FT_n h(-\xi)\!\cdot\!\FT_n\phi(\xi)]\!>,\label{eq3.176}
\end{align}
where equations in the second line of (\ref{eq3.176}) are due to (\ref{eq1.112}) and (\ref{eq1.111}). According to equation (\ref{eq1.124}), the convolution $f*h$ of $f\in\Sw_{0}'(\bbbr^n)$ and $h\in\Sw(\bbbr^n)$ is a well-defined continuous linear functional in $\Sw_{0}'(\bbbr^n)$. Applying (\ref{eq1.112}), (\ref{eq1.111}), and (\ref{eq1.124}) in a sequence yields the following,
\begin{align}
&<(f*h)*\FT_n\multi,\phi>=<f*h,\iFT_n\multi*\phi>\nonumber\\
&=<\!f,\tilde{h}\!*\!\iFT_n\multi\!*\!\phi\!>=
<\!f,\iFT_n[\FT_n h(-\xi)\!\cdot\!\multi(\xi)\!\cdot\!\FT_n\phi(\xi)]\!>,\label{eq3.177}
\end{align}
where the last term of (\ref{eq3.177}) is the well-defined action of $f\in\Sw_{0}'(\bbbr^n)$ on the function 
\begin{equation}\label{eq3.178}
\iFT_n[\FT_n h(-\xi)\!\cdot\!\multi(\xi)\!\cdot\!\FT_n\phi(\xi)](x)\in\Sw_{0}(\bbbr^n).
\end{equation}
Equations (\ref{eq3.176}) and (\ref{eq3.177}) justify the validity of (\ref{eq1.115}).\\

Step 4: We prove Proposition \ref{proposition1} (iii). If $f\in\Sw_{00}'(\bbbr^n)$, $\multi(\xi)\in L^1(\bbbr^n)$ satisfies condition (\ref{eq1.64}), and $\phi\in\Sw_{0}(\bbbr^n)$, then Proposition \ref{proposition1} (i) implies
\begin{equation}\label{eq3.179}
<\FT_n[\multi(\xi)\cdot f],\phi>=<f,\multi\cdot\FT_n\phi>.
\end{equation}
Let $\FT_n f\in\Sw'(\bbbr^n)\subseteq\Sw_{0}'(\bbbr^n)$ be the distributional Fourier transform of the extended version of $f\in\Sw_{00}'(\bbbr^n)$, then we deduce from (\ref{eq1.111}) that the function (\ref{eq3.180}) is in $\Sw_{00}(\bbbr^n)$,
\begin{equation}\label{eq3.180}
\xi\in\bbbr^n\longmapsto\FT_n[\iFT_n\multi*\phi](\xi)=\multi(\xi)\!\cdot\!\FT_n\phi(\xi).
\end{equation}
And Proposition \ref{proposition1} (ii) implies
\begin{align}
&<\FT_n f*\FT_n\multi,\phi>=<\FT_n f,\iFT_n\multi*\phi>\nonumber\\
&=<f,\FT_n[\iFT_n\multi*\phi]>=<f,\multi\cdot\FT_n\phi>.\label{eq3.181}
\end{align}
The last term of (\ref{eq3.181}) is the action of the extended version of $f\in\Sw_{00}'(\bbbr^n)$ on the function $(\ref{eq3.180})\in\Sw_{00}(\bbbr^n)$, and it coincides with the action of the given continuous linear functional $f\in\Sw_{00}'(\bbbr^n)$ on the function (\ref{eq3.180}). Equations (\ref{eq3.179}) and (\ref{eq3.181}) justify the validity of (\ref{eq1.116}).\\

Step 5: We prove Proposition \ref{proposition1} (iv). If $f\in\Sw_{0}'(\bbbr^n)$, $\varphi\in\Sw_0(\bbbr^n)$, $\tilde{\varphi}(x)=\varphi(-x)$, and $g\in\Sw(\bbbr^n)$, then because $\tilde{\varphi}*g\in\Sw_{0}(\bbbr^n)$, the following equation (\ref{eq3.186}) defines a linear functional on $\Sw(\bbbr^n)$, 
\begin{equation}\label{eq3.186}
<f*\varphi,g>=<f,\tilde{\varphi}*g>.
\end{equation}
By (\ref{eq1.93}) and (\ref{eq1.89}), we can find $N\in\bbbn_0$ so that
\begin{equation}\label{eq3.187}
|<f*\varphi,g>|\lesssim\rho_N(\tilde{\varphi}*g),
\end{equation}
where $N\in\bbbn_0$ and the implicit constant in (\ref{eq3.187}) are determined by $f$. For all multi-indices $\alpha$ and $\beta$ with $|\alpha|\leq N$ and $|\beta|\leq N$, we have
\begin{equation}\label{eq3.188}
\sup_{x\in\bbbr^n}|x^{\alpha}\!\cdot\!\partial^{\beta}(\tilde{\varphi}*g)(x)|
\!=\!\sup_{x\in\bbbr^n}|x^{\alpha}\!\cdot\!(\tilde{\varphi}*\partial^{\beta}g)(x)|\!\lesssim\!
\rho_N(g),
\end{equation}
and the implicit constant in (\ref{eq3.188}) is determined by $n,N,\varphi$. Inserting (\ref{eq3.188}) into (\ref{eq3.187}) and invoking \cite[Proposition 2.3.4. (b)]{14classical}, we see that the linear functional $f*\varphi$ defined by (\ref{eq3.186}) is in $\Sw'(\bbbr^n)$. We temporarily denote the extended version of $f\in\Sw_{0}'(\bbbr^n)$ obtained by the Hahn-Banach theorem is $\overline{f}\in\Sw'(\bbbr^n)$. According to \cite[Theorem 2.3.20.]{14classical}, $\overline{f}*\varphi\in\Sw'(\bbbr^n)$ has the smooth function representative (\ref{eq3.182}) in the sense of $\Sw'(\bbbr^n)$,  
\begin{equation}\label{eq3.182}
F:x\in\bbbr^n\longmapsto<\overline{f},\varphi(x-\cdot)>=<f,\varphi(x-\cdot)>,
\end{equation}
where the equation in (\ref{eq3.182}) holds true because for every $x\in\bbbr^n$, we have
\begin{equation}\label{eq3.183}
y\in\bbbr^n\longmapsto\varphi(x-y)\in\Sw_{0}(\bbbr^n).
\end{equation}
Furthermore, for every multi-index $\alpha$, there exist positive finite constants $C_{\alpha}$ and $K_{\alpha}$ such that
\begin{equation}\label{eq3.189}
|\partial^{\alpha}F(x)|\leq C_{\alpha}\cdot(1+|x|)^{K_{\alpha}}.
\end{equation}
Since $\tilde{\varphi}*g\in\Sw_{0}(\bbbr^n)$ and $\overline{f}\in\Sw'(\bbbr^n)$, \cite[Definition 2.3.13]{14classical} implies for every $g\in\Sw(\bbbr^n)$,
\begin{equation}\label{eq3.190}
<\overline{f}*\varphi,g>=<\overline{f},\tilde{\varphi}*g>=<f,\tilde{\varphi}*g>.
\end{equation}
Equations (\ref{eq3.186}) and (\ref{eq3.190}) indicate that $f*\varphi=\overline{f}*\varphi$ in the sense of $\Sw'(\bbbr^n)$, thus the function (\ref{eq3.182}) belongs to the collection
\begin{equation}\label{eq3.191}
\functrep(\overline{f}*\varphi)=\functrep(f*\varphi).
\end{equation}
The estimate (\ref{eq1.120}) is a consequence of (\ref{eq3.189}). Now we prove (\ref{eq1.126}). Let $g\in\Sw(\bbbr^n)$. We use \cite[Definition 2.3.7]{14classical} and (\ref{eq1.122}) to deduce
\begin{equation}\label{eq3.192}
<\FT_n[f*\varphi],g>=<f*\varphi,\FT_n g>=<f,\tilde{\varphi}*\FT_n g>.
\end{equation}
Since $\FT_n f\in\Sw'(\bbbr^n)$ is the distributional Fourier transform of the extended version of $f\in\Sw_{0}'(\bbbr^n)$ and since $\FT_n\varphi\in\Sw_{00}(\bbbr^n)$, we use \cite[Definition 2.3.15 and Definition 2.3.7]{14classical} to obtain
\begin{align}
&<\FT_n f\cdot\FT_n\varphi,g>=<\FT_n f,\FT_n\varphi\cdot g>\nonumber\\
&=<f,\FT_n[\FT_n\varphi\cdot g]>=<f,\tilde{\varphi}*\FT_n g>,\label{eq3.193}
\end{align}
where the last term of (\ref{eq3.193}) is the action of the extended version of $f\in\Sw_{0}'(\bbbr^n)$ on the function $\tilde{\varphi}*\FT_n g\in\Sw_{0}(\bbbr^n)$ and coincides with the action of the original continuous linear functional $f\in\Sw_{0}'(\bbbr^n)$ on the function $\tilde{\varphi}*\FT_n g\in\Sw_{0}(\bbbr^n)$. Equations (\ref{eq3.192}) and (\ref{eq3.193}) prove the formula (\ref{eq1.126}) is true in the sense of $\Sw'(\bbbr^n)$. The proof of Proposition \ref{proposition1} is now complete.
\end{proof}

\section{The boundedness of the Littlewood-Paley-Poisson function}\label{proof.of.theorem3}
\begin{proof}[Proof of Theorem \ref{theorem3}]
Step 1: First we notice that it suffices to prove
		\begin{equation}\label{eq3.1}
			\|(\int_0^{\infty}t^{q-sq-1}|\partial_l\Pint(f;\cdot,t)|^q dt)^{\frac{1}{q}}\|_{L^p(\bbbr^n)}
			\lesssim\|f\|_{\Fspq},
		\end{equation}
for $1\leq l\leq n+1$, $0<p,q<\infty$, $-1<s<1$, and $f\in\Fspq\subseteq\Sw_{0}'(\bbbr^n)$ has a function representative 
\begin{equation}\label{eq3.2}
f(x)\in\functrep_{0}(f)\cap L^{p_0}(\bbbr^n)\text{ for some $1\leq p_0\leq\infty$},
\end{equation}
and we will only provide a detailed proof for (\ref{eq3.1}) in the case of $l=n+1$, and the proof for (\ref{eq3.1}) in the case where $l=1,2,\cdots,n$ will be very much alike and hence is omitted. Since the function $f(x)\in L^{p_0}(\bbbr^n)$, and $\psi_{2^{-j}}(x)=2^{jn}\psi(2^j x)\in\Sw_{0}(\bbbr^n)$ for all $j\in\bbbz$, and since formula (\ref{eq1-3-2}) implies $\partial_{n+1}[P_t(x)]\in L^1(\bbbr^n)\cap L^{\infty}(\bbbr^n)$ for each fixed $t\in(0,\infty)$, Minkowski's integral inequality and formula (\ref{eq1-6}) imply the following functions in (\ref{eq3.37}), (\ref{eq3.133}), and (\ref{eq3.42}) belong to $L^{p_0}(\bbbr^n)$ for every $j\in\bbbz$ and $t\in(0,\infty)$,
\begin{equation}\label{eq3.37}
f_j(x):=f*\psi_{2^{-j}}(x),
\end{equation}
\begin{equation}\label{eq3.133}
\partial_{n+1}\Pint(f;x,t)\!=\!\partial_{n+1}[f\!*\!P_t(x)]
\!=\!f\!*\!\partial_{n+1}[P_t(\cdot)](x),
\end{equation}
\begin{align}
&\partial_{n+1}\Pint(f_j;x,t)\!=\!\partial_{n+1}[f_j\!*\!P_t(x)]
\!=\!f_j\!*\!\partial_{n+1}[P_t(\cdot)](x)\nonumber\\
&=(f\!*\!\psi_{2^{-j}})\!*\!\partial_{n+1}[P_t(\cdot)](x)
=(f\!*\!\partial_{n+1}[P_t(\cdot)])\!*\!\psi_{2^{-j}}(x),\label{eq3.42}
\end{align}
where $f$ appearing in (\ref{eq3.37}), (\ref{eq3.133}), and (\ref{eq3.42}) is the given function representative, and the second line of (\ref{eq3.42}) is due to an application of H\"{o}lder's inequality with $\frac{1}{p_0}+\frac{1}{p_0'}=1$ and Young's inequality that justifies the exchange of the order of integration. For each $j\in\bbbz$, Proposition \ref{proposition1} (iv) (\ref{eq1.119}) implies the convolution $f\!*\!\psi_{2^{-j}}$ of $f\in\Sw_{0}'(\bbbr^n)$ and $\psi_{2^{-j}}\in\Sw_{0}(\bbbr^n)$ is a tempered distribution in $\Sw'(\bbbr^n)$ and has the smooth function representative $(\ref{eq3.139})\in\functrep(f\!*\!\psi_{2^{-j}})\cap L^{p_0}(\bbbr^n)$,
\begin{align}
&x\!\in\!\bbbr^n\!\longmapsto<\!f,\psi_{2^{-j}}(x-\cdot)\!>\nonumber\\
&=\!\!\int_{\bbbr^n}\!\!\!f(y)\!\cdot\!\psi_{2^{-j}}(x-y)dy=f*\psi_{2^{-j}}(x),\label{eq3.139}
\end{align}
where the equations in (\ref{eq3.139}) are true because of the condition (\ref{eq3.2}) and the fact that the function (\ref{eq3.39}) belongs to $\Sw_{0}(\bbbr^n)$ for all $x\in\bbbr^n$ and $j\in\bbbz$,
\begin{equation}\label{eq3.39}
y\in\bbbr^n\longmapsto\psi_{2^{-j}}(x-y).
\end{equation}
Furthermore, we use Proposition \ref{proposition1} (iv) equation (\ref{eq1.122}) to deduce that
\begin{equation}\label{eq3.131}
f\!*\!\psi_{2^{-j}}\!=\!\iFT_n[\FT_n\psi(2^{-j}\xi)\FT_n f]\text{ in the sense of $\Sw'(\bbbr^n)$},
\end{equation}
where $\FT_n f$ appearing in (\ref{eq3.131}) is the distributional Fourier transform of the extended version of $f\in\Sw_{0}'(\bbbr^n)$. We notice that for every $t\in(0,\infty)$, the function
\begin{equation}\label{eq3.43}
\multi(\xi)=-2\pi|\xi|\cdot e^{-2\pi t|\xi|}\in L^1(\bbbr^n)
\end{equation}
is smooth on $\bbbr^n\setminus\{0\}$ and satisfies condition (\ref{eq1.64}), and 
\begin{equation}\label{eq3.55}
\partial_{n+1}[P_t(x)]=\FT_n\multi(x)=\iFT_n\multi(x), 
\end{equation}
then Proposition \ref{proposition1} (ii) tells us that the convolution $f*\partial_{n+1}[P_t(\cdot)]$ of $f\in\Fspq\subseteq\Sw_{0}'(\bbbr^n)$ and the function $\partial_{n+1}[P_t(x)]$ is a continuous linear functional in $\Sw_{0}'(\bbbr^n)$. The condition that $f\in\Fspq\subseteq\Sw_{0}'(\bbbr^n)$ has a function representative $f(x)\in\functrep_{0}(f)\cap L^{p_0}(\bbbr^n)$ implies Lemma \ref{lemma16} (\ref{eq2.238}) is applicable, thus formula (\ref{eq1-6}) implies the function
\begin{align}
&\partial_{n+1}\Pint(f;x,t)=f*\partial_{n+1}[P_t(\cdot)](x)\nonumber\\
&\in\functrep_{0}(f*\partial_{n+1}[P_t(\cdot)])\cap L^{p_0}(\bbbr^n),\label{eq3.41}
\end{align}
where $f$ appearing in the first line of (\ref{eq3.41}) is the function representative $f(x)\in\functrep_{0}(f)\cap L^{p_0}(\bbbr^n)$. By Lemma \ref{lemma8} (\ref{eq2.260}), the following equation (\ref{eq3.44}) is true in the sense of $\Sw_{0}'(\bbbr^n)$ for all $j\in\bbbz$ and $t\in(0,\infty)$,
\begin{equation}\label{eq3.44}
f_j*\partial_{n+1}[P_t(\cdot)]=(f*\partial_{n+1}[P_t(\cdot)])*\psi_{2^{-j}},
\end{equation}
where $f_j\!:=\!f\!*\!\psi_{2^{-j}}\!\in\!\Sw'(\bbbr^n)\!\subseteq\!\Sw_{0}'(\bbbr^n)$ has the smooth function representative 
\begin{equation}
(\ref{eq3.139})\!\in\!\functrep(f_j)\!\cap\!L^{p_0}(\bbbr^n)
\!\subseteq\!\functrep_0(f_j)\!\cap\!L^{p_0}(\bbbr^n).\label{eq3.45}
\end{equation}
Therefore we can apply Lemma \ref{lemma16} (\ref{eq2.238}) with $f$ being replaced by $f_j$ and deduce the function $\partial_{n+1}\Pint(f_j;x,t)$ as given in (\ref{eq3.42}) belongs to
\begin{align}
&\functrep_{0}(f_j*\partial_{n+1}[P_t(\cdot)])\cap L^{p_0}(\bbbr^n)\nonumber\\
&=\functrep_{0}((f*\partial_{n+1}[P_t(\cdot)])*\psi_{2^{-j}})\cap L^{p_0}(\bbbr^n).\label{eq3.46}
\end{align}
Indeed, a stronger conclusion can be deduced by using Proposition \ref{proposition1} (iv). Since $f*\partial_{n+1}[P_t(\cdot)]\in\Sw_{0}'(\bbbr^n)$ by Proposition \ref{proposition1} (ii) and $\psi_{2^{-j}}\in\Sw_{0}(\bbbr^n)$, Proposition \ref{proposition1} (iv), (\ref{eq3.41}), and (\ref{eq3.39}) together imply the convolution $(f\!*\!\partial_{n+1}[P_t(\cdot)])\!*\!\psi_{2^{-j}}\!\in\!\Sw'(\bbbr^n)$ has the smooth function representative $(\ref{eq3.47})\!\in\!\functrep((f*\partial_{n+1}[P_t(\cdot)])\!*\!\psi_{2^{-j}})
\!\cap\!L^{p_0}(\bbbr^n)$ for every $j\in\bbbz$ and $t\in(0,\infty)$,
\begin{align}
&x\in\bbbr^n\longmapsto<f*\partial_{n+1}[P_t(\cdot)],\psi_{2^{-j}}(x-\cdot)>\nonumber\\
&=\int_{\bbbr^n}f*\partial_{n+1}[P_t(\cdot)](y)\cdot\psi_{2^{-j}}(x-y)dy\nonumber\\
&=\partial_{n+1}\Pint(f_j;x,t),\label{eq3.47}
\end{align}
where $\partial_{n+1}\Pint(f_j;x,t)$ is the function as given in (\ref{eq3.42}), the smoothness of which is now proven. By the condition (\ref{eq1-7}) imposed on the function $\psi\in\Sw_{0}(\bbbr^n)$, the following functions (\ref{eq3.141}) and (\ref{eq3.48}) belong to $\Sw_{0}(\bbbr^n)$ for every $j\in\bbbz$,
\begin{align}
&x\in\bbbr^n\longmapsto\iFT_n[2^{-j}|\xi|\FT_n\psi(2^{-j}\xi)](x)\nonumber\\
&=2^{jn}\!\iFT_n[|\xi|\FT_n\psi(\xi)](2^j x)\!\in\!\Sw_0(\bbbr^n),\label{eq3.141}
\end{align}
\begin{align}
&x\in\bbbr^n\longmapsto\iFT_n[\frac{\FT_n\psi(2^{-j}\xi)}{2^{-j}|\xi|}](x)\nonumber\\
&=2^{jn}\!\iFT_n[\frac{\FT_n\psi(\xi)}{|\xi|}](2^j x)\!\in\!\Sw_0(\bbbr^n),\label{eq3.48}
\end{align}
and the following functions (\ref{eq3.52}) and (\ref{eq3.53}) belong to $\Sw_{0}(\bbbr^n)$ for all $x\in\bbbr^n$ and $j\in\bbbz$,
\begin{align}
&y\!\in\!\bbbr^n\!\longmapsto\!\iFT_n[2^{-j}|\xi|\FT_n\psi(2^{-j}\xi)](x\!-\!y)\!\in\!\Sw_{0}(\bbbr^n),
\label{eq3.52}\\
&y\!\in\!\bbbr^n\!\longmapsto\!\iFT_n[\frac{\FT_n\psi(2^{-j}\xi)}{2^{-j}|\xi|}](x\!-\!y)\!\in\!
\Sw_{0}(\bbbr^n).\label{eq3.53}
\end{align}
For every $j\in\bbbz$, we use the following notations for tempered distributions $f_j^{\num}$ and $f_j^{\den}$ defined below,
\begin{equation}\label{eq3.4}
f_j^{\num}:=f*\iFT_n[2^{-j}|\xi|\FT_n\psi(2^{-j}\xi)]
\end{equation}
where ``$\num$'' means the factor $2^{-j}|\xi|$ appears in the numerator, and
\begin{equation}\label{eq3.5}
f_j^{\den}:=f*\iFT_n[\frac{\FT_n\psi(2^{-j}\xi)}{2^{-j}|\xi|}]
\end{equation}
where ``$\den$'' means the factor $2^{-j}|\xi|$ appears in the denominator. Then Proposition \ref{proposition1} (iv), conditions (\ref{eq3.141}), (\ref{eq3.2}), and (\ref{eq3.52}) imply $f_j^{\num}\in\Sw'(\bbbr^n)$ has the smooth function representative $(\ref{eq3.38})\!\in\!\functrep(f_j^{\num})\!\cap\!L^{p_0}(\bbbr^n)$,
\begin{align}
&x\in\bbbr^n\longmapsto<f,\iFT_n[2^{-j}|\xi|\FT_n\psi(2^{-j}\xi)](x-\cdot)>\nonumber\\
&=\int_{\bbbr^n}f(y)\cdot\iFT_n[2^{-j}|\xi|\FT_n\psi(2^{-j}\xi)](x-y)dy\nonumber\\
&=f*\iFT_n[2^{-j}|\xi|\FT_n\psi(2^{-j}\xi)](x),\label{eq3.38}
\end{align}
where $f$ appearing in the last line of (\ref{eq3.38}) is the function representative given by (\ref{eq3.2}). And Proposition \ref{proposition1} (iv), conditions (\ref{eq3.48}), (\ref{eq3.2}), and (\ref{eq3.53}) imply $f_j^{\den}\in\Sw'(\bbbr^n)$ has the smooth function representative $(\ref{eq3.54})\!\in\!\functrep(f_j^{\den})\!\cap\!L^{p_0}(\bbbr^n)$,
\begin{align}
&x\in\bbbr^n\longmapsto<f,\iFT_n[\frac{\FT_n\psi(2^{-j}\xi)}{2^{-j}|\xi|}](x-\cdot)>\nonumber\\
&=\int_{\bbbr^n}f(y)\cdot\iFT_n[\frac{\FT_n\psi(2^{-j}\xi)}{2^{-j}|\xi|}](x-y)dy\nonumber\\
&=f*\iFT_n[\frac{\FT_n\psi(2^{-j}\xi)}{2^{-j}|\xi|}](x),\label{eq3.54}
\end{align}
where $f$ appearing in the last line of (\ref{eq3.54}) is the function representative given by (\ref{eq3.2}). Now we pick a function $\eta\in\Sw_0(\bbbr^n)$ satisfying
\begin{equation}\label{eq3.6}
spt.\FT_n\eta\subseteq\{\xi\in\bbbr^n:\frac{1}{4}\leq|\xi|<4\},
\end{equation}
and
\begin{equation}\label{eq3.7}
\FT_n\eta(\xi)=1\text{ on the annulus }\{\xi\in\bbbr^n:\frac{1}{2}\leq|\xi|<2\},
\end{equation}
then $\FT_n\eta=1$ on the support set of $\FT_n\psi$. We claim that
\begin{equation}\label{eq3.40}
f_j\!*\!\partial_{n+1}[P_t(\cdot)]\!=\!2^j f_j^{\num}\!*\!\iFT_n[-2\pi 
e^{-2\pi t|\xi|}\FT_n\eta(2^{-j}\xi)]
\end{equation}
in the sense of $\Sw_0'(\bbbr^n)$ for every $t\in(0,\infty)$ and $j\in\bbbz$, where $f_j\!=\!f\!*\!\psi_{2^{-j}}\!\in\!\Sw'(\bbbr^n)\!\subseteq\!\Sw_{0}'(\bbbr^n)$ by Proposition \ref{proposition1} (iv). Let $\phi\in\Sw_0(\bbbr^n)$ and recall the function $\multi(\xi)$ defined in (\ref{eq3.43}). Proposition \ref{proposition1} (ii) implies the convolution $f_j*\partial_{n+1}[P_t(\cdot)]\in\Sw_{0}'(\bbbr^n)$, and we use equations (\ref{eq1.112}) and (\ref{eq3.55}) to obtain
\begin{equation}\label{eq3.56}
<f_j\!*\!\partial_{n+1}[P_t(\cdot)],\phi>=<f\!*\!\psi_{2^{-j}},\partial_{n+1}[P_t(\cdot)]\!*\!\phi>.
\end{equation}
Since $\partial_{n+1}[P_t(\cdot)]\!*\!\phi\!\in\!\Sw_{0}(\bbbr^n)$ by equation (\ref{eq1.111}), we can further deduce from equation (\ref{eq1.122}) that
\begin{align}
&(\ref{eq3.56})\!=<\!f,\tilde{\psi}_{2^{-j}}\!*\!\partial_{n+1}[P_t(\cdot)]\!*\!\phi\!>\nonumber\\
&=<\!f,\iFT_n[\FT_n\psi(-2^{-j}\xi)\!\cdot\!\multi(\xi)\!\cdot\!\FT_n\phi(\xi)]\!>.\label{eq3.57}
\end{align}
The condition (\ref{eq3.6}) implies
\begin{equation}\label{eq3.58}
x\!\in\!\bbbr^n\!\longmapsto\!\iFT_n[-2\pi e^{-2\pi t|\xi|}\FT_n\eta(2^{-j}\xi)](x)
\!\in\!\Sw_{0}(\bbbr^n).
\end{equation}
Since $f_j^{\num}\in\Sw'(\bbbr^n)\subseteq\Sw_{0}'(\bbbr^n)$, we can use (\ref{eq1.122}), (\ref{eq3.58}), (\ref{eq3.4}), and (\ref{eq3.141}) to obtain the following equation for all $g\in\Sw(\bbbr^n)$,
\begin{align}
&<2^j f_j^{\num}\!*\!\iFT_n[-2\pi e^{-2\pi t|\xi|}\FT_n\eta(2^{-j}\xi)],g>\nonumber\\
&=<\!2^j f\!*\!\iFT_n[2^{-j}|\xi|\FT_n\psi(2^{-j}\xi)],
\FT_n[-2\pi e^{-2\pi t|\xi|}\FT_n\eta(2^{-j}\xi)]\!*\!g\!>\nonumber\\
&=<\!f,\iFT_n[|\xi|\FT_n\psi(-2^{-j}\xi)\!\cdot\!
-2\pi e^{-2\pi t|\xi|}\FT_n\eta(-2^{-j}\xi)\FT_n g(\xi)]\!>\nonumber\\
&=<\!f,\iFT_n[\FT_n\psi(-2^{-j}\xi)\!\cdot\!\multi(\xi)\!\cdot\!\FT_n g(\xi)]\!>,\label{eq3.59}
\end{align}
where the last equation of (\ref{eq3.59}) is true because the condition (\ref{eq3.7}) implies for all $j\in\bbbz$,
\begin{equation}\label{eq3.60}
\FT_n\eta(-2^{-j}\xi)=1\text{ whenever }\FT_n\psi(-2^{-j}\xi)\neq0,
\end{equation}
and $\multi(\xi)$ is the function defined in (\ref{eq3.43}). Combining (\ref{eq3.56}), (\ref{eq3.57}), and (\ref{eq3.59}) proves the claim (\ref{eq3.40}). The condition (\ref{eq3.6}) also implies
\begin{equation}\label{eq3.63}
x\!\in\!\bbbr^n\!\longmapsto\!\iFT_n[-2\pi|\xi|^2 e^{-2\pi t|\xi|}\FT_n\eta(2^{-j}\xi)](x)
\!\in\!\Sw_{0}(\bbbr^n).
\end{equation}
Since $f_j^{\den}\in\Sw'(\bbbr^n)\subseteq\Sw_{0}'(\bbbr^n)$, we can use (\ref{eq1.122}), (\ref{eq3.63}), (\ref{eq3.5}), and (\ref{eq3.48}) to obtain the following equation for all $g\in\Sw(\bbbr^n)$,
\begin{align}
&<2^{-j}f_j^{\den}\!*\!\iFT_n[-2\pi|\xi|^2 e^{-2\pi t|\xi|}\FT_n\eta(2^{-j}\xi)],g>\nonumber\\
&=<\!2^{-j}f\!*\!\iFT_n[\frac{\FT_n\psi(2^{-j}\xi)}{2^{-j}|\xi|}],
\FT_n[-2\pi|\xi|^2 e^{-2\pi t|\xi|}\FT_n\eta(2^{-j}\xi)]\!*\!g\!>\nonumber\\
&=<\!f,\iFT_n[\frac{\FT_n\psi(-2^{-j}\xi)}{|\xi|}\!\cdot\!
-2\pi|\xi|^2 e^{-2\pi t|\xi|}\FT_n\eta(-2^{-j}\xi)\!\cdot\!\FT_n g(\xi)]\!>\nonumber\\
&=<\!f,\iFT_n[\FT_n\psi(-2^{-j}\xi)\!\cdot\!\multi(\xi)\!\cdot\!\FT_n g(\xi)]\!>,\label{eq3.64}
\end{align}
where the last equation of (\ref{eq3.64}) is due to (\ref{eq3.60}). Combining (\ref{eq3.56}), (\ref{eq3.57}), and (\ref{eq3.64}) yields that the equation (\ref{eq3.51}) is true in the sense of $\Sw_0'(\bbbr^n)$ for every $t\!\in\!(0,\infty)$ and $j\!\in\!\bbbz$,
\begin{equation}\label{eq3.51}
f_j\!*\!\partial_{n+1}[P_t(\cdot)]\!=\!2^{-j}f_j^{\den}\!*\!\iFT_n[-2\pi|\xi|^2 
e^{-2\pi t|\xi|}\FT_n\eta(2^{-j}\xi)].
\end{equation}
By Lemma \ref{lemma8}, we have that the series in (\ref{eq2.261}) converges in the sense of $\Sw_0'(\bbbr^n)$, and we can deduce from (\ref{eq3.40}) and (\ref{eq3.51}) that for every $k\in\bbbz$ and $t\in(0,\infty)$, the following series in (\ref{eq3.123}) converges to $f*\partial_{n+1}[P_t(\cdot)]\in\Sw_{0}'(\bbbr^n)$ in the sense of $\Sw_0'(\bbbr^n)$,
\begin{align}
&f*\partial_{n+1}[P_t(\cdot)]\nonumber\\
&=\sum_{k\leq j}2^{-j}f_j^{\den}*\iFT_n[-2\pi|\xi|^2 e^{-2\pi t|\xi|}\FT_n\eta(2^{-j}\xi)]
\nonumber\\
&\quad+\sum_{j<k}2^{j}f_j^{\num}*\iFT_n[-2\pi e^{-2\pi t|\xi|}\FT_n\eta(2^{-j}\xi)].\label{eq3.123}
\end{align}
Since $f_j^{\num}\in\Sw'(\bbbr^n)$ has the smooth function representative $(\ref{eq3.38})\in\functrep(f_j^{\num})\cap L^{p_0}(\bbbr^n)$, we use Proposition \ref{proposition1} (iv) and condition (\ref{eq3.58}) to deduce the tempered distribution
\begin{equation}\label{eq3.65}
f_j^{\num}*\iFT_n[-2\pi e^{-2\pi t|\xi|}\FT_n\eta(2^{-j}\xi)]\in\Sw'(\bbbr^n)
\end{equation}
has the smooth function representative
\begin{equation}\label{eq3.66}
(\ref{eq3.67})\in\functrep(f_j^{\num}*\iFT_n[-2\pi e^{-2\pi t|\xi|}\FT_n\eta(2^{-j}\xi)])\cap L^{p_0}(\bbbr^n),
\end{equation}
where the smooth function (\ref{eq3.67}) is given by
\begin{align}
&x\in\bbbr^n\longmapsto<f_j^{\num},\iFT_n[-2\pi e^{-2\pi t|\xi|}\FT_n\eta(2^{-j}\xi)](x-\cdot)>
\nonumber\\
&=\int_{\bbbr^n}f_j^{\num}(y)\cdot\iFT_n[-2\pi e^{-2\pi t|\xi|}\FT_n\eta(2^{-j}\xi)](x-y)dy\nonumber\\
&=f_j^{\num}*\iFT_n[-2\pi e^{-2\pi t|\xi|}\FT_n\eta(2^{-j}\xi)](x),\label{eq3.67}
\end{align}
and we specify $f_j^{\num}$ appearing in the last line of (\ref{eq3.67}) is the function (\ref{eq3.38}). Since $f_j^{\den}\in\Sw'(\bbbr^n)$ has the smooth function representative $(\ref{eq3.54})\in\functrep(f_j^{\den})\cap L^{p_0}(\bbbr^n)$, we use Proposition \ref{proposition1} (iv) and condition (\ref{eq3.63}) to deduce the tempered distribution
\begin{equation}\label{eq3.68}
f_j^{\den}*\iFT_n[-2\pi|\xi|^2 e^{-2\pi t|\xi|}\FT_n\eta(2^{-j}\xi)]\in\Sw'(\bbbr^n)
\end{equation}
has the smooth function representative
\begin{equation}\label{eq3.69}
(\ref{eq3.74})\in\functrep(f_j^{\den}*\iFT_n[-2\pi|\xi|^2 e^{-2\pi t|\xi|}\FT_n\eta(2^{-j}\xi)])\cap L^{p_0}(\bbbr^n),
\end{equation}
where the smooth function (\ref{eq3.74}) is given by
\begin{align}
&x\in\bbbr^n\longmapsto<f_j^{\den},\iFT_n[-2\pi|\xi|^2 e^{-2\pi t|\xi|}\FT_n\eta(2^{-j}\xi)](x-\cdot)>
\nonumber\\
&=\int_{\bbbr^n}f_j^{\den}(y)\cdot
\iFT_n[-2\pi|\xi|^2 e^{-2\pi t|\xi|}\FT_n\eta(2^{-j}\xi)](x-y)dy\nonumber\\
&=f_j^{\den}*\iFT_n[-2\pi|\xi|^2 e^{-2\pi t|\xi|}\FT_n\eta(2^{-j}\xi)](x),\label{eq3.74}
\end{align}
and we specify $f_j^{\den}$ appearing in the last line of (\ref{eq3.74}) is the function (\ref{eq3.54}). To prove (\ref{eq3.1}) in the case where the partial derivative of the Poisson integral is with respect to $t$, we observe that equations (\ref{eq3.59}) and (\ref{eq3.64}) imply equation (\ref{eq3.127}) is true in the sense of $\Sw'(\bbbr^n)$,
\begin{align}
&2^j f_j^{\num}*\iFT_n[-2\pi e^{-2\pi t|\xi|}\FT_n\eta(2^{-j}\xi)]\nonumber\\
&=2^{-j}f_j^{\den}*\iFT_n[-2\pi|\xi|^2 e^{-2\pi t|\xi|}\FT_n\eta(2^{-j}\xi)].\label{eq3.127}
\end{align}
Since smooth functions are also in $\Lloc$, we deduce from (\ref{eq3.66}), (\ref{eq3.67}), (\ref{eq3.69}), (\ref{eq3.74}), (\ref{eq3.127}), and Lemma \ref{lemma15} (i) that equation (\ref{eq3.128}) is true for all $x\in\bbbr^n$, $t\in(0,\infty)$, and $j\in\bbbz$,
\begin{align}
&F_j(x,t):=2^j f_j^{\num}*\iFT_n[-2\pi e^{-2\pi t|\xi|}\FT_n\eta(2^{-j}\xi)](x)\nonumber\\
&=2^{-j}f_j^{\den}*\iFT_n[-2\pi|\xi|^2 e^{-2\pi t|\xi|}\FT_n\eta(2^{-j}\xi)](x).\label{eq3.128}	
\end{align}
Indeed, Lemma \ref{lemma15} (i) implies the second equation of (\ref{eq3.128}) holds true for almost every $x\in\bbbr^n$, and the smoothness of relevant functions implies the two functions in (\ref{eq3.128}) are equal for all $x\in\bbbr^n$, and we denote this common smooth function by $F_j(x,t)$. We have
\begin{align}
&F_j(x,t)\!\in\!\functrep(2^j f_j^{\num}*\iFT_n[-2\pi e^{-2\pi t|\xi|}\FT_n\eta(2^{-j}\xi)])
\cap L^{p_0}(\bbbr^n)\nonumber\\
&=\functrep(2^{-j}f_j^{\den}*\iFT_n[-2\pi|\xi|^2 e^{-2\pi t|\xi|}\FT_n\eta(2^{-j}\xi)])
\cap L^{p_0}(\bbbr^n).\label{eq3.132}
\end{align}
Given $k\!\in\!\bbbz$, we consider the series (\ref{eq3.124}) of nonnegative real numbers,
\begin{align}
&\sum_{j\in\bbbz}\!|F_j(x,t)|\!\!=\!\!\sum_{k\leq j}\!2^{-j}\big|f_j^{\den}\!*\!
\iFT_n[-2\pi|\xi|^2 e^{-2\pi t|\xi|}\FT_n\eta(2^{-j}\xi)](x)\big|\nonumber\\
&+\!\!\sum_{j<k}\!2^j\big|f_j^{\num}\!*\!\iFT_n[-2\pi e^{-2\pi t|\xi|}\FT_n\eta(2^{-j}\xi)](x)\big|.
\label{eq3.124}
\end{align}
By (\ref{eq3.128}), we have that the value of the series (\ref{eq3.124}) does not depend on $k\in\bbbz$ whenever it converges. And we begin with estimating the following term,
\begin{align}
&\int_0^{\infty}t^{q-sq-1}\bigg\{\sum_{j\in\bbbz}|F_j(x,t)|\bigg\}^q dt\nonumber\\
&\lesssim\sum_{k\in\bbbz}2^{-k(q-sq-1)}\int_{2^{-k-1}}^{2^{-k}}
\big\{\text{the series (\ref{eq3.124})}\big\}^q dt\nonumber\\
&\lesssim\sum_{k\in\bbbz}2^{-k(q-sq)}\esssup_{2^{-k-1}\leq t\leq 2^{-k}}
\big\{\text{the series (\ref{eq3.124})}\big\}^q.\label{eq3.125}
\end{align}
Furthermore, the expression (\ref{eq3.125}), into which we insert the series (\ref{eq3.124}), can be estimated from above by a constant multiple of the sum of the following two terms,
\begin{equation}\label{eq3.10}
\sum_{k\in\bbbz}2^{ksq}\!\!\!\!\esssup_{2^{-k-1}\leq t\leq 2^{-k}}\!\!\big\{
\sum_{k>j}2^{j-k}|f_j^{\num}\!*\!\iFT_n[e^{-2\pi t|\xi|}\FT_n\eta(2^{-j}\xi)](x)|\big\}^q
\end{equation}
and
\begin{equation}\label{eq3.11}
\sum_{k\in\bbbz}2^{ksq}\!\!\!\!\esssup_{2^{-k-1}\leq t\leq 2^{-k}}\!\!\big\{
\sum_{k\leq j}2^{-k-j}|f_j^{\den}\!*\!\iFT_n[|\xi|^2 e^{-2\pi t|\xi|}\FT_n\eta(2^{-j}\xi)](x)|\big\}^q,
\end{equation}
and the constant only depends on $q$.\\

Step 2: We estimate the term (\ref{eq3.10}). When $k>j$ and $2^{-k-1}\leq t\leq 2^{-k}$, we recall \cite[Definition 2.2]{Wang2023} of the Peetre-Fefferman-Stein maximal function and Remark \ref{remark4} for its rephrased version in the language of the theory of functional and function representatives. The tempered distribution $f_j^{\num}\in\Sw'(\bbbr^n)$, as defined in (\ref{eq3.4}), has the smooth function representative $(\ref{eq3.38})\!\in\!\functrep(f_j^{\num})\!\cap\!L^{p_0}(\bbbr^n)$. Furthermore, when $g\in\Sw(\bbbr^n)$ has 
$$spt.g\subseteq\{\xi\in\bbbr^n:|\xi|<2^{j-1}\text{ or }|\xi|\geq2^{j+1}\},$$
we use \cite[Definition 2.3.7]{14classical} and equation (\ref{eq1.122}) to deduce that
\begin{align}
&<\!\FT_n f_j^{\num},g\!>=<\!f_j^{\num},\FT_n g\!>\nonumber\\
&=<\!f*\iFT_n[2^{-j}|\xi|\FT_n\psi(2^{-j}\xi)],\FT_n g\!>\nonumber\\
&=<\!f,\FT_n[2^{-j}|\xi|\FT_n\psi(2^{-j}\xi)]*\FT_n g\!>\nonumber\\
&=<\!f,\FT_n[2^{-j}|\xi|\FT_n\psi(2^{-j}\xi)\!\cdot\!g(\xi)]\!>
=<\!f,\FT_n[0]\!>=0,\label{eq3.76}
\end{align}
thus the distributional Fourier transform $\FT_n f_j^{\num}$ is compactly supported in the annulus 
$$\{\xi\in\bbbr^n:2^{j-1}\leq|\xi|<2^{j+1}\}.$$
Therefore the Peetre-Fefferman-Stein maximal function $\PFSmax_n f_j^{\num}(x)$ is well-defined for every $j\in\bbbz$ by the following expression,
\begin{equation}\label{eq3.103}
\PFSmax_n f_j^{\num}(x)=\esssup_{z\in\bbbr^n}
\frac{|f_j^{\num}(x-z)|}{(1+2^{j+1}|z|)^{\frac{n}{r}}},
\end{equation}
where in the proof of this theorem, $r$ is a positive real number chosen to satisfy $0<r<\min\{p,q\}$. The function representative (\ref{eq3.38}) is smooth and satisfies the least additional continuity condition as illustrated in Remark \ref{remark4}, thus all the conclusions of \cite[Remark 2.4, Remark 2.5, Lemma 2.6, Remark 2.7, Remark 2.8, Lemma 2.11, and Remark 2.13]{Wang2023} are true for $\PFSmax_n f_j^{\num}(x)$. Then by (\ref{eq3.65}), (\ref{eq3.66}), and (\ref{eq3.67}), we have
\begin{align}
&|f_j^{\num}*\iFT_n[e^{-2\pi t|\xi|}\FT_n\eta(2^{-j}\xi)](x)|\nonumber\\
&\leq\int_{\bbbr^n}|\iFT_n[e^{-2\pi t|\xi|}\FT_n\eta(2^{-j}\xi)](z)|\cdot|f_j^{\num}(x-z)|dz\nonumber\\
&\leq\!\!\!\int_{\bbbr^n}\!\!\!\!\!|\iFT_n[e^{-2\pi t|\xi|}\FT_n\eta(\frac{t\xi}{2^j t})](z)|\!\cdot\!
(1\!+\!2^{j+1}|z|)^{\frac{n}{r}}dz\!\cdot\!\PFSmax_n f_j^{\num}(x)\nonumber\\
&=\!\!\!\int_{\bbbr^n}\!\!\!\!\!|\iFT_n[e^{-2\pi|\xi|}\FT_n\eta(\frac{\xi}{2^j t})](\frac{z}{t})|
\!\cdot\!(1\!+\!2^{j+1}t\!\cdot\!|\frac{z}{t}|)^{\frac{n}{r}}\frac{dz}{t^n}\!\cdot\!
\PFSmax_n f_j^{\num}(x)\nonumber\\
&=\!\!\!\int_{\bbbr^n}\!\!\!\!\!|\iFT_n[e^{-2\pi|\xi|}\FT_n\eta(\frac{\xi}{2^j t})](z)|\!\cdot\!
(1\!+\!2^{j+1}t|z|)^{\frac{n}{r}}dz\!\cdot\!\PFSmax_n f_j^{\num}(x),\label{eq3.12}
\end{align}
where we also use the formula $$\iFT_n[e^{-2\pi t|\xi|}\FT_n\eta(\frac{t\xi}{2^j t})](z)=t^{-n}\cdot\iFT_n[e^{-2\pi|\xi|}\FT_n\eta(\frac{\xi}{2^j t})](\frac{z}{t})$$ and an appropriate change of variable. We can split the integral in the last line of (\ref{eq3.12}) into two parts, one is an integral over the domain $\{z\in\bbbr^n:2^j t|z|\leq 1\}$ and the other is an integral over the domain $\{z\in\bbbr^n:2^j t|z|\geq 1\}$. For the first part, we estimate as follows
		\begin{align}
			&\int_{2^j t|z|\leq 1}|\iFT_n[e^{-2\pi|\xi|}\FT_n\eta(\frac{\xi}{2^j t})](z)|\cdot(1+2^{j+1}t|z|)^{\frac{n}{r}}dz\nonumber\\
			&\lesssim\int_{2^j t|z|\leq 1}\int_{\bbbr^n}e^{-2\pi|\xi|}\cdot|\FT_n\eta(\frac{\xi}{2^j t})|d\xi dz
			\nonumber\\
			&\lesssim 2^{-jn}t^{-n}\cdot\int_{2^{j-2}t\leq|\xi|<2^{j+2}t}e^{-2\pi|\xi|}d\xi
			\cdot\|\FT_n\eta\|_{L^{\infty}(\bbbr^n)}\nonumber\\
			&\lesssim e^{-\frac{\pi}{2}\cdot 2^j t}\lesssim 1,\label{eq3.13}
		\end{align}
		where the implicit constants in (\ref{eq3.13}) depend on $n,r,\eta$ and we also use the support condition of $\FT_n\eta$ given in (\ref{eq3.6}). For the second part, when $k>j$, $2^{-k-1}\leq t\leq 2^{-k}$, and $2^j t|z|\geq 1$, we can assume without loss of generality that $z=(z_1,z_2,\cdots,z_n)=(z_1,z')$, $z'=(z_2,\cdots,z_n)$, and $|z_1|=\max\{|z_1|,|z_2|,\cdots,|z_n|\}$. Therefore we have $|z_1|\cdot\sqrt{n}\geq|z|>0$ and we can use integration by parts with respect to $\xi_1$ repeatedly to obtain the following
		\begin{align}
			&|\iFT_n[e^{-2\pi|\xi|}\FT_n\eta(\frac{\xi}{2^j t})](z)|\nonumber\\
			&=|\int_{\bbbr^{n-1}}\int_{-\infty}^{\infty}\frac{\partial^M}{\partial\xi_1^M}\big(
			e^{-2\pi|\xi|}\FT_n\eta(\frac{\xi}{2^j t})\big)\cdot\frac{e^{2\pi i\xi_1 z_1}\cdot e^{2\pi i\xi'\cdot z'}}{(-2\pi iz_1)^M}d\xi_1 d\xi'|\nonumber\\
			&\lesssim|z|^{-M}\int_{\bbbr^n}\big|\frac{\partial^M}{\partial\xi_1^M}\big(
			e^{-2\pi|\xi|}\FT_n\eta(\frac{\xi}{2^j t})\big)\big|d\xi,\label{eq3.14}
		\end{align}
		where $M$ is a sufficiently large positive integer and the constant in (\ref{eq3.14}) depends on $n,M$. Combining some direct and trivial calculations with the fact that the support condition of $\FT_n\eta$ given in (\ref{eq3.6}) restricts the range of $\xi$ in the integral in the last line of (\ref{eq3.14}) to a domain where $|\xi|\sim 2^j t$, we can obtain
		\begin{equation}\label{eq3.15}
			(\ref{eq3.14})\lesssim\frac{e^{-\frac{\pi}{2}\cdot 2^j t}}{|z|^M}\cdot\sum_{l=0}^M (2^j t)^{n-l},
		\end{equation}
		where the constant in (\ref{eq3.15}) depends on $n,M,\eta$. Combining (\ref{eq3.14}) and (\ref{eq3.15}) and inserting the result into the expression below yield
		\begin{align}
			&\int_{2^j t|z|\geq 1}|\iFT_n[e^{-2\pi|\xi|}\FT_n\eta(\frac{\xi}{2^j t})](z)|\cdot(1+2^{j+1}t|z|)^{\frac{n}{r}}dz\nonumber\\
			&\lesssim\int_{2^j t|z|\geq 1}\frac{(1+2^{j+1}t|z|)^{\frac{n}{r}}}{|z|^M}dz\cdot e^{-\frac{\pi}{2}\cdot 2^j t}
			\cdot\sum_{l=0}^M (2^j t)^{n-l}\nonumber\\
			&\lesssim\int_{2^j t|z|\geq 1}\frac{(2^{j}t|z|)^{\frac{n}{r}}}{(2^j t|z|)^M}dz\cdot e^{-\frac{\pi}{2}\cdot 2^j t}\cdot\sum_{l=0}^M (2^j t)^{n+l}\nonumber\\
			&\lesssim e^{-\frac{\pi}{4}\cdot 2^{j-k}}\cdot\sum_{l=0}^M 2^{(j-k)l}
			\lesssim(M+1)e^{-\frac{\pi}{4}\cdot 2^{j-k}}\lesssim 1,\label{eq3.16}
		\end{align}
		where we pick $M$ so that $\frac{n}{r}+n<M$ and thus the integral $\int_{|z|\geq 1}|z|^{\frac{n}{r}-M}dz$ converges and we also use the conditions that $k>j$ and $2^{-k-1}\leq t\leq 2^{-k}$. The implicit constants in (\ref{eq3.16}) depend on $n,r,M,\eta$ and are independent of $k,j,t$. Combining estimates (\ref{eq3.13}) and (\ref{eq3.16}), we can conclude that the integral in the last line of (\ref{eq3.12}) can be bounded from above by a constant solely depending on the parameters $n,r,M,\eta$, therefore we have obtained the following pointwise estimate
\begin{equation}\label{eq3.17}
|f_j^{\num}*\iFT_n[e^{-2\pi t|\xi|}\FT_n\eta(2^{-j}\xi)](x)|\lesssim\PFSmax_n f_j^{\num}(x),
\end{equation}
		for $k>j$ and $2^{-k-1}\leq t\leq 2^{-k}$. Inserting (\ref{eq3.17}) into (\ref{eq3.10}) yields
		\begin{align}
			&(\ref{eq3.10})\lesssim\sum_{k\in\bbbz}2^{ksq}\big\{\sum_{k>j}2^{j-k}\PFSmax_n f_j^{\num}(x)\big\}^q\nonumber\\
			&=\sum_{k\in\bbbz}\big(\sum_{l<0}2^l\cdot 2^{ks}\PFSmax_n f_{k+l}^{\num}(x)\big)^q\nonumber\\
			&=\sum_{k\in\bbbz}\big(\sum_{l<0}2^{l(1-s)}\cdot 2^{(k+l)s}\PFSmax_n f_{k+l}^{\num}(x)\big)^q.\label{eq3.18}
		\end{align}
		If $0<q<1$, then we can have the following inequality directly and exchange the order of summations to obtain
		\begin{align}
			&(\ref{eq3.18})\leq\sum_{k\in\bbbz}\sum_{l<0}2^{l(1-s)q}\cdot 2^{(k+l)sq}\PFSmax_n f_{k+l}^{\num}(x)^q\nonumber\\
			&=\sum_{l<0}2^{l(1-s)q}\sum_{k\in\bbbz}2^{(k+l)sq}\PFSmax_n f_{k+l}^{\num}(x)^q
			\lesssim\sum_{k\in\bbbz}2^{ksq}\PFSmax_n f_{k}^{\num}(x)^q.\label{eq3.19}
		\end{align}
		If $1\leq q<\infty$, then we use the Minkowski's inequality for $\|\cdot\|_{l^q}$-norm with respect to $k\in\bbbz$ to obtain
		\begin{align}
			&(\ref{eq3.18})
			=\|\sum_{l<0}2^{l(1-s)}\cdot\{2^{(k+l)s}\PFSmax_n f_{k+l}^{\num}(x)\}_{k\in\bbbz}\|_{l^q}^q\nonumber\\
			&\leq\big(\sum_{l<0}2^{l(1-s)}\cdot(\sum_{k\in\bbbz}2^{ksq}\PFSmax_n f_{k}^{\num}(x)^q)^{\frac{1}{q}}\big)^q
			\lesssim\sum_{k\in\bbbz}2^{ksq}\PFSmax_n f_{k}^{\num}(x)^q.\label{eq3.20}
		\end{align}
		In (\ref{eq3.19}) and (\ref{eq3.20}), we use the condition $s<1$ and the constants depend on $s,q$. Combining (\ref{eq3.18}), (\ref{eq3.19}), and (\ref{eq3.20}), we have obtained
		\begin{equation}\label{eq3.21}
			(\ref{eq3.10})\lesssim\sum_{k\in\bbbz}2^{ksq}\PFSmax_n f_{k}^{\num}(x)^q
			\quad\text{for $s<1$ and $0<q<\infty$},
		\end{equation}
		and the constant only depends on $n,r,M,\eta,s,q$.\\
		
Step 3: We estimate the term (\ref{eq3.11}). When $k\leq j$ and $2^{-k-1}\leq t\leq 2^{-k}$, we recall \cite[Definition 2.2]{Wang2023} of the Peetre-Fefferman-Stein maximal function and Remark \ref{remark4} for its rephrased version in the language of the theory of functional and function representatives. The tempered distribution $f_j^{\den}\in\Sw'(\bbbr^n)$, as defined in (\ref{eq3.5}), has the smooth function representative $(\ref{eq3.54})\!\in\!\functrep(f_j^{\den})\!\cap\!L^{p_0}(\bbbr^n)$. Furthermore, when $g\in\Sw(\bbbr^n)$ has 
$$spt.g\subseteq\{\xi\in\bbbr^n:|\xi|<2^{j-1}\text{ or }|\xi|\geq2^{j+1}\},$$
we use formula (\ref{eq1.126}) and \cite[Definition 2.3.15]{14classical} to deduce that
\begin{align}
&<\!\FT_n f_j^{\den},g\!>=<\!\FT_n f\!\cdot\!\frac{\FT_n\psi(2^{-j}\xi)}{2^{-j}|\xi|},g\!>\nonumber\\
&=<\!\FT_n f,\frac{\FT_n\psi(2^{-j}\xi)}{2^{-j}|\xi|}\!\cdot\!g\!>
=<\!\FT_n f,0\!>=0,\label{eq3.104}
\end{align}
thus the distributional Fourier transform $\FT_n f_j^{\den}$ is compactly supported in the annulus 
$$\{\xi\in\bbbr^n:2^{j-1}\leq|\xi|<2^{j+1}\}.$$
Therefore the Peetre-Fefferman-Stein maximal function $\PFSmax_n f_j^{\den}(x)$ is well-defined for every $j\in\bbbz$ by the following expression,
\begin{equation}\label{eq3.106}
\PFSmax_n f_j^{\den}(x)=\esssup_{z\in\bbbr^n}
\frac{|f_j^{\den}(x-z)|}{(1+2^{j+1}|z|)^{\frac{n}{r}}},
\end{equation}
where in the proof of this theorem, $r$ is a positive real number chosen to satisfy $0<r<\min\{p,q\}$. The function representative (\ref{eq3.54}) is smooth and satisfies the least additional continuity condition as illustrated in Remark \ref{remark4}, thus all the conclusions of \cite[Remark 2.4, Remark 2.5, Lemma 2.6, Remark 2.7, Remark 2.8, Lemma 2.11, and Remark 2.13]{Wang2023} are true for $\PFSmax_n f_j^{\den}(x)$. Then by (\ref{eq3.68}), (\ref{eq3.69}), and (\ref{eq3.74}), we have
\begin{align}
&|f_j^{\den}*\iFT_n[|\xi|^2 e^{-2\pi t|\xi|}\FT_n\eta(2^{-j}\xi)](x)|\nonumber\\
&\leq\!\!\!\int_{\bbbr^n}\!\!\!\!\!|\iFT_n[|\xi|^2 e^{-2\pi t|\xi|}\FT_n\eta(2^{-j}\xi)](z)|
\!\cdot\!|f_j^{\den}(x\!-\!z)|dz\nonumber\\
&\leq\!\!\!\int_{\bbbr^n}\!\!\!\!\!|\iFT_n[|\xi|^2 e^{-2\pi t|\xi|}\FT_n\eta(2^{-j}\xi)](z)|
(1\!+\!2^{j+1}|z|)^{\frac{n}{r}}dz\!\cdot\!\PFSmax_n f_j^{\den}(x)\nonumber\\
&=\!\!\!\int_{\bbbr^n}\!\!\!\!\!t^{-2}|\iFT_n[|t\xi|^2 e^{-2\pi t|\xi|}
\FT_n\eta(\frac{t\xi}{2^j t})](z)|(1\!+\!2^{j+1}|z|)^{\frac{n}{r}}dz\!\cdot\!
\PFSmax_n f_j^{\den}(x)\nonumber\\
&=\!\!\!\int_{\bbbr^n}\!\!\!\!\!t^{-2}|\iFT_n[|\xi|^2 e^{-2\pi|\xi|}
\FT_n\eta(\frac{\xi}{2^j t})](\frac{z}{t})|(1\!+\!2^{j+1}t\!\cdot\!|\frac{z}{t}|)^{\frac{n}{r}}
\frac{dz}{t^n}\!\cdot\!\PFSmax_n f_j^{\den}(x)\nonumber\\
&=\!\!\!\int_{\bbbr^n}\!\!\!\!\!|\iFT_n[|\xi|^2 e^{-2\pi|\xi|}\!\FT_n\eta(\frac{\xi}{2^j t})](z)|
(1\!+\!2^{j+1}t|z|)^{\frac{n}{r}}dz\!\cdot\!t^{-2}\PFSmax_n f_j^{\den}(x),\label{eq3.22}
\end{align}
where we also use the formula $$\iFT_n[|t\xi|^2 e^{-2\pi t|\xi|}\FT_n\eta(\frac{t\xi}{2^j t})](z)\!=\!t^{-n}\!\cdot\!\iFT_n[|\xi|^2 e^{-2\pi|\xi|}\FT_n\eta(\frac{\xi}{2^j t})](\frac{z}{t})$$ and an appropriate change of variable. We can split the integral in the last line of (\ref{eq3.22}) into two parts, one is an integral over the domain $\{z\in\bbbr^n:2^j t|z|\leq 1\}$ and the other is an integral over the domain $\{z\in\bbbr^n:2^j t|z|\geq 1\}$. For the first part, we estimate as follows
		\begin{align}
			&\int_{2^j t|z|\leq 1}|\iFT_n[|\xi|^2 e^{-2\pi|\xi|}\FT_n\eta(\frac{\xi}{2^j t})](z)|\cdot(1+2^{j+1}t|z|)^{\frac{n}{r}}dz\nonumber\\
			&\lesssim\int_{2^j t|z|\leq 1}\int_{\bbbr^n}|\xi|^2 e^{-2\pi|\xi|}\cdot|\FT_n\eta(\frac{\xi}{2^j t})|d\xi dz
			\nonumber\\
			&\lesssim 2^{-jn}t^{-n}\cdot\int_{2^{j-2}t\leq|\xi|<2^{j+2}t}|\xi|^2 e^{-2\pi|\xi|}d\xi
			\cdot\|\FT_n\eta\|_{L^{\infty}(\bbbr^n)}\nonumber\\
			&\lesssim 2^{2(j-k)} e^{-\frac{\pi}{4}\cdot 2^{j-k}}\lesssim 1,\label{eq3.23}
		\end{align}
		where the implicit constants in (\ref{eq3.23}) depend on $n,r,\eta$ and we also use the support condition of $\FT_n\eta$ given in (\ref{eq3.6}). The last inequality of (\ref{eq3.23}) is due to the fact that $\lim_{x\rightarrow+\infty}\frac{2^{2x}}{e^{\frac{\pi}{4}\cdot 2^x}}=0$, which can be easily verified by repeatedly applying L'Hospital's rule. For the second part, when $k\leq j$, $2^{-k-1}\leq t\leq 2^{-k}$, and $2^j t|z|\geq 1$, we can assume without loss of generality that $z=(z_1,z_2,\cdots,z_n)=(z_1,z')$, $z'=(z_2,\cdots,z_n)$, and $|z_1|=\max\{|z_1|,|z_2|,\cdots,|z_n|\}$. Therefore we have $|z_1|\cdot\sqrt{n}\geq|z|>0$ and we can use integration by parts with respect to $\xi_1$ repeatedly to obtain the following
		\begin{align}
			&|\iFT_n[|\xi|^2 e^{-2\pi|\xi|}\FT_n\eta(\frac{\xi}{2^j t})](z)|\nonumber\\
			&=|\int_{\bbbr^{n-1}}\int_{-\infty}^{\infty}\frac{\partial^M}{\partial\xi_1^M}\big(|\xi|^2
			e^{-2\pi|\xi|}\FT_n\eta(\frac{\xi}{2^j t})\big)\cdot\frac{e^{2\pi i\xi_1 z_1}\cdot e^{2\pi i\xi'\cdot z'}}{(-2\pi iz_1)^M}d\xi_1 d\xi'|\nonumber\\
			&\lesssim|z|^{-M}\int_{\bbbr^n}\big|\frac{\partial^M}{\partial\xi_1^M}\big(|\xi|^2
			e^{-2\pi|\xi|}\FT_n\eta(\frac{\xi}{2^j t})\big)\big|d\xi,\label{eq3.24}
		\end{align}
		where $M$ is a sufficiently large positive integer satisfying the condition $\frac{n}{r}+n<M$ and the constant in (\ref{eq3.24}) depends on $n,M$. Combining some direct and trivial calculations with the fact that the support condition of $\FT_n\eta$ given in (\ref{eq3.6}) restricts the range of $\xi$ in the integral in the last line of (\ref{eq3.24}) to a domain where $|\xi|\sim 2^j t$, we can obtain
		\begin{equation}\label{eq3.25}
			(\ref{eq3.24})\lesssim\frac{e^{-\frac{\pi}{2}\cdot 2^j t}}{|z|^M}\cdot\sum_{l=0}^M (2^j t)^{n+2-l},
		\end{equation}
		where the constant in (\ref{eq3.25}) depends on $n,M,\eta$. Combining (\ref{eq3.24}) and (\ref{eq3.25}) and inserting the result into the expression below yield
		\begin{align}
			&\int_{2^j t|z|\geq 1}|\iFT_n[|\xi|^2 e^{-2\pi|\xi|}\FT_n\eta(\frac{\xi}{2^j t})](z)| \cdot(1+2^{j+1}t|z|)^{\frac{n}{r}}dz\nonumber\\
			&\lesssim\int_{2^j t|z|\geq 1}\frac{(1+2^{j+1}t|z|)^{\frac{n}{r}}}{|z|^M}dz\cdot e^{-\frac{\pi}{2}\cdot 2^j t}
			\cdot\sum_{l=0}^M (2^j t)^{n+2-l}\nonumber\\
			&\lesssim\int_{2^j t|z|\geq 1}\frac{(2^{j}t|z|)^{\frac{n}{r}}}{(2^j t|z|)^M}dz\cdot e^{-\frac{\pi}{2}\cdot 2^j t}\cdot\sum_{l=0}^M (2^j t)^{n+2+l}\nonumber\\
			&\lesssim(M+1)2^{(j-k)(M+2)}\cdot e^{-\frac{\pi}{4}\cdot 2^{j-k}}\lesssim 1,\label{eq3.26}
		\end{align}
where we use the conditions that $k\leq j$, $2^{-k-1}\leq t\leq 2^{-k}$, and the fact that the integral $\int_{|z|\geq 1}|z|^{\frac{n}{r}-M}dz$ converges since $\frac{n}{r}+n<M$, and the implicit constants in (\ref{eq3.26}) depend on $n,r,M,\eta$. The last inequality of (\ref{eq3.26}) is due to the fact that $\lim_{x\rightarrow+\infty}\frac{2^{(M+2)x}}{e^{\frac{\pi}{4}\cdot 2^x}}=0$, which can be verified by repeatedly applying L'Hospital's rule. Combining (\ref{eq3.23}) and (\ref{eq3.26}), we can conclude that the integral in the last line of (\ref{eq3.22}) can be bounded from above by a constant whose value only depends on the parameters $n,r,M,\eta$ and in particular, is independent of $k,j,t$. Therefore we have obtained the following pointwise estimate
\begin{equation}\label{eq3.27}
|f_j^{\den}\!*\!\iFT_n\![|\xi|^2 e^{-2\pi t|\xi|}\FT_n\eta(2^{-j}\xi)](x)|\!\lesssim\!
t^{-2}\PFSmax_n f_j^{\den}(x)\!\lesssim\!2^{2k}\PFSmax_n f_j^{\den}(x),
\end{equation}
		for $k\leq j$ and $2^{-k-1}\leq t\leq 2^{-k}$. Inserting (\ref{eq3.27}) into (\ref{eq3.11}) yields
		\begin{align}
			&(\ref{eq3.11})
			\lesssim\sum_{k\in\bbbz}2^{ksq}\big\{\sum_{k\leq j}2^{k-j}\PFSmax_n f_j^{\den}(x)\big\}^q\nonumber\\
			&=\sum_{k\in\bbbz}\big(\sum_{l\geq 0}2^{-l}\cdot 2^{ks}\PFSmax_n f_{k+l}^{\den}(x)\big)^q\nonumber\\
			&=\sum_{k\in\bbbz}\big(\sum_{l\geq 0}2^{-l(1+s)}\cdot 2^{(k+l)s}
			\PFSmax_n f_{k+l}^{\den}(x)\big)^q.\label{eq3.28}
		\end{align}
		If $0<q<1$, then we can have the following inequality directly and exchange the order of summations to obtain
		\begin{align}
			&(\ref{eq3.28})
			\leq\sum_{k\in\bbbz}\sum_{l\geq 0}2^{-l(1+s)q}\cdot 2^{(k+l)sq}\PFSmax_n f_{k+l}^{\den}(x)^q\nonumber\\
			&=\sum_{l\geq 0}2^{-l(1+s)q}\sum_{k\in\bbbz}2^{(k+l)sq}\PFSmax_n f_{k+l}^{\den}(x)^q
			\lesssim\sum_{k\in\bbbz}2^{ksq}\PFSmax_n f_{k}^{\den}(x)^q.\label{eq3.29}
		\end{align}
		If $1\leq q<\infty$, then we use the Minkowski's inequality for $\|\cdot\|_{l^q}$-norm with respect to $k\in\bbbz$ to obtain
		\begin{align}
			&(\ref{eq3.28})
			=\|\sum_{l\geq 0}2^{-l(1+s)}\cdot\{2^{(k+l)s}\PFSmax_n f_{k+l}^{\den}(x)\}_{k\in\bbbz}\|_{l^q}^q\nonumber\\
			&\leq\!\big(\sum_{l\geq 0}2^{-l(1+s)}\!\cdot\!
			(\sum_{k\in\bbbz}2^{ksq}\PFSmax_n f_{k}^{\den}(x)^q)^{\frac{1}{q}}\big)^q\!
			\lesssim\!\sum_{k\in\bbbz}2^{ksq}\PFSmax_n f_{k}^{\den}(x)^q.\label{eq3.30}
		\end{align}
		In (\ref{eq3.29}) and (\ref{eq3.30}), we use the condition $-1<s$ and the constants there depend on $s,q$. Combining (\ref{eq3.28}), (\ref{eq3.29}), and (\ref{eq3.30}), we have obtained
		\begin{equation}\label{eq3.31}
			(\ref{eq3.11})\lesssim\sum_{k\in\bbbz}2^{ksq}\PFSmax_n f_{k}^{\den}(x)^q
			\quad\text{for $-1<s$ and $0<q<\infty$},
		\end{equation}
		and the constant only depends on $n,r,M,\eta,s,q$.\\

Step 4: Now we combine the expression (\ref{eq3.125}), into which we insert the series (\ref{eq3.124}), with the estimates (\ref{eq3.10}), (\ref{eq3.11}), (\ref{eq3.21}), (\ref{eq3.31}) all together, raise both sides of the resulting inequality to the power of $\frac{1}{q}$, and put the subsequent result into $\|\cdot\|_{L^p(\bbbr^n)}$-quasinorm, then we can obtain the estimate below
\begin{align}
&\|\big[\text{the expression (\ref{eq3.125})}\big]^{\frac{1}{q}}\|_{L^p(\bbbr^n)}\nonumber\\
&\lesssim\!\|(\sum_{k\in\bbbz}2^{ksq}\PFSmax_n f_{k}^{\num}(x)^q)^{\frac{1}{q}}\|_{L^p(\bbbr^n)}\!+\!
\|(\sum_{k\in\bbbz}2^{ksq}\PFSmax_n f_{k}^{\den}(x)^q)^{\frac{1}{q}}\|_{L^p(\bbbr^n)}\nonumber\\
&\lesssim\|\{2^{ks}f_k^{\num}\}_{k\in\bbbz}\|_{L^p(l^q)}
+\|\{2^{ks}f_k^{\den}\}_{k\in\bbbz}\|_{L^p(l^q)},\label{eq3.32}
\end{align}
where the constants in (\ref{eq3.32}) depend on $n,r,M,\eta,s,p,q$, and the last inequality in (\ref{eq3.32}) is due to \cite[Remark 2.13]{Wang2023} since we choose $0<r<\min\{p,q\}$ and the $n$-dimensional distributional Fourier transforms of $f_k^{\num}$ and $f_k^{\den}$ are both compactly supported in the annulus $\{\xi\in\bbbr^n:2^{k-1}\leq|\xi|<2^{k+1}\}$ for each $k\in\bbbz$. Recall that $f_k^{\num}\in\Sw'(\bbbr^n)$, as defined in (\ref{eq3.4}), has the smooth function representative $(\ref{eq3.38})\in\functrep(f_k^{\num})\cap L^{p_0}(\bbbr^n)$, and the conditions (\ref{eq3.6}) and (\ref{eq3.7}) imposed on the function $\eta\in\Sw_{0}(\bbbr^n)$, then we have
\begin{align}
&f_k^{\num}(x)=f*\iFT_n[2^{-k}|\xi|\FT_n\psi(2^{-k}\xi)](x)\nonumber\\
&=f*\iFT_n[2^{-k}|\xi|\FT_n\eta(2^{-k}\xi)\FT_n\psi(2^{-k}\xi)](x)\nonumber\\
&=f_k*\iFT_n[2^{-k}|\xi|\FT_n\eta(2^{-k}\xi)](x),\label{eq3.121}
\end{align}
where $f$ appearing in the first two lines of (\ref{eq3.121}) is the function representative given by (\ref{eq3.2}), and $f_k(x)=f*\psi_{2^{-k}}(x)$ appearing in the last line of (\ref{eq3.121}) is the function defined in (\ref{eq3.37}). According to (\ref{eq3.139}) and Proposition \ref{proposition1} (iv), $f_k\!=\!f\!*\!\psi_{2^{-k}}\!\in\!\Sw'(\bbbr^n)$ has the smooth function representative $f_k(x)\in\functrep(f_k)\cap L^{p_0}(\bbbr^n)$, and equation (\ref{eq2.325}) indicates the distributional Fourier transform $\FT_n f_k$ is compactly supported in the annulus 
$\{\xi\in\bbbr^n:2^{k-1}\leq|\xi|<2^{k+1}\}$, therefore \cite[Definition 2.2]{Wang2023} and Remark \ref{remark4} tell us that the Peetre-Fefferman-Stein maximal function $\PFSmax_n f_k(x)$ is well-defined for every $k\in\bbbz$ by the following expression,
\begin{equation}\label{eq3.122}
\PFSmax_n f_k(x)\!=\!\esssup_{z\in\bbbr^n}\!\frac{|f_k(x\!-\!z)|}{(1\!+\!2^{k+1}|z|)^{\frac{n}{r}}}
\!=\!\esssup_{z\in\bbbr^n}\!\frac{|f*\psi_{2^{-k}}(x\!-\!z)|}{(1\!+\!2^{k+1}|z|)^{\frac{n}{r}}}.
\end{equation}
Furthermore, we also deduce from (\ref{eq3.121}) that
\begin{align}
&|f_k^{\num}(x)|=|f_k*\iFT_n[2^{-k}|\xi|\FT_n\eta(2^{-k}\xi)](x)|\nonumber\\
&\leq\int_{\bbbr^n}|\iFT_n[2^{-k}|\xi|\FT_n\eta(2^{-k}\xi)](z)|\cdot|f_k(x-z)|dz\nonumber\\
&\leq\!\!\!\int_{\bbbr^n}\!\!\!\!\!2^{kn}|\iFT_n[|\xi|\FT_n\eta(\xi)](2^k z)|\!\cdot\!
(1\!+\!2^{k+1}|z|)^{\frac{n}{r}}dz\PFSmax_n f_k(x)\nonumber\\
&=\!\!\!\int_{\bbbr^n}\!\!\!\!\!|\iFT_n[|\xi|\FT_n\eta(\xi)](z)|\!\cdot\!(1\!+\!2|z|)^{\frac{n}{r}}dz
\PFSmax_n f_k(x)\!\lesssim\!\PFSmax_n f_k(x),\label{eq3.33}
\end{align}
where in (\ref{eq3.33}), we also use the formula $$\iFT_n[2^{-k}|\xi|\FT_n\eta(2^{-k}\xi)](z)\!=\!2^{kn}\iFT_n[|\xi|\FT_n\eta(\xi)](2^k z),$$
and the integral in the last line of (\ref{eq3.33}) converges to a positive finite constant whose value depends on $n,r,\eta$ since $\iFT_n[|\xi|\FT_n\eta(\xi)](z)\in\Sw(\bbbr^n)$. Recall that $f_k^{\den}\in\Sw'(\bbbr^n)$, as defined in (\ref{eq3.5}), has the smooth function representative $(\ref{eq3.54})\in\functrep(f_k^{\den})\cap L^{p_0}(\bbbr^n)$, and the conditions (\ref{eq3.6}) and (\ref{eq3.7}) imposed on the function $\eta\in\Sw_{0}(\bbbr^n)$, then we have
\begin{align}
&f_k^{\den}(x)=f*\iFT_n[\frac{\FT_n\psi(2^{-k}\xi)}{2^{-k}|\xi|}](x)\nonumber\\
&=f*\iFT_n[\frac{\FT_n\eta(2^{-k}\xi)\FT_n\psi(2^{-k}\xi)}{2^{-k}|\xi|}](x)\nonumber\\
&=f_k*\iFT_n[\frac{\FT_n\eta(2^{-k}\xi)}{2^{-k}|\xi|}](x),\label{eq3.126}
\end{align}
where $f$ appearing in the first two lines of (\ref{eq3.126}) is the function representative given by (\ref{eq3.2}), and $f_k(x)=f*\psi_{2^{-k}}(x)$ appearing in the last line of (\ref{eq3.126}) is the function defined in (\ref{eq3.37}). Furthermore, we use the method given in (\ref{eq3.33}) and the fact that the function $\iFT_n[\frac{\FT_n\eta(\xi)}{|\xi|}](z)\in\Sw(\bbbr^n)$ to deduce that for every $k\in\bbbz$ and $x\in\bbbr^n$,
\begin{equation}\label{eq3.34}
|f_k^{\den}(x)|\lesssim\PFSmax_n f_k(x),
\end{equation}
where the implicit constant in (\ref{eq3.34}) depends only on $n,r,\eta$ and is independent of $k$ and $x$. Inserting (\ref{eq3.33}) and (\ref{eq3.34}) into (\ref{eq3.32}) and invoking \cite[Remark 2.13]{Wang2023} again, we can obtain
		\begin{align}
			&\|\big[\text{the expression (\ref{eq3.125})}\big]^{\frac{1}{q}}\|_{L^p(\bbbr^n)}\nonumber\\
			&\lesssim\|\{2^{ks}\PFSmax_n f_k\}_{k\in\bbbz}\|_{L^p(l^q)}\lesssim\|f\|_{\Fspq}<\infty,\label{eq3.35}
		\end{align}
where $0<p,q<\infty$, $-1<s<1$ and $f\in\Fspq$ has a function representative $f(x)\in\functrep_0(f)\cap L^{p_0}(\bbbr^n)$ for some $p_0\in[1,\infty]$. Inequality (\ref{eq3.35}) implies that the series (\ref{eq3.124}) converges for almost every $x\in\bbbr^n$ and almost every $t\in(0,\infty)$. Recall (\ref{eq3.128}). We denote
\begin{equation}\label{eq3.156}
S_N(x,t):=\sum_{|j|\leq N}F_{j}(x,t),\quad H(x,t):=\sum_{j\in\bbbz}F_{j}(x,t),
\end{equation}
\begin{equation}\label{eq3.157}
\overline{S}_N(x,t):=\sum_{|j|\leq N}|F_{j}(x,t)|,\quad
\overline{H}(x,t):=\sum_{j\in\bbbz}|F_{j}(x,t)|,
\end{equation}
then both of the following limits (\ref{eq3.134}) and (\ref{eq3.135}) exist and are finite for almost every $x\in\bbbr^n$ and almost every $t\in(0,\infty)$,
\begin{align}
&\lim_{N\rightarrow\infty}S_N(x,t)=H(x,t),\label{eq3.134}\\
&\lim_{N\rightarrow\infty}\overline{S}_N(x,t)=\overline{H}(x,t).\label{eq3.135}
\end{align}
From equation (\ref{eq3.123}), we see that for every $k\in\bbbz$ and $t\in(0,\infty)$, the partial sum (\ref{eq3.151}) converges to $f*\partial_{n+1}[P_t(\cdot)]\in\Sw_{0}'(\bbbr^n)$ in the sense of $\Sw_0'(\bbbr^n)$ as $N\rightarrow\infty$,
\begin{align}
&\sum_{k\leq j\leq N}2^{-j}f_j^{\den}*\iFT_n[-2\pi|\xi|^2 e^{-2\pi t|\xi|}\FT_n\eta(2^{-j}\xi)]
\nonumber\\
&+\sum_{-N\leq j<k}2^{j}f_j^{\num}*\iFT_n[-2\pi e^{-2\pi t|\xi|}\FT_n\eta(2^{-j}\xi)].\label{eq3.151}
\end{align}
We make the following claim.
\begin{equation}\label{claim1}
\begin{split}
&\text{If $f\in L^{p_0}(\bbbr^n)$ and $1\leq p_0<\infty$, the pointwise}\\
&\text{limit $H(x,t)$ equals the function (\ref{eq3.133}) for al-}\\
&\text{-most every $x\!\in\!\bbbr^n$ and almost every $t\!\in\!(0,\infty)$.}\\
&\text{If $f\!\in\!L^{p_0}(\bbbr^n)$ and $p_0\!=\!\infty$, then $H(x,t)$ belongs}\\
&\text{to $\functrep_0(\partial_{n+1}\Pint(f;x,t))$ for almost every $t\!\in\!\!(0,\infty)$.}
\end{split}
\end{equation}
We will assume the claim (\ref{claim1}) is true for now and postpone its proof to Step 5 of the current section. Furthermore, we have justified the inequality
\begin{align}
&\|(\int_0^{\infty}t^{q-sq-1}|\partial_{n+1}\Pint(f;\cdot,t)|^q dt)^{\frac{1}{q}}\|_{L^p(\bbbr^n)}\nonumber\\
&\lesssim\|\big[\text{the expression (\ref{eq3.125})}\big]^{\frac{1}{q}}\|_{L^p(\bbbr^n)},\label{eq3.36}
\end{align}
and the implicit constant depends on $s,q$. In (\ref{claim1}) and (\ref{eq3.36}), $\partial_{n+1}\Pint(f;$ $x,t)$ is the function (\ref{eq3.133}) when the function representative $f(x)$ given by (\ref{eq3.2}) is in $L^{p_0}(\bbbr^n)$ and $1\leq p_0<\infty$, and $\partial_{n+1}\Pint(f;x,t)$ is the tempered distribution defined by the integral
\begin{equation}\label{eq3.136}
\int_{\bbbr^n}\partial_{n+1}\Pint(f;x,t)\cdot g(x)dx\text{ for $g\in\Sw(\bbbr^n)$}
\end{equation}
so that the function $(\ref{eq3.133})\in\functrep(\partial_{n+1}\Pint(f;x,t))\subseteq\functrep_0(\partial_{n+1}\Pint(f;x,t))$ when the function representative $f(x)$ given by (\ref{eq3.2}) is in $L^{p_0}(\bbbr^n)$ and $p_0=\infty$. Inequalities (\ref{eq3.35}) and (\ref{eq3.36}) together complete the proof of (\ref{eq3.1}) when $l=n+1$. The proof of (\ref{eq3.1}) where $1\leq l\leq n$ can be obtained in a similar way.\\
		
Step 5: In this step, we prove the claim (\ref{claim1}). Let $t\in(0,\infty)$. For every $j\in\bbbz$, we denote
\begin{equation}\label{eq3.73}
\multi_j(\xi)=\FT_n\psi(2^{-j}\xi)\cdot-2\pi|\xi|e^{-2\pi t|\xi|}\in\Sw_{00}(\bbbr^n),
\end{equation}
then $\multi_j(\xi)$ is compactly supported in the annulus $\{\xi\in\bbbr^n:2^{j-1}\leq|\xi|<2^{j+1}\}$, and both of the following functions (\ref{eq3.137}) and (\ref{eq3.138}) belong to $\Sw_{0}(\bbbr^n)$ for every $x\in\bbbr^n$ and $j\in\bbbz$,
\begin{align}
&y\in\bbbr^n\longmapsto\iFT_n\multi_j(y)\in\Sw_{0}(\bbbr^n),\label{eq3.137}\\
&y\in\bbbr^n\longmapsto\iFT_n\multi_j(x-y)\in\Sw_{0}(\bbbr^n).\label{eq3.138}
\end{align}
Then Proposition \ref{proposition1} (iv), conditions (\ref{eq3.2}), (\ref{eq3.137}), and (\ref{eq3.138}) all together indicate that for the given $f\in\Fspq\subseteq\Sw_{0}'(\bbbr^n)$, the convolution $f*\iFT_n\multi_j\in\Sw'(\bbbr^n)$ has the smooth function representative $(\ref{eq3.153})\in\functrep(f*\iFT_n\multi_j)\cap L^{p_0}(\bbbr^n)$,
\begin{align}
&x\in\bbbr^n\longmapsto<f,\iFT_n\multi_j(x-\cdot)>\nonumber\\
&=\!\!\int_{\bbbr^n}\!\!\!\!f(y)\cdot\iFT_n\multi_j(x-y)dy=f*\iFT_n\multi_j(x).\label{eq3.153}
\end{align}
Equations (\ref{eq3.59}), (\ref{eq3.64}), (\ref{eq3.127}) and Proposition \ref{proposition1} (iv) equation (\ref{eq1.122}) imply the following equation (\ref{eq3.152}) is true in the sense of $\Sw'(\bbbr^n)$,
\begin{align}
&f*\iFT_n\multi_j\nonumber\\
&\!=2^{j}f_j^{\num}*\iFT_n[-2\pi e^{-2\pi t|\xi|}\FT_n\eta(2^{-j}\xi)]\nonumber\\
&\!=2^{-j}f_j^{\den}*\iFT_n[-2\pi |\xi|^2 e^{-2\pi t|\xi|}\FT_n\eta(2^{-j}\xi)],\label{eq3.152}
\end{align}
where $f\in\Fspq\subseteq\Sw_{0}'(\bbbr^n)$ in the first line of (\ref{eq3.152}). Therefore we infer from (\ref{eq3.132}), (\ref{eq3.153}), (\ref{eq3.152}), Lemma \ref{lemma15} (i), and the smoothness of relevant functions that for all $x\in\bbbr^n$, $t\in(0,\infty)$, and $j\in\bbbz$,
\begin{equation}\label{eq3.140}
F_j(x,t)=f*\iFT_n\multi_j(x),
\end{equation}
where $F_j(x,t)$ is the smooth function defined in (\ref{eq3.128}) and $f$ appearing on the right side of (\ref{eq3.140}) is the function representative given in (\ref{eq3.2}). Let $\phi\in\Sw_0(\bbbr^n)$, then by (\ref{eq3.151}) and (\ref{eq3.132}), we have
\begin{equation}\label{eq3.62}
<\!f*\partial_{n+1}[P_t(\cdot)],\phi\!>
=\!\!\lim_{N\rightarrow\infty}\int_{\bbbr^n}\sum_{|j|\leq N}F_{j}(x,t)\!\cdot\!\phi(x)dx.
\end{equation}
Now we recall (\ref{eq3.156}), (\ref{eq3.157}), (\ref{eq3.134}), and (\ref{eq3.135}). Then the dominating function for the integrand on the right side of (\ref{eq3.62}) is given by
\begin{equation*}
\bigg|\sum_{|j|\leq N}F_{j}(x,t)\cdot\phi(x)\bigg|\leq\overline{H}(x,t)\cdot|\phi(x)|
\end{equation*}
for every $x\in\bbbr^n$ and $t\in(0,\infty)$, and uniformly for all $N\geq0$. Our goal is to prove that
\begin{equation}\label{eq3.72}
\int_{\bbbr^n}\overline{H}(x,t)\cdot|\phi(x)|dx<\infty
\end{equation}
if $f(x)$ is the function representative given in (\ref{eq3.2}). Then we can apply the dominated convergence theorem and pass the limit under the integral sign in (\ref{eq3.62}) to obtain that
\begin{equation}\label{eq3.70}
<f*\partial_{n+1}[P_t(\cdot)],\phi>=\int_{\bbbr^n}H(x,t)\cdot\phi(x)dx
\end{equation}
for almost every $t\in(0,\infty)$, whenever $\phi\in\Sw_0(\bbbr^n)$, and hence justify the aforementioned claim (\ref{claim1}).\\

If $f(x)$ is the function representative given in (\ref{eq3.2}) and $p_0=1$, then we use (\ref{eq3.140}) and \cite[Theorem 1.2.10. (Minkowski's inequality)]{14classical} to obtain the following estimate for almost every $t\in(0,\infty)$,
\begin{align}
&\|\overline{H}(\cdot,t)-\overline{S}_N(\cdot,t)\|_{L^{1}(\bbbr^n)}\leq\sum_{|j|>N}\|F_{j}(\cdot,t)\|_{L^{1}(\bbbr^n)}\nonumber\\
&\!=\!\!\sum_{|j|>N}\!\!\|f\!*\!\iFT_n\multi_j\|_{L^{1}(\bbbr^n)}\!\leq\!\!
\sum_{|j|>N}\!\!\|\iFT_n\multi_j\|_{L^{1}(\bbbr^n)}\!\cdot\!\|f\|_{L^1(\bbbr^n)}.\label{eq3.75}
\end{align}
To estimate each term in the last summation of (\ref{eq3.75}), we pick a positive finite real number $\delta_j$ for every $j\in\bbbz$ and the value of $\delta_j$ will be determined later, then we can use (\ref{eq3.73}) to write
\begin{align}
&\|\iFT_n\multi_j\|_{L^{1}(\bbbr^n)}\nonumber\\
&=\int_{|x|<\delta_j}\big|\iFT_n[-2\pi|\xi|e^{-2\pi t|\xi|}\cdot\FT_n\psi(2^{-j}\xi)](x)\big|dx\nonumber\\
&\quad+\int_{|x|\geq\delta_j}\big|\iFT_n[-2\pi|\xi|e^{-2\pi t|\xi|}\cdot\FT_n\psi(2^{-j}\xi)](x)\big|dx.\label{eq3.107}
\end{align}
We can use the support condition (\ref{eq1-7}) of $\FT_n\psi$ and estimate the first term on the right side of (\ref{eq3.107}) as follows,
\begin{align}
	&\int_{|x|<\delta_j}\big|\iFT_n[-2\pi|\xi|e^{-2\pi t|\xi|}\cdot\FT_n\psi(2^{-j}\xi)](x)\big|dx\nonumber\\
	&\leq2\pi\nu_n\delta_j^n\int_{\bbbr^n}|\xi|e^{-2\pi t|\xi|}\cdot|\FT_n\psi(2^{-j}\xi)|d\xi\nonumber\\
	&\lesssim\delta_j^n\cdot2^{j(n+1)}\cdot e^{-\pi t\cdot2^j},\label{eq3.108}
\end{align}
where $\nu_n$ is the volume of the unit ball in $\bbbr^n$, and the implicit constant in (\ref{eq3.108}) depends on $n,\psi$ and is independent of $j\in\bbbz$. To estimate the second term on the right side of (\ref{eq3.107}) when $|x|\geq\delta_j$, we can assume without loss of generality that $x=(x_1,x_2,\cdots,x_n)$ and $|x_1|=\max\{|x_1|,|x_2|,\cdots,|x_n|\}$, then we have $0<\delta_j\leq|x|\leq\sqrt{n}|x_1|$. Furthermore, we can use the integration by parts formula with respect to $\xi_1$ repeatedly for $n+1$ times to obtain the following equation
\begin{align}
	&\iFT_n[-2\pi|\xi|e^{-2\pi t|\xi|}\cdot\FT_n\psi(2^{-j}\xi)](x)\nonumber\\
	&\!=\!\!\!\int_{\bbbr^n}\!\frac{\partial^{n+1}}{\partial\xi_1^{n+1}}\!\bigg[\!\!
	-\!2\pi|\xi|e^{-2\pi t|\xi|}\!\cdot\!\FT_n\psi(2^{-j}\xi)\!\bigg]\!\!\cdot\!\frac{e^{2\pi i\xi\cdot x}}{(-2\pi ix_1)^{n+1}}d\xi,\label{eq3.109}
\end{align}
and hence we have
\begin{align}
	&\int_{|x|\geq\delta_j}\big|\iFT_n[-2\pi|\xi|e^{-2\pi t|\xi|}\cdot\FT_n\psi(2^{-j}\xi)](x)\big|dx\nonumber\\
	&\lesssim\int_{|x|\geq\delta_j}\frac{1}{|x|^{n+1}}dx\cdot
	\int_{\bbbr^n}\big|\partial^{\alpha}[|\xi|e^{-2\pi t|\xi|}
	\cdot\FT_n\psi(2^{-j}\xi)]\big|d\xi\nonumber\\
	&\lesssim\frac{1}{\delta_j}\int_{\bbbr^n}\big|\partial^{\alpha}[|\xi|e^{-2\pi t|\xi|}
	\cdot\FT_n\psi(2^{-j}\xi)]\big|d\xi,\label{eq3.110}
\end{align}
where the implicit constants in (\ref{eq3.110}) depend on $n$, and $\alpha$ is the multi-index
\begin{equation}\label{eq3.111}
	\alpha=(n+1,0,\cdots,0),\qquad|\alpha|=n+1.
\end{equation}
Now we let $\beta_1$, $\beta_2$, $\gamma_1$, $\gamma_2$, $\omega_1$ be multi-indices satisfying the following conditions
\begin{align}
	&\alpha=\beta_1+\gamma_1+\omega_1,\qquad|\omega_1|>0,\label{eq3.112}\\
	&\alpha=\beta_2+\gamma_2,\label{eq3.113}
\end{align}
then we can use Leibniz rule to estimate the integral over $\bbbr^n$ in the last line of (\ref{eq3.110}) from above by a constant multiple of the sum of terms of the following forms,
\begin{equation}\label{eq3.114}
	\int_{\bbbr^n}\!\!\!2^{-j|\beta_1|}|(\partial^{\beta_1}\FT_n\psi)(2^{-j}\xi)|\!\cdot\!|\partial^{\gamma_1}(|\xi|)|\!\cdot\!|\partial^{\omega_1}(e^{-2\pi t|\xi|})|d\xi
\end{equation}
and
\begin{equation}\label{eq3.115}
	\int_{\bbbr^n}\!\!\!2^{-j|\beta_2|}|(\partial^{\beta_2}\FT_n\psi)(2^{-j}\xi)|\!\cdot\!|\partial^{\gamma_2}(|\xi|)|\!\cdot\!e^{-2\pi t|\xi|}d\xi,
\end{equation}
and the constant depends on $n$. By direct computations, we have the formulae
\begin{align}
&|\partial^{\gamma'}(|\xi|)|\lesssim|\xi|^{1-|\gamma'|},\label{eq1-47}\\
&|\partial^{\gamma''}(e^{-2\pi t|\xi|})|\lesssim e^{-2\pi t|\xi|}\cdot\sum_{l=0}^{|\gamma''|-1} \frac{1}{|\xi|^l},\label{eq1-48}
\end{align}
where $\gamma'$ and $\gamma''$ are multi-indices, and the implicit constant in (\ref{eq1-48}) depends on $t$. We use the support condition (\ref{eq1-7}) of $\FT_n\psi$ and formulae (\ref{eq1-47}), (\ref{eq1-48}) to estimate (\ref{eq3.114}) from above by the following
\begin{align}
	&(\ref{eq3.114})
	\lesssim2^{jn}\cdot2^{j(1-|\beta_1|-|\gamma_1|)}\cdot\sum_{l_1=0}^{|\omega_1|-1}2^{-jl_1}e^{-\pi t\cdot2^j}
	\nonumber\\
	&=\sum_{l_1=0}^{|\omega_1|-1}2^{j(n+1-|\beta_1|-|\gamma_1|-l_1)}e^{-\pi t\cdot2^j},\label{eq3.116}
\end{align}
where the implicit constant depends on $n,t,\psi$. Due to the conditions (\ref{eq3.111}) and (\ref{eq3.112}), we have
\begin{equation}\label{eq3.117}
	1\leq n+1-|\beta_1|-|\gamma_1|-l_1\leq n+1,
\end{equation}
and hence we obtain
\begin{equation}\label{eq3.118}
	(\ref{eq3.114})\lesssim
	\begin{cases}
		2^{j(n+1)}\cdot e^{-\pi t\cdot2^j} &\text{if}\quad j\geq0,\\
		2^{j}\cdot e^{-\pi t\cdot2^j} &\text{if}\quad j<0,
	\end{cases}
\end{equation}
where the implicit constant depends on $n,t,\psi$. We use the support condition (\ref{eq1-7}) of $\FT_n\psi$ and formula (\ref{eq1-47}) to estimate (\ref{eq3.115}) from above by the following
\begin{equation}\label{eq3.119}
	(\ref{eq3.115})\lesssim2^{j(n+1-|\beta_2|-|\gamma_2|)}\cdot e^{-\pi t\cdot2^j}=e^{-\pi t\cdot2^j},
\end{equation}
where the implicit constant depends on $n,\psi$, and we also use the equation $n+1=|\beta_2|+|\gamma_2|$ due to the conditions (\ref{eq3.111}) and (\ref{eq3.113}), and estimate (\ref{eq3.119}) is true for all $j\in\bbbz$. We combine (\ref{eq3.107}), (\ref{eq3.108}), (\ref{eq3.110}), (\ref{eq3.114}), (\ref{eq3.115}), (\ref{eq3.118}), (\ref{eq3.119}) all together and obtain the following estimate
\begin{align}
&\|\iFT_n\multi_j\|_{L^{1}(\bbbr^n)}\nonumber\\
&\!\lesssim\!\!
\begin{cases}
\!\delta_j^n\!\cdot\!2^{j(n+1)}\!\cdot\!e^{-\pi t\cdot2^j}\!\!+\!\frac{2^{j(n+1)}}{\delta_j}\!\cdot\! e^{-\pi t\cdot2^j}\!\!+\!\frac{1}{\delta_j}\!\cdot\!e^{-\pi t\cdot2^j} &\text{if}\quad j\!\geq\!0,\\
\!\delta_j^n\!\cdot\!2^{j(n+1)}\!\cdot\!e^{-\pi t\cdot2^j}\!\!+\!\frac{2^{j}}{\delta_j}\!\cdot\!
e^{-\pi t\cdot2^j}\!\!+\!\frac{1}{\delta_j}\!\cdot\!e^{-\pi t\cdot2^j} &\text{if}\quad j\!<\!0,
\end{cases}
\label{eq3.120}
\end{align}
and the implicit constant in (\ref{eq3.120}) depends on $n,t,\psi$ and is independent of $j\in\bbbz$. We want to make sure that all of the following $6$ series converge,
\begin{align*}
	&\sum_{j\geq1}\delta_j^n\cdot2^{j(n+1)}\cdot e^{-\pi t\cdot2^j},\quad
	\sum_{j\geq1}\frac{2^{j(n+1)}}{\delta_j}\cdot e^{-\pi t\cdot2^j},\quad
	\sum_{j\geq1}\frac{1}{\delta_j}\cdot e^{-\pi t\cdot2^j},\\
	&\sum_{j\leq-1}\delta_j^n\cdot2^{j(n+1)}\cdot e^{-\pi t\cdot2^j},\quad
	\sum_{j\leq-1}\frac{2^{j}}{\delta_j}\cdot e^{-\pi t\cdot2^j},\quad 
	\sum_{j\leq-1}\frac{1}{\delta_j}\cdot e^{-\pi t\cdot2^j},
\end{align*}
therefore choosing $\delta_j=2^{-j}$ will suffice. Estimate (\ref{eq3.120}) with $\delta_j=2^{-j}$ indicates $\overline{S}_N(x,t)\in L^{1}(\bbbr^n)$ for each $N\in\bbbn_0$ and $t\in(0,\infty)$. Inserting estimate (\ref{eq3.120}) with $\delta_j=2^{-j}$ into (\ref{eq3.75}) yields that the sequence $\{\overline{S}_N(x,t)\}_{N\in\bbbn_0}$ of partial sums converges to the function $\overline{H}(x,t)$ in $\|\cdot\|_{L^{1}(\bbbr^n)}$-norm as $N\rightarrow\infty$, and by the completeness of the space $L^{1}(\bbbr^n)$, we have $\overline{H}(x,t)\in L^{1}(\bbbr^n)$ and (\ref{eq3.72}) is proven in the case where $f\in L^{1}(\bbbr^n)$. In addition, the inequality $|H(x,t)|\leq\overline{H}(x,t)$ implies $H(x,t)\in L^{1}(\bbbr^n)$. The assumption (\ref{eq3.2}) with $p_0=1$ and (\ref{eq1-6}) imply the function $(\ref{eq3.133})\in L^1(\bbbr^n)$. We deduce from equation (\ref{eq3.70}) and Lemma \ref{lemma16} (\ref{eq2.238}) that both of the functions (\ref{eq3.133}) and $H(x,t)$ belong to $\functrep_{0}(f*\partial_{n+1}[P_t(\cdot)])\cap L^1(\bbbr^n)$, thus the following equation is true for all $\phi\in\Sw_0(\bbbr^n)$, and for almost every $t\in(0,\infty)$,
\begin{equation}\label{eq3.129}
\int_{\bbbr^n}\partial_{n+1}\Pint(f;x,t)\cdot\phi(x)dx=\int_{\bbbr^n}H(x,t)\cdot\phi(x)dx.
\end{equation}
Hence the difference
\begin{equation}\label{eq3.130}
\partial_{n+1}\Pint(f;x,t)-H(x,t)
\end{equation}
satisfies condition (\ref{eq1.101}) and is in $L^{1}(\bbbr^n)$. There exists $\tilde{f}\in\Sw'(\bbbr^n)$ so that the difference function (\ref{eq3.130}) is in $\functrep(\tilde{f})$. Therefore, by Proposition \ref{proposition2}, the difference (\ref{eq3.130}) equals a polynomial for almost every $x\in\bbbr^n$ whose coefficients are functions of $t\in(0,\infty)$. Since the difference (\ref{eq3.130}) also belongs to $L^{1}(\bbbr^n)$ with respect to variable $x$, the polynomial is identically zero, and $(\ref{eq3.133})=\partial_{n+1}\Pint(f;x,t)=H(x,t)$ for almost every $x\in\bbbr^n$ and almost every $t\!\in\!(0,\infty)$, where $f$ appearing in $\partial_{n+1}\Pint(f;x,t)$ is the function representative $f(x)\in\functrep_{0}(f)\cap L^1(\bbbr^n)$.\\

If $f(x)$ is the function representative given in (\ref{eq3.2}) and $1<p_0<\infty$, then we recall (\ref{eq3.140}) and (\ref{eq3.73}) and we have for almost every $t\!\in\!(0,\infty)$,
\begin{align}
&\|\overline{H}(\cdot,t)-\overline{S}_N(\cdot,t)\|_{L^{p_0}(\bbbr^n)}\leq\sum_{|j|>N}\|F_{j}(\cdot,t)\|_{L^{p_0}(\bbbr^n)}\nonumber\\
&=\sum_{|j|>N}\|f*\iFT_n\multi_j\|_{L^{p_0}(\bbbr^n)}.\label{eq3.77}
\end{align}
To estimate each term in the summation of (\ref{eq3.77}), we use the Mihlin-H\"{o}rmander multiplier theorem, i.e. \cite[Theorem 6.2.7.]{14classical}, and for every $j\in\bbbz$, we will find $\|\multi_j\|_{L^{\infty}(\bbbr^n)}$ and a positive finite constant $A_j$ so that
\begin{equation}\label{eq3.78}
\sup_{0<R<\infty}R^{2|\alpha|-n}\int_{\frac{R}{2}<|\xi|<2R}|\partial_{\xi}^{\alpha}\multi_j(\xi)|^2 d\xi\leq A_j^2<\infty
\end{equation}
for all multi-indices $\alpha$ with $|\alpha|\leq[\frac{n}{2}]+1$ and $[\frac{n}{2}]$ is the greatest integer less than or equal to $\frac{n}{2}$. According to \cite[Theorem 6.2.7 and Deﬁnition 2.5.11]{14classical}, we define the operator $W_j(g)$ for every $j\in\bbbz$ and $g\in\Sw(\bbbr^n)$ as below,
\begin{equation}\label{eq3.159}
W_j(g)(x):=\iFT_n[\multi_j(\xi)\FT_n g(\xi)](x),
\end{equation}
then the estimate (\ref{eq3.79}) is true for all $g\in\Sw(\bbbr^n)$,
\begin{equation}\label{eq3.79}
\|W_j(g)\|_{L^{p_0}(\bbbr^n)}\lesssim\big(A_j+\|\multi_j\|_{L^{\infty}(\bbbr^n)}\big)\cdot
\|g\|_{L^{p_0}(\bbbr^n)},
\end{equation}
where the implicit constant in (\ref{eq3.79}) depends on $n$ and $p_0$. Since $\Sw(\bbbr^n)$ is dense in $L^{p_0}(\bbbr^n)$, we can find a sequence $\{g_u\}_{u\in\bbbn}$ of Schwartz functions so that
\begin{equation}\label{eq3.160}
\lim_{u\rightarrow\infty}\|f-g_u\|_{L^{p_0}(\bbbr^n)}=0.
\end{equation}
By the linearity of the operator $W_j$, we have
\begin{equation}\label{eq3.161}
\|W_j(g_u)\!-\!W_j(g_v)\|_{L^{p_0}(\bbbr^n)}\!\!\lesssim\!\!\big(A_j\!+\!\|\multi_j\|_{L^{\infty}(\bbbr^n)}\big)\!\cdot\!\|g_u\!-\!g_v\|_{L^{p_0}(\bbbr^n)},
\end{equation}
thus the sequence $\{W_j(g_u)\}_{u\in\bbbn}$ is a Cauchy sequence in $L^{p_0}(\bbbr^n)$ and converges to a function in $L^{p_0}(\bbbr^n)$. We denote this function by $W_j(f)$, then we have
\begin{equation}\label{eq3.162}
\lim_{u\rightarrow\infty}\|W_j(f)-W_j(g_u)\|_{L^{p_0}(\bbbr^n)}=0,
\end{equation}
and the estimate (\ref{eq3.79}) is true with $f\in L^{p_0}(\bbbr^n)$ in place of $g\in\Sw(\bbbr^n)$. Since $\multi_j\in\Sw_{00}(\bbbr^n)$ and $g_u\in\Sw(\bbbr^n)$ for every $u\in\bbbn$, we can write $W_j(g_u)(x)$ equivalently as follows,
\begin{equation}\label{eq3.163}
W_j(g_u)(x)\!=\!\iFT_n[\multi_j(\xi)\FT_n g_u(\xi)](x)\!=\!g_u\!*\!\iFT_n\multi_j(x).
\end{equation}
Then \cite[Theorem 1.2.12.\! (Young's inequality)]{14classical} implies
\begin{align}
&\lim_{u\rightarrow\infty}\|f*\iFT_n\multi_j-W_j(g_u)\|_{L^{p_0}(\bbbr^n)}\nonumber\\
&\leq\lim_{u\rightarrow\infty}\|\iFT_n\multi_j\|_{L^1(\bbbr^n)}
\cdot\|f-g_u\|_{L^{p_0}(\bbbr^n)}=0,\label{eq3.164}
\end{align}
because $\multi_j\in\Sw_{00}(\bbbr^n)$ implies $\iFT_n\multi_j\in\Sw_{0}(\bbbr^n)\subseteq L^1(\bbbr^n)$. Combining (\ref{eq3.162}) and (\ref{eq3.164}) with the triangle inequality and the estimate (\ref{eq3.79}) in which we replace $g\in\Sw(\bbbr^n)$ by $f\in L^{p_0}(\bbbr^n)$, we have obtained the following inequality for every $j\in\bbbz$,
\begin{align}
&\|f*\iFT_n\multi_j\|_{L^{p_0}(\bbbr^n)}=\|W_j(f)\|_{L^{p_0}(\bbbr^n)}\nonumber\\
&\!\lesssim\big(A_j+\|\multi_j\|_{L^{\infty}(\bbbr^n)}\big)\cdot\|f\|_{L^{p_0}(\bbbr^n)},
\label{eq3.165}
\end{align}
where the implicit constant in (\ref{eq3.165}) depends on $n$ and $p_0$, and $f$ appearing in (\ref{eq3.165}) is the function representative given by (\ref{eq3.2}). We will use this inequality (\ref{eq3.165}), and now we estimate the quantities $\|\multi_j\|_{L^{\infty}(\bbbr^n)}$ and $A_j$. By using the defining expression (\ref{eq3.73}) and the support condition (\ref{eq1-7}) of $\FT_n\psi$, we can obtain
		\begin{equation}\label{eq3.80}
			\|\multi_j\|_{L^{\infty}(\bbbr^n)}\lesssim2^je^{-\pi t\cdot2^j},
		\end{equation}
and the implicit constant in (\ref{eq3.80}) depends on $\psi$. To estimate $A_j$, we let $\alpha$, $\beta_1$, $\beta_2$, $\beta_3$, $\beta_4$, $\gamma_1$, $\gamma_2$, $\gamma_3$, $\gamma_4$, $\omega_1$, $\omega_3$ all be multi-indices satisfying the conditions that
		\begin{align}
			&|\alpha|\leq[\frac{n}{2}]+1,\qquad|\omega_1|>0,\qquad|\omega_3|>0,\label{eq3.81}\\
			&\alpha=\beta_1+\gamma_1+\omega_1=\beta_3+\gamma_3+\omega_3,\label{eq3.82}\\
			&\alpha=\beta_2+\gamma_2=\beta_4+\gamma_4.\label{eq3.83}
		\end{align}
Then we can use the Leibniz rule to estimate $|\partial_{\xi}^{\alpha}\multi_j(\xi)|$ from above by a constant multiple of the sum of the following terms,
\begin{equation}\label{eq3.84}
\sum_{\alpha=\beta_1+\gamma_1+\omega_1}\!\!\!\!2^{-j|\beta_1|}\big|(\partial^{\beta_1}\FT_n\psi)(2^{-j}\xi)\big|\!\cdot\!\big|\partial^{\gamma_1}(|\xi|)\big|\!\cdot\!\big|\partial^{\omega_1}(e^{-2\pi t|\xi|})\big|
\end{equation}
		and
		\begin{equation}\label{eq3.85}
			\sum_{\alpha=\beta_2+\gamma_2}2^{-j|\beta_2|}\big|(\partial^{\beta_2}\FT_n\psi)(2^{-j}\xi)\big|\cdot
			\big|\partial^{\gamma_2}(|\xi|)\big|\cdot e^{-2\pi t|\xi|}.
		\end{equation}
The above estimate is still true if we replace $\beta_1,\gamma_1,\omega_1$ in (\ref{eq3.84}) by $\beta_3,\gamma_3,\omega_3$, and $\beta_2,\gamma_2$ in (\ref{eq3.85}) by $\beta_4,\gamma_4$. Hence we can estimate $$|\partial_{\xi}^{\alpha}\multi_j(\xi)|^2$$ from above by a constant multiple of the sum of terms of the following forms,
		\begin{align}
			&2^{-j|\beta_1|-j|\beta_3|}\big|(\partial^{\beta_1}\FT_n\psi)(2^{-j}\xi)\big|\cdot
			\big|\partial^{\gamma_1}(|\xi|)\big|\cdot\big|\partial^{\omega_1}(e^{-2\pi t|\xi|})\big|\nonumber\\
			&\cdot\big|(\partial^{\beta_3}\FT_n\psi)(2^{-j}\xi)\big|\cdot
			\big|\partial^{\gamma_3}(|\xi|)\big|\cdot\big|\partial^{\omega_3}(e^{-2\pi t|\xi|})\big|,\label{eq3.86}
		\end{align}
		and
		\begin{align}
			&2^{-j|\beta_1|-j|\beta_4|}\big|(\partial^{\beta_1}\FT_n\psi)(2^{-j}\xi)\big|\cdot
			\big|\partial^{\gamma_1}(|\xi|)\big|\cdot\big|\partial^{\omega_1}(e^{-2\pi t|\xi|})\big|\nonumber\\
			&\cdot\big|(\partial^{\beta_4}\FT_n\psi)(2^{-j}\xi)\big|\cdot
			\big|\partial^{\gamma_4}(|\xi|)\big|\cdot e^{-2\pi t|\xi|},\label{eq3.87}
		\end{align}
		and
		\begin{align}
			&2^{-j|\beta_3|-j|\beta_2|}\big|(\partial^{\beta_3}\FT_n\psi)(2^{-j}\xi)\big|\cdot
			\big|\partial^{\gamma_3}(|\xi|)\big|\cdot\big|\partial^{\omega_3}(e^{-2\pi t|\xi|})\big|\nonumber\\
			&\cdot\big|(\partial^{\beta_2}\FT_n\psi)(2^{-j}\xi)\big|\cdot
			\big|\partial^{\gamma_2}(|\xi|)\big|\cdot e^{-2\pi t|\xi|},\label{eq3.88}
		\end{align}
		and
		\begin{align}
			&2^{-j|\beta_2|-j|\beta_4|}\big|(\partial^{\beta_2}\FT_n\psi)(2^{-j}\xi)\big|\cdot
			\big|\partial^{\gamma_2}(|\xi|)\big|\nonumber\\
			&\cdot\big|(\partial^{\beta_4}\FT_n\psi)(2^{-j}\xi)\big|\cdot
			\big|\partial^{\gamma_4}(|\xi|)\big|\cdot e^{-4\pi t|\xi|},\label{eq3.89}
		\end{align}
where the constant depends on $\alpha$. Due to the support condition (\ref{eq1-7}) of $\FT_n\psi$, each of the above expressions (\ref{eq3.86}), (\ref{eq3.87}), (\ref{eq3.88}), (\ref{eq3.89}) is supported in the annulus
		\begin{equation}\label{eq3.90}
			\{\xi\in\bbbr^n:2^{j-1}\leq|\xi|\leq2^{j+1}\},
		\end{equation}
		and hence if $R<2^{j-2}$ or $R>2^{j+2}$ in (\ref{eq3.78}), the annulus
		\begin{equation}\label{eq3.91}
			\{\xi\in\bbbr^n:\frac{R}{2}<|\xi|<2R\}
		\end{equation}
		will not intersect the annulus (\ref{eq3.90}). Therefore after putting expressions (\ref{eq3.86}), (\ref{eq3.87}), (\ref{eq3.88}), (\ref{eq3.89}) into (\ref{eq3.78}), the range of $R$ in (\ref{eq3.78}) is restricted to the closed and bounded interval $[2^{j-2},2^{j+2}]$. Inserting (\ref{eq3.86}) into (\ref{eq3.78}) and invoking formulae (\ref{eq1-47}), (\ref{eq1-48}) yields the following estimate
		\begin{align}
			&\sup_{2^{j-2}\leq R\leq2^{j+2}}R^{2|\alpha|-n}\int_{\frac{R}{2}<|\xi|<2R}\big[\text{expression (\ref{eq3.86})}\big]d\xi\nonumber\\
			&\leq\sup_{2^{j-2}\leq R\leq2^{j+2}}R^{2|\alpha|-n}\int_{2^{j-1}\leq|\xi|\leq2^{j+1}}\big[\text{expression (\ref{eq3.86})}\big]d\xi\nonumber\\
			&\lesssim2^{j(2|\alpha|-n)}\cdot2^{jn}\cdot2^{-j(|\beta_1|+|\beta_3|)}\cdot2^{j(2-|\gamma_1|-|\gamma_3|)}
			\nonumber\\
			&\quad\cdot\sum_{l_1=0}^{|\omega_1|-1}2^{-jl_1}\cdot\sum_{l_3=0}^{|\omega_3|-1}2^{-jl_3}
			\cdot e^{-2\pi t\cdot2^j}\nonumber\\
			&=\sum_{l_1=0}^{|\omega_1|-1}\sum_{l_3=0}^{|\omega_3|-1}2^{j(2+2|\alpha|-|\beta_1|-|\beta_3|-|\gamma_1|
				-|\gamma_3|-l_1-l_3)}\cdot e^{-2\pi t\cdot2^j},\label{eq3.92}
		\end{align}
		where the implicit constant in (\ref{eq3.92}) depends on $\alpha,n,t,\psi$ and is independent of $j\in\bbbz$. Conditions (\ref{eq3.81}) and (\ref{eq3.82}) imply
\begin{equation}\label{eq3.93}
2\!\leq\!2|\alpha|\!-\!|\beta_1|\!-\!|\beta_3|\!-\!|\gamma_1|\!-\!|\gamma_3|\!-\!l_1\!-\!l_3
\!\leq\!2|\alpha|\!\leq\!n\!+\!2.
\end{equation}
		Therefore we have obtained the following estimate
		\begin{align}
			&\sup_{2^{j-2}\leq R\leq2^{j+2}}R^{2|\alpha|-n}\int_{\frac{R}{2}<|\xi|<2R}\big[\text{expression (\ref{eq3.86})}\big]d\xi\nonumber\\
			&\lesssim
			\begin{cases}
				2^{j(n+4)}\cdot e^{-2\pi t\cdot2^j} &\text{if}\quad j\geq0,\\
				2^{4j}\cdot e^{-2\pi t\cdot2^j} &\text{if}\quad j<0,
			\end{cases}
			\label{eq3.94}
		\end{align}
		where the implicit constant depends on $\alpha,n,t,\psi$ and is independent of $j\in\bbbz$. Inserting (\ref{eq3.89}) into (\ref{eq3.78}) and invoking formula (\ref{eq1-47}) yields the following estimate
\begin{align}
&\sup_{2^{j-2}\leq R\leq2^{j+2}}R^{2|\alpha|-n}\int_{\frac{R}{2}<|\xi|<2R}\big[\text{expression (\ref{eq3.89})}\big]d\xi\nonumber\\
&\leq\sup_{2^{j-2}\leq R\leq2^{j+2}}R^{2|\alpha|-n}\int_{2^{j-1}\leq|\xi|\leq2^{j+1}}
\big[\text{expression (\ref{eq3.89})}\big]d\xi\nonumber\\
&\lesssim\!2^{j(2|\alpha|-n)}\!\cdot\!2^{jn}\!\cdot\!2^{-j(|\beta_2|+|\beta_4|)}\!\cdot\!
2^{j(2-|\gamma_2|-|\gamma_4|)}\!\cdot\!e^{-2\pi t\cdot2^j}\nonumber\\
&=2^{2j}\!\cdot\!e^{-2\pi t\cdot2^j},\label{eq3.95}
\end{align}
		where we use the condition (\ref{eq3.83}), and the implicit constant in (\ref{eq3.95}) depends on $\alpha,n,\psi$ and is independent of $j\in\bbbz$. Estimate (\ref{eq3.95}) is true for all $j\in\bbbz$. Inserting (\ref{eq3.87}) into (\ref{eq3.78}) and invoking formulae (\ref{eq1-47}) and (\ref{eq1-48}) yields the following estimate
\begin{align}
&\sup_{2^{j-2}\leq R\leq2^{j+2}}R^{2|\alpha|-n}\int_{\frac{R}{2}<|\xi|<2R}\big[\text{expression (\ref{eq3.87})}\big]d\xi\nonumber\\
&\leq\sup_{2^{j-2}\leq R\leq2^{j+2}}R^{2|\alpha|-n}\int_{2^{j-1}\leq|\xi|\leq2^{j+1}}\big[\text{expression (\ref{eq3.87})}\big]d\xi\nonumber\\
&\lesssim\!2^{j(2|\alpha|-n)}\!\cdot\!2^{jn}\!\cdot\!2^{-j(|\beta_1|+|\beta_4|)}\!\cdot\!
2^{j(2-|\gamma_1|-|\gamma_4|)}
\!\cdot\!\!\sum_{l_1=0}^{|\omega_1|-1}2^{-jl_1}\!\cdot\!e^{-2\pi t\cdot2^j}\nonumber\\
&=\sum_{l_1=0}^{|\omega_1|-1} 2^{j(2+2|\alpha|-|\beta_1|-|\beta_4|-|\gamma_1|
-|\gamma_4|-l_1)}\cdot e^{-2\pi t\cdot2^j},\label{eq3.96}
\end{align}
		where the implicit constant in (\ref{eq3.96}) depends on $\alpha,n,t,\psi$ and is independent of $j\in\bbbz$. Conditions (\ref{eq3.81}), (\ref{eq3.82}), and (\ref{eq3.83}) imply
\begin{equation}\label{eq3.97}
1\!\leq\!2|\alpha|\!-\!|\beta_1|\!-\!|\beta_4|\!-\!|\gamma_1|\!-\!|\gamma_4|\!-\!l_1\!\leq\!2|\alpha|
\!\leq\!n\!+\!2.
\end{equation}
		Therefore we have obtained the following estimate
		\begin{align}
			&\sup_{2^{j-2}\leq R\leq2^{j+2}}R^{2|\alpha|-n}\int_{\frac{R}{2}<|\xi|<2R}\big[\text{expression (\ref{eq3.87})}\big]d\xi\nonumber\\
			&\lesssim
			\begin{cases}
				2^{j(n+4)}\cdot e^{-2\pi t\cdot2^j} &\text{if}\quad j\geq0,\\
				2^{3j}\cdot e^{-2\pi t\cdot2^j} &\text{if}\quad j<0,
			\end{cases}
			\label{eq3.98}
		\end{align}
		where the implicit constant depends on $\alpha,n,t,\psi$ and is independent of $j\in\bbbz$. Since (\ref{eq3.88}) can be obtained from (\ref{eq3.87}) by replacing $\beta_1,\gamma_1,\omega_1$ in (\ref{eq3.87}) by $\beta_3,\gamma_3,\omega_3$ and replacing $\beta_4,\gamma_4$ in (\ref{eq3.87}) by $\beta_2,\gamma_2$, we can insert (\ref{eq3.88}) into (\ref{eq3.78}) and use an argument that is similar to the above to obtain the following estimate
		\begin{align}
			&\sup_{2^{j-2}\leq R\leq2^{j+2}}R^{2|\alpha|-n}\int_{\frac{R}{2}<|\xi|<2R}\big[\text{expression (\ref{eq3.88})}\big]d\xi\nonumber\\
			&\lesssim
			\begin{cases}
				2^{j(n+4)}\cdot e^{-2\pi t\cdot2^j} &\text{if}\quad j\geq0,\\
				2^{3j}\cdot e^{-2\pi t\cdot2^j} &\text{if}\quad j<0,
			\end{cases}
			\label{eq3.99}
		\end{align}
		and the implicit constant in (\ref{eq3.99}) depends on $\alpha,n,t,\psi$ and is independent of $j\in\bbbz$. We can combine (\ref{eq3.94}), (\ref{eq3.95}), (\ref{eq3.98}), (\ref{eq3.99}) with (\ref{eq3.78}) and notice that $|\alpha|\leq[\frac{n}{2}]+1$ in (\ref{eq3.78}), then we can estimate $A_j$ from above as follows,
		\begin{equation}\label{eq3.100}
			A_j\!\lesssim\!\!
			\begin{cases}
				2^{j(\frac{n}{2}+2)}\cdot e^{-\pi t\cdot2^j}+2^{j}\cdot e^{-\pi t\cdot2^j} 
				&\text{if }j\!\geq\!0,\\
				2^{2j}\cdot e^{-\pi t\cdot2^j}+2^{j}\cdot e^{-\pi t\cdot2^j}+2^{\frac{3j}{2}}\cdot e^{-\pi t\cdot2^j} &\text{if }j\!<\!0,
			\end{cases}
		\end{equation}
		where the implicit constant depends on $n,t,\psi$. We use the inequalities
		\begin{align*}
			&2^j\leq2^{j(\frac{n}{2}+2)}\qquad\text{if $j\geq0$},\\
			&2^{2j}\leq2^j,\qquad2^{\frac{3j}{2}}\leq2^j,\qquad\text{if $j<0$},
		\end{align*}
		and recall the estimate (\ref{eq3.80}), then we can obtain the following estimate
		\begin{equation}\label{eq3.101}
			A_j+\|\multi_j\|_{L^{\infty}(\bbbr^n)}\lesssim
			\begin{cases}
				2^{j(\frac{n}{2}+2)}\cdot e^{-\pi t\cdot2^j} &\text{if }j\geq0,\\
				2^{j}\cdot e^{-\pi t\cdot2^j} &\text{if }j<0,
			\end{cases}
		\end{equation}
where the implicit constant in (\ref{eq3.101}) depends on $n,t,\psi$ and is independent of $j\in\bbbz$. Now we can combine (\ref{eq3.165}) and (\ref{eq3.101}), and insert the result into (\ref{eq3.77}) to obtain that for almost every $t\!\in\!(0,\infty)$, we have
\begin{align*}
&\|\overline{H}(\cdot,t)\!-\!\overline{S}_N(\cdot,t)\|_{L^{p_0}(\bbbr^n)}\!\leq\!\!
\sum_{|j|>N}\!\|f*\iFT_n\multi_j\|_{L^{p_0}(\bbbr^n)}\nonumber\\
&\lesssim\!\!\sum_{j<-N}\!2^{j}\!\cdot\!e^{-\pi t\cdot2^j}\!\cdot\!\|f\|_{L^{p_0}(\bbbr^n)}
\!+\!\sum_{j>N}\!2^{j(\frac{n}{2}+2)}\!\cdot\!e^{-\pi t\cdot2^j}\!\cdot\!\|f\|_{L^{p_0}(\bbbr^n)}.
\end{align*}
Both of the following series in (\ref{eq3.102}) converge,
\begin{equation}\label{eq3.102}
\sum_{j\leq-1}2^{j}\cdot e^{-\pi t\cdot2^j}\qquad\text{and}\qquad
\sum_{j\geq1}2^{j(\frac{n}{2}+2)}\cdot e^{-\pi t\cdot2^j},
\end{equation}
and the convergence of the former series is because $e^{-\pi t\cdot2^j}\leq1$ for all $j\leq-1$, and the convergence of the latter series is justified by using the ratio test. Estimates (\ref{eq3.165}) and (\ref{eq3.101}) also show that $\overline{S}_N(x,t)\in L^{p_0}(\bbbr^n)$ for each $N\in\bbbn_0$ and $t\in(0,\infty)$. Hence the sequence $\{\overline{S}_N(x,t)\}_{N\in\bbbn_0}$ of partial sums converges to the function $\overline{H}(x,t)$ in $\|\cdot\|_{L^{p_0}(\bbbr^n)}$-norm, and by the completeness of the space $L^{p_0}(\bbbr^n)$, we have $\overline{H}(x,t)\in L^{p_0}(\bbbr^n)$ and (\ref{eq3.72}) is proven by applying the H\"{o}lder's inequality with $\frac{1}{p_0}+\frac{1}{p_0'}=1$ in the case where $f\in L^{p_0}(\bbbr^n)$ for some $p_0\in(1,\infty)$. In addition, the inequality $|H(x,t)|\leq\overline{H}(x,t)$ implies $H(x,t)\in L^{p_0}(\bbbr^n)$. The assumption (\ref{eq3.2}) with $1<p_0<\infty$, formula (\ref{eq1-6}), and \cite[Theorem 1.2.12. (Young's inequality)]{14classical} imply the function $(\ref{eq3.133})\in L^{p_0}(\bbbr^n)$. We deduce from equation (\ref{eq3.70}) and Lemma \ref{lemma16} (\ref{eq2.238}) that both of the functions (\ref{eq3.133}) and $H(x,t)$ belong to $\functrep_{0}(f*\partial_{n+1}[P_t(\cdot)])\cap L^{p_0}(\bbbr^n)$, thus equation (\ref{eq3.129}) is true for all $\phi\in\Sw_0(\bbbr^n)$, and for almost every $t\!\in\!(0,\infty)$. And the difference (\ref{eq3.130}) satisfies condition (\ref{eq1.101}) and is in $L^{p_0}(\bbbr^n)\subseteq\Lloc$. There exists $\tilde{f}\in\Sw'(\bbbr^n)$ so that the difference function (\ref{eq3.130}) is in $\functrep(\tilde{f})$. Therefore, by Proposition \ref{proposition2}, the difference (\ref{eq3.130}) equals a polynomial for almost every $x\in\bbbr^n$ whose coefficients are functions of $t\in(0,\infty)$. Since the difference (\ref{eq3.130}) also belongs to $L^{p_0}(\bbbr^n)$ with respect to variable $x$, the polynomial is identically zero, and $(\ref{eq3.133})=\partial_{n+1}\Pint(f;x,t)=H(x,t)$ for almost every $x\in\bbbr^n$ and almost every $t\!\in\!(0,\infty)$, where $f$ appearing in $\partial_{n+1}\Pint(f;x,t)$ is the function representative $f(x)\in\functrep_{0}(f)\cap L^{p_0}(\bbbr^n)$.\\
		
If $f(x)$ is the function representative given in (\ref{eq3.2}) and $p_0=\infty$, then we recall (\ref{eq3.157}) and (\ref{eq3.140}) to obtain that for almost every $t\!\in\!(0,\infty)$,
\begin{align}
&\|\overline{H}(\cdot,t)-\overline{S}_N(\cdot,t)\|_{L^{\infty}(\bbbr^n)}\leq\sum_{|j|>N}\|F_{j}(\cdot,t)\|_{L^{\infty}(\bbbr^n)}\nonumber\\
&\!\leq\sum_{|j|>N}\|\iFT_n\multi_j\|_{L^{1}(\bbbr^n)}\cdot\|f\|_{L^{\infty}(\bbbr^n)}.\label{eq3.105}
\end{align}
Estimate (\ref{eq3.120}) with $\delta_j=2^{-j}$ indicates $\overline{S}_N(x,t)\in L^{\infty}(\bbbr^n)$ for each $N\in\bbbn_0$ and $t\in(0,\infty)$. Inserting estimate (\ref{eq3.120}) with $\delta_j=2^{-j}$ into (\ref{eq3.105}) yields that the sequence $\{\overline{S}_N(x,t)\}_{N\in\bbbn_0}$ of partial sums converges to the function $\overline{H}(x,t)$ in $\|\cdot\|_{L^{\infty}(\bbbr^n)}$-norm as $N\rightarrow\infty$, and by the completeness of the space $L^{\infty}(\bbbr^n)$, we have $\overline{H}(x,t)\in L^{\infty}(\bbbr^n)$ and (\ref{eq3.72}) is proven in the case where $f\in L^{\infty}(\bbbr^n)$. In addition, the inequality $|H(x,t)|\leq\overline{H}(x,t)$ implies $H(x,t)\in L^{\infty}(\bbbr^n)$. The assumption (\ref{eq3.2}) with $p_0=\infty$ and (\ref{eq1-6}) imply the function $(\ref{eq3.133})\in L^{\infty}(\bbbr^n)$. We deduce from equation (\ref{eq3.70}) and Lemma \ref{lemma16} (\ref{eq2.238}) that both of the functions (\ref{eq3.133}) and $H(x,t)$ belong to $\functrep_{0}(f*\partial_{n+1}[P_t(\cdot)])\cap L^{\infty}(\bbbr^n)$, thus equation (\ref{eq3.129}) is true for all $\phi\in\Sw_0(\bbbr^n)$, and for almost every $t\!\in\!(0,\infty)$. And the difference (\ref{eq3.130}) satisfies condition (\ref{eq1.101}) and is in $L^{\infty}(\bbbr^n)\subseteq\Lloc$. There exists $\tilde{f}\in\Sw'(\bbbr^n)$ so that the difference function (\ref{eq3.130}) is in $\functrep(\tilde{f})$. Therefore, by Proposition \ref{proposition2}, the difference (\ref{eq3.130}) equals a polynomial for almost every $x\in\bbbr^n$ whose coefficients are functions of $t\in(0,\infty)$. Since the difference (\ref{eq3.130}) also belongs to $L^{\infty}(\bbbr^n)$ with respect to variable $x$, the difference (\ref{eq3.130}) is almost everywhere equal to a function of variable $t$. From (\ref{eq3.70}) and Lemma \ref{lemma16} (\ref{eq2.245}), we deduce that
\begin{equation}\label{eq3.142}
H(x,t)\in\functrep_{0}(f*\partial_{n+1}[P_t(\cdot)])=\functrep_{0}(\partial_{n+1}\Pint(f;x,t)),
\end{equation}
where $\partial_{n+1}\Pint(f;x,t)$ appearing in (\ref{eq3.142}) is the tempered distribution defined by the integral (\ref{eq1.121}). The proof of Theorem \ref{theorem3} is complete.
\end{proof}

\section{Proofs of Corollary \ref{corollary1} and Corollary \ref{corollary2}}\label{proofs.of.corollaries.1&2}
\begin{proof}[Proof of Corollary \ref{corollary1}]
We can use the formula (\ref{eq1-14}) to deduce that $$\|f\|_{\dot{F}^0_{p,q}(\bbbr^n)}\leq\|f\|_{\dot{F}^0_{p,2}(\bbbr^n)}$$ when $2\leq q<\infty$, and Corollary \ref{corollary1} is proved if we apply Theorem \ref{theorem3} with $s=0$. Furthermore, we can use Lemma \ref{lemma12} (i) and (ii) to deduce that
	\begin{equation}\label{eq1-23}
		\|\LPg_{0,q}(f)\|_{L^p(\bbbr^n)}\lesssim\|f\|_{L^p(\bbbr^n)},
	\end{equation}
if $1<p<\infty$, $2\leq q<\infty$, and $f\in\dot{F}^0_{p,2}(\bbbr^n)$. We can also use \cite[estimate (2.2.25) of Theorem 2.2.9]{14modern} to deduce that
\begin{equation}\label{eq1-24}
\|\LPg_{0,q}(f)\|_{L^p(\bbbr^n)}\lesssim\|f\|_{H^p(\bbbr^n)},
\end{equation}
if $0<p<1$, $2\leq q<\infty$, and $f\in H^p(\bbbr^n)$ is a tempered distribution and has a function representative $F(x)\in\functrep(f)\cap L^1(\bbbr^n)$, and $\|\cdot\|_{H^p(\bbbr^n)}$ represents the Hardy quasinorm. If $p=1$, $2\leq q<\infty$, and $f\in H^1(\bbbr^n)$, then we have
\begin{equation}\label{eq1-24-1}
\|\LPg_{0,q}(f)\|_{L^1(\bbbr^n)}\lesssim\|f\|_{H^1(\bbbr^n)}.
\end{equation}
In view of Definition \ref{definition4} and Lemma \ref{lemma17}, when $1\leq p<\infty$, every $f\in H^p(\bbbr^n)\subseteq\Sw_{0}'(\bbbr^n)$ has a function representative $F(x)\in\functrep_{0}(f)\cap L^p(\bbbr^n)$ and we have
\begin{equation}\label{eq5.24}
\|f\|_{H^p(\bbbr^n)}\!=\!\|\sup_{t>0}|f\!*\!\Phi_t|\|_{L^p(\bbbr^n)}\!=\!
\|\sup_{t>0}|F\!*\!\Phi_t|\|_{L^p(\bbbr^n)}.
\end{equation}
Under the conditions for (\ref{eq1-24}), \cite[Theorem 2.3.20]{14classical} implies $f*\Phi_t\in\Sw'(\bbbr^n)$ has the smooth function representative $(\ref{eq5.25})\!\in\!\functrep(f*\Phi_t)\cap L^1(\bbbr^n)$, which is also the function representative specified by Definition \ref{definition4} and belongs to $L^p(\bbbr^n)$ for $0<p<1$,
\begin{equation}\label{eq5.25}
x\!\in\!\bbbr^n\!\longmapsto<\!f,\Phi_t(x\!-\cdot)\!>=\!\!\!\int_{\bbbr^n}\!\!\!\!\!F(y)\!\cdot\!\Phi_t(x\!-\!y)dy\!=\!F\!*\!\Phi_t(x),
\end{equation}
thus equation (\ref{eq5.24}) is still true. Because of (\ref{eq3.39}) and \cite[Theorem 2.3.20]{14classical}, $f*\psi_{2^{-j}}\in\Sw'(\bbbr^n)$ has the smooth function representative $(\ref{eq5.26})\in\functrep(f*\psi_{2^{-j}})\cap L^1(\bbbr^n)$,
\begin{equation}\label{eq5.26}
x\!\in\!\bbbr^n\!\mapsto<\!\!f,\psi_{2^{-j}}(x\!-\cdot)\!\!>=\!\!\!\int_{\bbbr^n}\!\!\!\!\!F(y)\!\cdot\!\psi_{2^{-j}}(x\!-\!y)dy\!=\!F\!*\!\psi_{2^{-j}}(x).
\end{equation}
We argue as in the proof of \cite[Theorem 2.2.9]{14modern}. Since $\{\Phi_t\}_{t>0}$ is an approximate identity and since $F*\psi_{2^{-j}}(x)\in L^1(\bbbr^n)$ for all $j\in\bbbz$, we have for any $M\in\bbbn$,
\begin{equation}\label{eq5.27}
\bigg|\!\sum_{|j|\leq M}\!\!r_j(w)\!\cdot\!F\!*\!\psi_{2^{-j}}(x)\bigg|\!\!\leq\!\sup_{t>0}\!
\bigg|\Phi_t\!*\!\!\bigg(\!\sum_{|j|\leq M}\!\!r_j(w)\!\cdot\!F\!*\!\psi_{2^{-j}}\!\!\bigg)(x)\bigg|,
\end{equation}
where $r_j{}'s$ are the Rademacher functions introduced in \cite[Appendix C]{14classical}. Since $F(x)\in L^1(\bbbr^n)$, \cite[Theorem 6.1.2 (6.1.5)]{14classical} tells us that for almost every $x\in\bbbr^n$,
\begin{equation}\label{eq5.28}
\bigg(\sum_{j\in\bbbz}|F*\psi_{2^{-j}}(x)|^2\bigg)^{\frac{1}{2}}<\infty,
\end{equation}
thus \cite[Appendix C.2 Khintchine's inequality]{14classical} is applicable. And we have
\begin{align}
&\int_{\bbbr^n}\bigg(\sum_{|j|\leq M}|F*\psi_{2^{-j}}(x)|^2\bigg)^{\frac{p}{2}}dx\nonumber\\
&\lesssim\int_{\bbbr^n}\int_0^1\bigg|\!\sum_{|j|\leq M}\!\!r_j(w)\!\cdot\!
F\!*\!\psi_{2^{-j}}(x)\bigg|^p dwdx\nonumber\\
&=\int_0^1\int_{\bbbr^n}\bigg|\!\sum_{|j|\leq M}\!\!r_j(w)\!\cdot\!
F\!*\!\psi_{2^{-j}}(x)\bigg|^p dxdw\nonumber\\
&\leq\int_0^1\int_{\bbbr^n}\!\sup_{t>0}\!\bigg|\Phi_t\!*\!\!\bigg(\!\sum_{|j|\leq M}\!\!r_j(w)\!\cdot\!
F\!*\!\psi_{2^{-j}}\!\!\bigg)(x)\bigg|^p dxdw\nonumber\\
&\lesssim\int_0^1\|\sup_{t>0}|F*\Phi_t|\|_{L^p(\bbbr^n)}^p dw=\|f\|_{H^p(\bbbr^n)}^p,\label{eq5.29}
\end{align}
where the inequality in the penultimate line of (\ref{eq5.29}) is due to (\ref{eq5.27}), and the inequality in the last line of (\ref{eq5.29}) is due to \cite[Theorem 2.1.14]{14modern}, and the equation in the last line of (\ref{eq5.29}) is by (\ref{eq5.24}). Under the conditions for (\ref{eq1-24-1}), we use Lemma \ref{lemma17} (ii) to obtain a function representative $F(x)\in\functrep_{0}(f)\cap L^1(\bbbr^n)$ and
\begin{equation}\label{eq5.30}
\|f\|_{H^1(\bbbr^n)}=\|\sup_{t>0}|F*\Phi_t|\|_{L^1(\bbbr^n)}.
\end{equation}
And (\ref{eq5.26}) is still true. The argument given for (\ref{eq5.27}), (\ref{eq5.28}), (\ref{eq5.29}) only uses the condition $F(x)\in L^1(\bbbr^n)$, and thus we obtain when $p=1$ and $M\in\bbbn$,
\begin{align}
&\int_{\bbbr^n}\bigg(\sum_{|j|\leq M}|F*\psi_{2^{-j}}(x)|^2\bigg)^{\frac{1}{2}}dx\nonumber\\
&\lesssim\int_0^1\|\sup_{t>0}|F*\Phi_t|\|_{L^1(\bbbr^n)}dw=\|f\|_{H^1(\bbbr^n)},\label{eq5.31}
\end{align}
where the last equation in (\ref{eq5.31}) is due to (\ref{eq5.30}). Notice that when $0<p<1$ and $f\in H^p(\bbbr^n)\subseteq\Sw_{0}'(\bbbr^n)$ has a function representative $F(x)\in\functrep_0(f)\cap L^1(\bbbr^n)$, the equation (\ref{eq5.24}) is unknown, since the function $\Phi\in\Sw(\bbbr^n)$ given in Definition \ref{definition4} is not in $\Sw_{0}(\bbbr^n)$ and since the continuous function representative $F*\Phi_t(x)\in\functrep_{0}(f*\Phi_t)\cap L^1(\bbbr^n)$ that can be obtained by the argument given in (\ref{eq2.337}) may not be identical to the smooth function representative $F_t(x)\in\functrep_{0}(f*\Phi_t)\cap L^p(\bbbr^n)$ specified by Definition \ref{definition4} (\ref{eq2.334}) and satisfying estimate (\ref{eq1.125}). Therefore inequality (\ref{eq1-24}) is unknown under the weaker condition that $0<p<1$, $2\leq q<\infty$, and $f\in H^p(\bbbr^n)$ has a function representative $F(x)\in\functrep_0(f)\cap L^1(\bbbr^n)$. In \cite[Corollary of Theorem 3]{Stein1958}, E. M. Stein proves that the classical Littlewood-Paley $g$-function, which can be recovered from (\ref{eq1-19}) by setting $s=0,q=2$, is a bounded operator from $L^1(\bbbr^n)$ into $L^{1,\infty}(\bbbr^n)$. In \cite[Theorem 3.1]{JL2018}, H. Jiao and H. Lin prove that the classical Littlewood-Paley $g$-function defined on a non-homogeneous metric measure space is bounded from $L^1(\mu)$ into $L^{1,\infty}(\mu)$ assuming the boundedness on $L^2(\mu)$, where $\mu$ is a Borel measure satisfying the upper doubling condition. In \cite[Remark 5.1 (ii), Theorem 5.1, and Theorem 5.2]{JL2018}, the authors also prove that the classical Littlewood-Paley $g$-function defined on a non-homogeneous metric measure space is bounded from the atomic Hardy space $\tilde{H}^1(\mu)$ into $L^1(\mu)$, and is bounded from the molecular Hardy space $\tilde{H}^{p,q,\gamma,\epsilon_1}_{mb,\rho}(\mu)$ into $L^p(\mu)$ for some $p\in(0,1)$, assuming the boundedness on $L^2(\mu)$. The boundedness property of the generalized homogeneous Littlewood–Paley $g$-function from Hardy space $H^1(\mu)$ into $L^1(\mu)$ is also studied in \cite{FZ2016} by X. Fu and J. M. Zhao. Moreover, when we let $2\leq p=q<\infty$ and $f\in\dot{F}^0_{q,2}(\bbbr^n)$, the inequality (\ref{eq1-23}) yields
\begin{equation}\label{eq1-25}
\int_{\bbbr^n}\int_0^{\infty}t^{q-1}|\nabla_{n+1}\Pint(f;x,t)|^q dtdx\lesssim\int_{\bbbr^n}|f(x)|^q dx,
\end{equation}
where $f$ appearing on both sides of (\ref{eq1-25}) is the function representative $f(x)\in\functrep_{0}(f)\cap L^q(\bbbr^n)$. When $2\leq p=q<\infty$ and $f(x)$ is a function in $L^q(\bbbr^n)$, the integral in (\ref{eq5.32}) defines a tempered distribution denoted by the symbol $f$,
\begin{equation}\label{eq5.32}
\int_{\bbbr^n}f(x)\!\cdot\!g(x)dx\text{ for $g\in\Sw(\bbbr^n)$},
\end{equation}
and we have $f(x)\in\functrep(f)\subseteq\functrep_{0}(f)$, thus Lemma \ref{lemma12} (i) implies $f\in\dot{F}^0_{q,2}(\bbbr^n)$, hence inequality (\ref{eq1-25}) holds true. This inequality has been used in the proofs of other important results in this paper. Theorem \ref{theorem3} is also used in the significant estimate (\ref{eq4.26}) in the proof of Theorem \ref{theorem4}. The proof of Corollary \ref{corollary1} is complete.
\end{proof}
\begin{proof}[Proof of Corollary \ref{corollary2}]
	Let $f\in L^q(\bbbr^n)$ be a function. By using the defining expression (\ref{eq1-26}) and the condition $1<\lambda<\infty$, we have
	\begin{align}
		&\|\LPG_{\lambda,q}(f)\|_{L^q(\bbbr^n)}^q\nonumber\\
		&=\!\!\!\int_0^{\infty}\!\!\!\!\int_{\bbbr^n}\!\int_{\bbbr^n}\!\!\!
		\big(\frac{1}{1\!+\!\frac{|x-y|}{t}}\big)^{\lambda n}\frac{dx}{t^n}\cdot t^{q-1}
		|\nabla_{n+1}\Pint(f;y,t)|^q dydt\nonumber\\
		&\lesssim\int_0^{\infty}\!\!\int_{\bbbr^n}t^{q-1}|\nabla_{n+1}\Pint(f;y,t)|^q dydt
		\lesssim\|f\|_{L^q(\bbbr^n)}^q,\label{eq1-28}
	\end{align}
	where the constants in (\ref{eq1-28}) depend on $n$, $\lambda$, and some fixed parameters due to the application of estimate (\ref{eq1-25}). If we pick a positive constant $A$ satisfying $\frac{1}{q'}<A<s+\frac{1}{q'}$ for $\frac{1}{q}+\frac{1}{q'}=1$, then we can use H\"{o}lder's inequality to obtain
	\begin{align}
		&(\int_0^{2|x-y|}|\partial_{n+1}\Pint(f;y,t)|t^A\cdot t^{s-A}dt)^q\nonumber\\
		&\lesssim\int_0^{2|x-y|}|\partial_{n+1}\Pint(f;y,t)|^q t^{qA}dt\cdot|x-y|^{q(1+s-A)-1},\label{eq1-29}
	\end{align}
	and the constant depends on $q,s,A$. Now we use the defining expression (\ref{eq1-27}), inequality (\ref{eq1-29}), Fubini's theorem, and estimate (\ref{eq1-25}) in a sequence to obtain
	\begin{align}
		&\|\Remain_{s,q}(f)\|_{L^q(\bbbr^n)}^q\nonumber\\
		&=\!\!\int_{\bbbr^n}\!\int_{\bbbr^n}\!\frac{1}{|x\!-\!y|^{n+sq}}\!\cdot\!(\!\int_0^{2|x-y|}\!\!\!\!\!\!\!\!\!\!|\partial_{n+1}\Pint(f;y,t)|t^s dt)^q dydx\nonumber\\
		&\lesssim\!\!\int_{\bbbr^n}\!\int_{\bbbr^n}\!\!\!|x\!-\!y|^{q-qA-n-1}\!\cdot\!\!
		\int_0^{2|x-y|}\!\!\!\!\!\!\!\!\!\!|\partial_{n+1}\Pint(f;y,t)|^q t^{qA}dtdydx\nonumber\\
		&=\!\!\int_{\bbbr^n}\!\int_0^{\infty}\!\!\!\int_{\frac{t}{2}\leq|x-y|}\!\!\!\!\!\!\!\!\!\!|x\!-\!y|^{q-qA-n-1}dx\!\cdot\!|\partial_{n+1}\Pint(f;y,t)|^q t^{qA}dtdy\nonumber\\
		&\lesssim\int_{\bbbr^n}\int_0^{\infty}t^{q-1}|\partial_{n+1}\Pint(f;y,t)|^q dtdy
		\lesssim\|f\|_{L^q(\bbbr^n)}^q,\label{eq1-30}
	\end{align}
	where the constants in (\ref{eq1-30}) depend on $n,s,q,A$, and some other fixed parameters due to the application of estimate (\ref{eq1-25}).
\end{proof}

\section{The weak type $(p,p)$ boundedness of the $\LPG_{\lambda,q}$-function}\label{proof.of.theorem4}
\begin{proof}[Proof of Theorem \ref{theorem4}]
		Step 1: Assume $f$ is a function in $L^p(\bbbr^n)$. If $$\esssup_{x\in\bbbr^n}|f(x)|\leq\alpha<\infty,$$ then since $2\leq q<\infty$ and $\lambda>1$, we can use Corollary \ref{corollary2} to obtain the following inequality
		\begin{align}
			&\Lebes^n(\{x\in\bbbr^n:\LPG_{\lambda,q}(f)(x)>\alpha\})\nonumber\\
			&\leq\alpha^{-q}\|\LPG_{\lambda,q}(f)\|_{L^q(\bbbr^n)}^q\nonumber\\
			&\lesssim\alpha^{-q}\int_{\bbbr^n}|f(x)|^{q-p}\cdot|f(x)|^p dx\leq\alpha^{-p}\|f\|_{L^p(\bbbr^n)}^p,
			\label{eq4.1}
		\end{align}
		and hence inequality (\ref{eq1-31}) is proved in this case. If $$0<\alpha<\esssup_{x\in\bbbr^n}|f(x)|,$$ we apply Lemma \ref{lemma3} to the function $f\in L^p(\bbbr^n)$ and the set $\Omega=\{x\in\bbbr^n:M(|f|^p)(x)>\alpha^p\}$, and obtain a collection of Whitney cubes $\{I_j\}_j$ and two functions, the ``good'' function $g$ and the ``bad'' function $b$, where $\{I_j\}_j$, $g$ and $b$ satisfy all the conclusions of Lemma \ref{lemma3}. Then by Lemma \ref{lemma3} (v), we have the pointwise estimate $\LPG_{\lambda,q}(f)(x)\lesssim\LPG_{\lambda,q}(g)(x)+\LPG_{\lambda,q}(b)(x)$ and hence the following inequality
		\begin{align*}
			&\Lebes^n(\{x\in\bbbr^n:\LPG_{\lambda,q}(f)(x)>\alpha\})\\
			&\leq\Lebes^n(\{x\in\bbbr^n:\frac{\alpha}{2}\lesssim\LPG_{\lambda,q}(g)(x)\})
			+\Lebes^n(\{x\in\bbbr^n:\frac{\alpha}{2}\lesssim\LPG_{\lambda,q}(b)(x)\}),
		\end{align*}
		where the implicit constants in the above inequalities only depend on $q$. Therefore to prove (\ref{eq1-31}), it suffices to prove the following two inequalities,
		\begin{align}
			&\Lebes^n(\{x\in\bbbr^n:\frac{\alpha}{2}\lesssim\LPG_{\lambda,q}(g)(x)\})
			\lesssim\alpha^{-p}\|f\|_{L^p(\bbbr^n)}^p,\label{eq4.2}\\
			&\Lebes^n(\{x\in\bbbr^n:\frac{\alpha}{2}\lesssim\LPG_{\lambda,q}(b)(x)\})
			\lesssim\alpha^{-p}\|f\|_{L^p(\bbbr^n)}^p,\label{eq4.3}
		\end{align}
		where the values of implicit constants only depend on some fixed parameters. By Lemma \ref{lemma3} (viii), we have 
		\begin{equation}\label{eq4.4}
			\|g\|_{L^q(\bbbr^n)}^q\lesssim\alpha^{q-p}\|g\|_{L^p(\bbbr^n)}^p
			\lesssim\alpha^{q-p}\|f\|_{L^p(\bbbr^n)}^p,
		\end{equation}
		where the constants in (\ref{eq4.4}) depend on $n,p,q$. Inequality (\ref{eq4.4}) shows that $g$ is a function in $L^q(\bbbr^n)$, and hence we can apply Corollary \ref{corollary2} to obtain
		\begin{align}
			&\Lebes^n(\{x\in\bbbr^n:\frac{\alpha}{2}\lesssim\LPG_{\lambda,q}(g)(x)\})\nonumber\\
			&\lesssim(\frac{2}{\alpha})^q\|\LPG_{\lambda,q}(g)\|_{L^q(\bbbr^n)}^q
			\lesssim\alpha^{-q}\|g\|_{L^q(\bbbr^n)}^q\lesssim\alpha^{-p}\|f\|_{L^p(\bbbr^n)}^p,\label{eq4.6}
		\end{align}
		where the constants in (\ref{eq4.6}) depend on $n,p,q$ and the implicit constant coming from Corollary \ref{corollary2}. Inequality (\ref{eq4.2}) is proved. Now we prove inequality (\ref{eq4.3}). We consider the following equation
		\begin{align}
			&\Lebes^n(\{x\in\bbbr^n:\frac{\alpha}{2}\lesssim\LPG_{\lambda,q}(b)(x)\})\nonumber\\
			&=\Lebes^n(\{x\in\bbbr^n:\alpha^q\lesssim\LPG_{\lambda,q}(b)(x)^q\}),\label{eq4.7}
		\end{align}
		where the implicit constants depend on $q$. We recall the relations between two Whitney cubes given in (\ref{near}) and (\ref{far}), and we also use the decomposition
		\begin{equation}\label{eq4.8}
			\bbbr^n=\Omega^c\bigcup\Omega=\Omega^c\bigcup\bigg(\bigcup_m I_m\bigg).
		\end{equation}
		By Lemma \ref{lemma3} (v), the function $b$ is supported in $\Omega$ and if $\chi_{I_j}$ denotes the characteristic function of the cube $I_j$ and if $b_j=b\cdot\chi_{I_j}$, then we have the equation
		\begin{equation}\label{eq4.9}
			b=\sum_j b_j=\bnearIm+\bfarIm,
		\end{equation}
		and for each $m$, we denote
		\begin{equation}\label{eq4.10}
			\bnearIm=\sum_{\substack{j\\I_j\text{ near }I_m}}b_j\quad\text{and}\quad
			\bfarIm=\sum_{\substack{j\\I_j\text{ far from }I_m}}b_j,
		\end{equation}
		where ``$(n.m.)$'' comes from the phrase ``near $I_m$'' while ``$(f.m.)$'' comes from the phrase ``far from $I_m$''. By using the defining expression (\ref{eq1-26}), we can estimate $\LPG_{\lambda,q}(b)(x)^q$ from above by a constant multiple of the sum of the following $(2n+3)$ terms,
\begin{align}
A_k(x)\!\!&:=\!\!\!\int_0^{\infty}\!\!\!\!\int_{\Omega^c}\!\!\!\big(\frac{t}{t\!+\!|x\!-\!y|}\big)^{\lambda n}\!\!\!\cdot\!t^{q-n-1}|\partial_k\Pint(b;y,t)|^q dydt,\label{eq4.11}\\
B_k(x)\!\!&:=\!\!\!\sum_m\!\!\!\int_0^{\infty}\!\!\!\!\int_{I_m}\!\!\!\!\big(\frac{t}{t\!+\!|x\!-\!y|}\big)^{\lambda n}\!\!\!\cdot\!t^{q-n-1}|\partial_k\Pint(\bfarIm;y,t)|^q dydt,\label{eq4.12}\\
D(x)\!\!&:=\!\!\!\sum_m\!\!\!\int_0^{\infty}\!\!\!\!\int_{I_m}\!\!\!\!\big(\frac{t}{t\!+\!|x\!-\!y|}\big)^{\lambda n}\!\!\!\cdot\!t^{q-n-1}|\nabla_{n+1}\Pint(\bnearIm;y,t)|^q dydt,\label{eq4.13}
\end{align}
		where $1\leq k\leq n+1$ in (\ref{eq4.11}) and (\ref{eq4.12}), and the constant depends on $n,q$. Combining the above pointwise estimate with (\ref{eq4.7}), we can obtain
		\begin{equation}\label{eq4.14}
			\alpha^q\leq C_1\cdot(\sum_{k=1}^{n+1}A_k(x)+\sum_{k=1}^{n+1}B_k(x)+D(x)),
		\end{equation}
		where $C_1=C_1(n,q)$ and $x$ belongs to the set $\{x\!\in\!\bbbr^n\!:\!\alpha^q\!\lesssim\!\LPG_{\lambda,q}(b)(x)^q\}$. Inequality (\ref{eq4.14}) says that at least one of the terms $D(x)$, $A_k(x)$ and $B_k(x)$ for $1\leq k\leq n+1$ is greater than or equal to $\frac{\alpha^q}{C_1(2n+3)}$, therefore we can estimate (\ref{eq4.7}) from above by the sum of the following,
		\begin{align}
			&\sum_{k=1}^{n+1}\Lebes^n(\{x\in\bbbr^n:\alpha^q\lesssim A_k(x)\}),\label{eq4.15}\\
			&\sum_{k=1}^{n+1}\Lebes^n(\{x\in\bbbr^n:\alpha^q\lesssim B_k(x)\}),\label{eq4.16}\\
			&\Lebes^n(\{x\in\bbbr^n:\alpha^q\lesssim D(x)\}),\label{eq4.17}
		\end{align}
		and the implicit constants in (\ref{eq4.15}), (\ref{eq4.16}), (\ref{eq4.17}) depend only on $n,q$. Therefore to prove (\ref{eq4.3}), it suffices to prove that each and every one of (\ref{eq4.15}), (\ref{eq4.16}), (\ref{eq4.17}) is dominated by a constant multiple of $\alpha^{-p}\|f\|_{L^p(\bbbr^n)}^p$. We use the following notations for $1\leq k\leq n+1$,
\begin{align}
\bar{A}_k(x)\!\!&:=\!\!\!\int_0^{\infty}\!\!\!\!\int_{\Omega^c}\!\!\!\big(\frac{t}{t\!+\!|x\!-\!y|}\big)^{\lambda n}\!\!\!\cdot\!t^{-n}|\partial_k\Pint(b;y,t)|dydt,\label{eq4.18}\\
\bar{B}_k(x)\!\!&:=\!\!\!\sum_m\!\!\!\int_0^{\infty}\!\!\!\!\int_{I_m}\!\!\!\!\big(\frac{t}
{t\!+\!|x\!-\!y|}\big)^{\lambda n}\!\!\!\cdot\!t^{-n}|\partial_k\Pint(\bfarIm;y,t)|dydt.\label{eq4.19}
\end{align}\\
		
		Step 2: We estimate the term (\ref{eq4.15}). From (\ref{eq4.11}) and (\ref{eq4.18}), we can apply Lemma \ref{lemma6} (1) to the term $|\partial_k\Pint(b;y,t)|^{q-1}$ and obtain $$\alpha^q\lesssim A_k(x)\lesssim\alpha^{q-1}\bar{A}_k(x),$$ where $x$ belongs to the set $\{x\in\bbbr^n:\alpha^q\lesssim A_k(x)\}$ and $1\leq k\leq n+1$, and the constants depend on $n,p,q$. Therefore invoking Chebyshev's inequality, Fubini's theorem, Lemma \ref{lemma7} (\ref{eq2-13}), Lemma \ref{lemma3} (vi) and (i) in a sequence yields the following estimate
		\begin{align}
			&(\ref{eq4.15})\lesssim\sum_{k=1}^{n+1}\Lebes^n(\{x\in\bbbr^n:\alpha\lesssim\bar{A}_k(x)\})\nonumber\\
			&\lesssim\sum_{k=1}^{n+1}\frac{1}{\alpha}\int_0^{\infty}\!\!\!\!\int_{\Omega^c}\!\!\int_{\bbbr^n}
			\big(\frac{t}{t+|x-y|}\big)^{\lambda n}\frac{dx}{t^n}\cdot |\partial_k\Pint(b;y,t)|dydt\nonumber\\
			&\lesssim\sum_{k=1}^{n+1}\frac{1}{\alpha}\int_0^{\infty}\!\!\!\!\int_{\Omega^c}|\partial_k\Pint(b;y,t)|dydt
			\nonumber\\
			&\lesssim\frac{1}{\alpha}\cdot\sum_j\int_{I_j}|b(z)|dz\lesssim\sum_j\Lebes^n(I_j)
			\lesssim\alpha^{-p}\|f\|_{L^p(\bbbr^n)}^p,\label{eq4.20}
		\end{align}
		where the constants in (\ref{eq4.20}) depend on $n,p,q,\lambda$, and we also use the fact that for $1<p<q$ and $\lambda=\frac{q}{p}>1$,
		\begin{equation}\label{eq4.42}
			\int_{\bbbr^n}(1+|x|)^{-n\lambda}dx<\infty.
		\end{equation}
		
		Step 3: We estimate the term (\ref{eq4.16}). From (\ref{eq4.12}) and (\ref{eq4.19}), we can apply Lemma \ref{lemma6} (2) to the term $|\partial_k\Pint(\bfarIm;y,t)|^{q-1}$ and obtain $$\alpha^q\lesssim B_k(x)\lesssim\alpha^{q-1}\bar{B}_k(x),$$ where $x$ belongs to the set $\{x\in\bbbr^n:\alpha^q\lesssim B_k(x)\}$ and $1\leq k\leq n+1$, and the constants depend on $n,p,q$. Therefore invoking Chebyshev's inequality, Fubini's theorem, Lemma \ref{lemma7} (\ref{eq2-14}), Lemma \ref{lemma3} (vi) and (i) in a sequence yields the following estimate
\begin{align}
&(\ref{eq4.16})\lesssim\sum_{k=1}^{n+1}\Lebes^n(\{x\in\bbbr^n:\alpha\lesssim\bar{B}_k(x)\})\nonumber\\
&\lesssim\sum_{k=1}^{n+1}\!\frac{1}{\alpha}\!\sum_m\!\!\int_0^{\infty}\!\!\!\!\int_{I_m}\!\!\int_{\bbbr^n}\!\!\!\!\big(\frac{t}{t\!+\!|x\!-\!y|}\big)^{\lambda n}\frac{dx}{t^n}
|\partial_k\Pint(\bfarIm;y,t)|dydt\nonumber\\
&\lesssim\sum_{k=1}^{n+1}\frac{1}{\alpha}\sum_m\int_0^{\infty}\!\!\!\!\int_{I_m}|\partial_k\Pint(\bfarIm;y,t)|dydt\nonumber\\
&\lesssim\frac{1}{\alpha}\cdot\sum_j\int_{I_j}|b(z)|dz\lesssim\sum_j\Lebes^n(I_j)
\lesssim\alpha^{-p}\|f\|_{L^p(\bbbr^n)}^p,\label{eq4.21}
\end{align}
		where the constants in (\ref{eq4.21}) depend on $n,p,q,\lambda$, and we also use the fact (\ref{eq4.42}).\\
		
		Step 4: Finally, we prove that (\ref{eq4.17}) can be estimated from above by a constant multiple of $\alpha^{-p}\|f\|_{L^p(\bbbr^n)}^p$. By Lemma \ref{lemma3} (i) and Chebyshev's inequality, we have
		\begin{align}
			&(\ref{eq4.17})
			\!=\!\Lebes^n(\{x\in\Omega:\alpha^q\lesssim D(x)\})\!+\!\Lebes^n(\{x\notin\Omega:\alpha^q\lesssim D(x)\})\nonumber\\
			&\!\lesssim\!\Lebes^n(\Omega)\!+\!\alpha^{-q}\!\!\!\int_{\Omega^c}\!\!\!D(x)dx\!
			\lesssim\!\alpha^{-p}\|f\|_{L^p(\bbbr^n)}^p\!+\!\alpha^{-q}\!\!\!\int_{\Omega^c}\!\!\!D(x)dx,\label{eq4.22}
		\end{align}
		where the constants in (\ref{eq4.22}) depend on $n,q$. If $I_m$ is a Whitney cube and $x\in\Omega^c$, $y\in I_m$, then we can use Lemma \ref{lemma1} (b) to obtain $$|x-y|\geq\dist(I_m,\Omega^c)\geq\sqrt{n}\cdot l(I_m).$$ Therefore we recall the defining expression (\ref{eq4.13}) and use Fubini's theorem to estimate the term $\alpha^{-q}\int_{\Omega^c}D(x)dx$ from above by the following expression
		\begin{align}
			&\alpha^{-q}\sum_m\int_0^{\infty}\!\!\!\!\int_{I_m}\!\!\int_{\Omega^c}
			\frac{t^{\lambda n+q-n-1}}{(t+|x-y|)^{\lambda n}}dx\cdot|\nabla_{n+1}\Pint(\bnearIm;y,t)|^q dydt\nonumber\\
			&\lesssim\alpha^{-q}\sum_m\int_0^{\infty}\!\!\!\!\int_{I_m}\!\!\int_{|z|\geq\sqrt{n}l(I_m)}
			\big(\frac{t}{t+|z|}\big)^{\lambda n}dz\nonumber\\
			&\quad\cdot t^{q-n-1}|\nabla_{n+1}\Pint(\bnearIm;y,t)|^q dydt\nonumber\\
			&\lesssim\alpha^{-q}\sum_m\int_0^{\infty}\!\!\!\!\int_{I_m}\!\!\int_{|z|\geq\frac{\sqrt{n}l(I_m)}{t}}
			\frac{t^{q-1}}{|z|^{\lambda n}}dz\cdot|\nabla_{n+1}\Pint(\bnearIm;y,t)|^q dydt\nonumber\\
			&\lesssim\alpha^{-q}\sum_m\Lebes^n(I_m)^{1-\lambda}\!\!\int_0^{\infty}\!\!\!\!\int_{I_m}\!\!
			t^{\lambda n-n+q-1}\big|\nabla_{n+1}\Pint(\bnearIm;y,t)\big|^q dydt,\label{eq4.23}
		\end{align}
		where the constants in (\ref{eq4.23}) depend on $n,\lambda$, and we also use the formula $\Lebes^n(I_m)=l(I_m)^n$. For every $m$, the iterated integral in the last line of (\ref{eq4.23}) can be estimated from above by the following expression
		\begin{equation}\label{eq4.24}
			\int_{\bbbr^n}\int_0^{\infty}t^{q+qn(\frac{\lambda}{q}-\frac{1}{q})-1}
			\big|\nabla_{n+1}\Pint(\bnearIm;y,t)\big|^q dtdy,
		\end{equation}
		and we can identify (\ref{eq4.24}) as $$\|\LPg_{-n(\frac{\lambda}{q}-\frac{1}{q}),q}(\bnearIm)\|_{L^q(\bbbr^n)}^q,$$ where $\LPg_{-n(\frac{\lambda}{q}-\frac{1}{q}),q}(\bnearIm)(\cdot)$ is the Littlewood-Paley-Poisson function defined in (\ref{eq1-19}). From our assumption $\lambda=\frac{q}{p}$ and $0<n(\frac{1}{p}-\frac{1}{q})<1$, we see that
		\begin{equation}\label{eq4.25}
		-n\leq-1<\kappa:=-n(\frac{\lambda}{q}-\frac{1}{q})=-n(\frac{1}{p}-\frac{1}{q})<0.
		\end{equation}
Assume for now that $\bnearIm(x)$ is a function in $L^p(\bbbr^n)$ for $1<p<q<\infty$ and defines a tempered distribution by the following integral
\begin{equation}\label{eq4.34}
\int_{\bbbr^n}\bnearIm(x)\cdot g(x)dx\text{ for $g\in\Sw(\bbbr^n)$},
\end{equation} 
and we denote this tempered distribution by the symbol $\bnearIm$ and we also assume $\bnearIm\in\Sw'(\bbbr^n)$ is an element in the homogeneous Triebel-Lizorkin space $\dot{F}^{\kappa}_{q,q}(\bbbr^n)$. These assumptions will be verified later, then all the conditions of Theorem \ref{theorem3} are satisfied and we can apply Theorem \ref{theorem3} to obtain
		\begin{align}
			&(\ref{eq4.24})=\|\LPg_{\kappa,q}(\bnearIm)\|_{L^q(\bbbr^n)}^q\nonumber\\
			&\lesssim\|\bnearIm\|_{\dot{F}^{\kappa}_{q,q}(\bbbr^n)}^q\sim
			\|\Lift_{\kappa}(\bnearIm)\|_{\dot{F}^{0}_{q,q}(\bbbr^n)}^q,\label{eq4.26}
		\end{align}
where the last step is due to the well-known lifting property introduced in \cite[section 5.2.3]{1983functionspaces} by H. Triebel, and $\Lift_{\kappa}(\bnearIm)\!=\!\iFT_n[|\xi|^{\kappa}\cdot\FT_n\bnearIm]$. We deduce from Lemma \ref{lemma13} that $\Lift_{\kappa}(\bnearIm)\in\Sw'(\bbbr^n)$ and the function (\ref{eq4.43}) is in $\functrep_0(\Lift_{\kappa}(\bnearIm))\cap L^q(\bbbr^n)$,
\begin{equation}\label{eq4.43}
x\in\bbbr^n\longmapsto
\frac{\varGamma(\frac{\kappa+n}{2})}{\pi^{\kappa+\frac{n}{2}}\varGamma(\frac{-\kappa}{2})}
\int_{\bbbr^n}\bnearIm(y)\cdot|y-x|^{-n-\kappa}dy.
\end{equation}
Furthermore, we have
\begin{equation}\label{eq4.30}
\|\Lift_{\kappa}(\bnearIm)\|_{L^q(\bbbr^n)}\lesssim\|\bnearIm\|_{L^p(\bbbr^n)},
\end{equation}
and the implicit constant in (\ref{eq4.30}) depends on $n$, $p$, and $q$. Since $2\leq q<\infty$, we can use formula (\ref{eq1-14}) and Lemma \ref{lemma12} (i) to deduce
\begin{equation}\label{eq4.27}
\|\Lift_{\kappa}(\bnearIm)\|_{\dot{F}^{0}_{q,q}(\bbbr^n)}\!\leq\!
\|\Lift_{\kappa}(\bnearIm)\|_{\dot{F}^{0}_{q,2}(\bbbr^n)}\!\lesssim\!
\|\Lift_{\kappa}(\bnearIm)\|_{L^q(\bbbr^n)}.
\end{equation}
Now we recall (\ref{eq4.10}) and (\ref{near}). Since Whitney cubes have disjoint interiors and since Lemma \ref{lemma2} (2) implies that the total volume of all the Whitney cube $I_j$'s, which satisfy the condition ``$I_j$ near $I_m$'' for some fixed $m$, is bounded by the volume of the ball $B^n(c(I_m),11(\sqrt{n}+1)^2 l(I_m))\subseteq\bbbr^n$, we have
\begin{equation}\label{eq4.31}
\sum_{\substack{j\\I_j\text{ near }I_m}}\Lebes^n(I_j)\lesssim l(I_m)^n\sim\Lebes^n(I_m),
\end{equation}
and the implicit constant depends only on $n$. We can apply Lemma \ref{lemma3} (vi) to obtain
\begin{equation}\label{eq4.32}
\|\bnearIm\|_{L^p(\bbbr^n)}^p\!\!=\!\!\!\!\sum_{\substack{j\\I_j\text{ near }I_m}}\!\!\!\!\!\int_{I_j}\!\!|b(z)|^p dz\!\lesssim\!
\alpha^p\!\!\!\!\sum_{\substack{j\\I_j\text{ near }I_m}}\!\!\!\!\!\Lebes^n(I_j)\!\lesssim\!\alpha^p\Lebes^n(I_m),
\end{equation}
where the constants depend on $n,p$. Estimate (\ref{eq4.32}) shows $\bnearIm(x)$ is a function in $L^p(\bbbr^n)$ for $1<p<q<\infty$. Combining (\ref{eq4.26}), (\ref{eq4.27}), (\ref{eq4.30}), (\ref{eq4.32}) and the condition $\lambda=\frac{q}{p}$ yields
\begin{equation}\label{eq4.33}
\|\bnearIm\|_{\dot{F}^{\kappa}_{q,q}(\bbbr^n)}^q\lesssim\|\bnearIm\|_{L^p(\bbbr^n)}^q
\lesssim\alpha^q\Lebes^n(I_m)^{\lambda},
\end{equation}
and since $\alpha^q\Lebes^n(I_m)^{\lambda}$ is controlled by $\alpha^q\Lebes^n(\Omega)^{\lambda}\lesssim\|f\|_{L^p(\bbbr^n)}^q<\infty$, the estimate (\ref{eq4.33}) also shows that $\bnearIm\in\Sw'(\bbbr^n)$ is an element of $\dot{F}^{\kappa}_{q,q}(\bbbr^n)$ and hence justifies the application of Theorem \ref{theorem3} in (\ref{eq4.26}). Combining (\ref{eq4.22}), (\ref{eq4.23}), (\ref{eq4.24}), (\ref{eq4.26}), (\ref{eq4.33}) all together and invoking Lemma \ref{lemma3} (i), we can conclude that (\ref{eq4.17}) can be estimated from above by a constant multiple of $\alpha^{-p}\|f\|_{L^p(\bbbr^n)}^p$. Inequalities (\ref{eq4.20}), (\ref{eq4.21}) and the above conclusion complete the proof of Theorem \ref{theorem4}.
\end{proof}

\section{The weak type $(p,p)$ boundedness of the $\Remain_{s,q}$-function}\label{proof.of.theorem5}
\begin{proof}[Proof of Theorem \ref{theorem5}]
Step 1: Assume $f$ is a function in $L^p(\bbbr^n)$. If $$\esssup_{x\in\bbbr^n}|f(x)|\leq\alpha<\infty,$$ then since $2\leq q<\infty$ and $0<s<1$, we can use Corollary \ref{corollary2} and the same method as given in (\ref{eq4.1}), with $\Remain_{s,q}(f)$ in place of $\LPG_{\lambda,q}(f)$, to obtain inequality (\ref{eq1-32}). If 
\begin{equation}\label{eq5.23}
0<\alpha<\esssup_{x\in\bbbr^n}|f(x)|,
\end{equation}
we apply Lemma \ref{lemma3} to the function $f\in L^p(\bbbr^n)$ and the set $\Omega=\{x\in\bbbr^n:M(|f|^p)(x)>\alpha^p\}$, and obtain a collection of Whitney cubes $\{I_j\}_j$ and two functions, the ``good'' function $g$ and the ``bad'' function $b$, where $\{I_j\}_j$, $g$ and $b$ satisfy all the conclusions of Lemma \ref{lemma3}. Then by Lemma \ref{lemma3} (v), we have the following inequality
\begin{align*}
&\Lebes^n(\{x\in\bbbr^n:\Remain_{s,q}(f)(x)>\alpha\})\\
&\!\leq\!\Lebes^n(\{x\!\in\!\bbbr^n\!:\!\frac{\alpha}{2}\!\lesssim\!\Remain_{s,q}(g)(x)\})\!+\!
\Lebes^n(\{x\!\in\!\bbbr^n\!:\!\frac{\alpha}{2}\!\lesssim\!\Remain_{s,q}(b)(x)\}),
\end{align*}
		where the implicit constants only depend on $q$. Therefore to prove (\ref{eq1-32}), it suffices to prove the following two inequalities,
		\begin{align}
			&\Lebes^n(\{x\in\bbbr^n:\frac{\alpha}{2}\lesssim\Remain_{s,q}(g)(x)\})
			\lesssim\alpha^{-p}\|f\|_{L^p(\bbbr^n)}^p,\label{eq5.1}\\
			&\Lebes^n(\{x\in\bbbr^n:\frac{\alpha}{2}\lesssim\Remain_{s,q}(b)(x)\})
			\lesssim\alpha^{-p}\|f\|_{L^p(\bbbr^n)}^p,\label{eq5.2}
		\end{align}
		where the values of implicit constants only depend on some fixed parameters. By Lemma \ref{lemma3} (viii), we see that when $\alpha$ satisfies (\ref{eq5.23}) and $f\in L^p(\bbbr^n)$, inequality (\ref{eq4.4}) is still true and hence $g$ is a function in $L^q(\bbbr^n)$, therefore we can apply Corollary \ref{corollary2} to obtain
		\begin{align}
			&\Lebes^n(\{x\in\bbbr^n:\frac{\alpha}{2}\lesssim\Remain_{s,q}(g)(x)\})\nonumber\\
			&\lesssim(\frac{2}{\alpha})^q\|\Remain_{s,q}(g)\|_{L^q(\bbbr^n)}^q
			\lesssim\alpha^{-q}\|g\|_{L^q(\bbbr^n)}^q\lesssim\alpha^{-p}\|f\|_{L^p(\bbbr^n)}^p,\label{eq5.3}
		\end{align}
		where the constants in (\ref{eq5.3}) depend on $n,p,q$ and the implicit constant coming from Corollary \ref{corollary2}. Inequality (\ref{eq5.1}) is proved. Now we prove inequality (\ref{eq5.2}). We consider the following equation
		\begin{align}
			&\Lebes^n(\{x\in\bbbr^n:\frac{\alpha}{2}\lesssim\Remain_{s,q}(b)(x)\})\nonumber\\
			&=\Lebes^n(\{x\in\bbbr^n:\alpha^q\lesssim\Remain_{s,q}(b)(x)^q\}),\label{eq5.4}
		\end{align}
where the implicit constants depend on $q$. We recall the relations between two Whitney cubes given in (\ref{near}) and (\ref{far}), and we also use the decomposition (\ref{eq4.8}). If $\chi_{I_j}$ denotes the characteristic function of the cube $I_j$ and if $b_j=b\cdot\chi_{I_j}$, then we can continue using the equation (\ref{eq4.9}) and $\bnearIm$, $\bfarIm$ have the same meaning as in (\ref{eq4.10}). By using the defining expression (\ref{eq1-27}), we can estimate $\Remain_{s,q}(b)(x)^q$ from above by a constant multiple of the sum of the following three terms,
\begin{align}
&\Remain_{s,q}^1(b)(x)\!\!:=\!\!\int_{\Omega^c}\!\!\!\!\!|x\!-\!y|^{-n-sq}(\!\int_0^{2|x-y|}\!\!\!\!\!\!\!\!\!\!\!\!\!|\partial_{n+1}\Pint(b;y,t)|t^s dt)^q dy,\label{eq5.5}\\
&\Remain_{s,q}^2(b)(x)\!\!:=\!\!\sum_m\!\!\int_{I_m}\!\!\!\!\!|x\!-\!y|^{-n-sq}(\!\int_0^{2|x-y|}\!\!\!\!\!\!\!\!\!\!\!\!\!|\partial_{n+1}\Pint(\bfarIm;y,t)|t^s dt)^q dy,\label{eq5.6}\\
&\Remain_{s,q}^3(b)(x)\!\!:=\!\!\sum_m\!\!\int_{I_m}\!\!\!\!\!|x\!-\!y|^{-n-sq}(\!\int_0^{2|x-y|}\!\!\!\!\!\!\!\!\!\!\!\!\!|\partial_{n+1}\Pint(\bnearIm;y,t)|t^s dt)^q dy,\label{eq5.7}
\end{align}
and the constant depends on $q$. Therefore we can estimate (\ref{eq5.4}) from above by the sum of the following,
		\begin{align}
			&\Lebes^n(\{x\in\bbbr^n:\alpha^q\lesssim\Remain_{s,q}^1(b)(x)\}),\label{eq5.8}\\
			&\Lebes^n(\{x\in\bbbr^n:\alpha^q\lesssim\Remain_{s,q}^2(b)(x)\}),\label{eq5.9}\\
			&\Lebes^n(\{x\in\bbbr^n:\alpha^q\lesssim\Remain_{s,q}^3(b)(x)\}),\label{eq5.10}
		\end{align}
		and the implicit constants in (\ref{eq5.8}), (\ref{eq5.9}), (\ref{eq5.10}) depend only on $q$. Therefore to prove (\ref{eq5.2}), it suffices to prove that each and every one of (\ref{eq5.8}), (\ref{eq5.9}), (\ref{eq5.10}) is dominated by a constant multiple of $\alpha^{-p}\|f\|_{L^p(\bbbr^n)}^p$. We use the following notations in the proof below,
\begin{align}
&\bar{\Remain}_{s,q}^1(b)(x)\!\!:=\!\!\int_{\Omega^c}\!\frac{1}{|x\!-\!y|^{n+s}}\!\!\int_0^{2|x-y|}\!\!\!\!\!\!\!\!\!\!\!\!\!|\partial_{n+1}\Pint(b;y,t)|t^s dtdy,\label{eq5.11}\\
&\bar{\Remain}_{s,q}^2(b)(x)\!\!:=\!\!\sum_m\!\!\int_{I_m}\!\frac{1}{|x\!-\!y|^{n+s}}\!\!\int_0^{2|x-y|}\!\!\!\!\!\!\!\!\!\!\!\!\!|\partial_{n+1}\Pint(\bfarIm;y,t)|t^s dtdy.\label{eq5.12}
\end{align}\\
		
Step 2: To estimate (\ref{eq5.8}), we use Lemma \ref{lemma6} (1) for $k=n+1$ and $y\in\Omega^c$ to obtain
\begin{align}
&(\int_0^{2|x-y|}\!\!\!\!\!\!\!|\partial_{n+1}\Pint(b;y,t)|t^s dt)^{q-1}\nonumber\\
&\lesssim(\int_0^{2|x-y|}\!\!\!\!\!\!\!\alpha t^{s-1}dt)^{q-1}\lesssim\alpha^{q-1}|x-y|^{s(q-1)},\label{eq5.13}
\end{align} 
		and the constants depend on $n,p,q$. Inserting (\ref{eq5.13}) into (\ref{eq5.5}) yields 
		$$\alpha^q\lesssim\Remain_{s,q}^1(b)(x)\lesssim\alpha^{q-1}\bar{\Remain}_{s,q}^1(b)(x),$$ where $x$ belongs to the set $\{x\in\bbbr^n:\alpha^q\lesssim\Remain_{s,q}^1(b)(x)\}$. Therefore invoking Chebyshev's inequality, Fubini's theorem, Lemma \ref{lemma7} (\ref{eq2-13}) for $k=n+1$, Lemma \ref{lemma3} (vi) and (i) in a sequence yields the following estimate
		\begin{align}
			&(\ref{eq5.8})\!\lesssim\!\Lebes^n(\{x\in\bbbr^n\!:\!\alpha\!\lesssim\!\bar{\Remain}_{s,q}^1(b)(x)\})
			\!\lesssim\!\alpha^{-1}\!\!\!\int_{\bbbr^n}\!\!\!\bar{\Remain}_{s,q}^1(b)(x)dx\nonumber\\
			&=\alpha^{-1}\!\!\!\int_0^{\infty}\!\!\!\int_{\Omega^c}\int_{\frac{t}{2}\leq|x-y|}\!\frac{1}{|x-y|^{n+s}}dx
			\big|\partial_{n+1}\Pint(b;y,t)\big|t^s dydt\nonumber\\
			&\lesssim\alpha^{-1}\int_0^{\infty}\int_{\Omega^c}t^{-s}\cdot
			\big|\partial_{n+1}\Pint(b;y,t)\big|t^s dydt\nonumber\\
			&\lesssim\alpha^{-1}\sum_j\int_{I_j}|b(z)|dz\lesssim\sum_j\Lebes^n(I_j)
			\lesssim\alpha^{-p}\|f\|_{L^p(\bbbr^n)}^p,\label{eq5.14}
		\end{align}
		where the implicit constants in (\ref{eq5.14}) depend on $n,p,q$.\\
		
Step 3: To estimate (\ref{eq5.9}), we use Lemma \ref{lemma6} (2) for $k=n+1$ and $y\in I_m$ for some Whitney cube $I_m$ to obtain
\begin{align}
&(\int_0^{2|x-y|}\!\!\!\!\!\!\!|\partial_{n+1}\Pint(\bfarIm;y,t)|t^s dt)^{q-1}\nonumber\\
&\lesssim(\int_0^{2|x-y|}\!\!\!\!\!\!\!\alpha t^{s-1}dt)^{q-1}\lesssim\alpha^{q-1}|x-y|^{s(q-1)},\label{eq5.15}
\end{align} 
and the constants depend on $n,p,q$. Inserting (\ref{eq5.15}) into (\ref{eq5.6}) yields 
$$\alpha^q\lesssim\Remain_{s,q}^2(b)(x)\lesssim\alpha^{q-1}\bar{\Remain}_{s,q}^2(b)(x),$$ where $x$ belongs to the set $\{x\in\bbbr^n:\alpha^q\lesssim\Remain_{s,q}^2(b)(x)\}$. Therefore invoking Chebyshev's inequality, Fubini's theorem, Lemma \ref{lemma7} (\ref{eq2-14}) for $k=n+1$, Lemma \ref{lemma3} (vi) and (i) in a sequence yields the following estimate
		\begin{align}
			&(\ref{eq5.9})\!\lesssim\!\Lebes^n(\{x\in\bbbr^n\!:\!\alpha\!\lesssim\!\bar{\Remain}_{s,q}^2(b)(x)\})\!
			\lesssim\!\alpha^{-1}\!\!\!\int_{\bbbr^n}\!\!\!\bar{\Remain}_{s,q}^2(b)(x)dx\nonumber\\
			&=\alpha^{-1}\!\sum_m\!\!\int_0^{\infty}\!\!\!\!\int_{I_m}\!\int_{\frac{t}{2}\leq|x-y|}\!\!\!\!\!\!\!\!\!\!
			|x-y|^{-n-s}dx\big|\partial_{n+1}\Pint(\bfarIm;y,t)\big|t^s dydt\nonumber\\
			&\lesssim\alpha^{-1}\sum_m\int_0^{\infty}\int_{I_m}t^{-s}\cdot|\partial_{n+1}\Pint(\bfarIm;y,t)|t^s dydt
			\nonumber\\
			&\lesssim\alpha^{-1}\sum_j\int_{I_j}|b(z)|dz\lesssim\sum_j\Lebes^n(I_j)
			\lesssim\alpha^{-p}\|f\|_{L^p(\bbbr^n)}^p,\label{eq5.16}
		\end{align}
		where the implicit constants in (\ref{eq5.16}) depend on $n,p,q$.\\
		
		Step 4: Finally we estimate the term (\ref{eq5.10}). By Lemma \ref{lemma3} (i) and Chebyshev's inequality, we have
		\begin{align}
			&(\ref{eq5.10})=\Lebes^n(\{x\in\Omega:\alpha^q\lesssim\Remain_{s,q}^3(b)(x)\})\nonumber\\
			&+\Lebes^n(\{x\notin\Omega:\alpha^q\lesssim \Remain_{s,q}^3(b)(x)\})\nonumber\\
			&\lesssim\Lebes^n(\Omega)+\alpha^{-q}\int_{\Omega^c}\Remain_{s,q}^3(b)(x)dx\nonumber\\
			&\lesssim\alpha^{-p}\|f\|_{L^p(\bbbr^n)}^p+\alpha^{-q}\int_{\Omega^c}\Remain_{s,q}^3(b)(x)dx,\label{eq5.17}
		\end{align}
		where the constants in (\ref{eq5.17}) depend on $n,q$. Now we estimate the term $\alpha^{-q}\int_{\Omega^c}\Remain_{s,q}^3(b)(x)dx$. Recall (\ref{eq1-6}), which is applicable due to (\ref{eq4.32}), then we have
		\begin{align}
			&\int_0^{2|x-y|}|\partial_{n+1}\Pint(\bnearIm;y,t)|t^s dt\nonumber\\
			&\!\lesssim\!\!\!\int_{\bbbr^n}\!\!\!|\bnearIm(y-z)|\!\!\int_0^{\infty}\!\!\!\!\frac{t^s}{(t^2+|z|^2)^{\frac{n+1}{2}}}\!+\!\frac{t^{2+s}}{(t^2+|z|^2)^{\frac{n+3}{2}}}dtdz,\label{eq5.18}
		\end{align}
		where the constant depends on $n$. We apply the change of variable $u=\frac{t}{|z|}$ in the integral with respect to $t$ and get
		\begin{align}
			&\int_0^{\infty}\frac{t^s}{(t^2+|z|^2)^{\frac{n+1}{2}}}+\frac{t^{2+s}}{(t^2+|z|^2)^{\frac{n+3}{2}}}dt
			\nonumber\\
			&=|z|^{s-n}\cdot\int_0^{\infty}\frac{u^s}{(1+u^2)^{\frac{n+1}{2}}}+\frac{u^{2+s}}{(1+u^2)^{\frac{n+3}{2}}}du
			\lesssim|z|^{s-n},\label{eq5.19}
		\end{align}
		where the last integral with respect to $u$ in (\ref{eq5.19}) converges since $0<s<1\leq n$, and the constant in (\ref{eq5.19}) depends on $n,p,q$. We put (\ref{eq5.19}) into (\ref{eq5.18}), insert the resulting estimate into the defining expression (\ref{eq5.7}) of $\Remain_{s,q}^3(b)(x)$ and apply Fubini's theorem, then we can estimate the term $\alpha^{-q}\int_{\Omega^c}\Remain_{s,q}^3(b)(x)dx$ from above by a constant multiple of the following expression
		\begin{equation}\label{eq5.20}
			\alpha^{-q}\sum_m\!\!\int_{I_m}\int_{\Omega^c}\!\!|x-y|^{-n-sq}dx\cdot
			\big(\!\!\int_{\bbbr^n}\!\!|\bnearIm(y-z)|\cdot|z|^{s-n}dz\big)^q dy,
		\end{equation}
		and the constant depends only on $n,p,q$. By Lemma \ref{lemma1} (b), when $y\in I_m$ and $x\in\Omega^c$, we have $|x-y|\geq\dist(I_m,\Omega^c)\geq\sqrt{n}\cdot l(I_m)$ and thus
		\begin{equation}\label{eq5.21}
			\int_{\Omega^c}\!\!|x-y|^{-n-sq}dx\leq\int_{|x|\geq\sqrt{n}\cdot l(I_m)}\!\!|x|^{-n-sq}dx\lesssim
			\Lebes^n(I_m)^{-\frac{sq}{n}},
		\end{equation}
		where we also use the formula $\Lebes^n(I_m)=l(I_m)^n$. Inserting (\ref{eq5.21}) into (\ref{eq5.20}) and invoking the famous Hardy-Littlewood-Sobolev inequality, i.e. Lemma \ref{lemma5}, yield
		\begin{align}
			&(\ref{eq5.20})\!\lesssim\!\alpha^{-q}\sum_m\Lebes^n(I_m)^{-\frac{sq}{n}}\!\!\!\int_{I_m}\!\!
			\big(\!\!\int_{\bbbr^n}\!\!\!|\bnearIm(y-z)|\!\cdot\!|z|^{s-n}dz\big)^q dy\nonumber\\
			&\lesssim\alpha^{-q}\sum_m\Lebes^n(I_m)^{-\frac{sq}{n}}\int_{\bbbr^n}
			\big(\!\!\int_{\bbbr^n}\!\!|\bnearIm(y-z)|\!\cdot\!|z|^{s-n}dz\big)^q dy\nonumber\\
			&\lesssim\alpha^{-q}\sum_m\Lebes^n(I_m)^{-\frac{sq}{n}}\|\bnearIm\|_{L^p(\bbbr^n)}^q
			\lesssim\alpha^{-p}\|f\|_{L^p(\bbbr^n)}^p,\label{eq5.22}
		\end{align}
		where we also use the condition that $1<p<q<\infty$ and $0<s=n(\frac{1}{p}-\frac{1}{q})<1$, and the last inequality in (\ref{eq5.22}) is due to estimate (\ref{eq4.32}) and Lemma \ref{lemma3} (i). The implicit constants in (\ref{eq5.22}) depend on $n,p,q$. Combining (\ref{eq5.17}), (\ref{eq5.20}), and (\ref{eq5.22}), we find that the term (\ref{eq5.10}) can be estimated from above by a constant multiple of $\alpha^{-p}\|f\|_{L^p(\bbbr^n)}^p$, and the constant only depends on $n,p,q$. The proof of Theorem \ref{theorem5} is now complete.
\end{proof}

\section{The weak type boundedness of the $\Diff_{s,q}$-function in $\dot{L}^p_s(\bbbr^n)$}\label{proof.of.theorem2}
\begin{proof}[Proof of Theorem \ref{theorem2}]
Step 1: It is sufficient to prove that
\begin{equation}\label{eq6.0}
\Lebes^n(\{x\in\bbbr^n:\Diff_{s,q}f(x)>\alpha\})\lesssim\alpha^{-p}\|f\|_{\Lps}^p
\end{equation}
for $0<\alpha<\infty$. To provide a pointwise estimate for $\Diff_{s,q}f(x)$, we follow the method given in \cite[Appendix A]{Piero2019} and the hint in \cite[chapter V, 6.12]{Stein1971}. By the almost everywhere pointwise convergence property of Poisson integrals (cf. \cite[chapter I, Theorem 1.25]{SteinWeissFourier}) and the fact that the real-valued function representative $f(x)\in\functrep_{0}(f)\cap W^{1,p_0}(\bbbr^n)$ for some $p_0\in[1,\infty]$ implies $f(x)\in L^{p_0}(\bbbr^n)$, we have for almost every $x\in\bbbr^n$,
\begin{equation*}
\lim_{t\rightarrow0}\Pint(f;x,t)=\lim_{t\rightarrow0}f*P_t(x)=f(x).
\end{equation*} 
By Lemma \ref{lemma14} (\ref{eq2.266}) and (\ref{eq2.267}), we can find a sequence $\{t_k\}_{k\in\bbbn}$ of positive real numbers and denote $C_1=\frac{(2\pi)^s}{\varGamma(s)}$ so that
\begin{equation*}
0<t_{k+1}<t_k<\infty,\qquad\lim_{k\rightarrow\infty}t_k=0,
\end{equation*}
\begin{equation*}
\lim_{k\rightarrow\infty}\!C_1\!\cdot\!\!\!\int_0^{\infty}\!\!\!\!\!\!\Pint(\Lift_s f;x,t_k\!+\!r)
\!\cdot\!r^{s-1}dr\!=\!C_1\!\cdot\!\!\!\int_0^{\infty}\!\!\!\!\!\!\Pint(\Lift_s f;x,r)
\!\cdot\!r^{s-1}dr
\end{equation*}
for almost every $x\!\in\!\bbbr^n$. By the fundamental theorem of calculus, we can denote $\bar{y}=\frac{y}{|y|}$ and rewrite $f(x+y)-f(x)$ identically as
\begin{align*}
&\lim_{k\rightarrow\infty}\big\{\!\Pint(f;x\!+\!y,t_k)\!-\!\Pint(f;x\!+\!y,|y|)\\
&+\!\Pint(f;x\!+\!y,|y|)\!-\!\Pint(f;x,|y|)\!+\!\Pint(f;x,|y|)\!-\!\Pint(f;x,t_k)\!\big\},
\end{align*}
and the above expression is equivalent to
\begin{align}
&-\!\!\int_0^{|y|}\!\!\!\partial_{n+1}\Pint(f;x\!+\!y,u)du\!+\!\!\int_0^{|y|}\!\!\!\nabla_n\Pint(f;x\!+\!u\bar{y},|y|)\!\cdot\!\bar{y}du\nonumber\\
&+\!\!\int_0^{|y|}\!\!\!\partial_{n+1}\Pint(f;x,u)du.\label{eq6.1}
\end{align}
Notice that we use the almost everywhere pointwise convergence property of the Poisson integral $\Pint(f;x,t)$ to deduce (\ref{eq6.1}), and this property is a property of the Poisson integral $\Pint(f;x,t)$ as a function rather than a tempered distribution, therefore the expressions $\Pint(f;x+y,u)$, $\Pint(f;x+u\bar{y},|y|)$, and $\Pint(f;x,u)$ appearing in (\ref{eq6.1}) are Poisson integrals considered as functions. Since $1<p<\infty$, $0<s=n(\frac{1}{p}-\frac{1}{q})<\frac{n}{p}$, and the assumption of Theorem \ref{theorem2} implies $f\in\Lps$ has a real-valued function representative $f(x)\in\functrep_{0}(f)\cap L^{p_0}(\bbbr^n)$ for some $p_0\in[1,\infty]$, we denote $\bar{y}_k=\frac{y_k}{|y|}$ is the $k$-th coordinate of $\bar{y}$, and then we use Lemma \ref{lemma14} (\ref{eq2.186}) and (\ref{eq2.187}) to deduce that
\begin{align}
&\int_0^{|y|}\!\!\!\!\!\!\nabla_n\Pint(f;x\!+\!u\bar{y},|y|)\!\cdot\!\bar{y}du\!=\!\!\!
\int_0^{|y|}\!\sum_{k=1}^n\!\partial_k\Pint(f;x\!+\!u\bar{y},|y|)\!\cdot\!\bar{y}_k du\nonumber\\
&=\!C_1\!\cdot\!\!\!\int_0^{|y|}\!\!\!\int_0^{\infty}\!\!\!\!\big[\nabla_n\Pint(\Lift_s f;x\!+\!u\bar{y},|y|\!+\!r)\!\cdot\!\bar{y}\big]r^{s-1}drdu,\label{eq6.45}
\end{align}
and
\begin{align}
&-\!\!\!\int_0^{|y|}\!\!\!\!\!\!\partial_{n+1}\Pint(f;x\!+\!y,u)du\!+\!\!\!\int_0^{|y|}\!\!\!\!\!\!
\partial_{n+1}\Pint(f;x,u)du\nonumber\\
&=\!\!\!\int_0^{|y|}\!\!\!\!\!\!-\partial_{n+1}\Pint(f;x\!+\!y,u)\!+\!\partial_{n+1}\Pint(f;x,u)du
\nonumber\\
&=\!\!\!\int_0^{|y|}\!\!\!\!\!\!-C_1\!\cdot\!\!\!\int_0^{\infty}\!\!\!\!\!\!\partial_{n+1}\Pint(\Lift_s f;x\!+\!y,u\!+\!r)r^{s-1}dr\!-\!G'(u)\nonumber\\
&\quad+\!C_1\!\cdot\!\!\!\int_0^{\infty}\!\!\!\!\!\!\partial_{n+1}\Pint(\Lift_s f;x,u\!+\!r)r^{s-1}dr\!+\!G'(u)du\nonumber\\
&=\!-C_1\!\!\!\int_0^{|y|}\!\!\!\!\int_0^{\infty}\!\!\!\!\!\!\partial_{n+1}\Pint(\Lift_s f;x\!+\!y,u\!+\!r)r^{s-1}drdu\nonumber\\
&\quad+\!C_1\!\!\!\int_0^{|y|}\!\!\!\!\int_0^{\infty}\!\!\!\!\!\!\partial_{n+1}\Pint(\Lift_s f;x,u\!+\!r)r^{s-1}drdu,\label{eq6.46}
\end{align}
where the last equation of (\ref{eq6.46}) is due to the fact that the conclusion (\ref{eq2.268}) of Lemma \ref{lemma14} implies both of the following integrals (\ref{eq6.47}) and (\ref{eq6.48}) exist and are finite for almost every $x\in\bbbr^n$ and $y\in\bbbr^n$,
\begin{align}
&\int_0^{|y|}\!\!\!\!\int_0^{\infty}\!\!\!\!\!\!\partial_{n+1}\Pint(\Lift_s f;x\!+\!y,u\!+\!r)r^{s-1}drdu,\label{eq6.47}\\
&\int_0^{|y|}\!\!\!\!\int_0^{\infty}\!\!\!\!\!\!\partial_{n+1}\Pint(\Lift_s f;x,u\!+\!r)r^{s-1}drdu.
\label{eq6.48}
\end{align}
Under the given conditions, we can use Lemma \ref{lemma11} to identify $\varGamma(\frac{-s}{2})\Lift_s f$ with a real-valued function in $L^p(\bbbr^n)$. And by Lemma \ref{lemma10}, we know $|\nabla_{n+1}\Pint(\varGamma(\frac{-s}{2})\Lift_s f;x,t)|^q$ is subharmonic on $\bbbr^{n+1}_+$ for $2\leq q<\infty$. We will use this subharmonicity and hence from now on, we work with the function $\varGamma(\frac{-s}{2})f(x)$. Combining (\ref{eq6.1}), (\ref{eq6.45}), and (\ref{eq6.46}) together with an appropriate change of variable, and multiplying the result by $\varGamma(\frac{-s}{2})$, we can estimate the term $|\varGamma(\frac{-s}{2})f(x+y)-\varGamma(\frac{-s}{2})f(x)|$ from above by the constant $C_1$ times the sum of the following three terms,
\begin{equation}\label{eq6.7}
\int_0^{|y|}\!\!\!\!\int_u^{\infty}\!\!\!\!\!|\partial_{n+1}\Pint(\varGamma(\frac{-s}{2})\Lift_s f;x\!+\!y,v)|\!\cdot\!(v\!-\!u)^{s-1}dvdu,
\end{equation}
\begin{equation}\label{eq6.8}
\int_0^{|y|}\!\!\!\!\int_u^{\infty}\!\!\!\!\!|\nabla_n\Pint(\varGamma(\frac{-s}{2})\Lift_s f;x\!+\!u\bar{y},|y|\!+\!v\!-\!u)|\!\cdot\!(v\!-\!u)^{s-1}dvdu,
\end{equation}
\begin{equation}
\int_0^{|y|}\!\!\!\!\int_u^{\infty}\!\!\!\!\!|\partial_{n+1}\Pint(\varGamma(\frac{-s}{2})\Lift_s f;x,v)|\!\cdot\!(v\!-\!u)^{s-1}dvdu.\label{eq6.9}
\end{equation}
We observe that when $0\leq u\leq|y|$, $2|y|\leq v$ and $u\leq v$, the three points $(x+y,v)$, $(x+u\bar{y},|y|+v-u)$, $(x,v)$ belong to the common cone $\gamma_{\frac{1}{2}}(x)\subseteq\bbbr^{n+1}_+$ whose vertex is $(x,0)$ for some $x\in\bbbr^n$, and whose aperture is $\frac{1}{2}$. Therefore we can split the integrals in (\ref{eq6.7}), (\ref{eq6.8}), (\ref{eq6.9}), and rearrange the summation, then we can obtain
		\begin{align}
			&|\varGamma(\frac{-s}{2})f(x+y)-\varGamma(\frac{-s}{2})f(x)|\nonumber\\
			&\leq(\ref{eq6.7})+(\ref{eq6.8})+(\ref{eq6.9})\nonumber\\
			&=C_1\cdot\big\{F_1(x,y)+F_2(x,y)+F_3(x,y)+F_4(x,y)\big\},\label{eq6.10}
		\end{align}
where
\begin{equation}\label{eq6.11}
F_1(x,y)\!\!=\!\!\int_0^{|y|}\!\!\!\!\int_u^{2|y|}\!\!\!\!\!\!\!|\partial_{n+1}\Pint(\varGamma(\frac{-s}{2})\Lift_s f;x\!+\!y,v)|\frac{dvdu}{(v\!-\!u)^{1-s}},
\end{equation}
\begin{align}
&F_2(x,y)\nonumber\\
&\!=\!\!\int_0^{|y|}\!\!\!\!\int_u^{2|y|}\!\!\!\!\!\!\!|\nabla_n\Pint(\varGamma(\frac{-s}{2})\Lift_s f;x\!+\!u\bar{y},|y|\!+\!v\!-\!u)|\frac{dvdu}{(v\!-\!u)^{1-s}},\label{eq6.12}
\end{align}
\begin{equation}\label{eq6.13}
F_3(x,y)\!\!=\!\!\int_0^{|y|}\!\!\!\!\int_u^{2|y|}\!\!\!\!\!\!\!|\partial_{n+1}\Pint(\varGamma(\frac{-s}{2})\Lift_s f;x,v)|\frac{dvdu}{(v\!-\!u)^{1-s}},
\end{equation}
and
		\begin{align}
			&F_4(x,y)\nonumber\\
			&\!=\int_0^{|y|}\!\!\!\!\int_{2|y|}^{\infty}\big\{|\partial_{n+1}\Pint(\varGamma(\frac{-s}{2})\Lift_s f;x+y,v)|\nonumber\\
			&\quad+|\nabla_n\Pint(\varGamma(\frac{-s}{2})\Lift_s f;x+u\bar{y},|y|+v-u)|\nonumber\\
			&\quad+|\partial_{n+1}\Pint(\varGamma(\frac{-s}{2})\Lift_s f;x,v)|\big\}
			\cdot\frac{dvdu}{(v-u)^{1-s}}.\label{eq6.14}
		\end{align}
Furthermore, we recall the defining expression (\ref{eq1-10}) and obtain
\begin{align}
&\Diff_{s,q}f(x)^q=\frac{1}{|\varGamma(\frac{-s}{2})|^q}\cdot\int_{\bbbr^n}
\frac{|\varGamma(\frac{-s}{2})f(x+y)-\varGamma(\frac{-s}{2})f(x)|^q}{|y|^{n+sq}}dy\nonumber\\
&\lesssim\sum_{k=1}^4\int_{\bbbr^n}\frac{F_k(x,y)^q}{|y|^{n+sq}}dy\quad
\text{for almost every $x\in\bbbr^n$,}\label{eq6.15}
\end{align}
where the implicit constant is real-valued and depends on $q,s$. We will estimate each term on the right side of inequality (\ref{eq6.15}).\\
		
		Step 2: We begin with the term $\int_{\bbbr^n}\frac{F_4(x,y)^q}{|y|^{n+sq}}dy$. Observe that the three points $(x+y,v)$, $(x+u\bar{y},|y|+v-u)$, $(x,v)$ in the iterated integral of (\ref{eq6.14}) belong to the set
		\begin{equation}\label{eq6.16}
			\{(\xi,\eta)\in\bbbr^{n+1}_+:\xi\in\bbbr^n,\eta>0,|\xi-x|\leq|y|\leq\frac{1}{2}\eta\}.
		\end{equation}
		If we pick a positive real number $A$ so that $0<s<A<1$, then the condition $0\leq u\leq|y|\leq\frac{1}{2}v$ implies $v^{s-1}\leq(v-u)^{s-1}\leq 2^{1-s}v^{s-1}$, and thus
		\begin{equation}\label{eq6.17}
			\int_0^{|y|}\!\!\!\!\int_{2|y|}^{\infty}\frac{(v-u)^{s-1}}{v^A}dvdu\lesssim
			\int_0^{|y|}\!\!\!\!\int_{2|y|}^{\infty}v^{s-A-1}dvdu\lesssim|y|^{1+s-A},
		\end{equation}
and the constants in (\ref{eq6.17}) depend on $s,A$. First, we have
\begin{align}
&\int_0^{|y|}\!\!\!\!\int_{2|y|}^{\infty}\!\!\!\!\!|\partial_{n+1}\Pint(\varGamma(\frac{-s}{2})\Lift_s f;x\!+\!y,v)|v^A\!\cdot\!\frac{(v\!-\!u)^{s-1}}{v^A}dvdu\nonumber\\
&\leq\!\!\sup_{|\xi-x|\leq|y|\leq\frac{1}{2}\eta}\!\!\!\!|\partial_{n+1}\Pint(\varGamma(\frac{-s}{2})\Lift_s f;\xi,\eta)|\eta^A\!\cdot\!\!\!\int_0^{|y|}\!\!\!\!\int_{2|y|}^{\infty}
\!\!\frac{(v\!-\!u)^{s-1}}{v^A}dvdu\nonumber\\
&\lesssim\!\!\sup_{|\xi-x|\leq|y|\leq\frac{1}{2}\eta}\!\!\!\!|\partial_{n+1}\Pint(\varGamma(\frac{-s}{2})\Lift_s f;\xi,\eta)|\eta^A\!\cdot\!|y|^{1+s-A}.\label{eq6.18}
\end{align}
Second, we have
\begin{align}
&\int_0^{|y|}\!\!\!\!\int_{2|y|}^{\infty}\!\!\!\!\!|\nabla_n\Pint(\varGamma(\frac{-s}{2})\Lift_s f;x\!+\!u\bar{y},|y|\!+\!v\!-\!u)|(|y|\!+\!v\!-\!u)^A\nonumber\\
&\quad\cdot\frac{(v\!-\!u)^{s-1}}{(|y|\!+\!v\!-\!u)^A}dvdu\nonumber\\
&\leq\sup_{|\xi-x|\leq|y|\leq\frac{1}{2}\eta}|\nabla_n\Pint(\varGamma(\frac{-s}{2})\Lift_s f;\xi,\eta)|\eta^A
\nonumber\\
&\quad\cdot\int_0^{|y|}\!\!\!\!\int_{2|y|}^{\infty}\frac{(v-u)^{s-1}}{(|y|+v-u)^A}dvdu\nonumber\\
&\lesssim\!\!\sup_{|\xi-x|\leq|y|\leq\frac{1}{2}\eta}\!\!\!\!|\nabla_n\Pint(\varGamma(\frac{-s}{2})\Lift_s f;\xi,\eta)|\eta^A\!\cdot\!\!\!\int_0^{|y|}\!\!\!\!
\int_{2|y|}^{\infty}\!\!\!\!v^{s-A-1}dvdu\nonumber\\
&\lesssim\!\!\sup_{|\xi-x|\leq|y|\leq\frac{1}{2}\eta}\!\!\!\!|\nabla_n\Pint(\varGamma(\frac{-s}{2})\Lift_s f;\xi,\eta)|\eta^A\cdot|y|^{1+s-A},\label{eq6.19}
\end{align}
and the constants in (\ref{eq6.19}) depend on $s,A$. Third, we have
\begin{align}
&\int_0^{|y|}\!\!\!\!\int_{2|y|}^{\infty}\!\!\!\!\!|\partial_{n+1}\Pint(\varGamma(\frac{-s}{2})\Lift_s f;x,v)|v^A\!\cdot\!\frac{(v\!-\!u)^{s-1}}{v^A}dvdu\nonumber\\
&\leq\!\!\sup_{|\xi-x|\leq|y|\leq\frac{1}{2}\eta}\!\!\!\!|\partial_{n+1}\Pint(\varGamma(\frac{-s}{2})\Lift_s f;\xi,\eta)|\eta^A\!\cdot\!\!\!\int_0^{|y|}\!\!\!\!
\int_{2|y|}^{\infty}\!\!\frac{(v\!-\!u)^{s-1}}{v^A}dvdu\nonumber\\
&\lesssim\!\!\sup_{|\xi-x|\leq|y|\leq\frac{1}{2}\eta}\!\!\!\!|\partial_{n+1}\Pint(\varGamma(\frac{-s}{2})\Lift_s f;\xi,\eta)|\eta^A\!\cdot\!|y|^{1+s-A}.\label{eq6.20}
\end{align}
Inserting (\ref{eq6.18}), (\ref{eq6.19}), (\ref{eq6.20}) into (\ref{eq6.14}) yields
\begin{equation}\label{eq6.21}
F_4(x,y)^q\!\!
\lesssim\!\!\sup_{|\xi-x|\leq|y|\leq\frac{1}{2}\eta}\!\!\!\!|\nabla_{n+1}\Pint(\varGamma(\frac{-s}{2})\Lift_s f;\xi,\eta)|^q\eta^{qA}\!\cdot\!|y|^{q(1+s-A)},
\end{equation}
where the constant in (\ref{eq6.21}) is real-valued and depends on $q,s,A$. Now we apply Lemma \ref{lemma11} and Lemma \ref{lemma10}, then $\varGamma(\frac{-s}{2})\Lift_s f$ can be identified with a real-valued function in $L^p(\bbbr^n)$ and hence 
\begin{equation}\label{eq6.42}
|\nabla_{n+1}\Pint(\varGamma(\frac{-s}{2})\Lift_s f;\xi,\eta)|^q
\end{equation}
is subharmonic on $\bbbr^{n+1}_+$ when $2\leq q<\infty$. We consider the $(n+1)$-dimensional ball $B^{n+1}((\xi,\eta),\frac{1}{2}\eta)\subseteq\bbbr^{n+1}_+$ centered at $(\xi,\eta)$ with radius $\frac{1}{2}\eta$, where $\xi\in\bbbr^n$ and $\eta>0$, and we have
		\begin{align}
			&|\nabla_{n+1}\Pint(\varGamma(\frac{-s}{2})\Lift_s f;\xi,\eta)|^q\nonumber\\
			&\leq\mvint_{B^{n+1}((\xi,\eta),\frac{1}{2}\eta)}|\nabla_{n+1}\Pint(\varGamma(\frac{-s}{2})\Lift_s f;z,t)|^q d(z,t),\label{eq6.22}
		\end{align}
where the right side of this inequality is a mean value integral over the aforementioned $(n+1)$-dimensional ball. We also observe that if $(\xi,\eta)$ belongs to the set (\ref{eq6.16}) and $(z,t)$ belongs to the ball $B^{n+1}((\xi,\eta),\frac{1}{2}\eta)$ for $z\in\bbbr^n$ and $t>0$, then we have $|\xi-x|\leq|y|\leq\frac{1}{2}\eta$, $|z-\xi|\leq\frac{1}{2}\eta$, $\frac{1}{2}\eta\leq t\leq\frac{3}{2}\eta$, and these inequalities indicate that $|y|\leq\frac{1}{2}\eta\leq t$, $\frac{2}{3}t\leq\eta\leq 2t$, and $|z-x|\leq|z-\xi|+|\xi-x|\leq\eta\leq 2t$. Therefore we combine the above indications with (\ref{eq6.21}), (\ref{eq6.22}) and apply Fubini's theorem, and then we can estimate the term $\int_{\bbbr^n}\frac{F_4(x,y)^q}{|y|^{n+sq}}dy$ from above by a constant multiple of the following,
\begin{align}
&\int_{\bbbr^n}|y|^{q-qA-n}\sup_{|\xi-x|\leq|y|\leq\frac{1}{2}\eta}\frac{\eta^{qA-n-1}}{\nu_{n+1}}\nonumber\\
&\quad\cdot\int_{B^{n+1}((\xi,\eta),\frac{1}{2}\eta)}\!\!\!\!
|\nabla_{n+1}\Pint(\varGamma(\frac{-s}{2})\Lift_s f;z,t)|^q d(z,t)dy\nonumber\\
&\lesssim\!\!\int_{\bbbr^n}\!\!\!\!|y|^{q-qA-n}\sup_{|\xi-x|\leq|y|\leq\frac{1}{2}\eta}
\int_{B^{n+1}((\xi,\eta),\frac{1}{2}\eta)}\!\!\!\!\!\!\!\!\!\!\!\!\!\!\!\!\!\!\!\!\!\!\!
|\nabla_{n+1}\Pint(\varGamma(\frac{-s}{2})\Lift_s f;z,t)|^q\nonumber\\
&\quad\cdot t^{qA-n-1}d(z,t)dy\nonumber\\
&\lesssim\!\!\int_{\bbbr^n}\!\!\!\!|y|^{q-qA-n}\!\!\int_{|y|}^{\infty}\!\!\!\int_{|z-x|\leq 2t}\!\!\!\!\!\!\!\!\!\!\!\!\!\!\!\!
|\nabla_{n+1}\Pint(\varGamma(\frac{-s}{2})\Lift_s f;z,t)|^q\!\cdot\!t^{qA-n-1}dzdtdy\nonumber\\
&=\!\!\int_{0}^{\infty}\!\!\!\!\int_{|y|\leq t}\!\!\!\!\!\!\!\!\!|y|^{q-qA-n}dy\!\!\int_{|z-x|\leq 2t}\!\!\!\!\!\!\!\!\!\!\!\!\!\!\!\!
|\nabla_{n+1}\Pint(\varGamma(\frac{-s}{2})\Lift_s f;z,t)|^q t^{qA-n-1}dzdt\nonumber\\
&\lesssim\int_{0}^{\infty}\!\!\!\!\int_{|z-x|\leq 2t}\!\!\!\!\!\!\!\!1\cdot
|\nabla_{n+1}\Pint(\varGamma(\frac{-s}{2})\Lift_s f;z,t)|^q t^{q-n-1}dzdt,\label{eq6.23}
\end{align}
where the constants in (\ref{eq6.23}) depend on $n,q,s,A$, and $0<s<A<1$, and $\nu_{n+1}$ denotes the volume of the unit ball in $\bbbr^{n+1}$. Notice that when $|z-x|\leq 2t$, we have
\begin{equation}\label{eq6.24}
1\leq 3^{\lambda n}\cdot\big(\frac{t}{t+|z-x|}\big)^{\lambda n}\quad\text{for any }\lambda>0.
\end{equation}
Inserting (\ref{eq6.24}) into (\ref{eq6.23}) yields the following pointwise estimate
\begin{equation}\label{eq6.25}
\int_{\bbbr^n}\frac{F_4(x,y)^q}{|y|^{n+sq}}dy\lesssim\LPG_{\lambda,q}(\varGamma(\frac{-s}{2})\Lift_s f)(x)^q
\end{equation}
for almost every $x\in\bbbr^n$, where the implicit constant depends on $n,q,s,A,\lambda$, and $\LPG_{\lambda,q}(\varGamma(\frac{-s}{2})\Lift_s f)(x)$ is the generalized Littlewood-Paley $\LPG_{\lambda,q}$-function defined in (\ref{eq1-26}).\\
		
Step 3: We estimate the term $\int_{\bbbr^n}\frac{F_3(x,y)^q}{|y|^{n+sq}}dy$. We notice that when $0\leq v\leq 2|y|$, the point $(x,v)$ is in the set
\begin{equation}\label{eq6.26}
\{(\xi,\eta)\in\bbbr^{n+1}_+:\xi\in\bbbr^n,\eta>0,|\xi-x|\leq\eta\leq 2|y|\},
\end{equation}
then we can exchange the order of integration in the defining expression (\ref{eq6.13}) of $F_3(x,y)$ and obtain
\begin{align}
&F_3(x,y)^q\nonumber\\
&\lesssim\!(\!\int_0^{2|y|}\!\!\!\!\!|\partial_{n+1}\Pint(\varGamma(\frac{-s}{2})\Lift_s f;x,v)|v^B\!\cdot\!v^{s-B}dv)^q
\nonumber\\
&\leq\!\!\sup_{|\xi-x|\leq\eta\leq 2|y|}\!\!\!\!|\nabla_{n+1}\Pint(\varGamma(\frac{-s}{2})\Lift_s f;\xi,\eta)|^q\eta^{Bq}\!\cdot\!(\int_0^{2|y|}\!\!\!\! v^{s-B}dv)^q\nonumber\\
&\lesssim\!\!\sup_{|\xi-x|\leq\eta\leq 2|y|}\!\!\!\!|\nabla_{n+1}\Pint(\varGamma(\frac{-s}{2})\Lift_s f;\xi,\eta)|^q\eta^{Bq}\!\cdot\!|y|^{q(1+s-B)},\label{eq6.27}
\end{align}
where the constants in (\ref{eq6.27}) depend on $s,q,B$, and $B$ is a positive real number that satisfies $1<B<1+s$. We use the subharmonicity of (\ref{eq6.42}) given by Lemma \ref{lemma11} and Lemma \ref{lemma10}, and consider the $(n+1)$-dimensional ball $B^{n+1}((\xi,\eta),\frac{1}{2}\eta)\subseteq\bbbr^{n+1}_+$, then we still have the inequality (\ref{eq6.22}). Observe that if $(\xi,\eta)$ is in the set (\ref{eq6.26}) and $(z,t)$ belongs to the ball $B^{n+1}((\xi,\eta),\frac{1}{2}\eta)$ for $z\in\bbbr^n$ and $t>0$, we have $|\xi-x|\leq\eta\leq2|y|$, $|z-\xi|\leq\frac{1}{2}\eta$, $|t-\eta|\leq\frac{1}{2}\eta$, and these inequalities indicate that $\frac{2}{3}t\leq\eta\leq2t$, $t\leq3|y|$, $|z-x|\leq|z-\xi|+|\xi-x|\leq\frac{3}{2}\eta\leq3t$. Therefore we can combine the above indications with (\ref{eq6.27}), (\ref{eq6.22}), and apply Fubini's theorem, and then we can estimate the term $\int_{\bbbr^n}\frac{F_3(x,y)^q}{|y|^{n+sq}}dy$ from above by a constant multiple of the following,
\begin{align}
&\int_{\bbbr^n}|y|^{q-qB-n}\sup_{|\xi-x|\leq\eta\leq2|y|}\frac{\eta^{qB-n-1}}{\nu_{n+1}}\nonumber\\
&\quad\cdot\!\int_{B^{n+1}((\xi,\eta),\frac{1}{2}\eta)}\!\!\!\!\!\!\!\!\!\!\!\!\!\!\!\!\!\!\!\!\!\!\!\!
|\nabla_{n+1}\Pint(\varGamma(\frac{-s}{2})\Lift_s f;z,t)|^q d(z,t)dy\nonumber\\
&\lesssim\!\!\int_{\bbbr^n}\!\!\!\!|y|^{q-qB-n}\sup_{|\xi-x|\leq\eta\leq2|y|}
\int_{B^{n+1}((\xi,\eta),\frac{1}{2}\eta)}\!\!\!\!\!\!\!\!\!\!\!\!\!\!\!\!\!\!\!\!\!\!\!\!
|\nabla_{n+1}\Pint(\varGamma(\frac{-s}{2})\Lift_s f;z,t)|^q\nonumber\\
&\quad\cdot t^{qB-n-1}d(z,t)dy\nonumber\\
&\lesssim\!\!\!\int_{\bbbr^n}\!\!\!\!|y|^{q-qB-n}\!\!\int_{0}^{3|y|}\!\!\!\!\int_{|z-x|\leq3t}\!\!\!\!\!\!\!\!\!\!\!\!\!\!\!
|\nabla_{n+1}\Pint(\varGamma(\frac{-s}{2})\Lift_s f;z,t)|^q\!\cdot\!t^{qB-n-1}dzdtdy\nonumber\\
&=\!\!\!\int_{0}^{\infty}\!\!\!\!\int_{\frac{t}{3}\leq|y|}\!\!\!\!\!\!\!\!|y|^{q-qB-n}dy\!\!\int_{|z-x|\leq3t}\!\!\!\!\!\!\!\!\!\!\!\!\!\!\!
|\nabla_{n+1}\Pint(\varGamma(\frac{-s}{2})\Lift_s f;z,t)|^q t^{qB-n-1}dzdt\nonumber\\
&\lesssim\int_{0}^{\infty}\!\!\!\!\int_{|z-x|\leq3t}\!\!\!\!\!\!\!\!1\cdot
|\nabla_{n+1}\Pint(\varGamma(\frac{-s}{2})\Lift_s f;z,t)|^q t^{q-n-1}dzdt,\label{eq6.28}
\end{align}
where the constants in (\ref{eq6.28}) depend on $n,q,s,B$, and $1<B<1+s$, and $\nu_{n+1}$ is the volume of the unit ball in $\bbbr^{n+1}$. Notice that when $|z-x|\leq3t$, we have
		\begin{equation}\label{eq6.29}
			1\leq 4^{\lambda n}\cdot\big(\frac{t}{t+|z-x|}\big)^{\lambda n}\quad\text{for any }\lambda>0.
		\end{equation}
		Inserting (\ref{eq6.29}) into (\ref{eq6.28}) yields the following pointwise estimate
		\begin{equation}\label{eq6.30}
			\int_{\bbbr^n}\frac{F_3(x,y)^q}{|y|^{n+sq}}dy\lesssim\LPG_{\lambda,q}(\varGamma(\frac{-s}{2})\Lift_s f)(x)^q
		\end{equation}
for almost every $x\in\bbbr^n$, where the implicit constant depends on $n,q,s,B,\lambda$.\\
		
Step 4: We estimate the term $\int_{\bbbr^n}\frac{F_2(x,y)^q}{|y|^{n+sq}}dy$. Observe that when $0\leq u\leq|y|$ and $u\leq v\leq2|y|$, and $\bar{y}=\frac{y}{|y|}$, the point $(x+u\bar{y},|y|+v-u)$ belongs to the set
		\begin{equation}\label{eq6.31}
			\{(\xi,\eta)\in\bbbr^{n+1}_+:\xi\in\bbbr^n,\eta>0,|\xi-x|\leq|y|\leq\eta\}.
		\end{equation}
		Furthermore, we continue using the positive number $A$ satisfying $0<s<A<1$ and we have
\begin{equation}\label{eq6.32}
\int_0^{|y|}\!\!\!\!\int_u^{2|y|}\!\!\!\!\!\frac{(v\!-\!u)^{s-1}}{(|y|\!+\!v\!-\!u)^A}dvdu\!\!\leq\!\!\!\int_0^{|y|}\!\!\!\!\int_u^{2|y|}\!\!\frac{(v\!-\!u)^{s-1}}{|y|^A}dvdu\!\!\lesssim\!\!|y|^{1+s-A}.
\end{equation}
		Now we recall the defining expression (\ref{eq6.12}), multiply the integrand of (\ref{eq6.12}) by the factor $(|y|+v-u)^A$ and divide it by the same factor, and then we apply the above observation and the estimate (\ref{eq6.32}) to obtain the following
		\begin{align}
			&F_2(x,y)^q\leq\sup_{|\xi-x|\leq|y|\leq\eta}\!\!\!\!|\nabla_{n}\Pint(\varGamma(\frac{-s}{2})\Lift_s f;\xi,\eta)|^q\eta^{Aq}\nonumber\\
			&\cdot(\int_0^{|y|}\!\!\!\!\int_u^{2|y|}\frac{(v-u)^{s-1}}{(|y|+v-u)^A}dvdu)^q
			\nonumber\\
			&\lesssim\sup_{|\xi-x|\leq|y|\leq\eta}\!\!\!\!|\nabla_{n+1}\Pint(\varGamma(\frac{-s}{2})\Lift_s f;\xi,\eta)|^q\eta^{Aq}\cdot|y|^{q(1+s-A)},\label{eq6.33}
		\end{align}
and the constant depends on $n,p,q$. We use the subharmonicity of (\ref{eq6.42}) over the $(n+1)$-dimensional ball $B^{n+1}((\xi,\eta),\frac{1}{2}\eta)\subseteq\bbbr^{n+1}_+$ where $\xi\in\bbbr^n$, $\eta>0$, and $2\leq q<\infty$, and this subharmonicity is due to Lemma \ref{lemma11} and Lemma \ref{lemma10}, then we still have the inequality (\ref{eq6.22}). When $(\xi,\eta)$ belongs to the set (\ref{eq6.31}) and $(z,t)$ belongs to the ball $B^{n+1}((\xi,\eta),\frac{1}{2}\eta)$ for $z\in\bbbr^n$ and $t>0$, we have $|\xi-x|\leq|y|\leq\eta$, $|z-\xi|\leq\frac{1}{2}\eta$, $\frac{1}{2}\eta\leq t\leq\frac{3}{2}\eta$, and these inequalities indicate that $\frac{2}{3}t\leq\eta\leq2t$, $|y|\leq2t$, and $|z-x|\leq|z-\xi|+|\xi-x|\leq\frac{3}{2}\eta\leq3t$. Therefore we can combine the above indications with (\ref{eq6.33}), (\ref{eq6.22}), and apply Fubini's theorem, and then we can estimate the term $\int_{\bbbr^n}\frac{F_2(x,y)^q}{|y|^{n+sq}}dy$ from above by a constant multiple of the following,
\begin{align}
&\int_{\bbbr^n}|y|^{q-qA-n}\sup_{|\xi-x|\leq|y|\leq\eta}\frac{\eta^{qA-n-1}}{\nu_{n+1}}\nonumber\\
&\quad\cdot\!\!\int_{B^{n+1}((\xi,\eta),\frac{1}{2}\eta)}\!\!\!\!\!\!\!\!\!\!\!\!\!\!\!\!\!\!\!\!\!\!\!\!|\nabla_{n+1}\Pint(\varGamma(\frac{-s}{2})\Lift_s f;z,t)|^q d(z,t)dy\nonumber\\
&\lesssim\!\!\!\int_{\bbbr^n}\!\!\!\!|y|^{q-qA-n}\sup_{|\xi-x|\leq|y|\leq\eta}
\int_{B^{n+1}((\xi,\eta),\frac{1}{2}\eta)}\!\!\!\!\!\!\!\!\!\!\!\!\!\!\!\!\!\!\!\!\!\!\!\!
|\nabla_{n+1}\Pint(\varGamma(\frac{-s}{2})\Lift_s f;z,t)|^q\nonumber\\
&\quad\cdot t^{qA-n-1}d(z,t)dy\nonumber\\
&\lesssim\!\!\!\int_{\bbbr^n}\!\!\!\!|y|^{q-qA-n}\!\!\int_{\frac{|y|}{2}}^{\infty}\!\!
\int_{|z-x|\leq3t}\!\!\!\!\!\!\!\!\!\!\!\!\!\!\!\!
|\nabla_{n+1}\Pint(\varGamma(\frac{-s}{2})\Lift_s f;z,t)|^q\!\cdot\!t^{qA-n-1}dzdtdy\nonumber\\
&=\!\!\!\int_{0}^{\infty}\!\!\!\!\int_{|y|\leq2t}\!\!\!\!\!\!\!\!\!\!|y|^{q-qA-n}dy\!\!\int_{|z-x|\leq3t}\!\!\!\!\!\!\!\!\!\!\!\!\!\!\!\!
|\nabla_{n+1}\Pint(\varGamma(\frac{-s}{2})\Lift_s f;z,t)|^q\!\cdot\!t^{qA-n-1}dzdt\nonumber\\
&\lesssim\int_{0}^{\infty}\!\!\!\!\int_{|z-x|\leq3t}\!\!\!\!\!\!\!\!1\cdot
|\nabla_{n+1}\Pint(\varGamma(\frac{-s}{2})\Lift_s f;z,t)|^q t^{q-n-1}dzdt,\label{eq6.34}
\end{align}
		where the constants in (\ref{eq6.34}) depend on $n,q,s,A$, and $0<s<A<1$, and $\nu_{n+1}$ denotes the volume of the unit ball in $\bbbr^{n+1}$. Inserting (\ref{eq6.29}) into (\ref{eq6.34}) yields the following pointwise estimate
		\begin{equation}\label{eq6.35}
			\int_{\bbbr^n}\frac{F_2(x,y)^q}{|y|^{n+sq}}dy\lesssim\LPG_{\lambda,q}(\varGamma(\frac{-s}{2})\Lift_s f)(x)^q
		\end{equation}
for almost every $x\in\bbbr^n$, where the implicit constant depends on $n,q,s,A,\lambda$, and $\LPG_{\lambda,q}(\varGamma(\frac{-s}{2})\Lift_s f)(x)$ is the generalized Littlewood-Paley $\LPG_{\lambda,q}$-function defined in (\ref{eq1-26}).\\
		
Step 5: We estimate the term $\int_{\bbbr^n}\frac{F_1(x,y)^q}{|y|^{n+sq}}dy$. We can exchange the order of integration in the defining expression (\ref{eq6.11}) and obtain
\begin{equation}\label{eq6.36}
F_1(x,y)\lesssim\int_0^{2|y|}\!\!\!\!\!|\partial_{n+1}\Pint(\varGamma(\frac{-s}{2})\Lift_s f;x+y,v)|v^s dv,
\end{equation} 
where the constant in (\ref{eq6.36}) depends on $0<s<1$. Therefore with a proper change of variables, we have obtained the pointwise estimate for almost every $x\in\bbbr^n$,
\begin{equation}\label{eq6.37}
\int_{\bbbr^n}\frac{F_1(x,y)^q}{|y|^{n+sq}}dy\lesssim\Remain_{s,q}(\varGamma(\frac{-s}{2})\Lift_s f)(x)^q,
\end{equation}
where the constant in (\ref{eq6.37}) depends on $s,q$, and $\Remain_{s,q}(\varGamma(\frac{-s}{2})\Lift_s f)(x)$ is the $\Remain_{s,q}$-function given by (\ref{eq1-27}).\\
		
Step 6: From inequalities (\ref{eq6.15}), (\ref{eq6.25}), (\ref{eq6.30}), (\ref{eq6.35}), (\ref{eq6.37}), we can deduce the pointwise estimate
\begin{equation}\label{eq6.38}
\Diff_{s,q}f(x)\!\lesssim\!\LPG_{\lambda,q}(\varGamma(\frac{-s}{2})\Lift_s f)(x)
\!+\!\Remain_{s,q}(\varGamma(\frac{-s}{2})\Lift_s f)(x)
\end{equation}
for almost every $x\in\bbbr^n$, and the implicit constant in (\ref{eq6.38}) only depends on some fixed parameters. Furthermore, we have obtained
		\begin{align}
			&\Lebes^n(\{x\in\bbbr^n:\Diff_{s,q}f(x)>\alpha\})\nonumber\\
			&\leq\Lebes^n(\{x\in\bbbr^n:\frac{\alpha}{2}\lesssim\LPG_{\lambda,q}(\varGamma(\frac{-s}{2})\Lift_s f)(x)\}) 
			\nonumber\\
			&\quad+\Lebes^n(\{x\in\bbbr^n:\frac{\alpha}{2}\lesssim\Remain_{s,q}(\varGamma(\frac{-s}{2})\Lift_s f)(x)\}),
			\label{eq6.39}
		\end{align}
for any positive finite real number $\alpha$. The assumption that $f\in\Lps$ has a real-valued function representative $f(x)\in\functrep_{0}(f)\cap W^{1,p_0}(\bbbr^n)$ for some $1\leq p_0\leq\infty$ and Lemma \ref{lemma11} indicate that the tempered distribution $\varGamma(\frac{-s}{2})\Lift_s f$ has a real-valued function representative (in the sense of $\Sw_0'(\bbbr^n)$) in $L^p(\bbbr^n)$. Therefore Theorem \ref{theorem4} and Theorem \ref{theorem5} are applicable. The first term on the right side of (\ref{eq6.39}) can be estimated from above by a constant multiple of $\alpha^{-p}\|\varGamma(\frac{-s}{2})\Lift_s f\|_{L^p(\bbbr^n)}^p\sim\alpha^{-p}\|f\|_{\Lps}^p$ due to Theorem \ref{theorem4} and Definition \ref{definition2}. The second term on the right side of (\ref{eq6.39}) can be estimated from above by a constant multiple of $\alpha^{-p}\|\varGamma(\frac{-s}{2})\Lift_s f\|_{L^p(\bbbr^n)}^p\sim\alpha^{-p}\|f\|_{\Lps}^p$ due to Theorem \ref{theorem5} and Definition \ref{definition2}. The proof of Theorem \ref{theorem2} is now complete.
\end{proof}

\section{Proofs of Corollary \ref{corollary3} and Corollary \ref{corollary4}}\label{proofs.of.corollaries.3&4}
\begin{proof}[Proof of Corollary \ref{corollary3}]
Under the given conditions, we can pick a real number $\bar{p}$ so that $2\leq q<\infty$,
\begin{equation}\label{eq7.1}
\text{$1<\bar{p}<p<q$, and $n(\frac{1}{p}-\frac{1}{q})<s=n(\frac{1}{\bar{p}}-\frac{1}{q})<1$.}
\end{equation}
Since $f\in\Sw_0(\bbbr^n)$ and the function $\multi(\xi)=|\xi|^s$ satisfies condition (\ref{eq1.64}), then Proposition \ref{proposition1} indicates $\FT_n f(\xi)\in\Sw_{00}(\bbbr^n)$, $|\xi|^s\cdot\FT_n f(\xi)\in\Sw_{00}(\bbbr^n)$, thus $\Lift_s f(x)\!=\!\iFT_n[|\xi|^s\!\cdot\!\FT_n f(\xi)](x)\!\in\!\Sw_0(\bbbr^n)$, and hence
\begin{equation}\label{eq7.2}
\Lift_s f(x)\in L^{\bar{p}}(\bbbr^n)\cap L^{p}(\bbbr^n)\cap L^{q}(\bbbr^n),
\end{equation}
therefore $f$ is a real-valued function that belongs to
\begin{equation}\label{eq7.3}
\dot{L}^{\bar{p}}_s(\bbbr^n)\cap\dot{L}^{p}_s(\bbbr^n)\cap\dot{L}^{q}_s(\bbbr^n)\cap W^{1,p_0}(\bbbr^n)
\end{equation}
for each $p_0\in[1,\infty]$, and inequality (\ref{eq6.38}) with $\lambda=\frac{q}{\bar{p}}$ is true for the parameters $\bar{p}$, $q$, $s$ satisfying condition (\ref{eq7.1}). This is because exchanging the order of integration in Step 2, Step 3, Step 4, and Step 5 in section \ref{proof.of.theorem2} in order to obtain the estimates (\ref{eq6.25}), (\ref{eq6.30}), (\ref{eq6.35}), and (\ref{eq6.37}), requires that $\LPG_{\lambda,q}(\varGamma(\frac{-s}{2})\Lift_s f)(x)$ and $\Remain_{s,q}(\varGamma(\frac{-s}{2})\Lift_s f)(x)$ are finite for almost every $x\in\bbbr^n$, which is a consequence of Theorem \ref{theorem4} and Theorem \ref{theorem5} and the fact that $\varGamma(\frac{-s}{2})\Lift_s f(x)\in L^{\bar{p}}(\bbbr^n)$ is real-valued according to Lemma \ref{lemma11}. Indeed, we set $p_0=\bar{p}$ in (\ref{eq7.3}) and let $\phi\in\Sw_{0}(\bbbr^n)$, then $|\xi|^s\cdot\iFT_n\phi(\xi)\in\Sw_{00}(\bbbr^n)$ and we invoke \cite[Theorem 2.2.14 (5)]{14classical} to obtain that
\begin{align}
&\int_{\bbbr^n}\varGamma(\frac{-s}{2})\Lift_s f(x)\cdot\phi(x)dx\nonumber\\
&=\int_{\bbbr^n}\varGamma(\frac{-s}{2})f(x)\cdot\FT_n[|\xi|^s\cdot\iFT_n\phi(\xi)](x)dx\nonumber\\
&=\int_{\bbbr^n}\big[\text{the expression (\ref{eq2-62})}\big]\cdot\phi(x)dx,\label{eq7.11}
\end{align}
where we use (\ref{eq2-60}), and the third and fourth equations of (\ref{eq2-61}) are applicable since the assumption that $f\in\Sw_{0}(\bbbr^n)$ is a real-valued function guarantees the exchange of the order of integration. Since both $\varGamma(\frac{-s}{2})\Lift_s f(x)$ and the expression (\ref{eq2-62}) belong to $L^{\bar{p}}(\bbbr^n)$, their difference
\begin{equation}\label{eq7.12}
\big\{\varGamma(\frac{-s}{2})\Lift_s f(x)-\text{the expression (\ref{eq2-62})}\big\}
\end{equation} 
is in $L^{\bar{p}}(\bbbr^n)\subseteq\Lloc$, satisfies condition (\ref{eq1.101}), and belongs to $\functrep(\tilde{f})$ for some $\tilde{f}\in\Sw'(\bbbr^n)$, thus Proposition \ref{proposition2} implies the difference (\ref{eq7.12}) equals a polynomial for almost every $x\in\bbbr^n$. Since this polynomial is also in $L^{\bar{p}}(\bbbr^n)$, the polynomial is identically zero and we have for almost every $x\in\bbbr^n$,
\begin{equation}\label{eq7.13}
\varGamma(\frac{-s}{2})\Lift_s f(x)=\text{the expression (\ref{eq2-62})}.
\end{equation}
The expression (\ref{eq2-62}) is continuous in $x\in\bbbr^n$ and $\varGamma(\frac{-s}{2})\Lift_s f(x)\in\Sw_{0}(\bbbr^n)$ when $f\in\Sw_{0}(\bbbr^n)$, hence equation (\ref{eq7.13}) is true for all $x\in\bbbr^n$. The assumption of Corollary \ref{corollary3} now implies the expression (\ref{eq2-62}) is real-valued for all $x\in\bbbr^n$, and so is the function $\varGamma(\frac{-s}{2})\Lift_s f(x)$. Applying the $\|\cdot\|_{L^p(\bbbr^n)}$-norm to both sides of inequality (\ref{eq6.38}) yields the estimate
\begin{equation*}
\|\Diff_{s,q}f\|_{L^p(\bbbr^n)}\!\lesssim\!\|\LPG_{\lambda,q}(\varGamma(\frac{-s}{2})\Lift_s f)\|_{L^p(\bbbr^n)}\!\!+\!\|\Remain_{s,q}(\varGamma(\frac{-s}{2})\Lift_s f)\|_{L^p(\bbbr^n)}.
\end{equation*}
Now it remains to prove that the operators $\LPG_{\lambda,q}$ and $\Remain_{s,q}$, as defined in (\ref{eq1-26}) and (\ref{eq1-27}), are bounded from $L^p(\bbbr^n)$ to $L^p(\bbbr^n)$. By (\ref{eq7.1}) and Corollary \ref{corollary2}, we have $2\leq q<\infty$, $1<\lambda=\frac{q}{\bar{p}}<\infty$, and $0<s=n(\frac{1}{\bar{p}}-\frac{1}{q})<1$, thus the operators $\LPG_{\lambda,q}$ and $\Remain_{s,q}$ are bounded from $L^q(\bbbr^n)$ to $L^q(\bbbr^n)$, and hence are bounded from $L^q(\bbbr^n)$ to $L^{q,\infty}(\bbbr^n)$. The parameters $\bar{p}$, $q$, $s$ with $\lambda=\frac{q}{\bar{p}}$ satisfy condition (\ref{eq7.1}), then they also satisfy conditions of Theorem \ref{theorem4} and Theorem \ref{theorem5}, therefore the operators $\LPG_{\lambda,q}$ and $\Remain_{s,q}$ are bounded from $L^{\bar{p}}(\bbbr^n)$ to $L^{\bar{p},\infty}(\bbbr^n)$. Applying the Marcinkiewicz interpolation theorem, i.e. \cite[Theorem 1.3.2.]{14classical}, yields the $L^p(\bbbr^n)$-boundedness property of the operators $\LPG_{\lambda,q}$ and $\Remain_{s,q}$. We recall Definition \ref{definition2} and complete the proof of Corollary \ref{corollary3}.
\end{proof}
\begin{proof}[Proof of Corollary \ref{corollary4}]
Let $f\in\Sw_0(\bbbr^n)$ be a nonzero Schwartz function, then we have $\|f\|_{\Fspq}<\infty$ and $\|f\|_{\Lps}<\infty$. Furthermore, there exist positive finite constant numbers $C_1$ and $R$ and a subset $E\subseteq\bbbr^n$ so that all of the following conditions are met,
\begin{equation}\label{eq7.4}
E\subseteq\Omega:=\{z\in\bbbr^n:|z|\leq R\},
\end{equation}
\begin{equation}\label{eq7.5}
0<\Lebes^n(E)<\infty,
\end{equation}
\begin{equation}\label{eq7.6}
x\in\Omega^c:=\bbbr^n\setminus\Omega\quad\Rightarrow\quad|f(x)|<\frac{1}{2}C_1,
\end{equation}
\begin{equation}\label{eq7.7}
y\in E\quad\Rightarrow\quad|f(y)|>C_1.
\end{equation}
When $0<p<q<\infty$ and $-\infty<s\leq n(\frac{1}{p}-\frac{1}{q})$, we consider points $x\in\bbbr^n$ and $y\in E$ satisfying the condition $|x|>2R$. By conditions (\ref{eq7.4}), (\ref{eq7.6}), and (\ref{eq7.7}), we have $|y|\leq R<\frac{1}{2}|x|$, $|y-x|\leq\frac{3}{2}|x|$, and
\begin{equation*}
|f(y)-f(x)|\geq|f(y)|-|f(x)|>\frac{1}{2}C_1.
\end{equation*}
Therefore, by denoting
\begin{equation}\label{eq7.8}
C_2:=2^{-q}\!\cdot\!C_1^q\!\cdot\!(\frac{3}{2})^{-n-sq},
\end{equation}
we can obtain
\begin{align}
&\|\Diff_{s,q}f\|_{L^p(\bbbr^n)}^p=\int_{\bbbr^n}\bigg(\int_{\bbbr^n}\frac{|f(y)-f(x)|^q}
{|y-x|^{n+sq}}dy\bigg)^{\frac{p}{q}}dx\nonumber\\
&\geq\int_{|x|>2R}\bigg(\int_{E}\frac{|f(y)-f(x)|^q}{|y-x|^{n+sq}}dy\bigg)^{\frac{p}{q}}dx\nonumber\\
&\geq\int_{|x|>2R}\bigg(\int_{E}\frac{2^{-q}\!\cdot\!C_1^q}{(\frac{3}{2}|x|)^{n+sq}}dy
\bigg)^{\frac{p}{q}}dx\nonumber\\
&=\int_{|x|>2R}\frac{[\Lebes^n(E)\!\cdot\!C_2]^{\frac{p}{q}}}
{|x|^{\frac{np}{q}+sp}}dx=\infty,\label{eq7.9}
\end{align}
since $\frac{np}{q}+sp\leq\frac{np}{q}+np(\frac{1}{p}-\frac{1}{q})=n$. When $0<q\leq p<\infty$ and $-\infty<s\leq0$, we consider points $x\in E$ and $y\in\bbbr^n$ satisfying the condition $|y|>2R$. By conditions (\ref{eq7.4}), (\ref{eq7.6}), and (\ref{eq7.7}), we have $|x|\leq R<\frac{1}{2}|y|$, $|y-x|\leq\frac{3}{2}|y|$, and
\begin{equation*}
|f(y)-f(x)|\geq|f(x)|-|f(y)|>C_1-\frac{1}{2}C_1=\frac{1}{2}C_1.
\end{equation*}
Hence, we have
\begin{align}
&\|\Diff_{s,q}f\|_{L^p(\bbbr^n)}^p\geq\int_{E}\bigg(\int_{|y|>2R}
\frac{|f(y)-f(x)|^q}{|y-x|^{n+sq}}dy\bigg)^{\frac{p}{q}}dx\nonumber\\
&\geq\int_{E}\bigg(\int_{|y|>2R}\frac{2^{-q}\!\cdot\!C_1^q}{(\frac{3}{2}|y|)^{n+sq}}dy
\bigg)^{\frac{p}{q}}dx\nonumber\\
&=\Lebes^n(E)\!\cdot\!\bigg(\int_{|y|>2R}\frac{C_2}{|y|^{n+sq}}dy\bigg)^{\frac{p}{q}}
=\infty,\label{eq7.10}
\end{align}
since $n+sq\leq n$. The proof of Corollary \ref{corollary4} is complete.
\end{proof}

	\section*{Acknowledgments}
	The author would like to thank the anonymous reviewer for the review of this article.
	
	\section*{Statements and Declarations}
	The author has no competing interests to declare that are relevant to the content of this article.
	
	
	
	
	
	
	
	

\end{document}